\documentclass[a4paper,oneside,reqno]{amsart}
\usepackage[utf8]{inputenc}

\usepackage{amssymb,amsthm,amsmath}
\usepackage{bbm}
\usepackage{enumitem}
\usepackage{hyperref}
\usepackage[capitalize]{cleveref}
\usepackage{mathtools}
\usepackage{stmaryrd} 
\usepackage{thmtools}
\usepackage{thm-restate}
\usepackage{enumitem}
\usepackage{algorithm}
\usepackage{algpseudocode}
\usepackage{comment}
\usepackage[toc,page]{appendix}

\usepackage{geometry}
\declaretheorem[numberwithin=section]{theorem}
\declaretheorem[sibling=theorem]{lemma}
\declaretheorem[sibling=theorem]{claim}
\declaretheorem[sibling=theorem]{fact}

\declaretheorem[sibling=theorem]{corollary}

\declaretheorem[sibling=theorem]{observation}
\declaretheorem[style=definition,sibling=theorem]{definition}
\declaretheorem[style=remark,sibling=theorem]{remark}

\renewcommand{\le}{\leqslant}
\renewcommand{\ge}{\geqslant}
\renewcommand{\leq}{\leqslant}
\renewcommand{\geq}{\geqslant}

\renewcommand{\Pr}{\mathbb{P}}
\renewcommand{\emptyset}{\varnothing}

\newcommand{\cA}{\mathcal{A}}

\newcommand{\cD}{\mathcal{D}}
\newcommand{\cE}{\mathcal{E}}
\newcommand{\cF}{\mathcal{F}}

\newcommand{\cH}{\mathcal{H}}

\newcommand{\cL}{\mathcal{L}}
\newcommand{\cP}{\mathcal{P}}

\newcommand{\cR}{\mathcal{R}}
\newcommand{\cS}{\mathcal{S}}
\newcommand{\cT}{\mathcal{T}}

\newcommand*\diff{\mathop{}\!\mathrm{d}}
\newcommand{\eps}{\varepsilon}

\DeclareMathOperator{\Aut}{Aut}

\DeclareMathOperator{\UB}{UB}
\DeclareMathOperator{\LB}{LB}

\renewcommand{\Pr}{\mathbb{P}}
\newcommand{\E}{\mathbb{E}}

\usepackage{xcolor}

\author{Asaf Cohen Antonir}
\address{School of Mathematical Sciences, Tel Aviv University, Tel Aviv 6997801, Israel}
\email{asafc1@tauex.tau.ac.il}

\author{Ilay Hoshen}
\address{School of Mathematical Sciences, Tel Aviv University, Tel Aviv 6997801, Israel}
\email{ilayhoshen@gmail.com}

\author{Maksim Zhukovskii}
\address{Department of Computer Science, University of Sheffield, Sheffield S1 4DP, UK}
\email{m.zhukovskii@sheffield.ac.uk}

\title{A Local Central Limit Theorem for Clique Counts in Sparse Random Graphs}

\begin{document}

\maketitle
\begin{abstract}
    Let $X_H$ denote the number of copies of a fixed graph $H$ in $G_{n, p}$. Gilmer and Kopparty conjectured that $X_H$ satisfies a local central limit theorem (LCLT) provided that $H$ is connected, $p \gg n^{-1/m(H)}$, and $n^2 (1-p) \gg 1$, where $m(H)$ is the maximum density. 
    
    Following the work of Berkowitz, Sah and Sawhney confirmed this conjecture for every constant $p$, leaving the regime where $p=o(1)$ open. In this regime, the only case addressed in the literature is when $H=K_3$, where, in a recent paper, Araújo and Mattos confirmed the conjecture for $p \in (4n^{-1/2}, 1/2)$. This, together with a general result of Röllin and Ross, essentially settles the conjecture for the triangle. We generalise these results by showing that an LCLT holds for $H = K_r$ (for any fixed $r \ge 3$) in the regime $n^{-1/m(H)}\ll p\leq 1/2$, essentially settling the conjecture for cliques.
\end{abstract}

\section{Introduction}\label{section: introduction}

Throughout this paper, we use standard graph theory notation: For a graph $G$, its vertex set is denoted by $V(G)$, its edge set is denoted by $E(G)$, and their cardinalities are denoted by $v(G)$ and $e(G)$, respectively.
The binomial random graph $G_{n,p}$ is obtained by independently retaining each edge of $K_n$, the complete graph on the vertex set $[n]\coloneqq \{1,\ldots ,n\}$, with probability $p$.  
This paper is concerned with the random variable $X_H$ counting the number of labelled copies of $H$ in $G_{n,p}$.
The study of this random variable dates back to the seminal works of Erd\H{o}s and R\'enyi~\cite{ErdRen1960} and of Bollob\'{a}s~\cite{Bol1981}, which established the following threshold phenomenon:
For every fixed graph $H$, upon setting $m(H)\coloneqq \max\{e(J)/v(J):\emptyset \neq J\subseteq H\}$ we have
\begin{equation*}
    \lim_{n\to \infty} \mathbb{P}(X_H\neq 0) = \begin{cases}0& \text{if}\ \  p\ll n^{-{1}/{m(H)}},\\
    1& \text{if}\ \  p\gg n^{-1/{m(H)}}.\end{cases}
\end{equation*}
When $H$ is strictly balanced \footnote{A graph $H$ is called \emph{strictly balanced} if $H$ is the unique $J\subseteq H$ attaining the maximum in the definition of $m(H)$.} and $p=\lambda n^{-1/m(H)}$ where $\lambda>0$ is some constant, Bollob\'{a}s \cite{Bol1981} proved that $X_H$ converges in distribution to a Poisson random variable with mean $\lambda ^{e(H)}/\lvert \Aut(H) \rvert$. 

For $p\gg n^{-1/m(H)}$ and $n^2(1-p)\gg 1$, in a sequence of works \cite{Kar1984,KarRuc1983,NowWie1986} culminating in the celebrated result of Ruci\'nski~\cite{Ruc1988}, it was shown that $X_H$ satisfies a central limit theorem. Formally, letting $\mu_H\coloneqq \E[X_H]$ and $\sigma_H^{2}\coloneqq \mathbb{V}ar[X_H]$ Ruci\'nski showed that for every fixed real $x$ we have
\[
    \lim_{n\to \infty }\Pr\left(\frac{X_H-\mu_H}{\sigma_H}\leq x\right) = \Pr\left(Z\leq x\right),
\]
where $Z$ is the standard normal random variable.
In a subsequent paper, Barbour, Karo\'nski, and Ruci\'nski \cite{BarKarRuc1989} derived a quantitative version of the central limit theorem for `decomposable' random variables, which include $X_H$. Another influential result in the study of small subgraph counts due to Janson \cite{Jan1990} used the $U$-statistics technique, from the asymptotic theory of statistics. 
This result establishes a central limit theorem for the joint distribution of $(X_{H_1},\ldots, X_{H_k})$, where $H_1,\ldots ,H_k$ are fixed graphs.

The next natural step is to refine the `integral' version of the central limit theorem of Ruci\'nski~\cite{Ruc1988} by proving a \emph{local central limit theorem} (LCLT). More precisely, is it true that
\[
    \Pr(X_H=x) = \frac{1}{\sqrt{2\pi}\cdot \sigma_H}\exp\left({-\frac{(x-\mu_H)^2}{2\sigma_H^2}}\right) + o (\sigma_H^{-1})?
\]
It was conjectured by Gilmer and Kopparty \cite{GilKop2016} that for any connected graph $H$, as long as $X_H$ admits a central limit theorem, it should admit an LCLT.  
We remark that for disconnected graphs, there are examples where a central limit theorem holds, whereas an LCLT fails; see \cite{SahSaw2022} or \cite{BerSahSaw2021} for a failure of a local central limit in a different but related model. 

In this paper, we resolve the Gilmer-Kopparty conjecture when $H$ is a clique in an essentially optimal range of edge probabilities $p$.
To state our main result, let us introduce some notations. 
For a graph $H$ its $2$-density is $m_2(H)\coloneqq \max \left\{\frac{e(J)-1}{v(J)-2}:J\subseteq H,v(J)>2\right\}\cup \left\{\frac{1}{2}\right\}$ and in particular, letting $r\geq 3$ be an integer we have $m_2(K_r)=\frac{r+1}{2}$ whereas $m(K_r)=\frac{r-1}{2}$.
In addition, we abbreviate $X_{K_r},\mu_{K_r},$ and $\sigma_{K_r}$ by writing $X_r,\mu_r,$ and $\sigma_r$, respectively. Lastly, set $X^*_r\coloneqq \frac{1}{\sigma_r}(X_r-\mu_r)$ and set $\cL_r$ to be the support of $X_r^*$. Our main theorem reads as follows.

\begin{theorem}\label{theorem:main theorem}
    Suppose that $r\geq 3$ is an integer. For any $n^{-1/m(K_r)}\ll p \leq \frac{1}{2}$ we have
    \begin{equation*}
        \Phi _r(n,p) \coloneqq \sup_{x\in \mathcal{L}_r}\left\lvert\sigma_r \cdot \mathbb{P}(X^*_r= x) - \frac{1}{\sqrt{2\pi}}e^{-x^2/2}\right\rvert = o(1).
    \end{equation*}
    Quantitatively, for any $c,\epsilon>0$ with $c<\frac{3}{2r}$, we have
    \begin{equation}\label{eq:main}
        \Phi_r (n,p) =\begin{cases}
	    O\left(\frac{1}{n^{1-\epsilon}\sqrt{p}}\right)&\text{if } p\geq n^{-c},\\
        O\left(n^{\epsilon}p^{3/2}\right)&\text{if } n^{-\frac{1}{m_2(K_r)}}(\log n)^{\frac{4r}{e(K_r)-1} }\leq p< n^{-c}	,\\
        O\left(\frac{(\log n)^{46r}}{\sqrt{n^{r}p^{\binom{r}{2}}}}\right)&\text{if } n^{-\frac{1}{m(K_r)}}(\log n)^{128}\leq p< n^{-\frac{1}{m_2(K_r)}}(\log n)^{\frac{4r}{e(K_r)-1} }, \\
        O\left( \frac{\log \left(n^{r}p^{\binom{r}{2}}\right)}{\sqrt{n^{r}p^{\binom{r}{2}}}}\right)&\text{if } n^{-\frac{1}{m(K_r)}}\ll  p<  n^{-\frac{1}{m(K_r)}}(\log n)^{128}.
        \end{cases}
    \end{equation}
\end{theorem}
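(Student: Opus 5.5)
We prove the statement via Fourier inversion, following the approach of Berkowitz and of Sah and Sawhney, with the sparse-regime modifications of Araújo and Mattos. Let $\varphi(t)=\E[e^{itX_r^*}]$. Write $\Delta$ for the lattice spacing of $\cL_r$; we take $\Delta=1/\sigma_r$, after checking that $X_r$ is not supported on a proper sublattice, which holds since adding a single pendant-free clique changes the count by $r!$ and gcd considerations on $X_r$ modulo automorphisms apply. Inversion then gives
\[
\sigma_r\Pr(X_r^*=x)=\frac{1}{2\pi}\int_{-\pi\sigma_r}^{\pi\sigma_r}\varphi(t)e^{-itx}\diff t .
\]
Comparing this with $\frac{1}{2\pi}\int_{\mathbb R} e^{-t^2/2}e^{-itx}\diff t$, we bound $\Phi_r(n,p)$ by the sum of three contributions:
\begin{enumerate}[label=(\roman*)]
\item $\int_{|t|\le T_1}|\varphi(t)-e^{-t^2/2}|\diff t$;
\item $\int_{T_1\le|t|\le \pi\sigma_r}|\varphi(t)|\diff t$;
\item a Gaussian tail.
\end{enumerate}

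For range (i) we use a quantitative CLT at the level of characteristic functions. One option is a Stein-method or dependency-graph cumulant bound: the $k$-th cumulant of $X_r^*$ is controlled by the dependency structure of clique copies. Another is to expand $X_r$ in the orthogonal (Hoeffding/Fourier–Walsh) basis, where the dominant term is the edge-count projection $\sum_e c_e(\xi_e-p)$. Either gives $|\varphi(t)-e^{-t^2/2}|\le e^{-t^2/4}\,|t|^3\,\delta$ for $|t|\le T_1$, with $T_1$ a small power of $\sigma_r$ in the dense case and $\delta$ the Berry–Esseen-type error. The four cases of the theorem come from computing $\sigma_r^2$ and $\delta$ in each range. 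For $p$ above $n^{-1/m_2}$, the variance is dominated by single-edge overlaps, $\sigma_r^2\asymp n^{2r-2}p^{2\binom r2-1}$. Below it, $\sigma_r^2\asymp \mu_r\asymp n^rp^{\binom r2}$ and the count is nearly Poisson; in the extreme range, an explicit Poisson/LCLT comparison, for instance via the Röllin–Ross result for Poisson-like counts, yields the $\log\mu/\sqrt{\mu}$ rate.

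The heart of the argument is range (ii). Following Berkowitz, we condition on a random partition of the vertices or edges. We expose all edges except a carefully chosen family of disjoint "switchable" configurations, which are vertex-disjoint gadgets. Resampling one gadget changes $X_r$ by an amount $D_j$ depending on the exposed edges. Conditionally on the exposed part, the gadgets are independent, so
\[
|\varphi(t)|\le \E\prod_j\bigl|\E[e^{itD_j/\sigma_r}\mid\text{exposed}]\bigr|.
\]
We choose the gadgets so that each $D_j$ takes two values differing by a quantity $d_j$ of controlled size. This gives
\[
|\varphi(t)|\le \E\exp\Bigl(-c\sum_j p\,\|t d_j/(2\pi\sigma_r)\|^2\Bigr),
\]
where $\|\cdot\|$ denotes distance to the nearest integer. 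In the intermediate range of $t$ we use single-edge gadgets, for which $d_j$ is the codegree-type clique count through an edge, of order $n^{r-2}p^{\binom r2-1}$. For the largest $t$, up to $\pi\sigma_r$, we need gadgets whose increments are tiny, ideally $\pm1$ times a fixed multiple. These are configurations creating exactly one new $K_r$ with no other cliques. Their abundance, and concentration of the relevant local counts around their means, requires Janson/Kim–Vu type concentration together with union bounds; these steps produce the polylogarithmic losses in $p$.

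The main obstacle is the middle range of $t$. There, the increments $d_j$ must be spread out enough that $t d_j/\sigma_r$ is not simultaneously close to integers for most $j$. We handle this with an anticoncentration statement for the multiset $\{d_j\}$: we show that the $d_j$ (local clique counts) vary across a window of width comparable to their standard deviation. Then, for each $t$, a constant fraction of the $j$ satisfy $\|t d_j/(2\pi\sigma_r)\|\ge c'$, which gives $|\varphi(t)|\le \exp(-\Omega(np))$ or better. For small $p$ the window structure changes, and we switch to gadgets involving $K_{r}$ minus an edge. This case interplay is what dictates the four regimes in the statement.
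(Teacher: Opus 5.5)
\textbf{The main gap is in range (ii), the core of the problem.} Without decoupling, the increment of a single-edge gadget $f$ is its codegree clique count $d_f\approx n^{r-2}p^{\binom{r}{2}-1}$. Above the $2$-density, $\sigma_r\asymp n^{r-1}p^{\binom{r}{2}-1/2}$, so with $\theta=t/(2\pi\sigma_r)$ you get $\theta d_f\asymp t/(n\sqrt p)$. This is $\gg 1$ for every $t\gg n\sqrt p$, i.e.\ on most of $[n^{1/2+\epsilon}p,\pi\sigma_r]$.

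Your inference, ``the $d_j$ spread over a window comparable to their standard deviation, hence a constant fraction satisfy $\|\theta d_j\|\ge c'$'', does not hold, in either regime:
\begin{itemize}
\item If $\theta$ times the window width is small, all $\theta d_j$ can sit next to a single integer, whenever $\theta\,\E d_j$ does.
\item If it is large, you need the integers $d_j$ to be spread modulo $1/\theta$. For $\theta\approx 1/q$, this means a positive fraction of the $d_j$ must avoid a neighbourhood of $q\mathbb{Z}$. That is a local limit statement for codegree counts, no easier than the theorem itself.
\end{itemize}
The top range has the same defect. Above $n^{-1/m_2(K_r)}$ a resampled edge typically changes $X_r$ by $\omega(1)$, so gadgets with increment $\pm1$ are not available.

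The missing idea is the decoupling of Corollary~\ref{cor: the decoupling lemma new} on an $(a,b,k,r)$-partition. After $2(r-k-2)$ Cauchy--Schwarz steps, $X_r$ is replaced by a signed count in which non-rainbow copies have sign $0$ and every copy has mean-zero sign. The weight $w_f$ of $f\in A_1\times A_2$ is then a mean-zero sum of size about $\Lambda^{1/2}$, where $\Lambda=b^{r-k-2}n^{2k}p^{2\binom{r}{2}-\binom{r-k}{2}-1}$ is tunable through $b$ and $k$. One chooses $(k,b)$ so that $t|w_f|/\sigma_r\le 1$ (Claim~\ref{claim: closest to integer}). Then $\|tw_f/(2\pi\sigma_r)\|=t|w_f|/(2\pi\sigma_r)$, and no arithmetic information about the weights is ever needed. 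For $t$ near $\pi\sigma_r$, the parts $B_i$ are taken so small that $b^{r-2}p^{\binom{r}{2}-1}=O(1)$, and this is what produces edges of weight exactly $\pm1$ (Section~\ref{sec:high-freq}).

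\textbf{Two further problems.}

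First, for $r\ge 4$, vertex-disjoint gadgets do not make $X_r$ additive, because one $K_r$ can contain two disjoint unexposed edges. So your factorisation of $|\varphi(t)|$ is not valid as written. Further conditioning on one edge of each interacting pair repairs this, but only returns you to codegree weights. In the paper, the corresponding non-linear part (copies outside $\cR_{2,r-k-2}$) is put into $\tilde Y$ and controlled by the $2L$-th moment bound of Section~\ref{section:high moments}. That bound is the main new ingredient, and it is what forces $t\le \UB_{k,b}$ and the patching of frequency intervals over $k$.

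Second, the four rates do not come from $\sigma_r^2$ and a Berry--Esseen constant $\delta$. They come from balancing the Stein/Barbour--Karo\'nski--Ruci\'nski bound $O(K^2/\psi_r^{1/2})$ on $[-K,K]$ against the smallest $K$ at which Lemmas~\ref{lemma:characteristic function integrable}--\ref{lemma:characteristic function integrable - low freq and low p} give decay. Your bound $|\varphi(t)-e^{-t^2/2}|\le e^{-t^2/4}|t|^3\delta$ up to $T_1$ is asserted, not derived. Invoking R\"ollin--Ross for general $K_r$ would require verifying their smoothness hypothesis, which is itself the substance of the sparsest regime; the paper handles that regime via Lemma~\ref{lemma:characteristic function integrable - low freq and low p}.
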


\begin{remark}
    We did not try to optimise the quantitative part of Theorem~\ref{theorem:main theorem}. In particular, our proof likely yields that the polynomials $n^{\epsilon}$ can be replaced by poly-logarithmic factors, and moreover, the powers in the poly-logarithmic factors can certainly be reduced. However, in the regime where $p=\Theta(1)$, the (unoptimised) error term in \eqref{eq:main} improves upon previous works \cite{Ber2018,SahSaw2022}.
\end{remark}

This theorem continues a line of work establishing local central limits for subgraph counts in various setups. To the best of our knowledge, the first result of this type was obtained by Gilmer and Kopparty \cite{GilKop2016}, who considered triangle counts for $p$ bounded away from zero and one and, in the same work, formulated the above-mentioned conjecture. Then, R\"{o}llin and Ross \cite{RolRos2015} proved a local limit theorem in a general framework, and as a consequence, obtained an LCLT for triangle counts above the appearance threshold $\Omega(n^{-1})$ and below $O\left(n^{-1/2}\right)$. Recently, Ara\'{u}jo and Mattos \cite{AraMat2023} closed the gap between the sparse regime and the dense regime where $p=\Theta(1)$. This work confirmed the Gilmer--Kopparty conjecture for the triangle by establishing an LCLT for every $n^{-1}\ll p\le 1-\Theta(1)$.

In the dense regime $p=\Theta(1)$, Berkowitz \cite{Ber2016,Ber2018} improved the result of Gilmer and Kopparty by first establishing an LCLT for triangle counts and then for clique counts with a quantitative control over the error terms. These works introduced the decoupling method, which was central to all subsequent works \cite{AraMat2023, SahSaw2022, SahSawZhu2024}, including this one. This method is discussed both in the overview and in Section \ref{section:decoupling}. 

Sah and Sawhney \cite{SahSaw2022} extended Berkowitz's theorem and proved an LCLT for every connected graph $H$ and every constant $p$, again, with a quantitative control over the error terms. This result was further refined by Sah, Sawhney, and Zhu \cite{SahSawZhu2024}, who recently established a local central limit for $(X_{H_1},\ldots,X_{H_k})$ where $H_1,\ldots ,H_k$ are fixed connected graphs and $p=\Theta(1)$.

Before giving some details on the proof of Theorem \ref{theorem:main theorem}, we briefly discuss another major topic in combinatorics and probability theory that has connections to local (central) limit theorems, namely, \emph{anti-concentration}.
For a discrete random variable $X$, the anti-concentration question asks to determine the quantity $\sup_{x\in \textup{Supp}X}\Pr(X=x)$. Clearly, an LCLT for $X$ answers this question asymptotically. In the case of subgraph counts, Fox, Kwan and Sauermann~\cite{FoxKwanSau2021b, FoxKwanSau2021a} obtained an asymptotic logarithmic optimal anti-concentration for $X_H$, when $H$ is connected, and $p$ is a constant. As was mentioned, Sah and Sawheny~\cite{SahSaw2022} obtained an LCLT in this case, thereby improving the results in \cite{FoxKwanSau2021b, FoxKwanSau2021a} and settling the problem for connected graphs in the dense regime. 
However, in the sparse regime, where $p=o(1)$, apart from the case $H=K_3$, the anti-concentration behaviour of $X_H$ remains unknown. As this work establishes a local central limit for clique counts in the sparse regime, it also provides an asymptotic answer to the anti-concentration question for these random variables in the sparse regime.

In a more general framework of polynomials of i.i.d.\ Rademacher random variables (i.e.\ uniform random variables on $\{\pm1\}$), Kwan and Sauermann \cite{KwaSau2023} made important progress towards a general anti-concentration theorem by resolving the problem for all polynomials of degree $2$. Interestingly, they used a decoupling technique similar to that of Berkowitz.

\subsection* {Proof strategy for Theorem \ref{theorem:main theorem}.}
Our proof of Theorem \ref{theorem:main theorem} begins with the classical Fourier approach used in essentially all earlier works on the matter. This approach reduces the problem to showing that the difference between the characteristic function of $X_r^*$ and that of the standard normal distribution is small. More specifically, the problem is reduced to showing that as $n\to \infty$ we have
\begin{equation}\label{eq: integral bound intro}
    \int^{\pi \sigma_r}_{-\pi \sigma_r}\left\lvert{\E\left[e^{it X_r^*}\right] -e^{-t^2/2}}\right\rvert \diff t \to 0.
\end{equation}
Moreover, the rate of decay above dictates the quality of the bound in \eqref{eq:main}. 

Restricting the integral to $[-K,K]$, where $K=K(H,n,p)$ is some `small enough' function, it is well-known that the quantitative central limit theorem in \cite{BarKarRuc1989} together with Stein's method (see e.g.\ \cite[Section 6]{Ros2011}) yield the required bound. For the sake of completeness, we reproduce these bounds in Section \ref{sec:quantitative-central-limit}. Therefore, the main challenge in proving Theorem \ref{theorem:main theorem} lies in bounding the integrand in \eqref{eq: integral bound intro} for frequencies $t\in [-\pi \sigma_r ,\pi\sigma_r ]\setminus [-K,K]$. The following lemmas establish this required bound and are the main contributions of this paper.

\begin{lemma}\label{lemma:characteristic function integrable}
    Suppose that $r\geq 3$ is an integer and that $K>0$. The following holds for every $\epsilon>0$ and every sufficiently large integer $ n$. 
    For every $n^{-\frac{1}{m_2(K_r)}}(\log n)^{\frac{4r}{e(K_r)-1}}\leq p\leq \frac{1}{2}$ and every $n^{1/2+\epsilon}p\le t \le \pi \sigma_r$, we have
    \begin{align}\label{align:characteristic function integrable}
        \left\lvert\E\left[e^{it \cdot X_r/\sigma_r}\right]\right\rvert \le t^{-K}.
    \end{align}
\end{lemma}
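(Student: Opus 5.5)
We will use Berkowitz's decoupling method. Fix $t$ in the stated range and write $\theta \coloneqq t/\sigma_r$. Recall that $\sigma_r^2\asymp n^{2r-2}p^{2\binom{r}{2}-1}$ in this regime, because for $p\gg n^{-1/m_2(K_r)}$ the variance is dominated by pairs of copies sharing a single edge. Thus $\theta$ ranges from about $n^{1/2+\epsilon}p/\sigma_r$ up to $\pi$.

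\textbf{Decoupling setup.} Choose disjoint vertex sets $A,B\subseteq[n]$ together with a vertex set $S$ of the ``frame''. The edge set is split into a random part indexed by $A\times B$ and everything else. Let $Y\in\{0,1\}^{E'}$ be the edges between $A$ and a chosen set of pairs, and let $W$ be the remaining edges. Then $X_r$ can be written as $f(W)+\sum_{e} Y_e\, g_e(W)+(\text{higher order in }Y)$. The standard Cauchy--Schwarz / van der Corput step
\[
\bigl|\E e^{i\theta X_r}\bigr|^{2}\le \E_W\bigl|\E_{Y}e^{i\theta X_r}\bigr|^{2}=\E_{W}\E_{Y,Y'}e^{i\theta(X_r(Y,W)-X_r(Y',W))},
\]
applied once per variable block (iterated $r-2$ times, or just twice with blocks chosen so that cliques use at most one edge from each block), kills the non-multilinear terms. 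After this we are left with a bilinear form $\sum_{a,b}\xi_a\eta_b M_{ab}(W)$. Here $\xi_a,\eta_b$ are differences $Y_e-Y'_e\in\{-1,0,1\}$ (nonzero with probability $\asymp p$), and $M_{ab}$ counts copies of $K_{r-2}$ (or a suitable clique-extension) in the common neighbourhood structure determined by $W$.

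\textbf{Conditioning and the bilinear exponential sum.} Conditioning on $\eta$, the inner expectation factorises over $a$ as $\prod_a|\E e^{i\theta \xi_a L_a}|$ with $L_a=\sum_b\eta_bM_{ab}$. Each factor is at most $1-\Omega(p\,\|\theta L_a/2\pi\|_{\mathbb{R}/\mathbb{Z}}^2)$. So it suffices to show that, with probability at least $1-t^{-2K}$ over $W,\eta$, at least $m\gtrsim (K\log t)/(p\delta^2)$ indices $a$ satisfy $\|\theta L_a/2\pi\|\ge\delta$ for a fixed small $\delta$. Two cases arise.
\begin{itemize}
\item For small $\theta$ (where $\theta L_a\ll1$), we use that $L_a$ is concentrated around its mean $\asymp n^{r-2}p^{\binom r2-1}\cdot(\text{block size})\cdot p$ with fluctuations large enough that $\theta L_a$ is not near $2\pi\mathbb Z$. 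Concentration of the $K_{r-2}$-extension counts comes from Janson/Kim--Vu. This is where the hypothesis $p\ge n^{-1/m_2(K_r)}\mathrm{polylog}$ enters: it makes these extension counts nondegenerate and concentrated.
\item For large $\theta$ (up to $\pi$), we instead exploit randomness of $\eta$. The quantity $L_a-L_{a'}$ is a sum of many $\pm$ increments whose values are small integers. Because individual coefficients $M_{ab}$ take the value $1$ with non-negligible probability (the sparse structure makes many $M_{ab}\in\{0,1\}$), a Littlewood--Offord/Esseen-type anticoncentration modulo $2\pi/\theta$ gives $\|\theta L_a/2\pi\|\ge\delta$ with constant probability for each $a$. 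Independence across $a$ given $\eta$ then yields many good indices by Chernoff.
\end{itemize}

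\textbf{Parameters and the main obstacle.} Block sizes are chosen depending on $\theta$, with smaller blocks for larger $\theta$, so that the product bound gives $\exp(-\Omega(p\,m\,\delta^2))\le t^{-K}$. Here the lower bound $t\ge n^{1/2+\epsilon}p$ supplies the polynomial slack needed to absorb $\log t$ factors and the $2^{-(\#\text{CS steps})}$ power loss. The main obstacle is the intermediate frequency range, where $\theta L_a$ is of order $1$ but the coefficients $M_{ab}$ are neither all tiny nor exactly $0/1$. There one must show that the vector $(M_{ab})_b$ is not arithmetically structured at scale $2\pi/\theta$. I would first try to handle this by choosing $B$ so that a positive fraction of $M_{ab}$ equal $1$ exactly (via sparse common neighbourhoods), reducing to a one-step Rademacher anticoncentration. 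Failing that, I would iterate the decoupling once more to isolate a single clique-completion edge.
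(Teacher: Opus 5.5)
There is a genuine gap, and it sits exactly where the paper does its main work. Your reduction rests on the claim that iterating the Cauchy--Schwarz step ``kills the non-multilinear terms'' and leaves an exact bilinear form $\sum_{a,b}\xi_a\eta_bM_{ab}(W)$. Decoupling does not do this. The operator $\alpha_m$ only cancels monomials that fail to involve \emph{every} block. A clique using two edges of the same block survives as a term of degree at least $2$ in that block. For $r\ge4$ any two edges lie in a common $K_r$, so blocks ``in which cliques use at most one edge'' do not exist in general. The natural way to make every surviving monomial multilinear is the paper's $k=0$ scheme: $r$ decoupled parts $A_1,A_2,B_1,\dots,B_{r-2}$, which forces each rainbow clique to take exactly one vertex in each part.

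In that scheme, however, the whole extension of an edge $f\in A_1\times A_2$ is a signed, mean-zero count. So $|w_f|$ is only of standard-deviation size $\Lambda^{1/2}$ with $\Lambda\le n^{r-2}p^{\binom r2-1}$. The requirement $p\,|A_1\times A_2|\,(t|w_f|/\sigma_r)^2\gg1$ then forces $t\gtrsim\sqrt{n^{r-2}p^{\binom r2-1}}$ up to logarithms. This exceeds $n^{1/2+\epsilon}p$ by about $\sqrt{n^{r-3}p^{\binom r2-3}}$, which is polynomially large, e.g.\ for $r\ge4$ and constant $p$; there the $k=0$ scheme only reaches $t\gtrsim n$, not $n^{1/2+\epsilon}$. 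To get larger weights you must leave $k\ge1$ clique vertices in an undecoupled part $Z$, where they contribute at mean scale $n^kp^{\binom r2-\binom{r-k}2}$. Once you do that, rainbow cliques with several vertices in some $A_i$ or $B_j$ are unavoidable. They form the term $\tilde Y$, which is neither independent of the linear part $\tilde X$ nor deterministically small.

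The paper handles this by Taylor-expanding in $\tilde Y$:
\[
\bigl|\E e^{i\theta(\tilde X+\tilde Y)}\bigr|\le\bigl|\E e^{i\theta\tilde X}P_{2L-1}(i\theta\tilde Y)\bigr|+O_L\bigl(\E(\theta\tilde Y)^{2L}\bigr).
\]
Each monomial of $P_{2L-1}$ depends on $O_L(1)$ edges and is absorbed by conditioning, so the linear structure $\tilde X=\sum_f w_f\mathbf 1_{f\in G_0}$ survives. The moment $\E[\tilde Y^{2L}]$ is bounded by counting $2L$-tuples of rainbow cliques in which every vertex of $\cP_A\cup\cP_B$ lies in at least two cliques (Lemma~\ref{claim:number of edges in F}, Claim~\ref{claim:bound on expectations of T_2L}); taking $|A_i|=n^{1-\epsilon_k}\ll|Z|$ makes such tuples rare. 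The resulting windows $[\LB_{k,b},\UB_{k,b}]$ are then patched over $k$ and $b$. Your proposal has no substitute for this step: you flag the intermediate range as ``the main obstacle'', and iterating the decoupling once more does not remove same-block terms either.

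Three smaller points.
\begin{enumerate}
\item Your small-$\theta$ mechanism is misdescribed. $L_a=\sum_b\eta_bM_{ab}$ with symmetric $\eta_b$ has conditional mean zero, as do all decoupled weights by Claim~\ref{claim:sufficient condition for sign zero in expectation new}. The correct statement is that $|w_f|$ is typically of standard-deviation size (second and fourth moments plus Paley--Zygmund), yet small enough that $t|w_f|\le\sigma_r$.
\item The threshold $n^{1/2+\epsilon}p$ is not slack for $\log t$ factors. It is essentially $\LB_{r-3}$, the lowest frequency the deepest scheme $k=r-3$ can reach given the $\tilde Y$ moment bound.
\item Your high-frequency idea, choosing $B$ so that weights are $0$ or $\pm1$ with constant probability, does match Section~\ref{sec:high-freq}.
\end{enumerate}
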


\begin{lemma}\label{lemma:characteristic function integrable - low freq and dense p}
    Suppose that $r\geq 3$ is an integer and that $c,K$ are positive reals with $c<\frac{3}{2r}$. The following holds for every small enough $\epsilon>0$ and every sufficiently large integer $n$. For every $n^{-c}\leq p\leq \frac{1}{2}$ and every $n^{\epsilon}\le t \le n^{1/2+\epsilon}p$, we have
    \begin{align}\label{align:characteristic function integrable - low freq and dense p}
        \left\lvert\E\left[e^{it \cdot X_r/\sigma_r}\right]\right\rvert \le t^{-K}.
    \end{align}
\end{lemma}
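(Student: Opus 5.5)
We bound $\lvert\E[e^{itX_r/\sigma_r}]\rvert$ by Berkowitz-type decoupling applied to the lowest-order (edge-level) behaviour of $X_r$. Throughout, $\sigma_r^2\asymp n^{2r-2}p^{2\binom{r}{2}-1}$ for $p\ge n^{-c}$ with $c<\frac{3}{2r}$, since the variance is then dominated by pairs of cliques sharing one edge.

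\textbf{Step 1: Hájek projection.} Write $X_r=\mu_r+\sum_{e}(\xi_e-p)\,\partial_e\mu+R$. Here $\xi_e$ is the edge indicator, $\partial_e\mu\asymp n^{r-2}p^{\binom r2-1}$ is the same for every edge, and $R$ collects the higher-order Hoeffding components, the first of which is indexed by paths of length two. The linear part equals $\partial_e\mu\,(e(G)-\E e(G))$. Its scaled contribution to the phase is $\theta\,e(G)$ with $\theta=t\,\partial_e\mu/\sigma_r\asymp t/(n\sqrt p)$. For $t\le n^{1/2+\epsilon}p$ we get $\theta\le n^{-1/2+\epsilon}\sqrt p=o(1)$. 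So in this low-frequency window the linear term alone behaves like a binomial, and its characteristic function is $\approx\exp(-\Theta(\theta^2 n^2p))=\exp(-\Theta(t^2))$. That is much smaller than $t^{-K}$ once $t\ge n^\epsilon$. The real task is to show that the nonlinear part $R$ cannot destroy this cancellation.

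\textbf{Step 2: conditioning on a random set of edges.} To handle $R$, expose $G$ outside a random sparse set $F$ of edge slots, with each pair in $F$ independently with probability $q$. Choose $q$ so that $F$ is a matching-like set: typically no two pairs of $F$ lie in a common potential $K_r$ together with present edges. Conditional on $G\setminus F$, $X_r$ is then (up to a negligible event) an affine function $A+\sum_{f\in F}\xi_f Y_f$, where $Y_f$ is the number of $K_r$-extensions of $f$ in $G\setminus F$. Hence
\[
\bigl\lvert\E[e^{itX_r/\sigma_r}\mid G\setminus F]\bigr\rvert\le\prod_{f\in F}\bigl\lvert 1-p+pe^{itY_f/\sigma_r}\bigr\rvert\le\exp\Bigl(-\Omega(p)\sum_{f\in F}\bigl\lVert tY_f/(2\pi\sigma_r)\bigr\rVert_{\mathbb R/\mathbb Z}^2\Bigr).
\]
It therefore suffices to show that, with probability $1-t^{-K}$ over $G\setminus F$, at least $\Omega(\log t/(p\,\theta_0^2))$ of the pairs $f\in F$ have $\lVert tY_f/(2\pi\sigma_r)\rVert\ge\theta_0$ for a suitable $\theta_0$.

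\textbf{Step 3: fluctuations of $Y_f$.} Each $Y_f$ has mean $\approx\partial_e\mu$ and fluctuations of order $\sqrt{\partial_e\mu\cdot(1+n p^{r-1})}$. The relevant scale is the standard deviation of $tY_f/\sigma_r$, which is roughly $t\,n^{(r-2)/2}p^{(\binom r2-1)/2}/\sigma_r$ up to lower-order factors. In the window $n^\epsilon\le t\le n^{1/2+\epsilon}p$ and under $p\ge n^{-c}$ with $c<3/(2r)$, I will check that this standard deviation is at least a fixed positive constant (or at least $n^{-\epsilon/2}$). This is exactly where the constraint $c<\frac{3}{2r}$ should enter. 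Once it holds, each $Y_f$ is anticoncentrated modulo $2\pi\sigma_r/t$: by a local CLT for the extension count (a sum over mostly disjoint common neighbourhoods, proved via a Littlewood–Offord / Esseen inequality), $\Pr(\lVert tY_f/(2\pi\sigma_r)\rVert<\theta_0)\le 1/2$. The $Y_f$ for distinct $f\in F$ are nearly independent, because they depend on almost disjoint edge sets when $F$ is sparse. A Janson or Talagrand concentration bound for the count of good $f$ then gives the required number with failure probability $\exp(-n^{\Omega(1)})\le t^{-K}$. Since $|F|\approx qn^2$, taking $q$ polynomially small but $\gg\log t/(pn^2)$ suffices.

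The main obstacle is Step 3. Establishing the mod-$(2\pi\sigma_r/t)$ anticoncentration of $Y_f$ uniformly over this whole frequency window is hard when $t$ is near $n^\epsilon$: there the period is large compared with the fluctuations of $Y_f$, and the argument must instead fall back on the Step 1 cancellation. To handle this, I would first try interpolating the two mechanisms. Decouple the linear term by also letting the edges of $F$ contribute through $\xi_f\,\partial_e\mu$, so that the sum over $f$ becomes $\theta\sum_f\xi_f+\sum_f\xi_f(Y_f-\partial_e\mu)t/\sigma_r$. Then, in the lowest range of $t$, the binomial cancellation from the first term dominates.
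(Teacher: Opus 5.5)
Your Step~1 heuristic is correct, and it is the mechanism the paper exploits. Steps~2--3, however, contain a genuine gap.

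\emph{Step~3 fails throughout the window, not only near $t=n^{\epsilon}$.} With $\sigma_r^2\asymp n^{2r-2}p^{2\binom{r}{2}-1}$, your scale $t\,n^{(r-2)/2}p^{(\binom{r}{2}-1)/2}/\sigma_r$ equals $t\,n^{-r/2}p^{-\binom{r}{2}/2}$. For $t\le n^{1/2+\epsilon}p$, $p\ge n^{-c}$ and $c<\frac{3}{2r}$, this is at most $n^{-3/4+\epsilon}$, and the actual variance of $Y_f$ also gives a polynomially small scale. So $\|tY_f/(2\pi\sigma_r)\|\approx\theta/(2\pi)$ with $\theta\asymp t/(n\sqrt{p})$ for essentially every $f$. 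Each free pair then contributes only $\asymp p\theta^2\asymp t^2/n^2$ to the exponent, so you need $|F|\gtrsim n^2\log t/t^2$ free pairs, i.e.\ about $n^{2-2\epsilon}$ at the bottom of the window.

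\emph{Step~2 cannot supply that many non-interacting pairs.} A pair $f\in F$ lies in about $n^{r-2}q\,p^{\binom{r}{2}-2}=|F|\,n^{r-4}p^{\binom{r}{2}-2}$ potential copies of $K_r$ that use another pair of $F$ and otherwise present edges. Making $X_r$ affine in $(\xi_f)_{f\in F}$, even after discarding interacting pairs, therefore forces $|F|\lesssim n^{4-r}p^{-(\binom{r}{2}-2)}$. Under $p\ge n^{-c}$ with $c<\frac{3}{2r}$ this is $O(n^{3/2})$. For constant $p$ and $r\ge 4$ it is $O(1)$, since any two free pairs then almost surely share a potential clique. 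Your closing interpolation does not repair this: once $F$ is large, $X_r$ is no longer affine in the free variables and the factorisation in Step~2 is lost.

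\emph{The missing idea} is to tolerate the interactions and control them in $L^{2L}$ instead of excluding them, which is exactly what one decoupling step provides. The paper takes $|A_1|=|A_2|=a$ and applies Corollary~\ref{cor: the decoupling lemma new} with $\cE=\{A_1\times A_2\}$. It splits the signed count into $\tilde X$ (rainbow cliques with exactly two vertices in $A_1\cup A_2$) and $\tilde Y$ (the rest).

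Conditioning on a typical $G_0[Z]$ gives $\tilde X=\sum_{f\in A_1\times A_2}w_f(\mathbf{1}_{f\in G_0}-\mathbf{1}_{f\in G_1})$. The weights are of the order of the \emph{mean} $n^{r-2}p^{\binom{r}{2}-1}$ and are independent across vertex-disjoint $f$. Hence a positive fraction of all $a^2$ pairs is usable, and the bound $\exp(-\Omega(t^2a^2/n^2))$ holds once $t\ge \log n\cdot n/a$.

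The sign function makes the expectation of a product of signs vanish unless every copy shares a vertex in $A_1$ and a vertex in $A_2$ with the other copies. This yields $\E[\tilde Y^{2L}]=O_L\bigl((a^4n^{-4}\sigma_r^2)^L\bigr)$. A Taylor expansion of $e^{it\tilde Y/\sigma_r}$ to order $2L-1$ then disposes of $\tilde Y$ when $t\le n^{2-\epsilon}/a^2$. Letting $a$ range over $[n^{3/4-\epsilon}p^{-1/2},n^{1-\epsilon/2}]$ covers the whole window $[n^{\epsilon},n^{1/2+\epsilon}p]$.
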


\begin{lemma}\label{lemma:characteristic function integrable - low freq and low p}
    For every integer $r\geq 3$ there is $\epsilon=\epsilon(r)>0$ such that the following holds. Let $K > 0$ be a constant, assume that $n$ is a sufficiently large integer, and that $p$ satisfies $n^{-1/m(K_r)} \ll p\leq n^{-\frac{1}{m_2(K_r)}}(\log n)^{\frac{4r}{e(K_r)-1}}$. Letting $\tilde{n} \le n/r$ be the largest integer satisfying $\tilde{n}^{r-2} p^{\binom{r}{2}-1} < (\log n)^{-10}$, for every $t \in [0, \pi\sigma_r]$, we have
    \begin{align}\label{align:characteristic function integrable - low freq and low p}
        \left\lvert\E\left[e^{it \cdot X_r/\sigma_r}\right]\right\rvert \le n^{-K} + \exp\left(-\frac{\epsilon\tilde{n}^r p^{\binom{r}{2}} t^2}{\sigma_r^2}\right).
    \end{align}
\end{lemma}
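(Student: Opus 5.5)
We prove Lemma~\ref{lemma:characteristic function integrable - low freq and low p} by conditioning on a random structure that exposes a sum of many independent, bounded, non-degenerate contributions to $X_r$, whose characteristic function factorises. Partition $[n]$ into $r$ disjoint sets $V_1,\dots,V_r$ of size $\tilde n$ (possible since $\tilde n\le n/r$) together with a remainder $R$. Reveal all edges of $G_{n,p}$ except those between $V_1$ and $V_2$; call this revealed configuration $\mathcal F$. Conditional on $\mathcal F$, $X_r$ equals a quantity determined by $\mathcal F$ plus the contribution of cliques using at least one $V_1V_2$ edge. Since $\tilde n^{r-2}p^{\binom r2-1}<(\log n)^{-10}$, the expected number of $K_r$'s containing a given $V_1V_2$ edge with the other vertices in $V_3\cup\dots\cup V_r$ is $o(1)$. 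So typically each pair $e=uv$ ($u\in V_1$, $v\in V_2$) lies in at most a bounded number of potential cliques (that is, $(r-2)$-sets completing $e$ to a $K_r$ in $\mathcal F$ plus $e$), and few pairs lie in cliques that use more than one $V_1V_2$ edge.

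The key steps are as follows. \textbf{(1)} Restrict attention to the multipartite copies of $K_r$ with exactly one vertex in each $V_i$ and to the $V_1V_2$ edges. By a second-moment and Janson-type concentration computation, show that with probability $1-n^{-K}$ the revealed $\mathcal F$ is \emph{good}. Being good means that there is a set $S$ of $\Omega(\tilde n^r p^{\binom r2-1})$ pairs $e\in V_1\times V_2$ such that: $e$ completes exactly one multipartite clique in $\mathcal F$; the corresponding $(r-2)$-set shares no vertices in $V_1\cup V_2$ with cliques completing other pairs of $S$; and no clique containing $e$ uses another $V_1V_2$ edge. Counting cliques through $e$ that use vertices of $R$ also matters, since those are genuine contributions. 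To handle this, I would also reveal the edges from $V_1\cup V_2$ to $R$ and require that $e$ lies in a fixed number $m_e$ of cliques overall, with $m_e$ taking a common value $m\ge1$ on a large subset. Here the logarithmic slack $(\log n)^{-10}$ and the $(\log n)^{4r/(e(K_r)-1)}$ upper limit on $p$ give the needed tail bounds. \textbf{(2)} Further condition on all $V_1V_2$ edges outside $S$. Then $X_r=A+m\sum_{e\in S}\xi_e$, where the $\xi_e\sim\mathrm{Ber}(p)$ are independent and $A$ is measurable with respect to the conditioning. This uses the fact that edges of $S$ never interact. \textbf{(3)} Compute
\[
\Bigl|\E\bigl[e^{itX_r/\sigma_r}\mid \mathcal F,\ldots\bigr]\Bigr|=\prod_{e\in S}\bigl|1-p+pe^{itm/\sigma_r}\bigr|\le \exp\Bigl(-c\,p\,|S|\,\|tm/(2\pi\sigma_r)\|^2\Bigr),
\]
using $|1-p+pe^{i\theta}|^2=1-2p(1-p)(1-\cos\theta)$. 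Since $t\le\pi\sigma_r$ and $m$ is bounded, $\|tm/2\pi\sigma_r\|\gtrsim t/\sigma_r$ when $m=1$. For general bounded $m$ I would arrange $m=1$ by choice of $S$, which avoids aliasing. Then $p|S|=\Omega(\tilde n^rp^{\binom r2})$ gives the stated term $\exp(-\epsilon\tilde n^rp^{\binom r2}t^2/\sigma_r^2)$, and the bad events contribute $n^{-K}$.

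The main obstacle is step (1): showing that with overwhelming probability $n^{-K}$ there are $\Omega(\tilde n^rp^{\binom r2-1})$ pairs each lying in \emph{exactly one} clique of all of $K_n$, including cliques through $R$, which is huge. Controlling cliques through $R$ is the hard part, because $n^{r-2}p^{\binom r2-1}$ may be large even though $\tilde n^{r-2}p^{\binom r2-1}$ is small. My first attempt would be to avoid requiring $m_e=1$. Instead, I would condition on everything except the $S$-edges and note that cliques through $R$ containing $e$ add a deterministic multiplicity $m_e$. I would then group the pairs by value of $m_e$ and pick the most popular class, handling aliasing via $t\le\pi\sigma_r$ and a bound on $m_e$ from the density condition. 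If that fails, I would shrink the ambient vertex set and treat $R$ by a further revealing argument with upper-tail estimates for extension counts.
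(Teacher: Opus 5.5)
There is a genuine gap, and it is the obstacle you flag yourself. At the top of the range, $p$ can be as large as $n^{-1/m_2(K_r)}(\log n)^{4r/(e(K_r)-1)}$, so $\lambda:=n^{r-2}p^{\binom r2-1}$ can reach $(\log n)^{4r}$. Once $\lambda\ge C_0\log n$, Theorem~\ref{claim: lower bound on extensions} with $k=2$ shows that, with probability $1-n^{-C}$, \emph{every} pair of vertices extends to at least $\lambda/(2r^r)\gg1$ copies of $K_r$. Your $m_e$ must count all cliques through $e$, since every one of them is a summand of $X_r$. So no pair has $m_e=1$, and ``arrange $m=1$ by choice of $S$'' cannot be carried out. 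The trouble starts earlier still: once $\lambda$ is a large multiple of $\log\log n$, the proportion of pairs in exactly one clique is roughly $\lambda e^{-\lambda}$. That is below the proportion $|S|/\tilde n^2\approx(\log n)^{-10}$ you need.

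The fallback of keeping a single popular class $m_e=m\ge2$ also fails. At $t=2\pi\sigma_r/m\in[0,\pi\sigma_r]$ every factor $|1-p+pe^{itm/\sigma_r}|$ equals $1$. Yet the required bound at that $t$ is $n^{-K}+\exp\bigl(-\epsilon\tilde n^rp^{\binom r2}(2\pi/m)^2\bigr)$, which is tiny because $\tilde n^rp^{\binom r2}$ is polynomially large in this window. Rescuing the direct approach would need several classes with coprime multiplicities, and a local-limit statement for the extension counts $m_e$ showing each class is large enough. It would also need a thinning of $S$ so that no clique through $R$ contains two $S$-edges; cliques with two vertices in $V_1$ and one in $V_2$ are plentiful. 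None of this is in the proposal, and the counting is not obviously favourable.

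The missing idea is the decoupling step, which removes the cliques through $R$ altogether rather than controlling them. Take the partition $(A_1,A_2,B_1,\dots,B_{r-2},Z)$ with all non-$Z$ parts of size $\tilde n$. Apply Corollary~\ref{cor: the decoupling lemma new} with $\cE=\{A_i\times B_j\}$. This gives $|\E[e^{itX_r/\sigma_r}]|^{2^{2(r-2)}}\le|\E[e^{it\tilde X/\sigma_r}]|$ with $\tilde X=\sum_{F\in\cR}S_{G_0,G_1}(F)$. By Claim~\ref{claim: sign zero new}, every clique that misses one of the $r$ parts $A_1,A_2,B_1,\dots,B_{r-2}$ has sign identically zero; in particular every clique using $Z$ does.

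Each rainbow clique has exactly one vertex per part, hence exactly one $A_1A_2$ edge, and $G_1$ has no $A_1A_2$ edges. So $\tilde X=\sum_f w_f\mathbf{1}_{f\in G_0}$ is exactly linear after conditioning on everything off $A_1\times A_2$, with no interaction issues. The expected number of rainbow cliques through $f$ is $\tilde n^{r-2}p^{\binom r2-1}<(\log n)^{-10}$, which is what the definition of $\tilde n$ is for. A Janson lower bound on rainbow copies of $K_r$ minus an edge in $G_0\triangle G_1$ gives many candidate edges. A Kim--Vu bound shows only $o(\tilde n^rp^{\binom r2-1})$ pairs of rainbow copies share an $A_1A_2$ edge. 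Together these give $\Omega(\tilde n^rp^{\binom r2-1})$ edges with $|w_f|=1$. Your step (3) with $m=1$ then goes through, and the $2^{-2(r-2)}$-th root is absorbed into $\epsilon(r)$ and $K$.

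Your direct approach is plausible only in the sparser part where $n^{r-2}p^{\binom r2-1}\le(\log n)^{-10}$. The lemma is stated with $\tilde n$ precisely so that it also covers the window up to polylogarithmically above the $2$-density.
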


As we shall see, the lower bound assumptions on $t$ in Lemmas \ref{lemma:characteristic function integrable} and \ref{lemma:characteristic function integrable - low freq and dense p} match the bounds we derive from the above-mentioned Stein's method and quantitative central limit, and are sufficient for our use. For example, when $p< n^{-c}$ satisfies the assumptions in Lemma \ref{lemma:characteristic function integrable}, the integral in \eqref{eq: integral bound intro} restricted to $t\in I \coloneqq  [- n^{1/2+\epsilon}p, n^{1/2+\epsilon}p]$, tends to zero with $n$ by the Stein's method and the quantitative central limit theorem. This, combined with Lemma \ref{lemma:characteristic function integrable}, which shows that the integral in \eqref{eq: integral bound intro} restricted to $[-\sigma_r \pi,\sigma_r \pi]\setminus I$ converges to zero, establishes \eqref{eq: integral bound intro} as required. A similar situation arises also when $p\le n^{-\frac{1}{m_2(K_r)}}(\log n)^{\frac{4r}{e(K_r)-1}}$ or when $p\geq n^{-c}$, where the frequencies left uncovered by Lemmas \ref{lemma:characteristic function integrable}, \ref{lemma:characteristic function integrable - low freq and dense p}, and \ref{lemma:characteristic function integrable - low freq and low p} are dealt as above.

We wish to emphasise a great difference between the first two Lemmas \ref{lemma:characteristic function integrable}, \ref{lemma:characteristic function integrable - low freq and dense p} and the third Lemma~\ref{lemma:characteristic function integrable - low freq and low p}: \emph{the densities at which they are applicable}.
A key point highlighting the need to distinguish these cases is that 
$\sigma_r = \Theta\left(\max\left\{n^{v(K_r)}p^{e(K_r)},n^{2v(K_r)-2}p^{2e(K_r)-1}\right\}\right)$, which is maximised on a different term depending on the relationship between $p$ and the \emph{$2$-density} $n^{-\frac{1}{m_2(K_r)}}$. 
Below the $2$-density, typically an edge is not contained in any copy of $K_r$, whereas poly-logarithmically above the $2$-density the number of $r$-cliques an edge is contained in is well concentrated around $n^{r-2}p^{\binom{r}{2}-1}=\omega(1)$.
In particular, in the sparse regime, typically copies of $K_r$ are edge-disjoint, and thus $X_r$ behaves like a sum of independent random variables. 
This can be carried out formally by the use of the decoupling method while carefully conditioning on typical events. 

The real difficulty lies above the $2$-density, as the dependencies between $r$-cliques become an essential obstruction in the decoupling method.
At a very high-level, by using the decoupling method, we can bound the characteristic function of $X_r$ by the characteristic function of a sum of weighted Bernoulli random variables and an `error' term. This error term resembles most of the dependencies. Dealing with the weighted sum of Bernoulli random variables is standard. We deal with the error term via a high-moment computation, which requires novel ideas and constitutes the most conceptual and technical part of the paper.
Due to the technical depth of this argument, we defer its exposition to the proof overview appearing in Section \ref{sec:proof-overview}.

\subsection*{Organisation of the paper:} The remainder of this paper is organised as follows. In Section~\ref{sec:proof-overview}, we provide a high-level overview of the proof of Theorem \ref{theorem:main theorem}. We first outline how our main result, Theorem \ref{theorem:main theorem}, can be inferred from Lemmas \ref{lemma:characteristic function integrable}, \ref{lemma:characteristic function integrable - low freq and dense p}, and \ref{lemma:characteristic function integrable - low freq and low p}, deferring the formal derivation to Section \ref{sec:quantitative-central-limit}. Since these three key lemmas share a common underlying strategy, the overview in Section \ref{sec:proof-overview} focuses primarily on the proof structure of Lemma \ref{lemma:characteristic function integrable}. The subsequent three sections provide the background and the technical ingredients needed for our proof:
\begin{itemize}
    \item Section \ref{sec:preliminaries} collects the main concentration inequalities utilised throughout the paper.
    \item Section \ref{subsection: decoupling new} contains a standalone discussion of the decoupling argument, which serves as a central tool in our proofs.
    \item Section \ref{sec:typical-properties} establishes several typical properties of the random graph $G_{n, p}$ that are leveraged in the later stages of the argument.
\end{itemize}
The core of the proof of Lemma \ref{lemma:characteristic function integrable} is presented in Section \ref{section:main lemma for medium frequencies}. As will become apparent, this proof relies on two main ingredients. The first is bounding the characteristic function of the clique counts for a class of `special', well-behaving cliques, which is carried out in Section \ref{section:decoupling}. The second ingredient involves bounding a high moment of a certain signed sum involving the remaining, problematic cliques, which is handled in Section \ref{section:high moments}. Together, these two components cover almost the entire range of relevant frequencies, while the small remaining boundary case is handled in Section \ref{sec:high-freq}. Finally, Sections \ref{section:low frequencies} and \ref{sec:low p} are dedicated to the proofs of Lemmas \ref{lemma:characteristic function integrable - low freq and dense p} and \ref{lemma:characteristic function integrable - low freq and low p}, respectively, building heavily upon the framework and technical estimates established in the preceding sections.

\section{Proof overview}\label{sec:proof-overview}
Fix $r\geq 3$ and assume that $ n^{-\frac{1}{m(K_r)}}\ll p \leq \frac{1}{2}$.
The proof of Theorem \ref{theorem:main theorem} takes two steps. 
The first is a reduction to our key Lemmas \ref{lemma:characteristic function integrable}, \ref{lemma:characteristic function integrable - low freq and dense p}, and \ref{lemma:characteristic function integrable - low freq and low p}.
This is fairly standard and appears in previous works, see for example \cite{AraMat2023}. 
Clearly, the second step is then to prove Lemmas \ref{lemma:characteristic function integrable}, \ref{lemma:characteristic function integrable - low freq and dense p}, and \ref{lemma:characteristic function integrable - low freq and low p}, which is the main content of this paper.
In the next subsections, we briefly explain the reduction step, and then focus on a high-level overview of the proofs of Lemmas \ref{lemma:characteristic function integrable},  \ref{lemma:characteristic function integrable - low freq and dense p}, and \ref{lemma:characteristic function integrable - low freq and low p}, with emphasis on Lemma \ref{lemma:characteristic function integrable} where most of the work is carried.

\subsection{The reduction}
By the classical Fourier inversion formula (see e.g.\ Chapter~3 in \cite{Dur2010}) one can derive that for every $x\in\frac{1}{\sigma_r}(\mathbb{Z}-\mu_r)$ and every $K > 0$, 
\begin{equation}\label{eq: integral bound overview}
\begin{aligned}
    \Bigg\lvert\sigma_r \cdot \mathbb{P}(X^*_r= x)& - \frac{1}{\sqrt{2\pi}}e^{-x^2/2}\Bigg\rvert \\ &\leq\int^{K}_{-K}\left\lvert{\E\left[e^{it X_r^*}\right] -e^{-t^2/2}}\right\rvert \diff t +2\int^{\pi\sigma_r}_{K}\left\lvert\E\left[e^{itX^*_r}\right]\right\rvert \diff 
        t+2\int^{\infty}_{K}e^{-t^2/2}\diff t.
\end{aligned}
\end{equation}
To prove \cref{theorem:main theorem} it is enough to show that for any $c,\epsilon>0$ such that $c<\frac{3}{2r}$ there is a choice of $K$ so that each of the integrals above can be bounded as in the quantitative part of Theorem \ref{theorem:main theorem}.

Since $K$ is always significantly large in our analysis, the third summand in \eqref{eq: integral bound overview}  satisfies the required bound, as a tail of the standard normal distribution.
To bound the first summand in~\eqref{eq: integral bound overview}, we combine the quantitative central limit theorem for subgraph counts \cite{BarKarRuc1989} with Stein's method for normal approximation~\cite{Ros2011} (see Lemma \ref{lem: low frequencies}) to obtain the following: 
\begin{align}\label{eq:bottleneck}
     \int_{-K}^{K}\left|\E\left[e^{itX_r^*}\right]-e^{-t^2/2}\right|\diff t = O\left(\frac{K^2}{\min \left\{np^{1/2},n^{r}p^{\binom{r}{2}}\right\}}\right).
\end{align}
It is left to provide suitable bounds for the second summand in \eqref{eq: integral bound overview}; this is where we use Lemmas \ref{lemma:characteristic function integrable}, \ref{lemma:characteristic function integrable - low freq and dense p}, and \ref{lemma:characteristic function integrable - low freq and low p}. The value of $K$ will be chosen differently depending on the value of $p$ in order to get the required bound. The reduction from bounding the left-hand side in \eqref{eq: integral bound overview} to proving Lemmas \ref{lemma:characteristic function integrable}, \ref{lemma:characteristic function integrable - low freq and dense p}, and \ref{lemma:characteristic function integrable - low freq and low p} is carried out in Section \ref{sec:quantitative-central-limit}.

\subsection{Proofs of Lemmas \ref{lemma:characteristic function integrable}, \ref{lemma:characteristic function integrable - low freq and dense p}, and \ref{lemma:characteristic function integrable - low freq and low p}} 
The proofs of Lemmas \ref{lemma:characteristic function integrable}, \ref{lemma:characteristic function integrable - low freq and dense p}, and \ref{lemma:characteristic function integrable - low freq and low p} share many similarities. To simplify our discussion, we will focus mainly on the proof of Lemma \ref{lemma:characteristic function integrable}. 
Nevertheless, we briefly comment on the modifications needed in the proofs of Lemmas \ref{lemma:characteristic function integrable - low freq and dense p} and \ref{lemma:characteristic function integrable - low freq and low p} at the end of the section. We stress that these proofs are fairly simple.

As a preliminary step, following Berkowitz \cite{Ber2018}, we represent $X_r$ as a sum of two random variables, $\Tilde{X}$ and $\Tilde{Y}$. The variable $\Tilde{X}$, in a sense, counts cliques which are `compatible' with the decoupling trick, and $\Tilde{Y}$ is the leftover.
From here, our proof strategy essentially has two main components. 

First, we use the decoupling trick (described below) to bound the characteristic function of $X_r/\sigma_r$ by the characteristic function of $\Tilde{X}/\sigma_r$ together with an error term which has the form of a high moment of a random variable associated with $\Tilde{Y}$. The compatibility of $\Tilde{X}$ with the decoupling trick allows us to approximate the characteristic function of $\Tilde{X}/\sigma_r$ by that of a weighted sum of independent Bernoulli random variables. This reduction is highly advantageous, as the characteristic functions of such random variables are well understood. In particular, the desired bounds follow from Claim~\ref{claim:char-bounds}, which provides a simple estimate for the characteristic function of a weighted Bernoulli random variable (and thereby a sum of such independent variables).

The challenge left is therefore to control the error term, which is the second component of the proof. In the dense regime, where $p=\Theta(1)$, as was addressed in \cite{Ber2018,SahSaw2022}, this task is fairly easy as the problem is somewhat degenerate. However, when $p=o(1)$, unlike standard subgraph counts, this random variable has a significantly more intricate combinatorial interpretation, and obtaining sharp bounds on the order of magnitude of its moments for deriving \eqref{eq: integral bound overview}, constitutes a significant challenge, this is one of our main contribution in the paper.

Before we dive into the details, let us illustrate how the values of the weights in the approximation to a weighted sum of Bernoulli random variables effect the quality of our bounds.
Suppose that $\textbf{x}_1,\ldots,\textbf{x}_s$ are independent Bernoulli random variables with mean $p$ and that $w<w_1,\ldots,w_s <W$ are positive reals. Then, by Claim \ref{claim:char-bounds} for every $t\in \mathbb{R}$, we have
\[
    \left|\E\left[e^{it\sum_{k=1}^{s}w_k\cdot \textbf{x}_k}\right]\right| = \prod_{k=1}^{s}\left|\E\left[e^{it\cdot w_k\cdot \textbf{x}_k}\right]\right| \leq \exp\left(-8p(1-p)\cdot\sum_{k=1}^{s} \left\|\frac{w_k\cdot t}{2\pi}\right\|^{2}\right), 
\]
where $\|x\|$ is the distance between $x$ and $\mathbb{Z}$. To be able to control the sum above, we make sure that $\frac{w_kt}{2\pi} \le \frac{Wt}{2\pi}<\frac{1}{2}$. If this is the case, 
\[
    \left|\E\left[e^{it\sum_{k=1}^{s}w_k\cdot \textbf{x}_k}\right]\right|\leq \exp\left(-8p(1-p)\cdot s\cdot \left(\frac{w\cdot t}{2\pi}\right)^{2}\right).    
\]
Now it is clear that the absolute value of the characteristic function of $\sum_{k=1}^{s}w_k\cdot \mathbf{x}_k$ gets smaller as $s$ and $w$ get larger, as long as $Wt< \pi$.

With the above intuition in mind, we move to the basic methodology of relating the characteristic function of $X_r/\sigma_r$ to that of a weighted sum of independent Bernoulli random variables. Let us abuse notation and for disjoint $A,B\subseteq [n]$ write $A\times B$ for the collection of (undirected) edges with endpoints in $A$ and $B$.
Fix disjoint sets $A_1,A_2\subseteq [n]$, and write $X_r = \sum_{f\in A_1\times A_2}w_f\cdot \mathbf{x}_f+Y$, where (i) $\mathbf{x}_f$ is the indicator of $\{f\in G_{n,p}\}$; (ii) $w_f$ is the number of extensions of $f$ to an $r$-clique with the endpoints of $f$ being the only vertices of this copy in $A_1\cup A_2$; (iii) $Y$ is the remainder defined as $X_r-\sum_{f\in A_1\times A_2}w_f\cdot \mathbf{x}_f$.
For most $f\in A_1\times A_2$ with high probability we have $w_f\approx n^{r-2}p^{\binom{r}{2}-1}$. Therefore, if we were able to approximate the characteristic function of $X_r/\sigma_r$ by that of $\frac{1}{\sigma_r}\sum_{f\in A_1\times A_2} w_f\cdot \mathbf{x}_f$, neglecting technical details, we would have had for every $t$
\[
    \left|\E\left[e^{itX_r/\sigma_r}\right]\right|\lesssim \exp\left(-8p(1-p)\cdot |A_1\times A_2| \cdot \left\|\frac{n^{r-2}p^{\binom{r}{2}-1}\cdot t}{2\pi \cdot \sigma_r}\right\|^{2}\right).
\]
Note that this bound is ineffective for large frequencies: if $t\gg np^{1/2}$ then $n^{r-2}p^{\binom{r}{2}-1} t\gg \sigma_r$ and we cannot evaluate $\left\|\frac{n^{r-2}p^{\binom{r}{2}-1}\cdot t}{2\pi \cdot \sigma_r}\right\|$. 
To overcome this issue, we wish to find a way to decrease the weights according to the frequency $t$.
This raises an immediate concern: it is clear that if $\left\|\frac{w_f\cdot t}{2\pi \cdot \sigma_r}\right\|$ is reduced too much, our upper bound on the characteristic function will become insufficient --- it may not even tend to $0$. Therefore, the weights $w_f$ must be reduced with some care.

\subsection*{The decoupling trick} To remedy the issue of possibly `large' weights, we use a separation-of-variables technique, also used in previous works \cite{AraMat2023, Ber2018, SahSaw2022, SahSawZhu2024}, though our application requires some subtle changes. The formal details are given in Section \ref{subsection: decoupling new}, and here we only explain the main aspects of the method and how it is used. That said, we have to be slightly technical and to introduce several key notations.

Let $k\geq 2$ be a positive integer, let $\cP=(P_1,\ldots,P_k)$ be a partition of $[n]$, and let $\cE\subseteq \binom{[k]}{2}$ be a {\it colouring scheme}. In addition, let $G_0\sim G_{n,p}$ and let us sample independently the random graph $G_1$ by including each edge in $\bigcup_{\{i,j\}\in \cE}P_{i}\times P_{j}$ independently with probability $p$. Then, for every $r$-clique $F\subseteq K_n$, define its sign $\cS_{G_0,G_1}^{\cP,\cE}(F)\in \{0,\pm1\}$ by setting
\[
        S_{G_0, G_1}^{\cP, \cE}(F) \coloneqq \mathbf{1}\left[F \subseteq G_0 \cup G_1\right] \cdot \left(\sum_{\mathbf{v} \in \{0,1\}^{\cE}} \left(-1\right)^{\lvert\mathbf{v}\rvert} \prod_{\{i,j\}\in \cE} \mathbf{1}\left[{F \cap (P_i\times P_j) \subseteq G_{\mathbf{v}_{ij}}}\right]\right);
\] 
for more details see Definition \ref{definition:sign} and Claim \ref{claim: sign zero new}. Let us remark that for our discussion here, it suffices to think of the sign as a mapping from the collection of $K_r$-copies to $\{0,\pm1\}$ that depends on the colouring scheme and satisfies several properties which will be mentioned shortly. Now by applying the Cauchy-Schwarz inequality multiple times --- the decoupling trick --- we can bound the characteristic function of $X_r/\sigma_r$ by the $\frac{1}{2^{|\cE|}}$-power of the characteristic function of $S\coloneqq \frac{1}{\sigma_r}\sum_{F\cong K_r}\cS^{\cP,\cE}_{G_0,G_1}(F)$, see Corollary~\ref{cor: the decoupling lemma new}.

We continue by bounding the characteristic function of $S$. For this, we use the following crucial properties of the sign function. Call an $r$-clique $F\subseteq K_n$ \emph{rainbow} if $F\cap (P_i\times P_j)\neq \emptyset$ for every $\{i,j\}\in \cE$, and denote by $\cR$ the collection of rainbow copies of $K_r$. It is easy to see that if $F$ is not rainbow, its sign is \emph{deterministically} zero, see Claim \ref{claim: sign zero new} and the remark after it. Therefore, the sum in the definition of $S$ can be restricted to the rainbow copies. 
Another important property of the sign function is that, in expectation, the sign function always vanishes, even if the copy is rainbow, see Claim \ref{claim:sufficient condition for sign zero in expectation new}.

For the sake of convenience, rewrite $\cP=(A_1,A_2,B_1\ldots,B_{k-3},Z)$ with $k\geq 4$ and assume that for every $(i,j)\in [2]\times [k-3]$ we have $|A_i|\eqqcolon a, |B_{j}|\eqqcolon b,$ and $|Z|=\Theta(n)$.
The colouring scheme that we use is $\cE\coloneqq [2]\times \{3,\ldots ,k-1\}$. In other words, a rainbow copy is a copy of $K_r$ that intersects each of $A_i$ and $B_j$ (possibly more than once).
Let $X$ be the sum of signs of the rainbow copies taking a \emph{single} vertex in each of $A_i$ and $B_j$, and set $Y\coloneqq \sigma_r S- X$ so that $S=\frac{1}{\sigma_r}(X+Y)$. 
For every $f\in A_1\times A_2$, define $w_f$, the weight of $f$, as the sum of $S_{G_0\cup f, G_1}^{\cP, \cE}(F)$ over all rainbow copies $F\ni f$ that intersect each one of $A_i$ and $B_j$ in a single vertex, and define $\mathbf{x}_f$ to be the indicator of $\{f\in G_{0}\}$. Then, we have $X=\sum_{f\in A_{1}\times A_2} w_f\cdot \mathbf{x}_f$.

For a suitable choice of $a$ and $b$, typically, we have $w_f\ll n^{r-2}p^{\binom{r}{2}-1}$ for a $(\log n)^{-O(1)}$ proportion of the edges $f\in A_1\times A_2$, which suffices for our purposes.
To be more specific, as the expected sign of every copy of $K_r$ is zero, for $(\log n)^{-O(1)}$ proportion of the edges $f\in A_1\times A_2$, the weight $w_f$ is of the order of the standard deviation of the number of copies of $K_{k-1}$ containing $f$ times~$\Theta\left(n^{r-k+1}p^{\binom{r}{2}-\binom{k-1}{2}}\right)$, accounting for the expected number of extensions of a fixed $K_{k-1}$ to a copy of $K_r$ with vertices in $Z$. 
This description is valid provided that $n^{r-k+1}p^{\binom{r}{2}-\binom{k-1}{2}}$ tends to infinity sufficiently fast; we will therefore avoid using this decoupling scheme in the remaining regimes. This achieves our goal in using the decoupling trick, i.e.\ it provides moderate weights for sufficiently many edges in $A_1\times A_2$.
The full details are given in Section \ref{section:decoupling}.

Before moving on, we stress that there are key differences between the cases $r=3$ and $r>3$. 
First, if $r=3$ then for every pair of edges $f,g\in A_1\times A_2$ not sharing a vertex, the weights $w_f$ and $w_g$ are independent. Exploiting this fact, Araújo and Mattos \cite{AraMat2023} used essentially the same partition as above, decomposed $A_1\times A_2$ into perfect matchings, and established the required control on the weights by proving concentration within each matching, where the weights are independent.
On the other hand, when $r>3$, the weights $w_f$ and $w_g$ are not independent even if $f$ and $g$ do not share vertices.
To overcome this, we first reveal $(G_0\cup G_1)\setminus (A_1\times A_2)$ without specifying for each revealed edge whether it belongs to $G_0$ or $G_1$. Then, we show that the number of extensions of an edge $f\in A_1\times A_2$ to a $(k-1)$-clique intersecting $A_1,A_2$, and every $B_i$, is well concentrated. By revealing whether edges belong to $G_0$ or $G_1$, we can show that, despite the dependencies among the weights, with high probability every matching contains sufficiently many edges whose weights have the required size. Finally, we use a trick similar to the matching decomposition in~\cite{AraMat2023}.

The second difference is in the necessity of the random variable $Y$, which we discuss shortly. Indeed, if $r=3$, the partition we consider is $\cP=(A_1,A_2,B_1,Z)$ and every rainbow copy intersects $A_1,A_2$ and $B_1$ in a single vertex. This makes the definition of $Y$ redundant in that case. However, for every $r>3$ when dealing with partitions with $k<r+1$ parts --- which we will have to consider if $p$ is large --- it is clear that $Y$ is non-trivial.

\subsection*{Dealing with dependencies} So far, in our discussion, we ignored the effect of the random variable $Y$. That said, since $X$ and $Y$ are not independent, we cannot simply separate their characteristic functions and focus only on that of $X$. To solve this issue, we use a reduction similar to that of Sah and Sawhney \cite{SahSaw2022}, showing that essentially it suffices to bound the characteristic function of $X$ and show that $\E[(tY/\sigma_r)^{2L}]=n^{-\Omega(1)}$ where $L$ is a large constant, see \eqref{align:splitting to main and error} and Section \ref{section:decoupling}.

To bound $\E\left[Y^{2L}\right]$ we first identify $T$, a collection of sequences $(F_1,\ldots,F_{2L})$ of rainbow copies of $K_r$, satisfying $\E\left[\prod_{i=1}^{2L}\cS_{G_{0},G_{1}}^{\cP,\cE}(F_i)\right]\neq 0$; see Claim \ref{claim:sufficient condition for sign zero in expectation new} and Definition \ref{def: count of good rainbow copies}. Then, we have
\[
    \E\left[Y^{2L}\right]=\sum_{(F_1,\ldots ,F_{2L})\in T}\E\left[\prod_{i=1}^{2L}\cS_{G_{0},G_{1}}^{\cP,\cE}(F_i)\right]. 
\] 
By letting $T(\mathbf{v},e)$ be the collections of tuples $(F_1,\ldots ,F_{2L})\in T$ with $\mathbf{v}$ being the vector encoding the distribution of vertices of $F_1,\ldots,F_{2L}$ among the partition parts and $e=e\left(\bigcup_{i=1}^{2L}F_i\right)$, we further have 
\[
\E\left[Y^{2L}\right]= \sum_{\mathbf{v}}\sum_{e}\sum_{(F_1,\ldots ,F_{2L})\in T(\mathbf{v},e)}\E\left[\prod_{i=1}^{2L}\cS_{G_{0},G_{1}}^{\cP,\cE}(F_i)\right].
\]
It is not difficult to show that for $(F_1,\ldots ,F_{2L})\in T(\mathbf{v},e)$ we have $\E\left[\prod_{i=1}^{2L}\cS_{G_{0},G_{1}}^{\cP,\cE}(F_i)\right] =O(p^{e})$ which implies the bound
\[
\E\left[Y^{2L}\right]= O\left(\sum_{\mathbf{v}}\sum_{e} |T(\mathbf{v},e)|\cdot p^{e}\right).
\]
Then, the challenging part is to find the maximum of the right-hand side up to a multiplicative constant. This is done by first analysing the possible values of $\mathbf{v}$ and of $e$ (as a function of $\mathbf{v}$). Bounding the possible values of $\mathbf{v}$ is easy, while obtaining tight bounds on the possible values of $e$ is non-trivial and requires a fine understanding of the possible interactions between the $2L$-tuples of rainbow copies belonging to $T$.
Then, by some computations, we determine the maximum of the right-hand side above as a function of $a,b$ and $p$. For more details, see Subsection \ref{subsec:structure-of-maximum}.

Let us conclude our discussion in this subsection by mentioning that the upper bound we get is increasing in both $a$ and $b$.
On the other hand, $a$ and $b$ need to be sufficiently large for the approximation of the characteristic function by that of a sum of weighted Bernoulli random variables to be efficient. This constrains $t$ to be sufficiently small in order to have the desired $\E\left[(tY/\sigma_r)^{2L}\right]=n^{-\Omega(1)}$. Therefore, a careful analysis is required to show that for every $t$ one can find $a$ and $b$ so that both bounds are achievable.
The formal details regarding this high moment argument are given in Section \ref{section:high moments}.

\subsection*{Concluding the proof} 
Given a partition $\cP=(A_1,A_2,B_1,\ldots , B_{k-3},Z)$ of $[n]$ with cardinalates $a$ and $b$ as above, we conclude that there are $L=L(a,b,k)$ and $U=U(a,b,k)$ such that $\left|\E\left[e^{itX_r/\sigma_r}\right]\right|\leq n^{-\Omega(1)}$ for every frequency $t\in I(a,b,k)=[L,U]$. 
The proof of Lemma \ref{lemma:characteristic function integrable} follows once we show that for every $t\in [n^{1/2+\epsilon}p,\pi \sigma_r]$ there are permissible values of $a,b,$ and $k$ such that $t\in I(a,b,k)$; in this case we say that $t$ is \emph{covered}. 

The way we show this is quite simple, although it requires several tedious computations. Indeed, for every $k$, by varying over all values of $a,b$ for which the above two steps are carried out, we show that there is a non-empty interval $I_k=[L_k,U_k]$ of covered frequencies.
Then, we find some $k^*$ depending on $p$, for which we show the following:
\begin{itemize}
    \item $U_{k^*}\geq \pi\sigma_r$ and $L_{r-2}\leq n^{1/2+\epsilon}p$.
    \item For every $k^*\leq \ell \le  r-3$ we have $L_{\ell}<U_{\ell+1}$.
\end{itemize}
This clearly shows that $\bigcup_{k=k^*}^{r-2}I_k\supseteq [n^{1/2+\epsilon}p,\pi\sigma]$, as required. See Section~\ref{section:main lemma for medium frequencies} for the exact details.

\subsection*{Lemma \ref{lemma:characteristic function integrable} vs Lemmas \ref{lemma:characteristic function integrable - low freq and dense p} and \ref{lemma:characteristic function integrable - low freq and low p}}
As was already mentioned, the proofs of Lemmas \ref{lemma:characteristic function integrable - low freq and dense p} and \ref{lemma:characteristic function integrable - low freq and low p} are very similar to that of Lemma \ref{lemma:characteristic function integrable} and follow the same strategy. 
The difference in Lemma \ref{lemma:characteristic function integrable - low freq and dense p} comes from taking a simpler partition, $\cP=(A_1,A_2,Z)$, and using the decoupling scheme $\cE=\{\{1,2\}\}$. The properties of the rainbow copies are preserved, and we still make the distinction between the contributions of rainbow copies intersecting both $A_1$ and $A_2$ once, and those that do not.

The advantage of this decoupling scheme is larger weights, which is key to cover lower frequency ranges. Another nice feature of this setup is that our argument of neglecting the contribution of the rainbow copies that intersect $A_1$ or $A_2$ in at least two vertices is simpler. 
However, this argument imposes strong constraints on the frequency range covered as a function of $a=|A_1|=|A_2|$. To resolve this issue, we instead implement the proof with $a$ varying over many distinct values, and do not take $a$ to be fixed of order $n^{1-o(1)}$ as before. The exact details are given in Section \ref{section:low frequencies}.

Lastly, to prove Lemma \ref{lemma:characteristic function integrable - low freq and low p}, we consider a related partition $\cP=(A_1,A_2,B_1,\ldots,B_{r-2},Z)$, where $|A_i|=|B_i|=\tilde{n}$, and also consider the colouring scheme $\cE=[2]\times \{3,\ldots , r-2\}$. The key difference here is that when $p$ is sufficiently small, with high probability, most edges of $A_1\times A_2$ are not contained in an $r$-clique. We then need to ask for a quantitative control on the number of edges of $A_1\times A_2$ contained in a unique $r$-rainbow-clique. While this seems to complicate the proof, as one needs stronger control over the typical number of extensions of an edge to an $r$-clique, the proof in fact becomes simpler than that of Lemma~\ref{lemma:characteristic function integrable}. This is mostly because we do not need to deal with dependencies (i.e.\ the random variable $Y$ in the above discussion). For the full detailed proof, see Section \ref{sec:low p}.

\section{Preliminaries}\label{sec:preliminaries}
Throughout this paper, we use the following notation.
For a graph $G$, we write $V(G)$ to denote its vertex set, and $E(G)$ to denote its edge set, moreover we denote the cardinality of these sets by $v(G)$ and $e(G)$ respectively. Sometimes, for notational convenience we will not distinguish between the graph and its edge set. The degree of a vertex $v\in V(G)$ is denoted by $d_G(v)$, and when $G$ is clear from the context, we will simply write $d(v)$.
For two sets of vertices $A$ and $B$, we abuse notation and write $A\times B$ for the set of (undirected) edges with one endpoint in $A$ and the other endpoint in $B$. When there is ambiguity, this will be clarified.

We denote by  $\left\|x\right\|$ the distance between a real $x$ and $\mathbb{Z}$. 
As usual, for a function $f$ we write $\textup{Im}(f)$ to denote the image of $f$. 
For some $p\in(0,1)$, we often write $G_p$ to denote the percolated subgraph of $G$. This subgraph is obtained by retaining each edge of $G$ with probability $p$ independently of the others. We note that $G_{n,p}$ is then simply $(K_n)_p$. Lastly, for an event $\mathcal{A}$ in some probability space, we write $\mathbf{1}[\cA]$ to denote the indicator random variable of $\cA$.

\subsection{Characteristic function of Bernoulli random variables} 
Throughout the paper, we use the following well-known bound, see e.g.\ \cite[Lemma 1]{GilKop2016}.

\begin{claim}\label{claim:char-bounds}
    Let $X$ be a Bernoulli random variable with mean $p \in [0, 1]$. Then, for every real $t$,
        \[
            \left\lvert\E \left [e^{itX} \right]\right\rvert\leq 1-8p(1-p)\cdot \left\|\frac{t}{2\pi}\right\|^{2}.
        \]
\end{claim}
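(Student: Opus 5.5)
The plan is a direct computation of the characteristic function followed by an elementary trigonometric inequality. Since $X$ takes the value $1$ with probability $p$ and $0$ with probability $1-p$, we have $\E[e^{itX}] = (1-p) + p e^{it}$. We will compute its squared modulus explicitly:
\[
    \left\lvert (1-p)+pe^{it}\right\rvert^2 = (1-p)^2 + p^2 + 2p(1-p)\cos t = 1 - 2p(1-p)(1-\cos t).
\]
Using $1-\cos t = 2\sin^2(t/2)$, this becomes $1-4p(1-p)\sin^2(t/2)$.

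Next, we lower bound $\sin^2(t/2)$ in terms of $\|t/(2\pi)\|$. Write $t/(2\pi) = m + \theta$ with $m\in\mathbb{Z}$ and $|\theta| = \|t/(2\pi)\|\le 1/2$. Then $|\sin(t/2)| = |\sin(\pi\theta)|$. By concavity of $\sin$ on $[0,\pi/2]$, we have $\sin x \ge \frac{2}{\pi}x$ for $x\in[0,\pi/2]$. Applying this with $x=\pi|\theta|$ gives $|\sin(\pi\theta)| \ge 2|\theta|$, so $\sin^2(t/2)\ge 4\|t/(2\pi)\|^2$. Consequently,
\[
    \left\lvert\E[e^{itX}]\right\rvert^2 \le 1 - 16p(1-p)\left\|\frac{t}{2\pi}\right\|^2.
\]

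Finally, we take square roots using $\sqrt{1-y}\le 1-y/2$ for $y\in[0,1]$. This holds because $(1-y/2)^2 = 1-y+y^2/4 \ge 1-y$, and $1-y/2\ge 0$. Here $y = 16p(1-p)\|t/(2\pi)\|^2 \le 16\cdot\frac14\cdot\frac14 = 1$, so the inequality applies and yields $|\E[e^{itX}]|\le 1-8p(1-p)\|t/(2\pi)\|^2$, which is the claim.

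There is no real obstacle here. The only points needing care are the reduction of $t$ modulo $2\pi$ to the fundamental interval, which is what makes $\|\cdot\|$ appear, and checking that $y\le 1$ so the square-root inequality is legitimate. Both are routine.
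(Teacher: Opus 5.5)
Your proof is correct and complete. The paper gives no proof of this claim; it cites it as well known (Lemma~1 of \cite{GilKop2016}). Your argument is the standard derivation and supplies what that citation provides: the identity $\lvert \E[e^{itX}]\rvert^2 = 1-4p(1-p)\sin^2(t/2)$, Jordan's inequality applied after reducing $t/(2\pi)$ modulo $\mathbb{Z}$, and $\sqrt{1-y}\le 1-y/2$ (valid here since $y\le 1$).
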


\subsection{Concentration inequalities}
Given a set $V$ and $p \in [0,1]$, denote by $V_p$ the random subset of $V$ obtained by keeping each element of $V$ with probability $p$, independently. Suppose that $\cH$ is a hypergraph with vertex set $V$. Define
\[
    \mu_p(\cH) \coloneqq \sum_{S \in \cH} p^{|S|} \quad \text{and} \quad \Delta_p(\cH) \coloneqq \sum_{\substack{S_1, S_2 \in \cH \\ S_1 \neq S_2,\ S_1 \cap S_2 \neq \emptyset}} p^{|S_1 \cup S_2|},
\]
where the second sum is over unordered pairs of edges $S_1,S_2 \in \cH$. In other words, $\mu_p(\cH)$ is the expected number of edges in the subhypergraph $\mathcal{H}[V_p]$ of $\mathcal{H}$ induced by the set $V_p$, and $\Delta_p(\cH)$ is the expected number of distinct unordered pairs of edges of $\cH[V_p]$ with a non-empty intersection.
The following theorems are well-known concentration results for the random variable $\left|\cH[V_p]\right|$.
\begin{theorem}[Janson's inequality \cite{Jan1990}]\label{theorem: Janson}
    For all $p\in [0,1]$ and a constant $\gamma \in(0,1)$, we have
    \[
        \Pr\left(\left|\cH[V_p]\right| \le(1- \gamma)\cdot   \mu_p(\cH)\right) \le \exp\left(-\frac{\gamma^{2}}{4}\cdot\min\left\{\mu_p(\cH), \frac{\mu_p(\cH)^2}{\Delta_p(\cH)}\right\}\right).
    \]
\end{theorem}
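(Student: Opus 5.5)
The statement is the classical lower-tail Janson inequality. I would prove it by the standard Laplace-transform argument (Boppana--Spencer), which uses only the Harris/FKG inequality on the product space $\{0,1\}^V$. Write $X \coloneqq |\cH[V_p]| = \sum_{S\in\cH} I_S$, where $I_S \coloneqq \mathbf{1}[S\subseteq V_p]$, and set $\Psi(\lambda)\coloneqq \E[e^{-\lambda X}]$ for $\lambda\ge 0$. By Markov's inequality,
\[
\Pr\left(X\le (1-\gamma)\mu_p(\cH)\right)\le e^{\lambda(1-\gamma)\mu_p(\cH)}\,\Psi(\lambda),
\]
so everything reduces to a good lower bound on $-\log\Psi(\lambda)$.

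\textbf{Differential inequality.} I would differentiate: $-\Psi'(\lambda)=\sum_{S}\E[I_S e^{-\lambda X}]=\sum_S p^{|S|}\,\E[e^{-\lambda X}\mid I_S=1]$. Fix $S$ and split $X=Y_S+Z_S$, where $Y_S$ counts the present edges $T\in\cH$ with $T\cap S\neq\emptyset$ (including $S$ itself) and $Z_S$ counts those with $T\cap S=\emptyset$.

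Conditioned on $I_S=1$, both $e^{-\lambda Y_S}$ and $e^{-\lambda Z_S}$ are decreasing functions of the remaining independent coordinates. Harris's inequality therefore gives
\[
\E[e^{-\lambda X}\mid I_S=1]\ge \E[e^{-\lambda Y_S}\mid I_S=1]\cdot\E[e^{-\lambda Z_S}\mid I_S=1].
\]
The variable $Z_S$ does not depend on the coordinates in $S$, so $\E[e^{-\lambda Z_S}\mid I_S=1]=\E[e^{-\lambda Z_S}]\ge\Psi(\lambda)$, because $Z_S\le X$. For the other factor, Jensen's inequality gives $\E[e^{-\lambda Y_S}\mid I_S=1]\ge \exp(-\lambda\E[Y_S\mid I_S=1])$.

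Dividing by $\Psi$ and applying Jensen once more, over $S$ with weights $p^{|S|}/\mu_p(\cH)$, yields
\[
-(\log\Psi)'(\lambda)\ge \mu_p(\cH)\exp\left(-\lambda\,\frac{\bar\Delta}{\mu_p(\cH)}\right),
\qquad
\bar\Delta\coloneqq\sum_S p^{|S|}\E[Y_S\mid I_S=1]=\mu_p(\cH)+2\Delta_p(\cH).
\]
The identity for $\bar\Delta$ holds because $p^{|S|}\,p^{|T\setminus S|}=p^{|S\cup T|}$, and each unordered intersecting pair is counted twice.

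\textbf{Integration and optimisation.} Integrating from $0$ to $\lambda$ gives
\[
-\log\Psi(\lambda)\ge \frac{\mu_p(\cH)^2}{\bar\Delta}\left(1-e^{-\lambda\bar\Delta/\mu_p(\cH)}\right).
\]
I would then choose $\lambda=-\log(1-\gamma)\,\mu_p(\cH)/\bar\Delta$. Plugging this into the Markov bound gives the exponent $-\varphi(-\gamma)\,\mu_p(\cH)^2/\bar\Delta$, where $\varphi(x)=(1+x)\log(1+x)-x$ and $\varphi(-\gamma)\ge\gamma^2/2$. Finally, I would use $\bar\Delta\le 2\max\{\mu_p(\cH),2\Delta_p(\cH)\}$ to pass from $\mu_p(\cH)^2/\bar\Delta$ to the stated $\min\{\mu_p(\cH),\mu_p(\cH)^2/\Delta_p(\cH)\}$.

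\textbf{Expected obstacle.} The main obstacle is not conceptual but the bookkeeping of constants in this last step. The crude bound above loses a factor $2$ on the $\Delta$-term, because $\Delta_p(\cH)$ is defined over unordered pairs. I would first try to recover the claimed constant $\gamma^2/4$ by splitting into the two cases $2\Delta_p(\cH)\le\mu_p(\cH)$ and $2\Delta_p(\cH)>\mu_p(\cH)$ and optimising $\lambda$ separately in each, rather than bounding $\mu_p(\cH)+2\Delta_p(\cH)$ by twice the maximum. If that still falls short, I would accept a slightly worse absolute constant in the exponent. Such a change is harmless for every later application in the paper, since there Janson's inequality is only used to obtain $\exp(-\Omega(\cdot))$ tail bounds.
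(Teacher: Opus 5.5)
Writing $\mu=\mu_p(\cH)$ and $\Delta=\Delta_p(\cH)$, your Harris/Laplace-transform argument is correct up to
\[
\Pr\bigl(X\le(1-\gamma)\mu\bigr)\le\exp\Bigl(-\varphi(-\gamma)\,\frac{\mu^2}{\mu+2\Delta}\Bigr),\qquad \varphi(x)=(1+x)\log(1+x)-x .
\]
This is the classical Janson bound; the paper does not prove the theorem and only cites it. The gap is the last step, and your proposed repair cannot close it. The estimate $\mu+2\Delta\le2\max\{\mu,2\Delta\}$ yields $\frac{\gamma^2}{4}\min\{\mu,\mu^2/(2\Delta)\}$, not $\frac{\gamma^2}{4}\min\{\mu,\mu^2/\Delta\}$. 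A case split does not help. Your $\lambda=-\log(1-\gamma)\mu/(\mu+2\Delta)$ is already the exact minimiser of $\lambda(1-\gamma)\mu-\frac{\mu^2}{\mu+2\Delta}\bigl(1-e^{-\lambda(\mu+2\Delta)/\mu}\bigr)$. So the method gives exactly $\varphi(-\gamma)\mu^2/(\mu+2\Delta)=(1+O(\gamma))\gamma^2\mu^2/(2\mu+4\Delta)$. For small $\gamma$ this is strictly smaller than the claimed exponent whenever $\Delta>\mu/2$.

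In fact no argument can close the gap: with $\Delta_p(\cH)$ summed over \emph{unordered} pairs, the stated inequality is false. Take $N$ vertex-disjoint stars, each consisting of the sets $\{x,y_1\},\dots,\{x,y_m\}$, with $p=1/100$ and $m=200$, so $mp=2$.

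Per star, $\mu_b=mp^2$ and $\Delta_b=\binom{m}{2}p^3=(1-p/2)\mu_b<\mu_b$. Hence the claimed bound reads $\exp(-N\gamma^2\mu_b/4)$.

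However, $X$ is a sum of $N$ i.i.d.\ copies of $\mathbf{1}[x\in V_p]\cdot\mathrm{Bin}(m,p)$, each with variance $mp^2(1-p)(1+mp)=3(1-p)\mu_b$. Cramér's theorem therefore gives
\[
\Pr\bigl(X\le(1-\gamma)\mu\bigr)=\exp\Bigl(-(1+O(\gamma))\frac{N\gamma^2\mu_b}{6(1-p)}+o(N)\Bigr).
\]
For a fixed small $\gamma$ and $N\to\infty$, this exceeds $\exp(-N\gamma^2\mu_b/4)$. Heuristically, for small $p$ the true exponent is about $\gamma^2\mu^2/(2\operatorname{Var}X)$ with $\operatorname{Var}X\approx\mu+2\Delta$.

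The statement is correct if $\Delta_p(\cH)$ is summed over ordered pairs, i.e.\ with $\mu^2/(2\Delta)$ in the minimum, and that is exactly what your argument proves. So your fallback is the right resolution, and it costs nothing in the paper. Claim~\ref{claim: lower bound on extensions distinct} and the proof of Lemma~\ref{claim:many good f high frequencies low p} use Janson's inequality only in the form $\exp(-\Omega(\min\{\mu,\mu^2/\Delta\}))$.
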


\begin{theorem}[Kim--Vu Polynomial Concentration Inequality \cite{MR1774845}]\label{theorem: Kim Vu}
    For every $A \subseteq V$, let $\cH_A$ be the hypergraph consisting of the edges of $\cH$ that contain $A$.
    Let $p \in [0, 1]$ and let $k \coloneqq \max_{e \in \cH} |e|$. For every integer $i\in [k]$, define $E_i \coloneqq \max_{\substack{A \subseteq V \\ |A|=i}} \mu_p(\cH_A)$. Set
    \[
        E' \coloneqq \max_{1 \le i \le k} E_i \quad \text{and} \quad E \coloneqq  \max\{\mu_p(\cH), E'\}.
    \]    
    Then, there exists a constant $C = C(k) > 0$ such that 
    \[
        \Pr\left(\left|\cH[V_p] - \mu_p(\cH)\right| > C (E E')^{1/2} \lambda^k\right) < 18 e^{-\lambda} n^{k-1}.
    \]
\end{theorem}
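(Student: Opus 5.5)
This is the Kim--Vu polynomial concentration inequality. I would cite it from \cite{MR1774845} rather than reprove it, but it is worth recording how the proof goes. The plan is to prove a slightly stronger statement for general multilinear polynomials by induction on the degree $k$, via a martingale argument.

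Write $Y=\sum_{S\in\cH}\prod_{v\in S}\xi_v$, where the $\xi_v$ are independent Bernoulli$(p)$. For a set $A$, the partial derivative $\partial_A Y$ is the polynomial $\sum_{S\in\cH_A}\prod_{v\in S\setminus A}\xi_v$, whose expectation is $\mu_p(\cH_A)$. The quantity $E'$ is therefore the largest expected partial derivative, and $E$ additionally includes the mean.

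\textbf{Step 1 (base case $k=1$).} Here $Y$ is a sum of independent Bernoulli variables with coefficients in $[0,1]$. A Chernoff bound gives $|Y-\mu|\le C\sqrt{\mu}\,\lambda$ except with probability $e^{-\lambda}$. This is consistent with $(EE')^{1/2}\ge \sqrt{\mu}$ when $E'\ge 1$; the case $E'<1$ is handled the same way after rescaling.

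\textbf{Step 2 (inductive step).} Expose the $\xi_v$ one at a time in some order and form the Doob martingale $Y_j=\E[Y\mid \xi_1,\dots,\xi_j]$. The increment at step $j$ is $(\xi_j-p)\,\E[\partial_{\{v_j\}}Y\mid \xi_1,\ldots,\xi_{j-1}]$, which involves a polynomial of degree $k-1$. By induction, outside an event of probability $O(e^{-\lambda}n^{k-2})$ for each $v$, every such derivative is at most $\E[\partial_v Y]+C(E_vE_v')^{1/2}\lambda^{k-1}$, which is $O(E'\lambda^{k-1})$. The same holds for its conditional expectations, by applying the induction hypothesis to the partially revealed polynomial, whose derivatives are still bounded by $E'$.

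\textbf{Step 3 (martingale concentration).} On this good event, the increments are bounded by $O(E'\lambda^{k-1})$. The predictable quadratic variation is at most $p\sum_v (\partial_v\text{-bound})\cdot E'\lambda^{k-1}$, which is $O(k\,E\,E'\lambda^{2k-2})$ because $\sum_v p\,\E\partial_vY=k\mu$. Apply Freedman's inequality to the martingale stopped at the first time the good event fails. This gives deviation $C(EE')^{1/2}\lambda^{k}$ with failure probability $e^{-\lambda}$. Union bounding over the $n$ vertices and the inductive bad events produces the factor $n^{k-1}$; the constant $18$ comes from summing the geometric failure terms.

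The main obstacle is Step 2. The conditional derivatives must be controlled uniformly over the whole exposure process, not merely at the start. The induction hypothesis therefore has to be applied to every partially revealed polynomial, and one has to check that its own derivative bounds are still dominated by $E'$ and do not blow up. This is exactly where the union bound costs the factor $n^{k-1}$.
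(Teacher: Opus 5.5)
The paper does not prove this theorem; it quotes it from Kim and Vu \cite{MR1774845}. Citing it, as you propose, is exactly what the paper does. The sketch you record, however, has a genuine gap in Step~3.

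Write $D_v\coloneqq \E[\partial_v Y\mid \xi_1,\dots,\xi_{v-1}]$. The predictable quadratic variation is at most $\max_v D_v\cdot Z$, where $Z\coloneqq p\sum_v D_v$ is a random variable. The identity $\sum_v p\,\E[\partial_v Y]\le k\mu$ controls only $\E Z$, not $Z$ itself. If you instead sum the Step~2 bounds, you get $k\mu+Cp\lambda^{k-1}\sum_v (E_vE_v')^{1/2}$, and the second term is not $O(E\lambda^{k-1})$ in general.

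For example, take $k=2$, $Y=\sum_{i=1}^{n/2}\xi_{2i-1}\xi_{2i}$ and $p=n^{-2/3}$. Then $\mu<1$, $E_1=p$ and $E_2=1$, so $E=E'=1$ and you need deviation $O(\lambda^2)$. However, $D_{2i}=\xi_{2i-1}$ has Step~2 bound $p+C\lambda$. Summed over the $n/2$ such vertices and multiplied by $p$, this gives order $np\lambda=n^{1/3}\lambda$, so your variance proxy is of order $n^{1/3}\lambda^2$. Freedman's inequality then only yields deviations of order $n^{1/6}\lambda^{3/2}$, which is far larger than $\lambda^2$ in the relevant range $\lambda=\Theta(\log n)$.

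What is missing is a second application of the induction hypothesis, this time to $Z$ viewed as a single polynomial. It has nonnegative coefficients and degree at most $k-1$, and $\E Z=\sum_{S\in\cH}|S|\,p^{|S|}\le k\mu$. Moreover $\E[\partial_A Z]\le k\,\E[\partial_A Y]$ for every nonempty $A$, so $E(Z)\le kE$ and $E'(Z)\le kE'$. Induction therefore gives $Z\le k\mu+O\bigl((EE')^{1/2}\lambda^{k-1}\bigr)=O(E\lambda^{k-1})$ outside an event of probability $O(e^{-\lambda}n^{k-2})$.

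Add this event to the bad event at which you stop the martingale. On the good event the quadratic variation is then $O(EE'\lambda^{2k-2})$, and the rest of Step~3 goes through: Freedman's inequality, plus the union bound over the $n$ per-vertex applications in Step~2 and this extra one, gives the $n^{k-1}$ factor. In the example above, $Z=\tfrac{n}{2}p^2+p\sum_i\xi_{2i-1}$ is typically of order $n^{-1/3}$, so the true quadratic variation is indeed small. It is only the vertex-by-vertex worst-case bounds that blow up.
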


\begin{theorem}[Harris inequality \cite{Har1960}]\label{theorem: Harris}
   Let $\Omega$ be a finite set, and let $X$ and $Y$ be random variables defined on a product probability space over $\{0,1\}^{\Omega}$.
   If $X$ and $Y$ are both non-decreasing (or non-increasing), then
   \[
   \E[XY]\geq \E[X]\E[Y].
   \]
\end{theorem}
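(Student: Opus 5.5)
The statement to prove is the Harris inequality (Theorem~\ref{theorem: Harris}): for $X,Y$ both non-decreasing (or both non-increasing) on the product space $\{0,1\}^{\Omega}$, we have $\E[XY]\ge\E[X]\E[Y]$. I would argue by induction on $|\Omega|$. The non-increasing case reduces to the non-decreasing one by replacing $X,Y$ with $-X,-Y$; this leaves $XY$ unchanged and flips the sign of both expectations, so their product is also unchanged.

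\textbf{Base case} $|\Omega|=1$. Let $\omega_1,\omega_2$ be two independent copies of the single coordinate. Since $X$ and $Y$ are both non-decreasing functions of one variable, $(X(\omega_1)-X(\omega_2))(Y(\omega_1)-Y(\omega_2))\ge 0$ pointwise. Taking expectations and expanding gives $2\E[XY]-2\E[X]\E[Y]\ge 0$. (The case $|\Omega|=0$ is trivial, since $X$ and $Y$ are then constants.)

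\textbf{Inductive step.} Fix an element $\omega_0\in\Omega$ and write $\Omega'=\Omega\setminus\{\omega_0\}$. Condition on the coordinate $x_{\omega_0}=s\in\{0,1\}$. For each fixed $s$, the functions $X(s,\cdot)$ and $Y(s,\cdot)$ are non-decreasing on $\{0,1\}^{\Omega'}$, which carries the product measure by independence. The induction hypothesis therefore gives
\[
\E[XY\mid x_{\omega_0}=s]\ge g(s)h(s),
\]
where $g(s)=\E[X\mid x_{\omega_0}=s]$ and $h(s)=\E[Y\mid x_{\omega_0}=s]$. Since $X$ and $Y$ are monotone pointwise, $g$ and $h$ are non-decreasing in $s$. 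Applying the base case to $g$ and $h$ as functions of the single coordinate $x_{\omega_0}$ gives $\E[g h]\ge\E[g]\E[h]=\E[X]\E[Y]$. Combining the two inequalities through the tower property yields $\E[XY]\ge\E[X]\E[Y]$.

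The only point needing care is that the conditional measure on $\{0,1\}^{\Omega'}$ is again a product measure, so that the induction hypothesis applies. This is immediate from independence of the coordinates, so I expect no real obstacle.
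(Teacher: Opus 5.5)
Your proof is correct. The two-copy identity
\[
\E\Bigl[\bigl(X(\omega_1)-X(\omega_2)\bigr)\bigl(Y(\omega_1)-Y(\omega_2)\bigr)\Bigr]=2\E[XY]-2\E[X]\E[Y]\ge 0
\]
settles a single coordinate. Conditioning on one coordinate, applying the induction hypothesis, and then applying the one-coordinate case to the monotone averages $g,h$ gives the general statement. The sign flip $X,Y\mapsto -X,-Y$ correctly handles the non-increasing case.

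The paper does not prove this theorem. It quotes Harris as a black box (used in Claim~\ref{claim:alpha_f high frequency}), and your induction on $|\Omega|$ is the standard textbook proof.

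One cosmetic point: if some coordinate has parameter $0$ or $1$, then conditioning on $x_{\omega_0}=s$ is undefined for one value of $s$. It is cleaner to define $g(s)=\E_{x'}[X(s,x')]$ directly as the average over the remaining coordinates under their product measure. This is always well defined and leaves the rest of your argument unchanged.
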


\begin{lemma}[Paley-Zygmund inequality]\label{claim: Paley-Zygmund}
    Let $X \ge 0$ be a random variable with finite variance and let $\theta \in [0, 1]$. Then,
    \[
        \Pr(X > \theta \E[X]) \ge (1-\theta)^2 \cdot \frac{\E[X]^2}{\E\left[X^2\right]}.
    \]
\end{lemma}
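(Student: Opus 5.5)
This is the standard second-moment argument, and I would prove it directly from the Cauchy--Schwarz inequality applied to the decomposition $X = X\mathbf{1}[X>\theta\E[X]] + X\mathbf{1}[X\le \theta\E[X]]$. Since $X\ge 0$, we may assume $\E[X]>0$, as otherwise the right-hand side is zero (interpreting $0/0$ appropriately, or noting $\E[X^2]>0$ whenever $\E[X]>0$). The second term contributes at most $\theta\E[X]$ to the expectation, because on the event $\{X\le \theta\E[X]\}$ we have $0\le X\le \theta \E[X]$. Hence
\[
\E[X] \le \theta\E[X] + \E\left[X\mathbf{1}[X>\theta\E[X]]\right],
\qquad\text{so}\qquad
(1-\theta)\E[X]\le \E\left[X\mathbf{1}[X>\theta\E[X]]\right].
\]

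The key step is then to bound the remaining expectation by Cauchy--Schwarz:
\[
\E\left[X\mathbf{1}[X>\theta\E[X]]\right] \le \E\left[X^2\right]^{1/2}\cdot \Pr\left(X>\theta\E[X]\right)^{1/2},
\]
using $\mathbf{1}[\cdot]^2=\mathbf{1}[\cdot]$. Both sides of the previous inequality are non-negative since $\theta\le 1$, so squaring is legitimate. This gives $(1-\theta)^2\E[X]^2\le \E[X^2]\Pr(X>\theta\E[X])$. Dividing by $\E[X^2]$, which is positive and finite by the finite-variance assumption together with $\E[X]>0$, yields the claim.

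There is no real obstacle here. The only points needing care are the degenerate case $\E[X]=0$ and ensuring non-negativity before squaring. Both are handled by the assumptions $X\ge0$ and $\theta\in[0,1]$.
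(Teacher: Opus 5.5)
Your proof is correct and is the standard Cauchy--Schwarz argument. The paper gives no proof of this inequality; it only quotes it as a well-known tool in the preliminaries, so your write-up supplies the expected textbook argument, and your remark covers the degenerate case $\E[X]=0$ (where $X=0$ almost surely and the right-hand side must be read as $0$).
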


We will also need the following tail bound for binomial random variables, known as the Chernoff bound.
\begin{lemma}[Chernoff Bound]\label{lemma:chernoff}    
    Suppose that $X \sim \textup{Bin}(N, p)$ and that $\delta \in (0,1)$. Then,
    \[
        \Pr\left(\lvert X - Np \rvert \ge \delta Np\right) \le 2e^{-\frac{\delta^2 Np}{3}}.
    \]
\end{lemma}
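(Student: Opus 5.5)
The statement is the Chernoff bound, and I would derive it from the exponential moment method, bounding the upper and lower tails separately and adding them.

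\textbf{Upper tail.} Write $X=\sum_{i=1}^N \xi_i$ with the $\xi_i$ independent Bernoulli$(p)$ variables. For $\lambda>0$, Markov's inequality applied to $e^{\lambda X}$ gives
\[
\Pr(X\ge (1+\delta)Np)\le e^{-\lambda(1+\delta)Np}\,\E\left[e^{\lambda X}\right].
\]
By independence, $\E[e^{\lambda X}]=(1+p(e^\lambda-1))^N\le \exp\left(Np(e^\lambda-1)\right)$, using $1+x\le e^x$. Choosing $\lambda=\log(1+\delta)$ yields
\[
\Pr(X\ge (1+\delta)Np)\le \left(\frac{e^{\delta}}{(1+\delta)^{1+\delta}}\right)^{Np}.
\]

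\textbf{Lower tail.} Apply Markov's inequality to $e^{-\lambda X}$ and choose $\lambda=-\log(1-\delta)$. The same computation gives
\[
\Pr(X\le (1-\delta)Np)\le \left(\frac{e^{-\delta}}{(1-\delta)^{1-\delta}}\right)^{Np}.
\]

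\textbf{Reducing to $e^{-\delta^2 Np/3}$.} It remains to show that, for $\delta\in(0,1)$, both exponents are at most $-\delta^2/3$.

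For the upper tail, set $f(\delta)=\delta-(1+\delta)\log(1+\delta)+\delta^2/3$. Then $f(0)=0$ and $f'(\delta)=-\log(1+\delta)+2\delta/3$. This satisfies $f'(0)=0$, and $f''(\delta)=-1/(1+\delta)+2/3$ is negative for $\delta<1/2$ and positive afterwards. Hence $f'$ first decreases from $0$ and then increases to $f'(1)=-\log 2+2/3<0$, so $f'\le 0$ on $[0,1]$. Therefore $f\le 0$ there.

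For the lower tail, set $g(\delta)=-\delta-(1-\delta)\log(1-\delta)+\delta^2/2$. Then $g(0)=0$ and $g'(\delta)=\log(1-\delta)+\delta\le 0$, so $g\le 0$. This gives the bound $e^{-\delta^2Np/2}\le e^{-\delta^2Np/3}$.

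Adding the two tail bounds gives $2e^{-\delta^2Np/3}$. The only mildly delicate step is the elementary calculus inequality for the upper tail; everything else is routine.
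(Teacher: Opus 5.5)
Your proof is correct: the exponential-moment bounds for both tails, the choices $\lambda=\log(1+\delta)$ and $\lambda=-\log(1-\delta)$, and the calculus checks (including $f'(1)=\frac{2}{3}-\log 2<0$ and $g'(\delta)=\log(1-\delta)+\delta\le 0$) all go through. The paper gives no proof of this lemma and simply quotes it as the well-known Chernoff bound; your argument is the standard derivation behind it.
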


\section{The decoupling}\label{subsection: decoupling new}
In this subsection, we recall a separation-of-variables technique due to Sah and Sawheny \cite{SahSaw2022}, that borrowed these ideas from Berkowitz \cite{Ber2016,Ber2018}. We begin with a lemma concerning general product spaces. Afterwards, we will apply this lemma to the number of $K_r$ in $G_{n, p}$ that we denote by $X_r$. We first define an operator $\alpha$ that plays a crucial role.

\begin{definition}
    Suppose that $\Omega_0,\ldots, \Omega_m$ are sets and that $f\colon \prod_{i=0}^{m} \Omega_i \to \mathbb{R}$.
    For a vector $\mathbf{v}\in \{0,1\}^{m}$, letting $\left\lvert\mathbf{v}\right\rvert=\sum_{i=1}^{m}\mathbf{v}_i$, we define $\alpha_m(f) : \Omega_0\times \prod_{i=1}^{m} \Omega_i^2 \to \mathbb{R}$ to be the following function:
    \[
        \alpha_m(f)(x,y_1^0,y_1^1,\ldots,y_m^0,y_m^1) \coloneqq \sum_{\mathbf{v} \in \{0,1\}^m} (-1)^{\left\lvert\mathbf{v}\right\rvert} f(x,y_{1}^{\mathbf{v}_1},\ldots,y_{m}^{\mathbf{v}_m}).
    \]
\end{definition}

The main result concerning this operator $\alpha_m$ is a separation-of-variables technique, which essentially relies on a repeated use of the Cauchy-Schwarz inequality. The next lemma was proved in \cite{Ber2016, Ber2018, SahSaw2022}; for the sake of completeness, we repeat its proof here.

\begin{lemma}\label{lemma:the decopuling lemma new}
    Suppose that $(X,\mathbf{Y})\coloneqq(X,Y_{1},\ldots,Y_{m})$ are mutually independent random variables taking values in $\Omega_X$ and $\Omega_{Y_i}$ respectively. Let $f\colon \Omega_X\times \prod_{i=1}^{m} \Omega_{Y_i} \to \mathbb{R}$ be a measurable function and denote by $\varphi (t) \coloneqq \E_{(X,\mathbf{Y})}\left[e^{itf(X,\mathbf{Y})}\right]$ the characteristic function of $f(X,\mathbf{Y})$. Then, 
    \[
        \left\lvert\varphi(t)\right\rvert ^{2^{m}} \le \E_{X, \mathbf{Y^{*}}}\left[e^{it\alpha_m(f)(X,\mathbf{Y^*})}\right],
    \]
    where $\mathbf{Y^*}\coloneqq (Y_1^{0},Y_1^{1},\ldots,Y_m^{0},Y_m^{1})$ is a vector of mutually independent random variables and for every $i$, the random variables $Y_i^0,Y_i^1,Y_i$ are identically distributed.
\end{lemma}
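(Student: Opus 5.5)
We argue by induction on $m$, peeling off one coordinate at a time with the Cauchy--Schwarz inequality. The key one-step inequality is the following. Let $Z$ be any random variable and $Y$ an independent one, and let $g$ be a real function. Then
\[
\left|\E_{Z,Y}\left[e^{itg(Z,Y)}\right]\right|^2 \le \E_Z\left[\left|\E_Y\left[e^{itg(Z,Y)}\right]\right|^2\right] = \E_{Z,Y^0,Y^1}\left[e^{it(g(Z,Y^0)-g(Z,Y^1))}\right],
\]
where $Y^0,Y^1$ are independent copies of $Y$, also independent of $Z$. The first inequality is Jensen/Cauchy--Schwarz applied to $\E_Z$ of the inner conditional expectation. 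The equality comes from writing $|w|^2=w\bar w$ and using independence. Note that the right-hand side is real and nonnegative, being the expectation of $|\E_Y[\cdot]|^2$ conditioned on $Z$.

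For $m=1$ the claim is exactly this step, with $Z=X$ and $g=f$, because $\alpha_1(f)(x,y^0,y^1)=f(x,y^0)-f(x,y^1)$. For the inductive step we assume the statement holds for $m-1$ coordinates with an arbitrary first variable. We apply it with the "first variable" $X$ and the coordinates $Y_1,\dots,Y_{m-1}$, treating $Y_m$ as part of the base. Concretely, set $X'=(X,Y_m)$ and $f'(X',Y_1,\dots,Y_{m-1})=f(X,\mathbf{Y})$. The induction hypothesis gives
\[
|\varphi(t)|^{2^{m-1}}\le \E\left[e^{it\alpha_{m-1}(f')(X,Y_m,Y_1^0,Y_1^1,\dots,Y_{m-1}^0,Y_{m-1}^1)}\right].
\]
The right-hand side is the characteristic function of $h(W,Y_m)$, where $W=(X,Y_1^{0},\dots,Y_{m-1}^1)$ is independent of $Y_m$ and $h\coloneqq\alpha_{m-1}(f')$. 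It is real and nonnegative, since it is a product of nested squared moduli from the earlier steps.

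We now square both sides and apply the one-step inequality with $Z=W$ and $Y=Y_m$. This yields
\[
|\varphi(t)|^{2^m}\le\E\left[e^{it(h(W,Y_m^0)-h(W,Y_m^1))}\right].
\]
It remains to check the algebraic identity
\[
h(W,y^0)-h(W,y^1)=\sum_{\mathbf{v}\in\{0,1\}^m}(-1)^{|\mathbf{v}|}f\left(x,y_1^{\mathbf v_1},\dots,y_m^{\mathbf v_m}\right)=\alpha_m(f).
\]
This holds because the sign $(-1)^{\mathbf v_m}$ accounts exactly for the final difference. Squaring is legitimate because both sides are nonnegative reals.

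The only delicate point is the bookkeeping: making sure the intermediate quantity is a genuine nonnegative characteristic function in the remaining variable, so that squaring preserves the inequality. We also need $Y_m$ to remain independent of all the previously duplicated copies, which holds by mutual independence. Everything else is routine.
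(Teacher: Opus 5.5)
Your proof is correct and is essentially the paper's argument: induction in which $Y_m$ is absorbed into the base variable, the hypothesis for $m-1$ is applied, and then Cauchy--Schwarz conditional on everything except $Y_m$, followed by expanding $|\E_{Y_m}[\cdot]|^2$ with two independent copies, produces $\alpha_m(f)$. The only difference is cosmetic: you start at $m=1$ and note explicitly that the right-hand side is a nonnegative real, so squaring is legitimate, whereas the paper starts at $m=0$ and covers this step with the triangle inequality. Your version is slightly cleaner, since the literal $m=0$ statement $|\varphi(t)|\le\varphi(t)$ need not hold.
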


\begin{proof}
    The proof is done by induction on $m$. The base case $m=0$ trivially holds as $\alpha_0(f)=f \colon \Omega_X\to\mathbb{R}$.
    For the induction step, fix some integer $m>0$, and assume the assertion of the lemma for this $m$.
    Let $(X,\mathbf{Y})\coloneqq(X,Y_{1},\ldots,Y_{m+1})$ be a sequence of mutually independent random variables taking values in $\Omega_X$ and $\Omega_{Y_i}$, respectively. Fix some function $f\colon \Omega_X\times \prod_{i=1}^{m+1} \Omega_{Y_i} \to \mathbb{R}$. Set $\mathbf{\tilde{Y}}\coloneqq (Y_{1},\ldots,Y_{m})$ and let $\tilde{f} \colon \Omega_{X}\times \Omega_{Y_{m+1}}\times \prod_{i=1}^{m} \Omega_{Y_i} \to \mathbb{R}$ be defined as $\tilde{f}((x,y_{m+1}),y_1,\ldots,y_{m})=f(x,y_1,\ldots,y_{m+1})$.
    By the induction hypothesis and the triangle inequality, letting $\varphi$ be the characteristic function of $f(X,Y_1,\ldots,Y_{m+1})$, we have    
    \begin{align*}
        \left\lvert\varphi(t)\right\rvert^{2^{m}} &\le \E_{\mathbf{\tilde{Y}^{*}}}\left\lvert\E_{X,Y_{m+1}}\left[e^{it\alpha_m(\tilde{f})((X,Y_{m+1}),\mathbf{\tilde{Y}^*})}\right]\right\rvert\\
        &\le \E_{\mathbf{\tilde{Y}^{*}},X}\left\lvert\E_{Y_{m+1}}\left[e^{it\alpha_m(\tilde{f})((X,Y_{m+1}),\mathbf{\tilde{Y}^*})}\right]\right\rvert,
    \end{align*}  
    where $\mathbf{\tilde{Y}^*}\coloneqq (Y_1^{0},Y_1^{1},\ldots,Y_m^{0},Y_m^{1})$ are mutually independent and for every $i$, the random variables $Y_i^0,Y_i^1,Y_i$ are identically distributed.
    Applying the Cauchy-Schwarz inequality, we obtain     
    \begin{align*}
        \left\lvert\varphi(t)\right\rvert^{2^{m+1}}
        &\le \E_{\mathbf{Y^{*}},X}\left\lvert\E_{Y_{m+1}}\left[e^{it\alpha_m(\tilde{f})((X,Y_{m+1}),\mathbf{\tilde{Y}^*})}\right]\right\rvert^2\\
        &= \E_{X,\mathbf{Y^{*}},Y^0_{m+1},Y^1_{m+1}}\left[e^{it\alpha_m(\tilde{f})((X,Y^{0}_{m+1}),\mathbf{\tilde{Y}^*})-it\alpha_m(\tilde{f})((X,Y^{1}_{m+1}),\mathbf{\tilde{Y}^*})}\right]\\
        &= \E_{X,\mathbf{Y^{*}},Y^0_{m+1},Y^1_{m+1}}\left[ e^{it\alpha_{m+1}(f)(X,\mathbf{\tilde{Y}^*},Y^{0}_{m+1},Y^{1}_{m+1})}\right],
    \end{align*}
    completing the proof.
\end{proof}

Lemma \ref{lemma:the decopuling lemma new} has played a central role in several previous works on LCLTs for subgraph counts and related random variables; see, for example, \cite{AraMat2023,Ber2016,Ber2018,BerSahSaw2021,GilKop2016,SahSaw2022,SahSawZhu2024}. In these works, one specifies collections of edges $\cF_i$ and applies Lemma \ref{lemma:the decopuling lemma new} with $Y_i=(\mathbf{1}[e\in E(G_{n,p})])_{e\in \cF_i}$ and $f=X_r$ (or, more generally, $f=X_H$).

The key idea is that, viewing $f$ as a polynomial in the $\binom{n}{2}$ edge-indicator variables of degree $\binom{r}{2}$, Lemma \ref{lemma:the decopuling lemma new} reduces the problem of bounding the characteristic function of $f(X,\mathbf Y)$ to bounding the characteristic function of $\alpha_m(f)(X,\mathbf {Y^*})$, conditioned on $\mathbf {Y^*}$. Crucially, the latter is a polynomial of degree at most $\binom{r}{2}-m$. Since linear polynomials of independent random variables are well understood, this reduction becomes particularly effective when the resulting polynomial has degree $1$ (conditionally on $\mathbf{Y^*}$).

However, reducing all the way to a linear polynomial can be too costly and, in general, yields only suboptimal bounds for the characteristic function of $X_r$. Our approach is instead to reduce the problem to the analysis of lower degree polynomials which are not necessarily linear. The challenge is that these nonlinear polynomials contain highly dependent monomials. While such polynomials can be analysed when $p$ is bounded away from $0$ and $1$, as was done in earlier works~\cite{Ber2018,SahSaw2022}, new ideas are required in the sparse regime when $n^{-1/m(K_r)} \ll p =o(1)$.

For the rest of this section, fix non-negative integers $k, n$ and $r$. In addition, fix a partition $\cP = (P_1, \dots, P_k)$ of $[n]$ and $\cE \subseteq \binom{[k]}{2}$. The partition $\cP$ and the collection $\cE$ will dictate the sets $\cF_i$ mentioned above, and the following definition will encapsulate the reduction of the degree of $X_r$ (conditionally on $Y_i$) via application of Lemma \ref{lemma:the decopuling lemma new}.
Combinatorially, $\alpha_m(X_r)$ applied with a pair of the edge indicators for graphs $G_0 \subseteq K_n$ and $G_1 \subseteq \bigcup_{\{i, j\} \in \cE} \left(P_i \times P_j\right)$ is a signed sum of indicators of $r$-cliques formed by combination of edges from $G_0$ and $G_1$.
The following definition groups together all signed contributions of $r$-cliques on a vertex set of size $r$ to $\alpha_m(X_r)$ applied with such pair $(G_0,G_1)$. For clarity of presentation, throughout this section unless stated otherwise fix 
\[ 
G_0\subseteq K_n \quad \text{and} \quad G_1 \subseteq \bigcup_{\{i, j\} \in \cE} \left(P_i \times P_j\right)\eqqcolon \cP_{\cE}.
\]

\begin{definition}\label{definition:sign}
   For every copy $F \subseteq K_n$ of $K_r$, define the \textit{sign} of $F$ in $(G_0,G_1)$ with respect to $\cP$ and $\cE$ to be\footnote{We note that in the definition of the sign function, we abuse notation and use $\mathbf{v}$ to denote a vector.}
    \[
        S_{G_0, G_1}^{\cP, \cE}(F) \coloneqq \mathbf{1}\left[F \subseteq G_0 \cup G_1\right] \cdot \left(\sum_{\mathbf{v} \in \{0,1\}^{\cE}} \left(-1\right)^{\lvert\mathbf{v}\rvert} \prod_{\{i,j\}\in \cE} \mathbf{1}\left[{F \cap (P_i\times P_j) \subseteq G_{\mathbf{v}_{ij}}}\right]\right).
    \]
    When clear from the context we will simply write $S_{G_0,G_1}(F)$ to denote $S_{G_0,G_1}^{\cP,\cE}(F)$.
\end{definition}

A fact that we prove next is that when the sign function $S_{G_0,G_1}$ does not vanish it takes values only in $\{\pm 1\}$. To prove this we define rainbow $r$-cliques that have a non-empty intersection with every bipartite graph $P_i\times P_j$. We further show that, for all non-rainbow cliques, their sign function vanishes.

\begin{definition}
   We say that a copy $F \subseteq K_n$ of $K_r$, is $\left(\cP, \cE\right)$-\textit{rainbow} if the following holds:
\[\text{For every } \{i, j\} \in \cE \text{ we have }  F \cap ( P_i\times P_j) \neq \emptyset.\]
    Denote by $\cR(\cP, \cE)$ the collection of all $\left(\cP, \cE\right)$-rainbow $r$-cliques. When the dependency on $\cP$ and $\cE$ is clear from the context, we will omit it from the notation.
\end{definition}

\begin{claim}\label{claim: sign zero new}
    The sign function satisfies $
        \textup{Im}(S_{G_{0},G_{1}}) \subseteq \{0,\pm 1\}$.
    Moreover, if $F$ is a copy of $K_r$ satisfying $F\cap (P_{i}\times P_{j})\subseteq G_0\cap G_1$ for some $\{i,j\}\in \cE$, then the sign $S_{G_0,G_1}(F)$ vanishes.
\end{claim}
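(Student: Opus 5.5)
The plan is to factor the alternating sum in Definition~\ref{definition:sign} as a product over the colour classes. Fix a copy $F$ of $K_r$ with $F\subseteq G_0\cup G_1$; otherwise the indicator in front vanishes and there is nothing to prove. For each $\{i,j\}\in\cE$ write $F_{ij}\coloneqq F\cap(P_i\times P_j)$ and $a_{ij}^{u}\coloneqq \mathbf{1}[F_{ij}\subseteq G_u]$ for $u\in\{0,1\}$. Each summand $(-1)^{|\mathbf{v}|}\prod_{\{i,j\}\in\cE} a_{ij}^{\mathbf{v}_{ij}}$ splits coordinatewise. Summing over $\mathbf{v}\in\{0,1\}^{\cE}$ therefore gives the identity
\[
S_{G_0,G_1}(F)=\mathbf{1}[F\subseteq G_0\cup G_1]\cdot\prod_{\{i,j\}\in\cE}\bigl(a_{ij}^{0}-a_{ij}^{1}\bigr).
\]
This is the distributive law, i.e.\ the same identity as $\alpha_m$ of a product of single-variable functions.

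Each factor $a_{ij}^0-a_{ij}^1$ is a difference of two indicators, so it lies in $\{-1,0,1\}$. Hence the product also lies in $\{0,\pm1\}$, and multiplying by the leading indicator keeps it there. This proves $\textup{Im}(S_{G_0,G_1})\subseteq\{0,\pm1\}$.

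For the second assertion, suppose $F_{ij}\subseteq G_0\cap G_1$ for some $\{i,j\}\in\cE$. Then $a_{ij}^0=a_{ij}^1=1$, so that factor vanishes and $S_{G_0,G_1}(F)=0$.

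The same identity also gives the remark that non-rainbow copies have zero sign. If $F_{ij}=\emptyset$, then $F_{ij}\subseteq G_0\cap G_1$ holds vacuously, so the factor for $\{i,j\}$ is again $1-1=0$.

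There is no real obstacle here. The only point needing care is to check that the sum over $\mathbf{v}$ genuinely factorises. It does, because the sign $(-1)^{|\mathbf{v}|}=\prod_{\{i,j\}\in\cE}(-1)^{\mathbf{v}_{ij}}$ and the indicator product both split coordinatewise over $\cE$.
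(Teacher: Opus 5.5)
Your proof is correct, and it takes a slightly different, tidier route than the paper.

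The paper argues term by term. For the vanishing statement, it pairs each $\mathbf{v}$ with $\mathbf{v}_{i^*j^*}=1$ against the vector differing only in that coordinate: the indicator products agree and the signs are opposite, so the sum cancels. For the range statement, it argues that when $S_{G_0,G_1}(F)\neq 0$, each $F\cap(P_i\times P_j)$ lies in \emph{exactly one} of $G_0,G_1$. Then only the summand indexed by the corresponding vector $\mathbf{u}$ survives, which gives $\pm1$.

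Your identity $S_{G_0,G_1}(F)=\mathbf{1}[F\subseteq G_0\cup G_1]\prod_{\{i,j\}\in\cE}(a^0_{ij}-a^1_{ij})$ handles both steps at once. The paper's pairing is exactly the statement that one factor equals $1-1=0$. Its collapse to a single term is just the expansion of a product of factors in $\{0,\pm1\}$.

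Your version has two small advantages:
\begin{itemize}
\item It automatically covers the case where $F\cap(P_i\times P_j)$ lies in neither $G_0$ nor $G_1$, since that factor is $0-0$. The paper handles this case only implicitly. It invokes the second assertion to get \emph{exactly one}, but that assertion only rules out \emph{both}; \emph{at least one} needs the separate observation that otherwise every summand vanishes.
\item The closed form makes Fact~\ref{fact:changing-G0-G1} immediate. Swapping $G_0$ and $G_1$ on $W_F^{ij}=F\cap(P_i\times P_j)$ preserves $G_0\cup G_1$, negates the $\{i,j\}$ factor, and leaves all other factors unchanged, because the classes $P_x\times P_y$ are pairwise edge-disjoint.
\end{itemize}
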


Before we prove this claim, let us remark that the `moreover' part of the claim implies that $S_{G_0,G_1}(F)=0$ for every $F\not\in \cR$. To see this note that for every $F\not\in \cR$ there is $\{i,j\}\in \cE$ such that $F\cap (P_i\times P_j)=\emptyset\subseteq G_0\cap G_1$.

\begin{proof}
    Let $F$ be a copy of $K_r$. We may assume that $F \subseteq G_0 \cup G_1$, as otherwise we have $S_{G_0, G_1}(F) = 0$ immediately from the definition of the sign function. Let us begin by proving the second assertion of the claim. Let $\{i^*,j^*\}\in\mathcal{E}$ satisfy $F\cap(P_{i^*}\times P_{j^*})\subseteq G_0\cap G_1$. We have
    \begin{align*}
        \sum_{\substack{\mathbf{v} \in \{0,1\}^{\cE}\\ \mathbf{v}_{i^* j^*}=1}} \left(-1\right)^{\lvert\mathbf{v}\rvert} \prod_{\{i, j\}\in \cE} \mathbf{1}\left[{F \cap (P_i\times P_j) \subseteq G_{\mathbf{v}_{i j}}}\right] =-\sum_{\substack{\mathbf{v} \in \{0,1\}^{\cE}\\ \mathbf{v}_{i^* j^*}=0}} \left(-1\right)^{\lvert\mathbf{v}\rvert} \prod_{\{i, j\}\in \cE} \mathbf{1}\left[{F \cap (P_i\times P_j) \subseteq G_{\mathbf{v}_{i j}}}\right].
    \end{align*}
    This implies the required equality
    \[
         S_{G_0, G_1}(F) =\sum_{\mathbf{v} \in \{0,1\}^{\cE}} \left(-1\right)^{\lvert\mathbf{v}\rvert} \prod_{\{i,j\} \in \cE} \mathbf{1}\left[{F \cap (P_i\times P_j) \subseteq G_{\mathbf{v}_{i j}}}\right] = 0.
    \]

    Let us now turn to the proof of the first assertion of the claim. To this end, fix $F$ such that $S_{G_0,G_1}(F)\neq 0$.
    Due to the second assertion that we have just proved, we may assume that for every $\{i,j\}\in\mathcal{E}$, the graph $F\cap(P_i\times P_j)$ is a subgraph of exactly one of the two graphs $G_0$ and $G_1$. In other words, there exists a vector $\mathbf{u}\in\{0,1\}^{\cE}$ (which we fix in what follows) such that for every $\{i,j\}\in \cE$, 
    $$
    F\cap(P_i\times P_j)\subseteq G_{\mathbf{u}_{ij}}\quad\text{ while }\quad
    F\cap(P_i\times P_j)\not\subseteq G_{1-\mathbf{u}_{ij}}.
    $$
    In particular,
    \[
       \sum_{\mathbf{v} \in \{0,1\}^{\cE}} \left(-1\right)^{\lvert\mathbf{v}\rvert} \prod_{\{i,j\} \in \cE} \mathbf{1}\left[{F \cap (P_i\times P_j) \subseteq G_{\mathbf{v}_{i j}}}\right] = \left(-1\right)^{\lvert\mathbf{u}\rvert}\prod_{\{i,j\} \in \cE} \mathbf{1}\left[{F \cap (P_i\times P_j) \subseteq G_{\mathbf{u}_{i j}}}\right] . 
    \]
    As the left-hand side above is $S_{G_0,G_1}(F)$ and as the right-hand side clearly belongs to $\{\pm 1\}$, the proof is complete.
\end{proof}

 The next fact establishes a simple duality of the sign function when changing the base graphs $G_0$ and $G_1$. This will become useful when we consider $G_0$ and $G_1$ as random graphs. Indeed, it will imply that the expected sign of any fixed copy of $K_r$ is null.

\begin{fact}\label{fact:changing-G0-G1}
    Suppose that $\{i,j\}\in \cE$, fix a copy $F\subseteq K_n$ of $K_r$, and define the bipartite graph $W_{F}^{ij} = (V(F) \cap P_i)\times (V(F)\cap P_j)$. Then, letting 
    \[
        \bar{G}_0\coloneqq \left(G_0 \setminus W_{F}^{ij}\right)\cup \left(G_1\cap W_{F}^{ij}\right) \quad \text{and}\quad \bar{G}_1 \coloneqq \left(G_1 \setminus W_F^{ij}\right)\cup \left(G_0\cap W_F^{ij}\right)\subseteq \cP_{\cE},
    \]
    we have $S_{G_0,G_1}(F)=-S_{\bar{G}_0,\bar{G}_1}(F)$.
\end{fact}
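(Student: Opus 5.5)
The plan is to show that the swap $(G_0,G_1)\mapsto(\bar G_0,\bar G_1)$ leaves the indicator factor unchanged and flips the sign of the alternating sum by reindexing $\mathbf v$ in the coordinate $\{i,j\}$. Write $W=W_F^{ij}$. The first step is to note which edges of $F$ the swap affects. Since $V(F)$ spans $F$, we have $F\cap(P_i\times P_j)=W$, and $W$ is disjoint from $P_{i'}\times P_{j'}$ for every other pair $\{i',j'\}\in\cE$, because the parts are disjoint. Hence, for every $\{i',j'\}\ne\{i,j\}$ and $\epsilon\in\{0,1\}$,
\[
F\cap(P_{i'}\times P_{j'})\subseteq \bar G_\epsilon \iff F\cap(P_{i'}\times P_{j'})\subseteq G_\epsilon,
\]
since the two graphs agree outside $W$. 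On $W$ itself the roles are exchanged: $W\subseteq\bar G_\epsilon$ iff $W\subseteq G_{1-\epsilon}$, since $\bar G_\epsilon\cap W=G_{1-\epsilon}\cap W$. We should also check that $\bar G_1\subseteq\cP_\cE$. This holds because $W\subseteq P_i\times P_j$ and $\{i,j\}\in\cE$.

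The second step is the indicator $\mathbf 1[F\subseteq \bar G_0\cup\bar G_1]$. Note that $\bar G_0\cup\bar G_1=(G_0\cup G_1)\setminus W\cup((G_0\cup G_1)\cap W)=G_0\cup G_1$, so this factor is unchanged.

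The third step handles the sum. Let $\tau\colon\{0,1\}^\cE\to\{0,1\}^\cE$ flip the coordinate $\{i,j\}$. By the first step, the product term for $\mathbf v$ computed with $(\bar G_0,\bar G_1)$ equals the product term for $\tau(\mathbf v)$ computed with $(G_0,G_1)$. Moreover, $(-1)^{|\mathbf v|}=-(-1)^{|\tau(\mathbf v)|}$. Since $\tau$ is a bijection, summing over $\mathbf v$ gives $S_{\bar G_0,\bar G_1}(F)=-S_{G_0,G_1}(F)$.

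There is no real obstacle here. The only point needing care is to check that the edges of $F$ in coordinate $\{i,j\}$ are exactly $W$ and that $W$ meets no other coloured pair, both of which follow from $\cP$ being a partition.
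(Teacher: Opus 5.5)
Your proof is correct and follows essentially the same route as the paper: both note that the swap leaves the containment indicators for every pair other than $\{i,j\}$ unchanged, exchanges the roles of $G_0$ and $G_1$ on $W_F^{ij}$, and then reindex the sum by flipping the $\{i,j\}$-coordinate of $\mathbf{v}$. Your explicit check that $\bar G_0\cup\bar G_1=G_0\cup G_1$ also justifies the prefactor $\mathbf{1}[F\subseteq G_0\cup G_1]$, which the paper's final display leaves implicit.
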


\begin{proof}
    Fix a copy $F\subseteq K_n$ of $K_r$. By definition, for both $s\in \{0,1\}$ and any $\{i,j\}\neq \{x,y\}\in \cE$ we have
    \[
    \mathbf{1}\left[{F \cap (P_x\times P_y) \subseteq \bar{G}_{s}}\right] = \mathbf{1}\left[{F \cap (P_x\times P_y) \subseteq G_{s}}\right],
    \]
    and moreover,
    \[
    \mathbf{1}\left[{F \cap (P_i\times P_j) \subseteq \bar{G}_{s}}\right] = \mathbf{1}\left[{F \cap (P_i\times P_j) \subseteq G_{1-s}}\right].
    \]    
    
    In particular, for every $\mathbf{v}\in  \{0,1\}^{\cE}$ letting $\mathbf{\bar{v}}\in  \{0,1\}^{\cE}$ be the vector agreeing with $\mathbf{v}$ in all coordinates expect for the $\{i,j\}$-th coordinate, the proof is concluded as follows:
    \begin{align*}
        S_{\bar{G}_0, \bar{G}_1}(F) &=\sum_{\mathbf{v} \in  \{0,1\}^{\cE}} \left(-1\right)^{\lvert\mathbf{v}\rvert} \prod_{\{x,y\} \in \cE} \mathbf{1}\left[{F \cap (P_x\times P_y) \subseteq \bar{G}_{\mathbf{v}_{xy}}}\right] \\
        &= \sum_{\mathbf{v} \in  \{0,1\}^{\cE}} \left(-1\right)^{\lvert\mathbf{v}\rvert} \prod_{\{x,y\} \in \cE} \mathbf{1}\left[{F \cap (P_x\times P_y) \subseteq G_{\mathbf{\bar{v}}_{xy}}}\right] = -S_{G_0,G_1}(F).\qedhere
    \end{align*}
\end{proof}

The following corollary is an adjustment of Lemma \ref{lemma:the decopuling lemma new} to the setup that we use throughout the paper.
Recall that $X_r$ is the random variable counting the number of copies of $K_r$ in $G_{n,p}$. In what follows, we view $X_r(G)$ as a function of the edges of a graph $G$. For a particular sample $G\sim G(n,p)$, we will simply write $X_r=X_r(G)$. With some abuse of notation, for (not necessarily random) graphs $G_1,\ldots,G_\ell$ we denote by $X_r(G_1,\ldots,G_{\ell})$ the number of copies of $K_r$ in $G_1 \cup \dots \cup G_{\ell}$.

\begin{corollary}\label{cor: the decoupling lemma new}
    Let $p=p(n) \in [0,1]$ and suppose that $G_0 \sim G_{n, p}$ and $G_1\sim \left(\cP_{\cE}\right)_p$ are independent. Then, setting $\Tilde{X}_r \coloneqq \sum_{F \in \cR} S_{G_0, G_1}(F)$ and $X_r\coloneqq X_r(G_0)$, for every real number $t$ we have
    \begin{align*}
        \left\lvert \E_{G_0}\left[e^{it X_r/\sigma_r}\right]\right\rvert^{2^{\lvert \cE \rvert}} \le \left\lvert \E_{G_0, G_1}\left[ e^{it \Tilde{X}_r / \sigma_r}\right] \right\rvert.
    \end{align*}
\end{corollary}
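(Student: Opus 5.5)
The plan is to apply Lemma~\ref{lemma:the decopuling lemma new} with $m=|\cE|$, $f=X_r/\sigma_r$ viewed as a function of the edge indicators, and to identify the decoupled function $\alpha_m(f)$ with $\Tilde{X}_r/\sigma_r$. Enumerate $\cE=\{e_1,\ldots,e_m\}$ with $e_\ell=\{i_\ell,j_\ell\}$. Set $Y_\ell=(\mathbf{1}[e\in G_0])_{e\in P_{i_\ell}\times P_{j_\ell}}$ and $X=(\mathbf{1}[e\in G_0])_{e\notin \cP_{\cE}}$. The bipartite graphs $P_i\times P_j$ for distinct pairs are edge-disjoint, so these are mutually independent and together determine $G_0$. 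Lemma~\ref{lemma:the decopuling lemma new} then gives
\[
\left\lvert \E\left[e^{itX_r/\sigma_r}\right]\right\rvert^{2^{m}}\le \E\left[e^{it\alpha_m(X_r)(X,\mathbf{Y^*})/\sigma_r}\right],
\]
using that $\alpha_m$ is linear, so $\alpha_m(X_r/\sigma_r)=\alpha_m(X_r)/\sigma_r$. We realise $Y_\ell^0$ as the restriction of $G_0$ to $P_{i_\ell}\times P_{j_\ell}$ and $Y_\ell^1$ as the restriction of an independent $G_1\sim(\cP_{\cE})_p$ to the same set. This has exactly the joint law required in the lemma. The right-hand side is real, being the limit of the Cauchy--Schwarz chain, so it is at most its absolute value.

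The main step is the combinatorial identity $\alpha_m(X_r)(X,\mathbf{Y^*})=\sum_{F}S_{G_0,G_1}(F)$, where the sum is over all copies $F$ of $K_r$. Write $X_r=\sum_F\prod_{e\in F}\mathbf{1}[e\in G]$. For $\mathbf{v}\in\{0,1\}^{\cE}$, the graph $G^{\mathbf{v}}$ formed by the edges of $G_0$ outside $\cP_\cE$ together with the edges of $G_{\mathbf{v}_{ij}}$ in each $P_i\times P_j$ satisfies
\[
\mathbf{1}[F\subseteq G^{\mathbf{v}}]=\mathbf{1}[F\setminus\cP_\cE\subseteq G_0]\prod_{\{i,j\}\in\cE}\mathbf{1}[F\cap(P_i\times P_j)\subseteq G_{\mathbf{v}_{ij}}].
\]
Summing with signs $(-1)^{|\mathbf{v}|}$ gives $\alpha_m(X_r)=\sum_F\mathbf{1}[F\setminus\cP_\cE\subseteq G_0]\sum_{\mathbf{v}}(-1)^{|\mathbf{v}|}\prod_{\{i,j\}\in\cE}\mathbf{1}[\cdots]$.

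It remains to reconcile this with the factor $\mathbf{1}[F\subseteq G_0\cup G_1]$ in Definition~\ref{definition:sign}. If $F\not\subseteq G_0\cup G_1$, then either some edge outside $\cP_\cE$ is missing from $G_0$, so the prefactor vanishes, or some edge of $F\cap(P_i\times P_j)$ lies in neither $G_0$ nor $G_1$, so every product term vanishes. Conversely, whenever $F\subseteq G_0\cup G_1$ the prefactor equals $1$, since $G_1\subseteq\cP_\cE$. Hence the two expressions agree term by term. Finally, by Claim~\ref{claim: sign zero new} and the remark after it, non-rainbow $F$ contribute zero, so the sum equals $\Tilde{X}_r$.

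I expect no real obstacle here. The only care needed is the bookkeeping that $G_1$ lives only on $\cP_\cE$, so that the indicator in the sign definition matches the prefactor, and the distributional identification of $(Y_\ell^0,Y_\ell^1)$ with $(G_0,G_1)$ restricted to each bipartite part.
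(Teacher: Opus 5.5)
Your proposal is correct and follows the paper's proof essentially verbatim. You apply Lemma~\ref{lemma:the decopuling lemma new} with $X$ the edges outside $\cP_\cE$ and $Y_{ij}$ the edges of $P_i\times P_j$, identify $(X,Y^0_{ij})$ with $G_0$ and $(Y^1_{ij})$ with $G_1$, rewrite $\alpha_m(X_r)$ as a sum of signs, and discard non-rainbow copies via Claim~\ref{claim: sign zero new}. Your explicit check that $\mathbf{1}[F\subseteq G_0\cup G_1]$ matches the prefactor $\mathbf{1}[F\setminus\cP_\cE\subseteq G_0]$ (because $G_1\subseteq\cP_\cE$) fills in a step the paper leaves implicit.
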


\begin{proof}
    For every $\{i, j\} \in \cE$, define
    \[  
        Y_{i j}\coloneqq (P_i\times P_j)_p \quad \text{and} \quad X\coloneqq \left(K_n\setminus \cP_\cE\right)_p.
    \]
    Lemma \ref{lemma:the decopuling lemma new}, invoked with $t$ replaced by $t/\sigma_r$, with $(X,\mathbf{Y})$, where $\mathbf{Y} = (Y_{ij})$ is the collection of the above random variables and the indices are ordered lexicographically, and with $f=X_r$, implies the following for $m\coloneqq \lvert \cE\rvert$:
    \begin{equation}\label{eq: applying the decopuling new}
    \left\lvert\E_{X,\mathbf{Y}}\left[e^{itX_r(X,\mathbf{Y})/\sigma_r}\right]\right\rvert ^{2^{m}}\le \E_{X, \mathbf{Y^{*}}}\left[e^{it\alpha_m(X_r)(X,\mathbf{Y^*})/\sigma_r}\right],
    \end{equation}
    where $\mathbf{Y^*}\coloneqq \left(Y_{ij}^{0},Y_{ij}^{1} : \{i, j\}\in \cE\right)$ is a vector of mutually independent random variables, and moreover for every $\{i,j\}\in \cE$ the random variables $Y_{ij}^{0},Y_{ij}^{1},Y_{ij}$ are identically distributed.
    Viewing the sequence $\left(X,Y^0_{ij}:\{i,j\}\in \cE\right)$ as the edges of $G_0$, and $\left(Y_{ij}^1: \{i,j\}\in \cE\right)$ as the edges of $G_1$, we find the following due to the definition of $\alpha_m$:
    \begin{equation}\label{eq: alpha on mixed random graphs new}
        \alpha_m(X_r)\left(X,\mathbf{Y^*}\right) \coloneqq \sum_{\mathbf{v} \in \{0,1\}^m} (-1)^{\left\lvert\mathbf{v}\right\rvert} X_r\left(X,\mathbf{Y}^{\mathbf{v}}\right),
    \end{equation}
    where $\mathbf{Y}^{\mathbf{v}}=\left(Y_{ij}^{\mathbf{v}_{ij}}: \{i,j\}\in \cE\right)$. By the definition of $S_{G_0,G_1}(F)$ and by the remark after Claim \ref{claim: sign zero new}, we may rewrite \eqref{eq: alpha on mixed random graphs new} as follows:
    \begin{align*}
        \alpha_m(X_r)\left(X,\mathbf{Y^*}\right) &= \sum_{F\cong K_r}\mathbf{1}\left[{F \subseteq G_0\cup G_1}\right]\sum_{\mathbf{v} \in \{0,1\}^\cE}(-1)^{|\mathbf{v}|} \prod_{\{i,j\}\in \cE} \mathbf{1}\left[{F \cap (P_i \times P_j) \subseteq G_{\mathbf{v}_{ij}}}\right]\\
        & = \sum_{F\cong K_r}S_{G_0,G_1}(F) = \sum_{F\in \cR}S_{G_0,G_1}(F) = \Tilde{X}_r.
    \end{align*}
    Plugging this into \eqref{eq: applying the decopuling new}, the proof is complete.
\end{proof}

\begin{remark}\label{remark: the decoupling lemma new}
    Note that Corollary \ref{cor: the decoupling lemma new} can be adapted straightforwardly to yield 
    \begin{align*}
        \left\lvert \E_{G_0}\left[e^{it X_r(G_0)/\sigma_r}\mid G_0[U]=\Gamma \right]\right\rvert^{2^{\lvert \cE \rvert}} \le \left\lvert \E_{G_0, G_1}\left[ e^{it \Tilde{X}_r / \sigma_r}\mid G_0[U]=\Gamma\right] \right\rvert,
    \end{align*}
    for every $U\subseteq [n]$ such that $K_{n}[U]\cap \cP_{\cE} =\emptyset$ and every fixed graph $\Gamma$ .
\end{remark}

Although the definitions and claims in this section concern a general partition $\cE$, throughout the paper, we only use partitions of the following form.

\begin{definition}\label{def: (a,b,k)-partition new}
    Let $a,b, r,$ and $k$ be non-negative integers such that $0\le k\le r-2$. we say that a partition ~$\cP = (A_1,A_2,B_1,\ldots B_{r-k-2},Z)$ of $[n]$ is an \emph{$(a,b,k,r)$-partition} if
    \[
        \left\lvert A_1\right\rvert=\left\lvert A_2\right\rvert=a \quad \text{and}\quad \left\lvert B_1\right\rvert=\ldots =\left\lvert B_{r-k-2}\right\rvert=b.
    \]
    Further, define
    \[
        \cP_A \coloneqq A_1 \cup A_2, \quad \cP_B \coloneqq B_1 \cup \dots \cup B_{r-k-2}  \quad \text{and} \quad \cP_Z \coloneqq Z.
    \]
    Lastly, define 
    \[
        \cE_{\cP} = \begin{cases}
                        \left\{\{x,y\}:x\in [2],y\in [r-k]\setminus [2]\right\}&\text{if }k<r-2,\\
                        \left\{\{1,2\}\right\}& \text{if }k=r-2.
                    \end{cases}
    \]
\end{definition}

To avoid confusion, we remark that when $k = r-2$, the partition has only three sets: $A_1, A_2,$ and $Z$. In particular, the parameter $b$ is irrelevant in this case.

Let us also clarify the definition of $\cE_{\cP}$. Recall that the set $\cE$ serves as an encoding of the edge sets we use in the decoupling argument, i.e.\ when we apply of Corollary \ref{cor: the decoupling lemma new}. The edge sets encoded by $\cE_{\cP}$ are as follows: If $k<r-2$, then $\cE_{\cP}$ represents the edge sets $A_i\times B_j$ with $(i,j)\in [2]\times [r-k-2]$. If $k=r-2$, then $\cE_{\cP}$ represents solely the edge set $A_1\times A_2$.

As mentioned, we will mainly use $(a,b,k,r)$-partitions in the sequel. To ease the presentation, throughout the paper, when a definition or a statement uses an $(a,b,k,r)$-partition $\cP$, we will always assume that $\cE=\cE_{\cP}$. 

As the notion of $(\cP,\cE)$-rainbow is central in the paper, we close this section by repeating the definition of a rainbow clique in the case of an $(a,b,k,r)$-partition $\cP$ (and its corresponding $\cE_{\cP}$). 

\begin{definition}\label{def:rainbow-copies-abkr-partition}
    Let ~$\cP = (A_1,A_2,B_1,\ldots B_{r-k-2},Z)$ be an $(a, b, k, r)$-partition and let $F\subseteq K_n$ be a copy of $K_r$. If $k<r-2$, we say that $F$ is \emph{$\cP$-rainbow} if for every $(i,j)\in [2]\times [r-k-2]$ we have 
    \[
        V(F)\cap A_i\neq \emptyset \quad \text{and}\quad V(F)\cap B_{j}\neq \emptyset.
    \]
    If $k=r-2$, we say that $F$ is \emph{$\cP$-rainbow} if $V(F)\cap A_i\neq \emptyset$ for both $i\in \{1,2\}$. In both cases, recall that we denote the set of rainbow copies of $K_r$ by $\cR=\cR(\cP)$.
\end{definition}

\section{Typical properties of Random Graphs}\label{sec:typical-properties}
In this section, we prove several typical properties of random graphs, which will be frequently used throughout the proof of the main Lemmas \ref{lemma:characteristic function integrable}, \ref{lemma:characteristic function integrable - low freq and dense p}, and \ref{lemma:characteristic function integrable - low freq and low p}. 

We start with typical properties concerned with the sign function defined in Section~\ref {subsection: decoupling new}. More specifically, in the sequel, we will be interested in bounding expectations of the form $\E\left[ \left(\sum_{F}S_{G_0, G_1}(F)\right)^L\right]$, where $G_0$ and $G_1$ are random as in Corollary \ref{cor: the decoupling lemma new}, $F\subseteq K_n$ is a copy of $K_r$, and its sign is taken with respect to some fixed $(a,b,k,r)$-partition. This naturally leads to seek conditions on $F_1,\ldots ,F_L$ that imply $\E\left[\prod _{i=1}^{L} S_{G_0, G_1}(F_i)\right]=0$. We note that by Claim \ref{claim: sign zero new}, we may restrict only to the case where every $F_i$ is rainbow. 

Throughout the section, let us fix some non-negative integers $a,b,k,n$, and $r\geq3$. As long as no other condition is imposed assume that $a,b\leq n$ and $k\le r-3$. 
Further, fix an $(a,b,k,r)$-partition $\cP=(A_1,A_2,B_1,\ldots,B_{r-k-2},Z)$, set $\cE=\cE_{\cP}$, and write $\cP_{\cE}\coloneqq \bigcup_{\{i,j\}\in \cE}(\cP_i\times \cP_j)$. The following claim provides a condition ensuring that the above expectations vanish.

\begin{claim}\label{claim:sufficient condition for sign zero in expectation new}
     Let $p=p(n) \in [0,1]$ and suppose that $G_0 \sim G_{n, p},G_1\sim \left(\cP_{\cE}\right)_p$ are independent. Assume that $F_1,\ldots, F_s\subseteq K_n$ are copies of $K_r$ and that there is some $\{i, j\} \in \cE$ such that 
        \[
            \left(V(F_s) \cap \cP_i\right) \cap \left(\bigcup_{t \neq s} V(F_{t})\right) = \emptyset.
        \]    
    Then, for every $\Gamma\subseteq K_n$ and an event $\cD$ independent of the edges of $G_0$ and $G_1$ in $F_s\cap (\cP_i\times \cP_j)$,
    \[
        \E\left[\prod_{t=1}^{s}S_{G_0, G_1}(F_{t})\mid \cD, (G_0\cup G_1 =\Gamma)\right] = 0.
    \]
\end{claim}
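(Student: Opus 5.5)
We plan to prove Claim~\ref{claim:sufficient condition for sign zero in expectation new} by an involution argument built on Fact~\ref{fact:changing-G0-G1}. Set $W\coloneqq W_{F_s}^{ij}=(V(F_s)\cap\cP_i)\times(V(F_s)\cap\cP_j)$. Consider the map $\Psi\colon(G_0,G_1)\mapsto(\bar G_0,\bar G_1)$ of Fact~\ref{fact:changing-G0-G1}, which swaps the edges of $G_0$ and $G_1$ inside $W$ and leaves everything else unchanged. The proof has four steps: show that $\Psi$ preserves the conditional law, show that it flips the sign of $F_s$, show that it fixes the signs of the other copies, and then conclude by symmetry.

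\textbf{Step 1: $\Psi$ preserves the conditional law.} Since $W\subseteq\cP_i\times\cP_j\subseteq\cP_\cE$, both $G_0\cap W$ and $G_1\cap W$ are independent $p$-percolations of $W$, independent of all remaining edges. Hence $\Psi$ preserves the law of $(G_0,G_1)$. It also preserves $G_0\cup G_1$, because $\bar G_0\cup\bar G_1=G_0\cup G_1$. For the conditioning on $\cD$, note that $W$ is contained in the edge set of $F_s\cap(\cP_i\times\cP_j)$, since $F_s$ is a clique. By hypothesis, $\cD$ is independent of the edges of $G_0,G_1$ in $F_s\cap(\cP_i\times\cP_j)$, and we read this as saying that $\cD$ is measurable with respect to the remaining coordinates, so $\Psi$ preserves $\cD$. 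Consequently $\Psi$ is a measure-preserving involution of the conditional probability space given $\cD\cap\{G_0\cup G_1=\Gamma\}$. The conditional probability could vanish, but then the statement is vacuous.

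\textbf{Step 2: $\Psi$ flips the sign of $F_s$.} By Fact~\ref{fact:changing-G0-G1}, $S_{\bar G_0,\bar G_1}(F_s)=-S_{G_0,G_1}(F_s)$.

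\textbf{Step 3: $\Psi$ fixes the signs of $F_t$ for $t\neq s$.} This is the main obstacle, and it is where the disjointness hypothesis enters. Any edge of $W$ has one endpoint in $V(F_s)\cap\cP_i$, and by hypothesis that vertex does not lie in $V(F_t)$. So $F_t\cap W=\emptyset$, and the indicators $\mathbf 1[F_t\subseteq G_0\cup G_1]$ and $\mathbf 1[F_t\cap(P_x\times P_y)\subseteq G_{v}]$ are the same for $(G_0,G_1)$ and $(\bar G_0,\bar G_1)$, for every $\{x,y\}\in\cE$ and $v\in\{0,1\}$. We will check that this holds for every coordinate $\{x,y\}$, including $\{x,y\}=\{i,j\}$: $\Psi$ changes only edges of $W$, and $F_t$ avoids $W$ entirely. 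Hence $S_{\bar G_0,\bar G_1}(F_t)=S_{G_0,G_1}(F_t)$.

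\textbf{Step 4: conclusion.} Let $P\coloneqq\prod_t S_{G_0,G_1}(F_t)$. Steps~2 and~3 give $P\circ\Psi=-P$, and Step~1 gives $\E[P\mid\cdot]=\E[P\circ\Psi\mid\cdot]$. Together these yield $\E[P\mid\cdot]=-\E[P\mid\cdot]$, so the conditional expectation is $0$.
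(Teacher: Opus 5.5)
Your proposal is correct and is essentially the paper's proof. The paper uses the same swap $\bar G_t=(G_t\setminus W)\cup(G_{1-t}\cap W)$ on $W=F_s\cap(\cP_i\times\cP_j)$, invokes Fact~\ref{fact:changing-G0-G1} for the sign flip of $F_s$, and uses $F_t\cap W=\emptyset$ for $t\neq s$; the only difference is that it argues fiberwise, additionally conditioning on $G_0\setminus W$ and $G_1\setminus W$, instead of using a global measure-preserving involution. Your explicit reading of the hypothesis on $\cD$ as measurability with respect to the edges outside $W$ is exactly what that fiberwise step tacitly requires, since plain stochastic independence would not suffice, and it is how the claim is used later in the paper.
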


\begin{proof}
        Fix a pair $\{i,j\}\in \cE$, a graph $\Gamma\subseteq K_n$, graphs $F_1,\ldots ,F_s$, and an event $\cD$ as in the claim. Let us also set $W = F_s\cap (\cP_i\times\cP_j)$ and for both $t\in\{0,1\}$ define
    \[  
        \bar{G}_t\coloneqq (G_t\setminus W)\cup (G_{1-t}\cap W).
    \]
    
    For every copy $F \subseteq K_n$ of $K_r$, the sign $S_{G_0,G_1}(F)$ is completely determined by $F\cap G_0$ and $F\cap G_1$. In particular, for every $t\neq s$, since $F_t\cap W=\emptyset$, the random variables $S_{G_0,G_1}(F_t),S_{\bar{G}_0,\bar{G}_1}(F_t)$ are identical. 
    Moreover, for $F_s$, Fact \ref{fact:changing-G0-G1} implies that $S_{G_0,G_1}(F_s)=-S_{\bar{G}_0,\bar{G}_1}(F_s)$ deterministically.   
    Therefore, for every pair of graphs $(\Gamma_0,\Gamma_1)$, upon setting the event 
    \[
    \cF(\Gamma_0,\Gamma_1)\coloneqq \cD \cap \left\{ \left(G_{0}\setminus W=\Gamma_0 \right)\ \land \ \left(G_{1}\setminus W=\Gamma_1\right)\  \land \ \left( G_0\cup G_1 =  \Gamma \right)\right\},
    \]
    we have
    \begin{align*}
        \E\left[\prod_{t=1}^{s}S_{G_0, G_1}(F_{t})\mid \cF (\Gamma_0,\Gamma_1)\right] &= -\E\left[\prod_{t=1}^{s}S_{\bar{G}_0, \bar{G}_1}(F_{t})\mid \cF (\Gamma_0,\Gamma_1)\right].
    \end{align*}

    Further, after conditioning on $\cF(\Gamma_0,\Gamma_1)$, the conditional distributions of $(G_0, G_1)$ and $(\bar{G}_0, \bar{G}_1)$ remain identical. Hence,
    \begin{align*}
        \E\left[\prod_{t=1}^{s}S_{G_0, G_1}(F_{t})\mid \cF (\Gamma_0,\Gamma_1)\right] &= -\E\left[\prod_{t=1}^{s}S_{\bar{G}_0, \bar{G}_1}(F_{t})\mid \cF (\Gamma_0,\Gamma_1)\right]\\
        &= -\E\left[\prod_{t=1}^{s}S_{{G}_0, {G}_1}(F_{t})\mid \cF (\Gamma_0,\Gamma_1)\right],
    \end{align*}
    implying that $\E\bigl[\prod_{t=1}^{s}S_{{G}_0, {G}_1}(F_{t})\mid \cD ,(G_0\cup G_1)=\Gamma \bigr]=0$ as required.    
\end{proof}

When analysing expectations of the form $\E\left[ \left(\sum_{F}S_{G_0, G_1}(F)\right)^L\right]$, it will become evident that it is sufficient to restrict the sum to copies $F \subseteq K_n$ of $K_r$ that contain the same number of vertices in every part of the partition $\cP$. Furthermore, Claim~\ref{claim:sufficient condition for sign zero in expectation new} provides a condition under which the expected value of each summand of $\left(\sum_{F}S_{G_0, G_1}(F)\right)^L$ vanishes. The following definitions are important step towards formalising this discussion.

\begin{definition}\label{def: i, j rainbow}
    For non-negative integers $i,j$, a graph $F\in \cR$ is called \emph{$\cP_{i,j}$-rainbow} if it has exactly $i$ vertices in $\cP_A$ as well as exactly $j$ vertices in $\cP_B$. Denote by $\cR_{i,j}= \cR_{i,j}(\cP)\subseteq \cR$ the set of $\cP_{i,j}$-rainbow copies of $K_r$. 
\end{definition}

\begin{definition}\label{def: count of good rainbow copies}
    Suppose that $L>0$ is an integer and that $p\in [0,1]$. 
    For every graph $G$, define $T_{\cP, L}(G)$ to be the collection of $L$-tuples $(F_1,\ldots ,F_L)$ satisfying the following for every $s \in [L]$:
    \begin{enumerate}
        \item\label{def-count-rainbow-item1} $F_s\in \mathcal{R}(\mathcal{P})$;
        \item $F_s\subseteq G$;
        \item every $v\in \cP_B \cup \cP_A$ does not belong to exactly one $F_t$.
    \end{enumerate}
    Moreover, for every edge $e \in K_n$, we define $\partial_e T_{\cP, L}(G)$ analogously, with the additional condition that $e \in F_s$ for every $s \in [L]$.    
    Similarly, for any pair of integers $i, j$, we define $T_{\cP, L}^{i, j}(G)$ and $\partial_e T_{\cP, L}^{i, j}(G)$ as above, except that condition~\eqref{def-count-rainbow-item1} is replaced by the requirement that $F_s \in \cR_{i, j}(\mathcal{P})$.     
\end{definition}

Throughout the paper, we omit the dependence on $\cP$ from the notation of the above definitions whenever the partition is clear from the context.
The next lemma bounds the expected size of $\partial_e T_{ 2L}^{2, r-k-2}(G_{n, p})$ conditioned on the presence of a small subgraph $H$.

\begin{lemma}\label{cor:upper bound for copies on f}
    Suppose that $p \in [0, 1]$ satisfies
    \begin{enumerate}[label=(A\arabic*)]
        \item \label{item: 1 high moments of alpha_f} $b^{r-k-2} p^{\binom{r-k}{2} - 1} \ge 1$;
        \item \label{item: 2 high moments of alpha_f}  $n^{k} p^{\binom{r}{2} - \binom{r-k}{2}} \ge n^{k'} p^{\binom{r}{2} - \binom{r-k'}{2}}$ for every integer $0 \le k' < k$.
    \end{enumerate}
    Then, for every $C>0$, non-negative integers $L_1,L_2$, an edge $e \in A_1 \times A_2$, and a graph $e \in H \subseteq K_n$ on at most $L_2$ vertices, we have
     \begin{equation}\label{eq:extensions2}
        \Pr\left(\left\lvert\partial_e T_{ 2L_1}^{2, r-k-2}(G_{n, p}) \right\rvert\ge  \left((\log n)^{5r} \cdot b^{r-k-2} n^{2k} p^{2\binom{r}{2} - \binom{r-k}{2} - 1}\right)^{L_1} \mid H \subseteq G_{n, p}\right) \le n^{-C}.
    \end{equation}
\end{lemma}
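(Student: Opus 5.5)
Throughout, write $M \coloneqq b^{r-k-2} n^{2k} p^{2\binom{r}{2} - \binom{r-k}{2} - 1}$. The plan is to read $M$ as the typical number of \emph{pairs} of rainbow copies in $\cR_{2,r-k-2}$ that contain $e$ and share all their $\cP_B$-vertices. Each $F\in\cR_{2,r-k-2}$ containing $e=\{u,v\}$ has its two $\cP_A$-vertices equal to $u,v$, one vertex in each $B_j$, and $k$ vertices in $Z$.

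Condition (3) of Definition~\ref{def: count of good rainbow copies} forces every $\cP_B$-vertex used by some $F_s$ to be used by at least two of the $F_t$. The vertices $u,v$ are automatically shared by all copies.

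\textbf{Reduction to pairs.} I would reduce to a count of pairs via a pairing/covering argument. Given a tuple $(F_1,\dots,F_{2L_1})$, consider the bipartite incidence between copies and $\cP_B$-vertices; every $\cP_B$-vertex has degree at least $2$. I would group the copies into $L_1$ blocks, each block being a set of copies linked through shared $\cP_B$-vertices, with the aim of bounding the count of each block by the $L_1$-th power of a pair count. The cleaner route is a moment/structure bound:
\[
\bigl|\partial_e T_{2L_1}^{2,r-k-2}(G)\bigr| \le \sum_{\text{structures}} \prod (\text{extension counts}).
\]
Here a structure records which copies share which $\cP_B$-vertices, and there are only $O_{L_1,r}(1)$ structures. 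For a fixed structure, order the copies so that each new copy reuses at least one $\cP_B$-vertex of an earlier copy whenever possible. Then bound the number of choices for each copy by the maximum, over $G$, of the number of extensions of a fixed partial vertex set to a rainbow clique.

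\textbf{Extension counts.} The key typical properties I would establish are polylog-tight upper bounds, holding with probability $1-n^{-C-1}$ even conditioned on $H\subseteq G_{n,p}$ with $v(H)\le L_2$, on the following quantities.
\begin{enumerate}
\item[(i)] $N_1$, the number of copies in $\cR_{2,r-k-2}$ containing $e$. This should be about $b^{r-k-2}n^kp^{\binom r2-1}$.
\item[(ii)] $N_2(F)$, for a fixed copy $F$, the number of copies $F'$ containing $e$ whose $\cP_B$-vertices lie inside $V(F)\cup H$ in a prescribed way. The extreme case is $F'$ sharing all $B$-vertices with $F$, which should be about $n^kp^{\binom r2-\binom{r-k}2}$, namely the number of ways to choose $k$ new $Z$-vertices with the required edges.
\end{enumerate}
Both bounds would be proved with Kim--Vu (Theorem~\ref{theorem: Kim Vu}) applied to the hypergraph of edge sets of the extensions, taking $\lambda=\Theta(\log n)$. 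This yields deviations at most $(\log n)^{O(r)}$ times $\max(E,E')$.

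Assumption \ref{item: 2 high moments of alpha_f} says that extending by $k$ vertices of $Z$ is the dominant term among all partial extensions by $k'<k$ vertices. Hence $E'$, the maximal expected count after fixing some vertices, is at most the full $Z$-extension count up to a constant. Assumption \ref{item: 1 high moments of alpha_f} ensures that $b^{r-k-2}p^{\binom{r-k}2-1}\ge1$, so the $B$-part extension count is at least $1$ and the bound on $N_1$ is driven by its mean. Conditioning on $H$ only adds a bounded number of guaranteed edges, which I would handle by replacing the union bound over $O(1)$ edges of $H$ by at most a factor $p^{-O(1)}$. Since such an edge is only used when its endpoints coincide with clique vertices, this loses at most a polylog after careful accounting; alternatively one uses the monotonicity of Harris (Theorem~\ref{theorem: Harris}).

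\textbf{Assembling the bound.} With these typical bounds, each structure contributes at most $(\log n)^{O(r)L_1}\,N_1^{a}\,\prod(\text{sharing extension counts})$, where $a$ is the number of copies introducing new $\cP_B$-vertices. Because every $\cP_B$-vertex is used at least twice, at most $L_1$ copies can be fully new in their $B$-part, and every other copy reuses at least one $B$-vertex. The main obstacle is that a copy may reuse only some of its $B$-vertices and introduce others. One then has to check, using \ref{item: 1 high moments of alpha_f} and \ref{item: 2 high moments of alpha_f}, that such mixed copies never beat the product ``one fresh copy with $N_1$, one fully sharing copy with $n^kp^{\binom r2-\binom{r-k}2}$'', amortised per vertex. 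The accounting would go as follows:
\begin{enumerate}
\item[(a)] Each fresh $B$-vertex costs a factor $b\,p^{(\text{new edges})}$.
\item[(b)] Each $B$-vertex is used at least twice, so its total factor is compared against $\sqrt{\,\cdot\,}$ pairs.
\item[(c)] Condition \ref{item: 1 high moments of alpha_f} shows that adding $B$-vertices never decreases the count, so the worst case is the pairing structure.
\end{enumerate}
This yields $M^{L_1}(\log n)^{O(rL_1)}$, and adjusting constants gives the $(\log n)^{5r}$ factor. A union bound over the $O(1)$ structures and the polynomially many partial vertex sets finishes the argument with failure probability at most $n^{-C}$.
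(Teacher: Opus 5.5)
Your architecture is a legitimate alternative to the paper's: whp upper bounds on every part-restricted extension count, then a deterministic sum over the $O_{L_1,L_2,r}(1)$ overlap patterns, embedding copies one at a time. As written, though, it does not prove the lemma, for two reasons.

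The first is quantitative, but it is fatal for the statement as given. Kim--Vu does not lose only $(\log n)^{O(r)}$. Its deviation term is $\lambda^{k}$, where $k$ is the number of edges of an extension (up to $\binom r2-1$ when only $e$ is pinned). You need $\lambda=\Theta(\log n)$ for polynomially small failure probability, so each copy costs $(\log n)^{\Theta(r^2)}$. Already a fresh copy together with a copy sharing its $\cP_B$-part costs about $(\log n)^{2\binom r2-\binom{r-k}2-1}$. Once $r$ is moderately large, this exceeds the per-pair budget $(\log n)^{5r}$ in \eqref{eq:extensions2}. The exponent $5r$ is fixed and is used downstream, so there is nothing to ``adjust''. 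To stay within it you need a Janson--Oleszkiewicz--Ruci\'nski type moment bound as in Claim~\ref{claim:upper bound on extensions}, redone with part sizes $b$ and $n$. That loses only about $(\log n)^{\#\text{new vertices}}$ per copy.

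A smaller point: Harris points the wrong way for conditioning on the increasing event $\{H\subseteq G_{n,p}\}$, since it gives $\Pr(\cA\mid H\subseteq G_{n,p})\ge\Pr(\cA)$ for increasing $\cA$. A cheap correct fix is $\Pr(\cdot\mid H\subseteq G_{n,p})\le p^{-e(H)}\Pr(\cdot)$ with $p\ge n^{-O(1)}$ by (A1), paid in the failure probability rather than in the count. Alternatively, pin $V(H)$ as the paper does.

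The second problem is that the step you yourself call the main obstacle is only gestured at in (a)--(c). That step, handling copies that reuse some $\cP_B$-vertices and introduce others, is the heart of the lemma. The missing ingredient is a per-vertex edge charge. Suppose $F_s$ has $\beta$ previously placed $\cP_B$-vertices and $\nu_s=r-k-2-\beta$ new ones. Then the edges of $F_s$ inside $\cP_A\cup\cP_B$ incident to the new ones number $\binom{r-k}{2}-\binom{\beta+2}{2}\ge\nu_s\cdot\frac{r-k+1}{2}$. Hence the mean of that copy's extension count is at most $U_s\coloneqq(bp^{(r-k+1)/2})^{\nu_s}\,n^{k}p^{\binom r2-\binom{r-k}2}$, using (A2) for the $Z$-part. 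The same bound holds for each intermediate sub-extension, which the moment bound needs. By (A1) and (A2) we have $U_s\ge1$, so the polylogarithmic floor is harmless. Each $\cP_B$-vertex is used at least twice, so $\sum_s\nu_s\le L_1(r-k-2)$; together with $bp^{(r-k+1)/2}\ge1$ this gives $\prod_sU_s\le M^{L_1}$. This is precisely the degree count $t(r-k+1)/2$ in the paper's proof.

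The paper, however, needs no whp extension bounds and no union bounds over pinned sets. It bounds $\E\bigl[|\partial_eT_{2L}^{2,r-k-2}(G_{n,p})|\mid H\subseteq G_{n,p}\bigr]=O((D_0M)^{L})$ directly for $L=L_1\lfloor C^2\log n\rfloor$. It then uses the concatenation injection $|\partial_eT_{2L_1}^{2,r-k-2}|^{L'}\le|\partial_eT_{2L_1L'}^{2,r-k-2}|$ together with Markov's inequality. The only loss is $D_0=O((L_2L)^{4r})=o((\log n)^{5r})$. Once repaired, your route is essentially the same moment computation split copy by copy.
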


\begin{proof}
    For brevity, for an integer $\ell$ and a graph $G$, we abbreviate $\partial_e T_{\ell}^{2, r-k-2}(G)$ by $T_\ell(G)$ and set $\Lambda \coloneqq b^{r-k-2} n^{2k} p^{2\binom{r}{2} - \binom{r-k}{2} - 1}$.
    We begin by proving that for every integer $L>0$ (that might depend on $n$) there is $D_0 = O_{r}\left((L_2L)^{4r}\right)$ such that for every $e \in A_1 \times A_2$ and a graph $e\in H\subseteq K_n$ on at most $L_2$ vertices, we have
    \begin{equation}\label{eq:extensions1}
         \E\left[\left|T_{ 2L}(G_{n, p})\right| \mid H \subseteq G_{n, p}\right] =O\left( (D_0\Lambda)^{L}\right).
    \end{equation}
    To do so, we begin by estimating the number of ways to construct $(F_1,\ldots,F_{2L})\in T_{2L}(K_n)$.   
    Note that for every $i \in [2L]$ the sets $V(F_i) \cap \cP_A$ are equal and consist of the two endpoints of $e$.
    Thus, to construct the above sequence, it suffices to bound the number of ways one can specify the sets $V(F_i)\cap V(H),V(F_i) \cap (\cP_{B}\setminus V(H)),$ and $V(F_i) \cap (\cP_Z\setminus V(H))$ for every $i$.

    To specify the intersections $V(F_i)\cap V(H)$, we consider the vertices of the graphs $F_i$ one at a time. For each such vertex, we either choose an image in $V(H)$ or leave its image unspecified. Since $|V(H)|\le L_2$ and $\sum_{i=1}^{2L}|V(F_i)|\le 2rL$, it follows that the number of possible choices for the intersections $V(F_i)\cap V(H)$ is at most $(L_2+1)^{2rL}$.

    We continue by specifying the intersections $V(F_i) \cap (\cP_{B}\setminus V(H))$. Let $F=F_1 \cup\ldots \cup F_{2L}$ and note that $\left\lvert V(F) \cap \cP_B\right\rvert \le L(r-k-2)$, since by definition every vertex in $\cP_B\cap V(F)$ belongs to at least two cliques among $F_1,\ldots ,F_{2L}$. 
    Let $t \le L(r-k-2)$ denote the number of vertices in $V(F)\cap (\cP_{B}\setminus V(H))$. 
    Since $|\cP_B|  = rb$, there are at most $(rb)^{t}$ ways to choose those vertices, and then at most $(t+1)^{2rL} \le (rL)^{2rL}$ ways to choose the intersections $V(F_i) \cap (\cP_{B}\setminus V(H))$.

    It remains to choose the intersections $V(F_i) \cap (\cP_Z\setminus V(H))$. By the definition of $\cR_{2, r-k-2}(\cP)$, every graph $F_i$ contains $k$ vertices in $\cP_Z$. Then, for each $i\in [2L]$, given $F_1,\ldots,F_{i-1}$, there are at most $(2rL)^{r}$ ways to identify $u_i\leq k$ vertices in the existing set $V\left(\bigcup _{j=1}^{i-1}F_j\right)$ that also belong to $V(F_{i})\cap (\cP_Z \setminus V(H))$. The remaining $v_i\leq k-u_i$ vertices of $F_{i}$ are then chosen in at most $n^{v_i}$ ways from $\cP_Z\setminus\left(V(H)\cup V\left(\bigcup _{j=1}^{i-1}F_j\right)\right)$.
    Overall, given sequences $u_i,v_i$, these choices account for at most $(2rL)^{2rL} \prod_{i=1}^{{2L}} n^{{v_i}}$ options.

    Finally, for a given $(F_1, \dots, F_{2L})$, letting $F=F_1 \cup \ldots \cup F_{2L}$, we estimate the probability that $F \subseteq G_{n, p} \cup H$. As before, let $t = |V(F)\cap (\cP_{B}\setminus V(H))| \le L(r-k-2)$. 
    Note that every vertex $v \in V(F)\cap (\cP_B \setminus V(H))$ has at least $r - 1 - k$ neighbours in $\cP_A \cup \cP_B$ within $F$, while every vertex $v \in V(F) \cap \cP_A$ (i.e.\ an endpoint of $e$) has $t$ neighbours in $\cP_B \setminus V(H)$. Consequentially, the subgraph of $F$ induced on $V(F)\cap \left(\cP_{B}\cup \cP_A\right)$ has at least $\frac{t(r-k-1)+2t}{2}$ edges not in $H$. 
    For every $i\in [2L]$, recall that $v_i$ is the number of vertices in $V(F_i) \cap (\cP_Z \setminus V(H))$ not appearing in earlier $F_j$, i.e.\ not belonging to $V\left(\bigcup_{j<i} F_j\right)$. These vertices contribute at least $\binom{r}{2}-\binom{r-v_i}{2}$ edges to $F\setminus H$.

    Summing over all possible choices of $t$ and $v_i$, we obtain the following upper bound:
     \begin{align*}
        \E\big[|T_{2L}(G_{n,p})| \mid H \subseteq G_{n, p}\big] &\leq (D_0/r^3)^L   \sum_{t=0}^{L(r-k-2)} b^t p^{\frac{t(r-1-k) + 2t}{2}} \cdot \prod_{i=1}^{2L} \left(\sum_{v_i = 0}^{k} n^{v_i} p^{\binom{r}{2} - \binom{r-v_i}{2}}\right) \\
        &\leq  (D_0/r)^L \sum_{t=0}^{L(r-k-2)}  b^t p^{\frac{t(r+1-k)}{2}} \cdot \left( n^{k} p^{\binom{r}{2} - \binom{r-k}{2}}\right)^{2L},
    \end{align*}
    where in the second inequality we used \ref{item: 2 high moments of alpha_f}. Observe that by \ref{item: 1 high moments of alpha_f}, we have
    \[
        b p^{\frac{r-k+1}{2}} = {\left(b^{r-k-2} p^{\binom{r-k}{2} - 1}\right)^{\frac{1}{r-k-2}}} \ge 1.
    \]
    Therefore, as $rL\leq r^L$ always, we also have 
    \[
        \E\big[|T_{2L}(G_{n,p})| \mid H \subseteq G_{n, p}\big]  =O \left(\left(D_0 \cdot b^{r-k-2} \cdot n^{2k} \cdot  p^{\frac{(r-k-2)(r+1-k)}{2}}p^{2\binom{r}{2} - 2\binom{r-k}{2}}\right)^{L}\right),
    \]
    concluding the proof of \eqref{eq:extensions1} as $\frac{(r-k-2)(r+1-k)}{2}=\binom{r-k}{2}-1$.

    Turning to the proof of \eqref{eq:extensions2}, without loss of generality, assume that $C$ is a sufficiently large constant and let $L' \coloneqq \lfloor C^2 \log n \rfloor$.
    Let $M=L_1L'$, let $D_0 = O_r((L_2L)^{4r})$ be such that \eqref{eq:extensions1} holds with respect to $L$, and fix $D\coloneqq (3D_0)^{L_1}$. By Markov's inequality 
    \begin{align*}
        \Pr\left(|T_{2L_1}(G_{n,p})| \ge D\cdot \Lambda^{L_1} \mid H \subseteq G_{n, p}\right) &\le \frac{\E\left[\left|T_{2L_1}(G_{n,p})\right|^{L'} \mid H \subseteq G_{n, p}\right]}{D^{L'}\cdot  \Lambda^{L_1 L'}}.
    \end{align*}

    Note that
    \[
        \E\left[\left|T_{2L_1}(G_{n,p})\right|^{L'} \mid  H\subseteq G_{n, p}\right] \le \E\big[\left|T_{2L_1L'}(G_{n,p})\right| \mid H \subseteq G_{n, p}\big].
    \]
    Indeed, expanding the expectation on the left-hand side, we get $L'$ many $2L_1$-tuples of $\cP_{2, r-k-2}$-rainbow copies lying on $e$ with the property that every vertex in $\cP_B$ appears in either no copy or in at least a pair of copies. Concatenating $L'$ such tuples gives a tuple of length $2L_1L'$ without vertices in $\cP_B$ appearing in exactly one clique as well, i.e.\ belonging to $T_{2L_1L'}(G_{n,p})$. Then, we obtain
    \begin{align*}
        \Pr\left(\left|T_{2L_1}(G_{n,p})\right| \ge D\cdot\Lambda^{L_1} \mid H \subseteq G_{n, p}\right) &\le \frac{\E\big[\left|T_{2L_1L'}(G_{n,p})\right| \mid H \subseteq G_{n, p}\big]}{D^{L'}\cdot \Lambda^{L_1  L'}}.
    \end{align*}
    Thus, by \eqref{eq:extensions1},
    \[
        \Pr\left(|T_{2L_1}(G_{n,p})| \ge D \cdot \Lambda^{L_1} \mid H \subseteq G_{n, p}\right) =O\left( \left(\frac{D_0 \cdot \Lambda}{D^{1/L_1} \cdot \Lambda}\right)^{L_1 L'} \right)=  O\left(3^{-L_1L'}\right) \leq  n^{-C},
    \]
    where the last inequality holds by our assumption that $C$ is large enough. This completes the proof as $D_0 =O_r((L_2L_1L')^{4r}) =o((\log n)^{5r}) $ which implies that
    $D = (3 D_0)^{L_1} \le \left(\log n\right)^{5rL_1}$.
\end{proof}

Suppose that $G$ is a graph on the vertex set $[n]$ and suppose that $S\subseteq V(G)$. An $ r$-vertex graph $F\subseteq G$ is a \emph{$K_r$-extension of $S$} (in $G$) if $S\subseteq V(F)$, and $E(F) = \binom{V(F)}{2} \setminus \binom{S}{2}$. 
The following claim provides a condition ensuring that, typically, any $S\subseteq [n]$ of size $k$, extends to many copies of $K_r$ in $G_{n,p}$. For $S\subseteq [n]$, let $X_{S}$ denote the random variable counting the number of $K_r$-extensions of $S$ in $G_{n,p}$. We note that if $S = \emptyset$, then we simply have $X_{S} = X_r$.
The following is the well-known extension theorem of Spencer \cite{Spe1990}.
\begin{theorem}[\cite{Spe1990}]\label{claim: lower bound on extensions}
    Let $r$ and $0 \le k \le r$ be integers and let $C > 0$ be a constant. There exists a constant $C_0 > 0$ such that the following holds. Suppose that $n^{r-k} p^{\binom{r}{2} - \binom{k}{2}} \ge C_0 \log n$.
    Then, 
    \[
        \Pr\left(\exists S\subseteq [n] \text{ of size }k \text{ such that } X_{S}< \frac{1}{2r^r}n^{r-k}p^{\binom{r}{2}-\binom{k}{2}} \right) \leq n^{-C}.
    \]
\end{theorem}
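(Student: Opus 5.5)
We will prove the statement in two stages: first a bound for a single fixed set $S$, and then a union bound over all at most $n^k$ sets $S\subseteq[n]$ of size $k$. Since $n^{k}\cdot n^{-C-k}=n^{-C}$, it suffices to show that for each fixed $S$ of size $k$,
\[
\Pr\left(X_S<\tfrac{1}{2r^r}\mu_S\right)\le n^{-C-k}, \qquad \mu_S\coloneqq \E[X_S]=\Theta\left(n^{r-k}p^{\binom{r}{2}-\binom{k}{2}}\right).
\]
The estimate for $\mu_S$ is immediate, since there are $\Theta(n^{r-k})$ choices of the remaining $r-k$ vertices, each requiring $\binom{r}{2}-\binom{k}{2}$ edges. (For $k=r$ the statement is trivial, because $X_S=1$.)

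\textbf{A disjoint subfamily.} The natural tool is Janson's inequality (Theorem~\ref{theorem: Janson}). The difficulty is that $\Delta$ can be large, since extensions sharing a vertex outside $S$ overlap in edges to $S$. To avoid this, we pass to a subfamily with good disjointness. Partition $[n]\setminus S$ into $r-k$ parts $V_1,\dots,V_{r-k}$ of equal size $\approx n/r$. We then consider only the extensions $F$ that use exactly one vertex from each $V_i$; there are roughly $(n/r)^{r-k}$ of them, which is where a constant like $r^{-r}$ enters. Even so, this does not remove overlaps, and the real issue is that for $k\ge1$ two extensions sharing one outside vertex share $k$ edges, which contributes a large term to $\Delta$. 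So for $k\ge 1$ a plain Janson bound may only give a probability of order $\exp(-\Theta(\min_{J}\Phi_J))$, where $\Phi_J$ is the expected number of partial extensions.

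\textbf{The standard Spencer route.} Instead we follow Spencer's proof: combine the Janson lower tail with a maximal disjoint family argument. Let $\mathcal{H}$ be the hypergraph whose edges are the edge sets of extensions of $S$. By Janson, $\Pr(\text{no extension})\le \exp(-\Omega(\min\{\mu,\mu^2/\Delta\}))$. The same bound applies to the event that there is no extension avoiding any fixed set of $O(\log n)$ vertices, since removing these vertices changes $\mu$ and $\Delta$ only by a $1+o(1)$ factor. This is the step where we need the assumption that all subextension counts are large. Because $K_r$ extensions are strictly balanced over $S$, the condition $n^{r-k}p^{\binom r2-\binom k2}\ge C_0\log n$ implies that every intermediate count $n^{j}p^{\binom{k+j}{2}-\binom k2}$ is at least as large up to constants, so $\mu^2/\Delta\ge c\,C_0\log n$.

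\textbf{Main obstacle: boosting to a linear count.} The hardest step is upgrading ``at least one extension'' to ``at least a constant fraction of $\mu_S$''. The plan is Spencer's: consider a maximal collection of extensions that are pairwise vertex-disjoint outside $S$. By Janson's inequality applied to disjoint families (the Erd\H{o}s--Tetali / Janson lower tail at level $(1-\gamma)\mu$), the expected number of such disjoint extensions is at least $(1-\gamma)$ times the relevant count, with failure probability $\exp(-\Omega(\mu))$. Alternatively, one can apply Theorem~\ref{theorem: Janson} directly to $\mathcal{H}$ restricted to the multipartite family above with $\gamma=1/2$, and separately show that $\mu^2/\Delta=\Omega(\min_{1\le j\le r-k}\text{(count of }j\text{-vertex subextensions)})\ge c\,C_0\log n$. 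Choosing $C_0$ large relative to $C+k$ then gives a failure probability of at most $n^{-C-k}$. Verifying this $\Delta$ bound over all overlap patterns, using the balancedness of cliques, is the computation we expect to be the main work.
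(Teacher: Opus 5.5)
The paper does not prove this statement; it is quoted from Spencer. The closest argument in the paper is the proof of Claim~\ref{claim: lower bound on extensions distinct}. That proof is exactly the fallback you mention at the end: Theorem~\ref{theorem: Janson} with $\gamma=1/2$ for a fixed $S$, then a union bound. That route works, but your proposal has two real problems.

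\textbf{The obstacle is misdiagnosed, and your main route for it fails.} Theorem~\ref{theorem: Janson} already bounds $\Pr(X_S\le \mu/2)$ directly, so there is nothing to ``boost'' from the existence of one extension. A family of extensions that are pairwise vertex-disjoint outside $S$ has at most $(n-k)/(r-k)$ members. It therefore cannot certify $\Omega(\mu)$ extensions once $\mu\gg n$ (e.g.\ $p=1/2$, $k\le r-2$). In Spencer's argument, disjoint families serve the upper tail, not the lower tail. Likewise, a Janson exponent of order $\min_J\Phi_J$ is not a weakness to work around. It is exactly what suffices, provided you show it is large.

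\textbf{That estimate is the one the proof rests on, and you justify it with a false claim and then defer it.} Set $\Phi_j\coloneqq n^jp^{jk+\binom j2}$. Two extensions sharing $j$ outside vertices share exactly $jk+\binom j2$ edges, so $\Delta=O\bigl(\mu^2\sum_{j=1}^{r-k-1}\Phi_j^{-1}\bigr)$. You therefore need $\min_{1\le j<r-k}\Phi_j\ge cC_0\log n$.

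It is not true that each $\Phi_j$ is at least $\mu$ up to constants: for $p=1/2$, $k=0$ one has $\Phi_2=\Theta(n^2)$ while $\mu=\Theta(n^r)$. The convexity/chord bound used in Claim~\ref{claim: lower bound on extensions distinct}, namely $np^k\ge\mu^{1/(r-k)}$, is also not enough. It gives only $(C_0\log n)^{1/(r-k)}$, hence failure probability $\exp(-\Omega((\log n)^{1/(r-k)}))$, far from $n^{-C-k}$. This is why that claim assumes $(\log n)^{(1+\epsilon)r}$.

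The missing step is to use the hypothesis to locate $p$.
\begin{itemize}
\item Since $\binom r2-\binom k2=\frac{(r-k)(r+k-1)}{2}$, the assumption gives $np^{(r+k-1)/2}\ge 1$, i.e.\ $p\ge n^{-2/(r+k-1)}$.
\item Hence $\Phi_j=\bigl(np^{k+(j-1)/2}\bigr)^j\ge n^{j(r-k-j)/(r+k-1)}\ge n^{1/(2r)}$ for all $1\le j\le r-k-1$. This is where strict balancedness really enters; if $r-k=1$ then $\Delta=0$.
\item So $\min\{\mu,\mu^2/\Delta\}=\Omega(\min\{C_0\log n,\,n^{1/(2r)}\})$, and Janson gives $\Pr(X_S\le\mu/2)\le n^{-C-k}$ for $C_0$ large.
\item Finally, $\mu/2\ge\frac{1}{2r^r}n^{r-k}p^{\binom r2-\binom k2}$ because $\binom{n-k}{r-k}\ge\bigl(\frac{n-k}{r-k}\bigr)^{r-k}\ge(n/r)^{r-k}$, and the union bound over the at most $n^k$ choices of $S$ finishes.
\end{itemize}
The multipartite restriction is unnecessary.
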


\begin{remark}\label{remark: lower bound on extensions}
    Fix some $\epsilon>0$. By adjusting $C$ and $C_0$ in Theorem \ref{claim: lower bound on extensions} it is immediate to show that for a fixed set $Z\subseteq [n]$ of size at least $\epsilon n$:
    \[
        \Pr\left(\exists S\subseteq [n] \text{ of size }k \text{ such that } X'_{S}< \frac{1}{2r^{r}}|Z|^{r-k}p^{\binom{r}{2}-\binom{k}{2}} \right) \leq n^{-C},
    \]
    where $X_S'$ counts the number of $K_r$-extensions of $S\subseteq [n]$ where all the additional vertices were taken from $Z$.  
\end{remark}

Throughout the paper, we will also use a simple modification of Theorem \ref{claim: lower bound on extensions} where the cliques are taken from specified sets. 
\begin{claim}\label{claim: lower bound on extensions distinct}
    Let $r$ and $0 \le k \le r$ be integers and $\epsilon>0$. Suppose $B_1,\ldots ,B_{r-k}\subseteq [n]$ are each of size $b
    $, such that $b^{r-k} p^{\binom{r}{2} - \binom{k}{2}} \ge (\log n)^{(1+\epsilon)r}$.
    For every $S \subseteq [n]\setminus \bigcup_{i=1}^{r-k} B_{i}$ of size $k$ denote by $\hat{X}_{S}$ the number of extension of $S$ to an $r$-clique with one vertex in each $B_i$. Then, 
    \[
        \Pr\left(\exists S\subseteq [n]\setminus \bigcup_{i=1}^{r-k}B_i \text{ of size }k \text{ such that } \hat{X}_{S}< \frac{1}{2}b^{r-k}p^{\binom{r}{2}-\binom{k}{2}} \right) = n^{-\omega(1)}.
    \]
\end{claim}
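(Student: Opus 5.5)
Fix a set $S\subseteq [n]\setminus\bigcup_{i=1}^{r-k}B_i$ of size $k$. We bound the lower tail of $\hat X_S$ with Janson's inequality (Theorem~\ref{theorem: Janson}) and then take a union bound over the at most $n^k$ choices of $S$. The randomness of $\hat X_S$ comes from the random edge set of $G_{n,p}$. Let $V$ be the set of edges of $K_n$ and let $\cH_S$ be the hypergraph on $V$ whose edges are the sets $E(F)$, where $F$ ranges over the $r$-vertex graphs on $S\cup\{x_1,\dots,x_{r-k}\}$ with $x_i\in B_i$ and edge set $\binom{V(F)}{2}\setminus\binom{S}{2}$. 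Each such set has $\binom{r}{2}-\binom{k}{2}$ elements, and $\hat X_S=|\cH_S[V_p]|$. Since the $B_i$ are disjoint from $S$ (and we may assume the $B_i$ are pairwise disjoint, as in the applications; otherwise the count changes only by a constant factor), we get $\mu\coloneqq\mu_p(\cH_S)=b^{r-k}p^{\binom{r}{2}-\binom{k}{2}}$.

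Next we bound $\Delta\coloneqq\Delta_p(\cH_S)$. Two extensions $F,F'$ that share an edge outside $\binom{S}{2}$ must share a nonempty set $J$ of new vertices, say $|J|=j$ with $1\le j\le r-k$, where $J$ takes one vertex from each of $j$ prescribed parts. For fixed $J$, the overlap consists of exactly the edges inside $S\cup J$ that are not in $\binom{S}{2}$, namely $\binom{k+j}{2}-\binom{k}{2}$ edges. Hence
\[
\Delta\le \sum_{j=1}^{r-k}2^{r}\, b^{2(r-k)-j}p^{2\left(\binom{r}{2}-\binom{k}{2}\right)-\left(\binom{k+j}{2}-\binom{k}{2}\right)},
\]
and therefore
\[
\frac{\mu^2}{\Delta}\ge 2^{-r}\min_{1\le j\le r-k} b^{j}p^{\binom{k+j}{2}-\binom{k}{2}}.
\]

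The key step is to show that each of these terms is at least roughly $(\log n)^{1+\epsilon}$. For every $j$, the quantity $\mu_j\coloneqq b^{j}p^{\binom{k+j}{2}-\binom{k}{2}}$ is the expected number of extensions of $S$ by $j$ new vertices. The exponent $e_j\coloneqq\binom{k+j}{2}-\binom{k}{2}=jk+\binom{j}{2}$ is convex in $j$, so $e_j/j=k+(j-1)/2$ is increasing in $j$. It follows that $b\,p^{e_j/j}\ge b\,p^{e_{r-k}/(r-k)}=\mu^{1/(r-k)}$, which gives $\mu_j\ge \mu^{j/(r-k)}\ge (\log n)^{(1+\epsilon)rj/(r-k)}\ge(\log n)^{1+\epsilon}$. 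The same bound applies to $\mu$ itself, since $\mu\ge(\log n)^{(1+\epsilon)r}$ by assumption.

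Janson's inequality with $\gamma=1/2$ then gives
\[
\Pr\left(\hat X_S<\tfrac12\mu\right)\le\exp\left(-\Omega_r\left((\log n)^{1+\epsilon}\right)\right)=n^{-\omega(1)}.
\]
Taking a union bound over the at most $n^k$ choices of $S$, with $k$ a constant, still leaves $n^{-\omega(1)}$, which proves the claim.

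The main obstacle is the monotonicity step bounding $\min_j\mu_j$ from below by a power of $\mu$. This step is what uses the clique structure, since it relies on the convexity of $e_j$, and it is also where the extra $\epsilon$ in the exponent is essential for obtaining a super-polynomially small failure probability.
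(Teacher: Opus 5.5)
Your proof is correct and is essentially the paper's argument: Janson's inequality for a fixed $S$ plus a union bound over the at most $n^k$ sets, with $\mu^2/\Delta$ controlled by the partial extension counts $b^{j}p^{\binom{k+j}{2}-\binom{k}{2}}$. The paper uses log-convexity in the number of shared vertices to reduce to $j\in\{1,r-k\}$ and then shows $bp^{k}\ge\mu^{1/(r-k)}$, whereas your monotonicity of $e_j/j$ gives $\mu_j\ge\mu^{j/(r-k)}$ for every $j$ at once.

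Two blemishes are cosmetic. First, summing over $j$ costs an extra constant factor $r-k$ in your lower bound on $\mu^2/\Delta$. Second, your aside about overlapping $B_i$ should be dropped, since a constant-factor loss in the count is not compatible with the fixed threshold $\tfrac12$; as the paper tacitly does, simply take the $B_i$ pairwise disjoint.
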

\begin{proof}
    We prove this claim by a standard application of Theorem \ref{theorem: Janson} and the union bound.    
    Fix $S \subseteq [n] \setminus \bigcup_{i=1}^{r-k} B_{i}$ of size $k$. Denote by $\cF_S$ the family of $K_r$-extensions of $S$ taking exactly one vertex in each $B_i$. Towards application of Theorem \ref{theorem: Janson}, set 
    \[
        \mu = \sum_{F \in \cF_S} \Pr(F \subseteq G_{n, p}) \quad \text{and} \quad \Delta = \sum_{\substack{F_1 \neq F_2 \in \cF_S \\ E(F_1) \cap E(F_2) \neq \emptyset}} \Pr\left(F_1 \cup F_2 \subseteq G_{n, p}\right).
    \]
    Clearly, for every $F\in \cF_{S}$ we have $\Pr(F\subseteq G_{n,p}) = p^{\binom{r}{2}-\binom{k}{2}}$ and therefore, 
    \[
        \mu = \left\lvert \cF_S\right\rvert \cdot p^{\binom{r}{2} - \binom{r-k}{2}} = b^{r-k} p^{\binom{r}{2} - \binom{k}{2}} \ge (\log n)^{(1+\epsilon)r}.
    \]

   We now turn to the analysis of $\Delta$, beginning with the following observation: 
   \begin{align}\label{eq:Delta}
        \Delta \le \mu \cdot \max_{F_1 \in \cF_S} \sum_{\substack{F_1 \neq F_2 \in \cF_S \\ F_1 \cap F_2 \neq \emptyset}} \Pr\left(F_2 \subseteq G_{n, p} \mid F_1 \subseteq G_{n, p}\right).
    \end{align}
    To further bound \eqref{eq:Delta}, by symmetry, it suffices to fix an arbitrary $F_1 \in \cF_S$ and bound the term inside the maximum for this particular instance. Given such $F_1$, the number of graph $F_2\in \cF_{S}$ intersecting $F_1$ in $k< i < r$ vertices is at most $O\left(b^{r-i}\right)$, and moreover, $e(F_2\setminus F_1) = \binom{r}{2}-\binom{i}{2}$. Hence, 
    \begin{align*}
      \frac{\Delta}{\mu} = O\left( \sum_{i=k+1}^{r} b^{r-i} p^{\binom{r}{2} - \binom{i}{2}}\right) = O\left(\max\left\{1, b^{r-k-1} p^{\binom{r}{2} - \binom{k+1}{2}}\right\}\right) = O\left(\max\left\{1, \frac{\mu}{bp^{k}}\right\}\right) ,
    \end{align*}
    where the middle equality holds due to the convexity of $x \mapsto  b^{r-x} p^{\binom{r}{2} - \binom{x}{2}}$.

    Noting that $\binom{r}{2} - \binom{k}{2} \ge k(r-k)$, our assumption implies that
    \begin{align*}
        b p^{k} \ge \left(b^{r-k} p^{\binom{r}{2} - \binom{k}{2}}\right)^{\frac{1}{r-k}} \ge (\log n)^{1+\epsilon}.
    \end{align*}
    As a result, $\frac{\mu^2}{\Delta} = \Omega\left((\log n)^{1+\epsilon}\right)$, and Theorem \ref{theorem: Janson} applied with $\gamma=1/2$ implies that
    \[
        \Pr\left(\hat{X}_S \le \frac{1}{2} 
        b^{r-k} p^{\binom{r}{2} - \binom{r-k}{2}}\right) \le e^{-\Omega\left((\log n)^{1+\epsilon}\right)}.
    \]
    The proof now follows by applying the union bound over the at most $n^k$ choices of $S$.
\end{proof}

We will also require a bound on the upper tail probability of $X_S$. This is the content of the next claim, which follows from a simple adaptation of the argument of Janson, Oleszkiewicz, and Ruci\'nski \cite{JanOleRuc2004}.

\begin{claim}\label{claim:upper bound on extensions}
    Let $r \ge 3$ be an integer and let $p \in [0,1]$. Then, for every $C>0$, we have
    \[
        \Pr\left(\exists S\subseteq [n] \text{ such that }X_S \geq \max\left\{r^2 n^{r-|S|} p^{\binom{r}{2} - \binom{|S|}{2}}, r^2(rC\log n)^{r}\right\}\right) \le n^{-C}.
    \]
\end{claim}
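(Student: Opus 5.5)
We will prove the claim with a high-moment bound on $X_S$ for each fixed $S$, followed by Markov's inequality and a union bound over the at most $\sum_{s\le r} n^s\le 2n^r$ choices of $S$. We may assume $|S|=s\le r$, since otherwise $X_S=0$. Set $\mu_S\coloneqq n^{r-s}p^{\binom r2-\binom s2}$, which upper bounds $\E[X_S]$, and fix $m\coloneqq\lceil (C+r+1)\log n\rceil$. Since extensions of $S$ always contain $S$, $X_S$ counts copies of the fixed rooted graph $K_r$ with root set $S$. The plan is to follow the Janson--Oleszkiewicz--Ruci\'nski strategy of bounding $\E[X_S^m]$ through a recursion over the first extension.

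\textbf{Step 1 (moment expansion).} We write $\E[X_S^m]=\sum_{(F_1,\dots,F_m)}\Pr(F_1\cup\dots\cup F_m\subseteq G_{n,p})$. We then expand the tuple as $F_1$ together with an $(m-1)$-tuple. Conditioning on $F_1\subseteq G_{n,p}$, each later extension $F_j$ shares some set $T_j\supseteq S$ with $V(F_1)$. By Harris' inequality (Theorem~\ref{theorem: Harris}), or simply by the product structure, we get
\[
\E[X_S^m]\le \E[X_S]\cdot \max_{F_1}\E\Bigl[\Bigl(\sum_{S\subseteq T\subseteq V(F_1)} Y_{T}\Bigr)^{m-1}\Bigr],
\]
where $Y_T$ counts extensions $F$ of $S$ with $V(F)\cap V(F_1)=T$ and with all edges not already in $F_1$ present. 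Each $Y_T$ is dominated by the number of $K_r$-extensions of the set $T$, so $Y_T\le X_T$. The case $T=S$ contributes $X_S$ itself, while the sets $T\supsetneq S$ are at most $2^r$ in number.

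\textbf{Step 2 (induction on $r-s$).} We prove by downward induction on $s$ the bound
\[
\E[X_S^m]\le \bigl(r\mu_S + r(rm)^{r-s}\bigr)^m
\]
for every $m\le n$. The base case $s=r$ is trivial, since then $X_S\le 1$. For the step we use $(x+y)^{m-1}$ expansions and Hölder's inequality to split the moment between $X_S$ and the terms $X_T$ with $|T|>s$. The key monotonicity fact is $\mu_T\le \mu_S\cdot (np^{s})^{-(|T|-s)}\cdot(\ldots)$. Rather than relying on it, it is easier to just use the inductive bound $\E[X_T^{j}]\le (r\mu_T+r(rm)^{r-|T|})^{j}$ and bound $\mu_T\le \max(\mu_S,\cdot)$. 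This is the delicate point: $\mu_T$ need not be at most $\mu_S$, since $p$ may be small while extensions of larger sets are rarer. So we will instead phrase the induction as a bound for the maximum $M_s\coloneqq\max(\max_{|T|\ge s}\mu_T,(rm)^{r-s})$, which matches the "max over $S$" structure in the claim. We apply the bound to $\mu_S$ for the given $S$ only after noting that the claim's threshold already allows the max. Concretely, the recursion gives $\E[X_S^m]\le \E[X_S]\cdot 2^{r}\,(r\mu_S+ r\,(rm)^{r-s})^{m-1}$ after absorbing factors, and iterating yields the stated bound with constant $r$ versus the $r^2$ in the claim.

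\textbf{Step 3 (conclusion).} By Markov's inequality,
\[
\Pr\bigl(X_S\ge r^2\max\{\mu_S,(rC\log n)^r\}\bigr)\le \Bigl(\tfrac{r\mu_S+r(rm)^{r}}{r^2\max\{\mu_S,(rC\log n)^{r}\}}\Bigr)^m\le (2/r)^m.
\]
Here we take $m\approx C\log n$ so that $(rm)^r$ is comparable to $(rC\log n)^r$; the constants must be tuned, possibly by replacing $m$ with $cC\log n$, and for $r\ge3$ the ratio then stays below a constant less than $1$. The resulting bound $n^{-C-r}$ survives the union bound over the sets $S$. \textbf{The main obstacle} is Step 2: making the JOR recursion close with the explicit constants $r^2$ and $rC\log n$, and handling sets $T\supset S$ whose $\mu_T$ exceeds $\mu_S$. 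We resolve the latter by noting that $X_T$ for $T\supset S$ only enters through the deterministic bound $X_T\le n^{r-|T|}$ conditioned on a good event, together with the inductive tail for $X_T$.
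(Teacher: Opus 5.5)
The gap is Step~2, which carries the entire argument and is never actually carried out; you flag it yourself as the main obstacle, and the resolutions you offer do not work.

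\textbf{Step~2 does not close.} Conditioning only on $F_1$ forces you to control $\E[(\sum_T Y_T)^{m-1}]$. Splitting this by H\"older/Minkowski gives $\E[X_S^m]\le\mu_S\bigl(\|X_S\|_{m-1}+\sum_{T\supsetneq S}\|X_T\|_{m-1}\bigr)^{m-1}$. Relative to $\|X_S\|_{m-1}^{m-1}$ this loses a factor $(1+\theta)^{m-1}$, which is exponential in $m\asymp\log n$ and cannot be absorbed into a prefactor $2^r$. The inequality $\E[X_S^m]\le\E[X_S]\,2^r\bigl(r\mu_S+r(rm)^{r-s}\bigr)^{m-1}$ is asserted, not derived. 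Even if granted, it does not reproduce your induction hypothesis, since that would need $2^r\mu_S\le r\mu_S+r(rm)^{r-s}$, which is false when $\mu_S\gg(rm)^{r-s}$.

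\textbf{The treatment of $\mu_T>\mu_S$ is wrong.} The threshold in the claim involves only $\mu_S$, so it does not ``already allow the max''. The bound $X_T\le n^{r-|T|}$ is far too weak to help. The relevant fact is that $j\mapsto n^{r-j}p^{\binom r2-\binom j2}$ is log-convex, so $\mu_T\le\max\{\mu_S,1\}$ whenever $T\supseteq S$.

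\textbf{Step~3 fails even granting your moment bound.} You have a sum $r\mu_S+r(rm)^{r-s}$ rather than a maximum. Take $S=\emptyset$ and $\mu_S=(rC\log n)^r$. With $m=\alpha C\log n$ the Markov ratio is $(1+\alpha^r)/r$. For $r=3$ one has $\alpha\ln\bigl(3/(1+\alpha^3)\bigr)<0.57$ for every $\alpha>0$, so no choice of $m$ reaches $n^{-C}$.

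\textbf{The missing idea} is to condition on the union of \emph{all} previously chosen extensions, not just on $F_1$. Then
\[
\E[X_S^L]=\sum_{F_1,\dots,F_{L-1}}\Pr\Bigl(\bigcup_{i<L}F_i\subseteq G_{n,p}\Bigr)\sum_{F_L}p^{e(F_L\setminus\bigcup_{i<L}F_i)},
\]
and the inner sum is deterministic. An $F_L$ meeting $\bigcup_{i<L}V(F_i)$ in $j$ vertices (including $S$) can be chosen in at most $(rL)^{j-s}n^{r-j}$ ways, and it has at least $\binom r2-\binom j2$ edges outside $\bigcup_{i<L}F_i$. Since $j\mapsto(rL)^{j-s}n^{r-j}p^{\binom r2-\binom j2}$ is log-convex, each term is at most its value at $j=s$ or $j=r$. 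Hence each term is at most $M\coloneqq\max\{\mu_S,(rC\log n)^r\}$ when $L\le C\log n$.

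This is the paper's one-step recursion $\E[X_S^L]\le rM\,\E[X_S^{L-1}]$. It needs no induction on $|S|$ and no moments of $X_T$, and the intermediate sets $T$ are dominated by the endpoints automatically. Markov at order $C\log n$ then gives ratio $1/r$.

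If you want to keep the JOR route instead, its real closing step is a dichotomy. Set $u=\|X_S\|_m$ and $y=\sum_{T\supsetneq S}\|X_T\|_m$, so that $u^m\le\mu_S(u+y)^{m-1}$. Either $y\le u/m$, and then $u\le e\mu_S$, or $u<my$. This does propagate by induction on $r-|S|$ and log-convexity. However, it only gives $\|X_S\|_m\le c_r\max\{\mu_S,m^{r-|S|}\}$ with $c_r$ much larger than $r$, not the stated constants $r^2$ and $rC\log n$.
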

\begin{proof}
    Observe that for every $S \subseteq [n]$ of size at least $r+1$, we have $X_S=0$. Therefore, by setting $M \coloneqq \max\left\{n^{r-s} p^{\binom{r}{2} - \binom{s}{2}}, (rC\log n)^{r}\right\}$ where $s=|S|$, it suffices to prove that for every $S\subseteq [n]$ of size at most $r$ we have
    \[
     \Pr\left(X_S \ge r^2 M \right) \le n^{-C}.
   \] 
    
    Fix $S\subseteq [n]$ of size $s\leq r$. We begin by showing that for every $L\leq C\log n\eqqcolon L^*$, we have $\E[X_S^{L}] \leq rM \cdot \E[X_S^{L-1}]$.
    This clearly holds for $L=1$ as 
    \[
        \E[X_S] = \binom{n-s}{r-s} p^{\binom{r}{2} - \binom{s}{2}} \le  n^{r-s} p^{\binom{r}{2} - \binom{s}{2}}\le M.
    \]
    For $L>1$ by setting $\cF_S$ to be the collection of $K_r$-extensions of $S$, we have
    \begin{align*}
        \E\left[(X_S)^L\right] &= \sum_{F_1, \dots, F_L \in \cF_S} \Pr\left(\bigcup_{i=1}^{L} F_i \subseteq G_{n, p}\right) \\
        &= \sum_{F_1, \dots, F_{L-1} \in \cF_S} \Pr\left(\bigcup_{i=1}^{L-1} F_i \subseteq G_{n, p}\right)  \cdot\sum_{F_L \in \cF_S} \Pr\left(F_L \subseteq G_{n, p} \ \bigg \vert\ \bigcup_{i=1}^{L-1} F_i \subseteq G_{n, p}\right).
    \end{align*}
    By setting $j$ to be the number of vertices $F_{L}$ shares with $\bigcup_{i=1}^{L-1}F_i$ (including vertices in $S$), we further have
    \begin{align*}
        \E\left[(X_S)^L\right] \le \E\left[(X_S)^{L-1}\right]  \cdot \sum_{j=s}^{r} (rL)^{j-s} n^{r-j} p^{\binom{r}{2} - \binom{j}{2}} 
        \le   r M \cdot \E\left[(X_S)^{L-1}\right],
    \end{align*}
    where the last inequality holds since $L\leq L^*$ and since $j \mapsto (rL^*)^{j}\cdot n^{r-j} p^{\binom{r}{2} - \binom{r-j}{2}}$ is convex and hence it attains its maximum in either $j=s$ or $j=r$. By repeatedly applying this inequality we obtain $\E\left[(X_S)^{L^*}\right] \le \left(r M\right)^{L^*}$. The proof is concluded by applying Markov's inequality as follows
    \[
        \Pr\left(X_S \ge r^2 M\right) \le\frac{\E\left[X_S^{L^*}\right]}{(r^2 M)^{L^*}} \le \frac{\left(r M\right)^{L^*}}{(r^2 M)^{L^*}}  = r^{-C  \log n} \le n^{-C}.\qedhere
    \]
\end{proof}

\section{Derivation of Theorem \ref{theorem:main theorem} from Lemmas \ref{lemma:characteristic function integrable}, \ref{lemma:characteristic function integrable - low freq and dense p}, and \ref{lemma:characteristic function integrable - low freq and low p}}\label{sec:quantitative-central-limit}

Our goal in this section is to derive Theorem \ref{theorem:main theorem} from our key Lemmas \ref{lemma:characteristic function integrable}, \ref{lemma:characteristic function integrable - low freq and dense p}, and \ref{lemma:characteristic function integrable - low freq and low p}. To this end, we will use the Fourier inversion theorem. The argument then splits in two: bounding the `high' frequencies and bounding the `low' frequencies. For the `high' frequencies, we will use our key Lemmas \ref{lemma:characteristic function integrable} ,\ref{lemma:characteristic function integrable - low freq and dense p}, and \ref{lemma:characteristic function integrable - low freq and low p}, each in different regime of $p$, whereas for the `low' frequencies we will use Stein's method \cite{Ros2011} and the quantitative central limit theorem of Barbour, Karo\'nski, and Ruci\'nski \cite{BarKarRuc1989}. Let us mention that we will mainly follow \cite[Section 4]{AraMat2023}, where we make the necessary adjustments and changes from the case of the triangle to an arbitrary clique.

As mentioned, the first step is to use the classical Fourier inversion formula (see e.g.\ Chapter~3 in \cite{Dur2010}). Recall that $X^*_r$ is supported on the lattice $\mathcal{L}_r \coloneqq \frac{1}{\sigma_r}\left(\mathbb{Z}-\E[X_r]\right)$. The Fourier inversion formula for lattice-supported random variables then asserts that for every $x\in \mathcal{L}_r$ we have
\[
    \sigma_r \cdot \mathbb{P}(X^*_r=x) = \frac{1}{2\pi} \int_{-\pi\sigma_r}^{\pi\sigma_r} e^{-itx}\E[e^{itX^*_r}]\diff t.
\]
Moreover, let $\mathcal{N}(x)\coloneqq \frac{1}{\sqrt{2\pi}}e^{-x^2/2}$ be the density function of the standard normal distribution, and recall that the characteristic function of the standard normal distribution is $t\mapsto e^{-t^2/2}$ (see Example 3.3.5 in \cite{Dur2010}). Then, using the Fourier inversion formula, this time for absolutely continuous random variables, for every $x\in \mathbb{R}$ we have, 
\[
    \mathcal{N}(x) = \frac{1}{2\pi}\int_{-\infty}^{\infty}e^{-itx}e^{-t^2/2}\diff t.
\]
As characteristic functions are always even, both $\left\lvert\E\left[e^{it X_r^*}\right]\right \rvert$ and $e^{-t^2/2}$ are even functions (in $t$). Hence, by the above and by the triangle inequality, for every $K \in [0, \pi \sigma_r]$ and $x \in \cL_r$, we have
\begin{equation}\label{eq: integral bound}
\begin{aligned}
    \big\lvert\sigma_r \cdot \mathbb{P}(X^*_r= x)& - \mathcal{N}(x)\big\rvert \\ &\leq\int^{K}_{-K}\left\lvert{\E\left[e^{it X_r^*}\right] -e^{-t^2/2}}\right\rvert \diff t +2\int^{\pi\sigma_r}_{K}\left\lvert\E\left[e^{itX^*_r}\right]\right\rvert \diff 
        t+2\int^{\infty}_{K}e^{-t^2/2}\diff t.
\end{aligned}
\end{equation}

Now we aim to bound the middle integral in \eqref{eq: integral bound}. When $p\ge n^{-\frac{1}{m_2(K_r)}}(\log n)^{^{\frac{4r}{e(K_r)-1}}}$ the straightforward bounds supplied by Lemmas \ref{lemma:characteristic function integrable} and \ref{lemma:characteristic function integrable - low freq and dense p} will suffice for our later use, while under the complementary assumption on $p$, a slightly technical application of Lemma~\ref{lemma:characteristic function integrable - low freq and low p} is required. We begin with the former case.

Fix a large constant $D>0$, assume that $n^{-\frac{1}{m_2(K_r)}}(\log n)^{^{\frac{4r}{e(K_r)-1}}} \le p \le \frac{1}{2}$, and let $c,\epsilon>0$ be some constants with $c<\frac{3}{2r}$.
As long as $p\leq n^{-c}$ and $n^{1+\epsilon}p^2 \le t^2 \le (\pi \sigma_r)^2$, Lemma \ref{lemma:characteristic function integrable} implies that $\left\lvert\E\left[e^{itX_r^*}\right]\right\rvert \leq t^{-D}$. In the other case, where $p\geq n^{-c}$, by combining Lemmas \ref{lemma:characteristic function integrable} and \ref{lemma:characteristic function integrable - low freq and dense p}, we have $\left\lvert\E\left[e^{itX_r^*}\right]\right\rvert \leq t^{-D}$ for every $n^{\epsilon} \le t^{2} \le (\pi \sigma_r)^2$.
In particular, if $p\geq n^{-c}$ and $K^2\geq n^{\epsilon}$ or if $p\leq n^{-c}$ and $K^2\geq n^{1+\epsilon}p^{2}$, the second integral in \eqref{eq: integral bound} satisfies the following:
\begin{equation}\label{eq:error-fourier1}
    \int^{\pi\sigma_r}_{K}\left\lvert\E[e^{itX^*_r}]\right\rvert \diff t = O\left(\sigma_r\cdot K^{-D}\right).
\end{equation}
Moreover, provided that $K$ is large enough, we have  
\begin{equation}\label{eq:error-fourier2}
     \int^{\infty}_{K}e^{-t^2/2} \diff t \leq 2e^{-K^2/2}.
\end{equation}
Thus, by taking $D$ to be large enough as a function of $\epsilon$, assuming that $p\leq n^{-c}$, and by combining \eqref{eq: integral bound},\eqref{eq:error-fourier1}, and \eqref{eq:error-fourier2} we find the following for every $K^2\geq n^{1+\epsilon}p^{2}$ and every $x\in \cL_r$: 
\begin{equation}\label{eq: final bound for diff}
    \big\lvert\sigma_r \cdot \mathbb{P}(X^*_r= x) - \mathcal{N}(x)\big\rvert \leq \int^{K}_{-K}\left\lvert{\E\left[e^{it X_r^*}\right] -e^{-t^2/2}}\right\rvert \diff t + o\left(n^{\epsilon} p^{3/2}\right),
\end{equation}
where the error term is in fact $n^{-\Omega(D)}$ and the precise term was chosen with foresight.
Similarly, letting $D$ be sufficiently large as a function of $\epsilon$, if $p\geq n^{-c}$ then by combining \eqref{eq: integral bound},\eqref{eq:error-fourier1}, and \eqref{eq:error-fourier2}, the following holds for every $K^{2} \geq n^{\epsilon}$ and every $x\in \cL_r$: 
\begin{equation}\label{eq: final bound for diff2}
    \big\lvert\sigma_r \cdot \mathbb{P}(X^*_r= x) - \mathcal{N}(x)\big\rvert \leq \int^{K}_{-K}\left\lvert{\E\left[e^{it X_r^*}\right] -e^{-t^2/2}}\right\rvert \diff t + o\left(\frac{1}{n^{1-\epsilon}\sqrt{p}}\right),
\end{equation}
where, as above, the error term is $n^{-\Omega(D)}$ and the above term was chosen with foresight.

We now turn to the case where $p < n^{-\frac{1}{m_2(K_r)}}(\log n)^{^{\frac{4r}{e(K_r)-1}}}$ where we wish to apply Lemma~\ref{lemma:characteristic function integrable - low freq and low p}. To this end, recall that $\tilde{n}\leq n/r$ is the largest integer such that $\tilde{n}^{r-2}p^{\binom{r}{2}-1}< (\log  n)^{-10}$. Under the above upper bound on $p$, we note that $\tilde{n} \ge  n/(\log n)^{30}$ and thereby $\sigma_r^{2}\leq  (\log n)^{40r}\cdot \tilde{n}^{r}p^{\binom{r}{2}}$. Thus, assuming $\epsilon$ is sufficiently small (as a function or $r$), Lemma \ref{lemma:characteristic function integrable - low freq and low p} applied with a sufficiently large $n$ asserts that for any $t\in [0,\pi\sigma_r]$, we have
\[
    \left|\E\left[e^{it \cdot X / \sigma_r}\right]\right| \le n^{-D} + \exp \left(-\frac{\epsilon t^2}{(\log n)^{40r}} \right).
\]
In particular, for $K \geq (\log n)^{21r}$, the middle integral in \eqref{eq: integral bound} satisfies
\begin{equation}\label{eq:error-fourier3}
     \int_{K}^{\pi \sigma_r} \left|\E\left[e^{it X_r / \sigma_r}\right]\right| \diff t \leq \int_{K}^{\pi \sigma_r} n^{-D} + e^{-\epsilon t^2/(\log n)^{40r}} \diff t = O\left(\sigma_r n^{-D}\right).
\end{equation}

The above computation can be improved if one assumes a stronger upper bound on $p$, which is crucial in order to tackle the problem when $p\leq n^{-1/m(K_r)}\cdot (\log n)^{O(1)}$. Indeed, assume that $n^{r-2} p^{\binom{r}{2} - 1} \le (\log n)^{-10}$, which implies that $\tilde{n} = n/r$ and that $\sigma_r^2 \le  L n^r p^{\binom{r}{2}}$ for some constant $L=L(r)>0$. Under this stronger upper bound on $p$, Lemma \ref{lemma:characteristic function integrable - low freq and low p} implies that for any $D>0$, provided that $n$ is large enough, for every $t\in [0,\pi\sigma_r]$ we have
\[
    \left|\E\left[e^{it \cdot X / \sigma_r}\right]\right| \le n^{-D}+e^{-\epsilon t^2}.
\]
This further implies that if $K$ is large as a function of $\epsilon$, we have 
\begin{equation}\label{eq:error-fourier4}
    \int_{K}^{\pi  \sigma_r} \left|\E\left[e^{it X_r / \sigma_r}\right]\right|  \diff t  \le \int_{K}^{\pi  \sigma_r} n^{-D} \diff t  + \int_{K}^{\infty}e^{-\epsilon t^2}  \diff t  \le 2e^{-\epsilon K^2},
\end{equation}
where the last inequality holds as the first integral is of order $o(1)$ for large enough $D$.

Finally, as in \eqref{eq: final bound for diff} and \eqref{eq: final bound for diff2}, letting $D$ be sufficiently large as a function of $\epsilon$, and by combining \eqref{eq: integral bound},\eqref{eq:error-fourier1} and \eqref{eq:error-fourier3} or \eqref{eq:error-fourier4}, the following holds:
If $K = (\log n)^{21r}$ then, for every $x\in \cL_r$ we have
\begin{equation}\label{eq: final bound for diff3}
    \big\lvert\sigma_r \cdot \mathbb{P}(X^*_r= x) - \mathcal{N}(x)\big\rvert \leq \int^{K}_{-K}\left\lvert{\E\left[e^{it X_r^*}\right] -e^{-t^2/2}}\right\rvert \diff t + o\left(\frac{(\log n)^{44r}}{\sqrt{n^{r}p^{\binom{r}{2}}}}\right),
\end{equation}
and if $K$ is sufficiently large as a function of $\epsilon$, then for every $x\in \cL_r$ we have
\begin{equation}\label{eq: final bound for diff4}
    \big\lvert\sigma_r \cdot \mathbb{P}(X^*_r= x) - \mathcal{N}(x)\big\rvert \leq \int^{K}_{-K}\left\lvert{\E\left[e^{it X_r^*}\right] -e^{-t^2/2}}\right\rvert \diff t + 2e^{-\epsilon K^2}.
\end{equation}
We remark that the error term in \eqref{eq: final bound for diff3} is again in fact much stronger. However, as will become clear next, the bottleneck will be evaluating the integral in \eqref{eq: final bound for diff3}.

We are now ready to argue for the `low frequencies', which we formulate in the following lemma. For the sake of completeness, we prove this lemma for every graph $H$ and arbitrary $K>0$ via the following simple optimisation problem. For every graph $H$ similarly to \cite[Section 3]{JanLucRuc2000} we define
\[
    \psi_H \coloneqq  \min_{F\subseteq H,e(F)>0} n^{v(F)}p^{e(F)}.
\]

\begin{lemma}\label{lem: low frequencies}
    Suppose that $H$ is a graph and that $n^{-1/m(H)}\leq p \leq 1/2$. Then, for every $K=K(n)>0$ we have
    \begin{align*}
        \int^{K}_{-K}\left|{\E\left[e^{it X_H^*}\right] -e^{-t^2/2}}\right| \diff t = O_H\left({K^2}/{\psi_H^{1/2}}\right).
    \end{align*}
\end{lemma}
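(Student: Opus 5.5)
The plan is to combine a quantitative normal approximation in a smooth test-function metric with the elementary fact that $e^{itx}$ is a smooth test function whose derivatives grow polynomially in $t$. Let $d_{\mathcal W}$ denote the Wasserstein distance. By the dual (Lipschitz) formulation, for every fixed $t$ we have
\[
\left|\E\left[e^{itX_H^*}\right]-e^{-t^2/2}\right|\le \left|\E[\cos(tX_H^*)]-\E[\cos(tZ)]\right|+\left|\E[\sin(tX_H^*)]-\E[\sin(tZ)]\right|\le 2|t|\, d_{\mathcal W}(X_H^*,Z),
\]
since $x\mapsto\cos(tx)$ and $x\mapsto\sin(tx)$ are $|t|$-Lipschitz. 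Integrating over $[-K,K]$ then gives
\[
\int_{-K}^{K}\left|\E\left[e^{itX_H^*}\right]-e^{-t^2/2}\right|\diff t\le 2K^2\, d_{\mathcal W}(X_H^*,Z).
\]
It therefore suffices to show that $d_{\mathcal W}(X_H^*,Z)=O_H(\psi_H^{-1/2})$.

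For the Wasserstein bound I would use Stein's method via dependency graphs, as in \cite[Theorem 3.5]{Ros2011}, or equivalently the decomposable-variable framework of \cite{BarKarRuc1989}. Write $X_H-\mu_H=\sum_{\alpha}(I_\alpha-\pi_\alpha)$, summing over labelled copies $\alpha$ of $H$ in $K_n$, where $I_\alpha$ is the indicator that $\alpha\subseteq G_{n,p}$ and $\pi_\alpha=p^{e(H)}$. Two copies are adjacent in the dependency graph when they share an edge. The \cite{BarKarRuc1989} bound then reads
\[
d_{\mathcal W}(X_H^*,Z)=O\left(\sigma_H^{-3}\sum_{\alpha}\sum_{\beta,\gamma\in N_\alpha}\E\left[I_\alpha I_\beta I_\gamma\right]+\text{similar terms with centred variables}\right),
\]
where $N_\alpha$ is the set of copies sharing an edge with $\alpha$ (including $\alpha$ itself). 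This is the standard third-moment-type error.

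The key computation is to bound both this sum and $\sigma_H^2$ in terms of $\psi_H$. For the variance I will use the classical estimate $\sigma_H^2=\Theta_H\big((1-p)\,\mu_H^2/\psi_H\big)$ (see \cite[Lemma 3.5]{JanLucRuc2000}), and $p\le 1/2$ removes the $(1-p)$ factor. For the triple sum, I would classify triples $(\alpha,\beta,\gamma)$ by the isomorphism type of $\alpha\cup\beta\cup\gamma$. Since $\beta$ and $\gamma$ each share at least one edge with $\alpha$, the contribution of each type is $O(\mu_H^3/(\Psi_{F_1}\Psi_{F_2}))$, where $F_1=\alpha\cap\beta$ and $F_2=(\alpha\cup\beta)\cap\gamma$ are subgraphs with at least one edge, and $\Psi_F:=n^{v(F)}p^{e(F)}$. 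The point is that $\gamma$ intersects $\alpha\cup\beta$ in a subgraph with at least one edge, and such a subgraph is isomorphic to a subgraph of $H$ after restricting to a single edge-sharing component. Since each factor $\Psi_F\ge\psi_H$, the sum is $O(\mu_H^3\psi_H^{-2})$. Dividing by $\sigma_H^3=\Theta(\mu_H^3\psi_H^{-3/2})$ gives $O(\psi_H^{-1/2})$, and the centred cross terms are handled identically since they are dominated by the same quantities.

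I expect the main obstacle to be precisely the bound on the triple sum. When $\gamma$ meets $\alpha\cup\beta$, the intersection $(\alpha\cup\beta)\cap\gamma$ need not be a subgraph of $H$. One must instead argue that the number of extensions of $\alpha\cup\beta$ to $\gamma$ times $p^{e(\gamma\setminus(\alpha\cup\beta))}$ is at most $\mu_H/\Psi_{\gamma\cap(\alpha\cup\beta)}$, where $\gamma\cap(\alpha\cup\beta)$ is a subgraph of $\gamma\cong H$. That subgraph has at least one edge, so this quantity is at most $\mu_H/\psi_H$, which resolves the issue. The assumption $p\ge n^{-1/m(H)}$ only guarantees $\psi_H\ge 1$, which keeps the bound meaningful; the constant depends only on $H$ through the finite number of overlap types.
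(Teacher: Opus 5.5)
Your proposal is correct and follows the paper's route. The paper bounds $d_{\mathcal W}(X_H^*,Z)=O_H(\psi_H^{-1/2})$ using Stein's method, the Barbour--Karo\'nski--Ruci\'nski decomposable bound and $\sigma_H^2=\Omega_H(\mu_H^2/\psi_H)$, and then integrates the $|t|$-Lipschitz bound for $\cos(t\,\cdot)$ and $\sin(t\,\cdot)$ over $[-K,K]$. You unpack the BKR error term instead of citing it; your extension-counting argument for $\alpha\cup\beta$ also covers the terms where $\gamma$ meets $\beta$ but not $\alpha$.

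One caveat: Ross's Theorem~3.5 is not equivalent to the BKR bound. Its error is governed by the maximal neighbourhood size $D=\Theta(n^{v(H)-2})$, which ignores $p$ and is far too weak in the sparse regime. You must use the probability-weighted BKR form you actually wrote down.
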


Before we proceed with the proof of this lemma, let us conclude the proof of Theorem \ref{theorem:main theorem}. First, by noting that $x \mapsto n^{x} p^{x(x-1)/2}$ is convex, it is straightforwards to see that
\[
    \psi_r\coloneqq \psi_{K_r} = \begin{cases}
        n^2 p \quad&\text{if } n^{r-2} p^{\binom{r}{2} - 1} \ge 1,\\
        n^r p^{\binom{r}{2}} \quad &\text{otherwise}.
    \end{cases}
\]
We now use the different proven bounds depending on the value of $p$. Assume that $p\ge n^{-\frac{1}{m_2(K_r)}} (\log n)^{\frac{4r}{e(K_r)-1}}$. Then, by combining~\eqref{eq: final bound for diff} and \eqref{eq: final bound for diff2} with Lemma~\ref{lem: low frequencies} applied with $H=K_r$ and either $K^{2}=n^{\epsilon}$ or $K^2=n^{1+\epsilon}p^{2}$, we find that for every $x\in \cL_r$ we have
\[
    \big\lvert\sigma_r \cdot \mathbb{P}(X^*_r= x) - \mathcal{N}(x)\big\rvert = \begin{cases}
        O\left(\frac{1}{n^{1-\epsilon}\sqrt{p}}\right)\quad&\text{if }p\ge n^{-c},\\
        O\left(n^{\epsilon}p^{3/2}\right)\quad&\text{if }p\le n^{-c}.
    \end{cases} 
\]
On the other hand, if $n^{-\frac{1}{m(K_r)}}(\log n )^{128}\le p \le n^{-\frac{1}{m_2(K_r)}} (\log n)^{\frac{4r}{e(K_r)-1}}$, then $\psi_{r} =\Omega_r\left( n^r p^{\binom{r}{2}}/(\log n)^{4r}\right)$ and by combining \eqref{eq: final bound for diff3} and Lemma \ref{lem: low frequencies} applied with $H=K_r$ and $K=(\log n)^{21r}$ we find that for every $x\in \cL_r$ we have 
\[
    \big\lvert\sigma_r \cdot \mathbb{P}(X^*_r= x) - \mathcal{N}(x)\big\rvert = O\left(\frac{(\log n)^{44r}}{\sqrt{n^r p^{\binom{r}{2}}}}\right).
\]
Lastly, if $n^{-\frac{1}{m(K_r)}}\ll p\leq n^{-\frac{1}{m(K_r)}}(\log n )^{128}$, then we have $\psi_r = n^{r}p^{\binom{r}{2}}$, and by combining \eqref{eq: final bound for diff4} and Lemma \ref{lem: low frequencies} applied with $H=K_r$ and $K^2 = \frac{1}{2\epsilon}\log \left(n^{r}p^{\binom{r}{2}}\right)$, we find that for every $x\in \cL_r$ we have
\[
    \big\lvert\sigma_r \cdot \mathbb{P}(X^*_r= x) - \mathcal{N}(x)\big\rvert =O\left( \frac{\log \left(n^{r}p^{\binom{r}{2}}\right)}{\sqrt{n^{r}p^{\binom{r}{2}}}}\right).
\]

We now prove Lemma \ref{lem: low frequencies}. To this end, we introduce some relevant definitions and results~--- In particular, the quantitative central limit theorem due to Barbur, Karo\'{n}ski, and Ruci\'{n}ski~\cite{BarKarRuc1989} and Stein's method~\cite{Ros2011}. A function $f\colon \mathbb{R}\to \mathbb{C}$ is said to be $1$-Lipschitz if for any $x,y\in \mathbb{R}$ we have $\left|f(x)-f(y)\right|\le \left|x-y\right|$. Setting $\mathcal{W}$ to be the family of all real-valued $1$-Lipschitz functions, for two real-valued random variables $X,Y\in L_1(\mathbb{R})$, their \emph{Wasserstein distance} is defined as follows:
\[
    d_\mathcal{W}(X,Y) \coloneqq \sup_{f\in \mathcal{W}} \left\lvert\E\left[f(X)-f(Y)\right]\right\rvert.
\]
For a function $f\in L^{\infty}(\mathbb{R})$, denote its norm by $\lVert f\rVert$. We can now state the following key result about the standard normal distribution, which constitutes the kernel of Stein's method for approximating random variables by the standard normal distribution.

\begin{theorem}[Theorem 3.1 in \cite{Ros2011}]\label{thm: bound on W distance}
    If $X,Z$ are random variables such that $Z$ has a standard normal distribution, setting $\mathcal{F}=\{f : \lVert f\rVert,\lVert f''\rVert\leq 2,\lVert f'\rVert\le \sqrt{2/\pi}\}$ we have
    \[
        d_\mathcal{W}(X,Z)\le \sup_{f\in \mathcal{F}}\left\lvert\E\left[f'(X)-Xf(X)\right]\right\rvert.
    \]
\end{theorem}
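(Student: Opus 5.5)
The plan is to follow the classical Stein's method argument: build, for each test function in $\mathcal{W}$, an explicit solution of the Stein equation and show that it lies in $\mathcal{F}$. Fix $h\in\mathcal{W}$. If $\E\lvert X\rvert=\infty$ we may assume the right-hand side is infinite (or restrict to $X\in L_1$ as in the definition of $d_\mathcal{W}$), so take $X\in L_1$. Set $\tilde h\coloneqq h-\E[h(Z)]$, which is finite since $h$ grows at most linearly. Consider the Stein equation
\[
    f'(x)-xf(x)=\tilde h(x),
\]
whose bounded solution is
\[
    f_h(x)\coloneqq e^{x^2/2}\int_{-\infty}^{x}\tilde h(y)e^{-y^2/2}\diff y=-e^{x^2/2}\int_{x}^{\infty}\tilde h(y)e^{-y^2/2}\diff y .
\]
The two expressions agree because $\E[\tilde h(Z)]=0$. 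If $f_h\in\mathcal{F}$, then substituting $X$ gives
\[
    \lvert\E[h(X)]-\E[h(Z)]\rvert=\lvert\E[f_h'(X)-Xf_h(X)]\rvert\le\sup_{f\in\mathcal{F}}\lvert\E[f'(X)-Xf(X)]\rvert .
\]
Every expectation here is finite, since $f_h$ and $f_h'$ will be shown to be bounded and $X\in L_1$. Taking the supremum over $h\in\mathcal{W}$ then proves the theorem.

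It remains to prove $\lVert f_h\rVert\le 2$, $\lVert f_h'\rVert\le\sqrt{2/\pi}$ and $\lVert f_h''\rVert\le 2$ whenever $\lVert h'\rVert\le 1$. Here $h$ is Lipschitz, so it is differentiable almost everywhere, and we interpret $f_h''$ almost everywhere (if needed, first assume $h$ is smooth and then approximate).

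For $\lVert f_h\rVert$, write $\tilde h(y)=\E[h(y)-h(Z)]$, so that $\lvert\tilde h(y)\rvert\le\lvert y\rvert+\E\lvert Z\rvert$. Using whichever of the two integral representations has the tail on the same side as $x$, together with the Mills-ratio bound $e^{x^2/2}\int_x^\infty e^{-y^2/2}\diff y\le\min\{\sqrt{\pi/2},1/x\}$, we get a uniform bound on $\lvert f_h\rvert$. The sharp constant $2$ comes from the cleaner route: write $\tilde h(y)=\int_{-\infty}^{\infty}h'(z)\bigl(\mathbf{1}[z\le y]-\Phi(z)\bigr)\diff z$, where $\Phi$ is the normal CDF. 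Then Fubini expresses $f_h$, $f_h'$ and $f_h''$ as integrals of $h'(z)$ against explicit kernels built from $\Phi$ and $\varphi=\Phi'$. With this representation every bound reduces to computing the $L^1$-norm of a kernel; for instance, $\int\lvert\text{kernel for }f_h'\rvert$ evaluates to $\sqrt{2/\pi}$.

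For $f_h''$, differentiate the Stein equation to get $f_h''=xf_h'+f_h+h'$, and then $f_h''(x)=h'(x)+(1+x^2)f_h(x)+x\tilde h(x)$. Substituting the $h'$-kernel representation and using the Gaussian tail identities $\int_x^\infty(y-x)\varphi(y)\diff y=\varphi(x)-x(1-\Phi(x))$ reduces $\lVert f_h''\rVert\le 2\lVert h'\rVert$ to checking that a certain explicit kernel has total mass at most $2$. This is the standard estimate of Stein and of Chen–Goldstein–Shao.

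I expect the main obstacle to be this second-derivative bound. The function $x(1+x^2)$ grows while $f_h$ decays like $1/x$, so there are cancellations, and the tight control comes only from the kernel computation. I would carry it out separately for $x\ge 0$ and $x<0$ (reducing one case to the other via $h(x)\mapsto h(-x)$). The bounds on $f_h$ and $f_h'$ are routine by comparison.
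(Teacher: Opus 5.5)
Your plan is correct and is the standard argument. The paper gives no proof of this statement; it quotes it from Ross, who derives it exactly as you propose: solve $f'-xf=h-\E[h(Z)]$ for $1$-Lipschitz $h$, then use the Chen--Goldstein--Shao bounds $\lVert f_h\rVert\le 2\lVert h'\rVert$, $\lVert f_h'\rVert\le\sqrt{2/\pi}\,\lVert h'\rVert$ and $\lVert f_h''\rVert\le 2\lVert h'\rVert$.

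\textbf{A slip to fix.} Since $\E\bigl[\mathbf{1}[z\le Z]\bigr]=1-\Phi(z)$, your kernel representation must read
\[
\tilde h(y)=\int_{-\infty}^{\infty}h'(z)\bigl(\mathbf{1}[z\le y]-(1-\Phi(z))\bigr)\diff z=\int_{-\infty}^{y}h'(z)\Phi(z)\diff z-\int_{y}^{\infty}h'(z)\bigl(1-\Phi(z)\bigr)\diff z .
\]
With $\Phi(z)$ in place of $1-\Phi(z)$, the integrand tends to $h'(z)$ as $z\to-\infty$. The integral then diverges already for $h(y)=y$.

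\textbf{Where the difficulty actually lies.} Your assessment of which bounds are delicate is somewhat inverted; here $\varphi=\Phi'$ as in your notation.
\begin{itemize}
\item For $\lVert f_h\rVert$, your crude Mills-ratio comparison already gives exactly $\lvert f_h(x)\rvert\le 1+\sqrt{2/\pi}\cdot\sqrt{\pi/2}=2$. The corrected kernel even gives $\lVert f_h\rVert\le\lVert h'\rVert$.
\item For $\lVert f_h'\rVert$, the $L^1$-mass of the kernel is not a constant. You first need $x(1-\Phi(x))\le\varphi(x)$ for $x>0$, and its mirror image, to remove the absolute values. The resulting mass is a function of $x$, and you must check that it is maximised at $x=0$, where it equals $2\varphi(0)=\sqrt{2/\pi}$.
\item For $\lVert f_h''\rVert$, the step you flag as the main obstacle is actually clean. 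The inequality $x\varphi(x)\le(1+x^2)(1-\Phi(x))$ for $x>0$, and its mirror image, make both coefficients nonnegative. The integral part of the kernel then has mass exactly $1$ for every $x$, and together with the $h'(x)$ term this gives $\lVert f_h''\rVert\le 2\lVert h'\rVert$ at once.
\end{itemize}
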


As mentioned, we would like to use a theorem of Barbour, Karo\'nski, and Ruci\'nski \cite{BarKarRuc1989} concerning `decomposable' random variables. To avoid technicalities, we present an immediate corollary from their work. For the full details, see Lemma 1 and Equation $(3.10)$ in \cite{BarKarRuc1989}.

\begin{lemma}\label{lem: a bound on stein's function}
    Suppose that $H$ is a graph with at least one edge. Then, there exists a constant $L=L(H)>0$ such that the following holds. For every $p\le 1/2$ and every twice differentiable $f\colon \mathbb{R} \to \mathbb{R}$ with bounded first and second derivatives, we have
    \[
        \left\lvert\E\left[X^*_Hf(X^*_H)-f'(X_H^*)\right]\right\rvert \le L\lVert f''\rVert\cdot \frac{n^{v(H)}p^{e(H)}}{\sigma_H\psi_H}.
    \]
\end{lemma}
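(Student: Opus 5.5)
The statement to prove is Lemma~\ref{lem: a bound on stein's function}, which the paper presents as a corollary of Barbour--Karo\'nski--Ruci\'nski. I would prove it by re-deriving their decomposable-sum argument directly for $X_H$.

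\textbf{Setup.} Write
\[
X_H^*=\sum_{\alpha} W_\alpha, \qquad W_\alpha \coloneqq \sigma_H^{-1}\left(I_\alpha-p^{e(H)}\right),
\]
where $\alpha$ ranges over copies of $H$ in $K_n$ and $I_\alpha$ is the indicator that $\alpha\subseteq G_{n,p}$. For each $\alpha$ define the dependency neighbourhood
\[
U_\alpha\coloneqq\sum_{\beta:\,E(\beta)\cap E(\alpha)\neq\emptyset}W_\beta .
\]
Then $W_\alpha$ is independent of $X_H^*-U_\alpha$. Since $\E[W_\alpha]=0$, Taylor expansion gives
\[
\E\left[W_\alpha f(X_H^*)\right]=\E\left[W_\alpha\left(f(X_H^*)-f(X_H^*-U_\alpha)\right)\right],
\]
and hence
\[
\E\left[W_\alpha f(X_H^*)\right]=\E\left[W_\alpha U_\alpha f'(X_H^*-U_\alpha)\right]+O\left(\lVert f''\rVert\,\E\left[|W_\alpha|U_\alpha^2\right]\right).
\]

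\textbf{Matching the $f'$ term.} Note that $\sum_\alpha\E[W_\alpha U_\alpha]=\mathbb{V}ar[X_H^*]=1$. To match $\E[f'(X_H^*)]$, I would use the standard second decomposition: for each $\beta$ in the neighbourhood of $\alpha$, let $V_{\alpha\beta}$ be the sum of $W_\gamma$ over $\gamma$ sharing an edge with $\alpha\cup\beta$. Replacing $f'(X_H^*-U_\alpha)$ by $f'(X_H^*-U_\alpha-V_{\alpha\beta})$ makes it independent of $W_\alpha W_\beta$, at a cost of $\lVert f''\rVert\,\E\left[|W_\alpha W_\beta|\,|V_{\alpha\beta}|\right]$. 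Comparing back to $f'(X_H^*)$ then costs $\lVert f''\rVert\,\E[|W_\alpha W_\beta|]\,\E\left[|U_\alpha+V_{\alpha\beta}|\right]$. Altogether, $|\E[X^*_Hf(X^*_H)-f'(X^*_H)]|$ is at most $O(\lVert f''\rVert)$ times a sum over triples $(\alpha,\beta,\gamma)$, with $\beta$ adjacent to $\alpha$ and $\gamma$ adjacent to $\alpha\cup\beta$, of third-order mixed moments of $\sigma_H^{-1}|I-p^{e(H)}|$ terms.

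\textbf{Counting the triples.} This is where the real work lies, and it is the main obstacle. Each such mixed moment is at most
\[
\sigma_H^{-3}\cdot O\left(p^{e(\alpha\cup\beta\cup\gamma)}+\text{products of lower unions}\right).
\]
Because $p\le 1/2$, the centred moments are dominated by the joint-inclusion probabilities. The number of triples whose union is a fixed shape $\alpha\cup\beta\cup\gamma$ is governed by the overlaps $F_1=\alpha\cap\beta$ and $F_2=(\alpha\cup\beta)\cap\gamma$. The sum therefore becomes
\[
\sigma_H^{-3}\sum_{F_1,F_2} \frac{\left(n^{v(H)}p^{e(H)}\right)^3}{n^{v(F_1)}p^{e(F_1)}\,n^{v(F_2)}p^{e(F_2)}} \le \sigma_H^{-3}\,\frac{\left(n^{v(H)}p^{e(H)}\right)^3}{\psi_H^2},
\]
where the inequality uses the definition of $\psi_H$ as the minimum of $n^{v(F)}p^{e(F)}$ over subgraphs $F\subseteq H$ with $e(F)>0$; the overlap $F_2$ is a subgraph of $\alpha\cup\beta$, which is handled by splitting it into its intersections with $\alpha$ and with $\beta$.

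\textbf{Conclusion.} Finally, I would use the standard variance estimate
\[
\sigma_H^2=\Theta\left(\frac{\left(n^{v(H)}p^{e(H)}\right)^2}{\psi_H}\right)\qquad(p\le 1/2).
\]
Substituting it into the previous bound, one factor of $\sigma_H^{-2}$ cancels against $\left(n^{v(H)}p^{e(H)}\right)^2/\psi_H$, leaving
\[
\frac{n^{v(H)}p^{e(H)}}{\sigma_H\,\psi_H},
\]
which is the claimed bound.
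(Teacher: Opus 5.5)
Your proposal is correct and is exactly the Barbour--Karo\'nski--Ruci\'nski decomposable-sum argument (the three error terms $\E[|W_\alpha|U_\alpha^2]$, $\E[|W_\alpha W_\beta V_{\alpha\beta}|]$, $\E|W_\alpha W_\beta|\,\E|U_\alpha+V_{\alpha\beta}|$, a triple count, then the variance lower bound), which the paper does not reprove but simply cites as Lemma~1 and (3.10) of that work. Two small fixes are needed. First, $V_{\alpha\beta}$ should collect only the $\gamma$ that share an edge with $\beta$ but not with $\alpha$; as you wrote it, $X^*_H-U_\alpha-V_{\alpha\beta}$ subtracts $U_\alpha$ twice and is then not independent of $W_\alpha W_\beta$. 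Second, $F_2=\gamma\cap(\alpha\cup\beta)$ needs no splitting: it is a subgraph of $\gamma\cong H$ with at least one edge, so $n^{v(F_2)}p^{e(F_2)}\ge\psi_H$ directly, whereas splitting it along $\alpha$ and $\beta$ would not obviously give this.
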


We are now ready to prove Lemma \ref{lem: low frequencies}. 

\begin{proof}[Proof of Lemma \ref{lem: low frequencies}]
Note that 
\[
    \sigma_H^2 = \Omega_H \left(\sum_{F\subseteq H,e(F)\ge 1} n^{2v(H)-v(F)}p^{2e(H)-e(F)}\right) = \Omega_H\left(\frac{n^{2v(H)}p^{2e(H)}}{\psi_H}\right).
\]
Thus, applying Lemma \ref{lem: a bound on stein's function} yields that, there exists some constant $L=L(H)>0$ such that for any twice differentiable $f\colon \mathbb{R} \to \mathbb{R}$ with bounded first and second derivatives, we have
\begin{equation}\label{eq: stein bound}
    \left|\E[X^*_Hf(X^*_H)-f'(X_H^*)]\right| \le {L\lVert f''\rVert}/{\psi_H^{1/2}}.
\end{equation}
In particular, letting $\mathcal{F}=\{f : \lVert f\rVert,\lVert f''\rVert\leq 2,\lVert f'\rVert\le \sqrt{2/\pi}\}$, Theorem \ref{thm: bound on W distance} and \eqref{eq: stein bound} imply that 
\[
    d_{\mathcal{W}}(X_r^*,Z)\leq \sup_{f\in \mathcal{F}}\left|\E[f'(X_H^*)-X_H^*f(X_H^*)]\right| \le 2L/{\psi_H^{1/2}}.
\]
To conclude the proof, note that $\theta\mapsto \cos(\theta)$ and $\theta\mapsto \sin(\theta)$ are $1$-Lipschitz, and thus
\begin{align*}
     \int_{-K}^{K}\left|\E\left[e^{itX_r^*}-e^{itZ}\right]\right|\diff t =&  \int_{-K}^{K}\left|\E\left[\cos\left(tX_r^*\right)-\cos\left(tZ\right)\right]\right|\diff t + \int_{-K}^{K}\left|\E\left[\sin\left(tX_r^*\right)-\sin\left(tZ\right)\right]\right|\diff t \\
     \leq & 2\int_{-K}^{K}d_{\mathcal{W}}(tX_r^*,tZ) \diff t \le 2d_{\mathcal{W}}(X_r^*,Z) \int_{-K}^{K} \left|t\right| \diff t = O_H\left({K^2}/{\psi_H^{1/2}}\right),
\end{align*}
where the second inequality holds since for any $1$-Lipschitz function $f$ the function $x\mapsto \frac{1}{|t|}f(tx)$ is also $1$-Lipschitz.
\end{proof}

\section{Proof of Lemma \ref{lemma:characteristic function integrable} for medium frequencies --- $n^{1/2 + \epsilon } p \le t \le n^{-\epsilon} \sigma_r$}\label{section:main lemma for medium frequencies}

Fix an integer $r\geq3$. Recall that $X_r$ is the random variable counting the number of copies of $K_r$ in $G_{n, p}$, and that $\sigma_r^2$ is its variance. The goal of this section is to prove Lemma \ref{lemma:characteristic function integrable} for a certain range of frequencies. This is stated as follows.

\begin{lemma}\label{lem: bound for mid frequencies}    
    Let $\gamma,K>0$ be constants. For every $p=p(n)$ with $n^{-\frac{1}{m_2(K_r)}}(\log n)^{\frac{4r}{e(K_r) - 1}}\leq p\leq 1/2$, and every $t\in[n^{1/2+\gamma}p,n^{-\gamma}\sigma_r]$, provided that $n$ is large we have
    \begin{equation}\label{eq: main bound for mid frequencies}
        \left\lvert\E\left[e^{it X_r/\sigma_r}\right]\right\rvert \le t^{-K}.
    \end{equation}    
\end{lemma}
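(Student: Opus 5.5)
The plan follows the overview of Section~\ref{sec:proof-overview}. For each $k\in\{k^*,\ldots,r-3\}$ (with $k^*$ chosen depending on $p$) and admissible sizes $a,b$, fix an $(a,b,k,r)$-partition $\cP=(A_1,A_2,B_1,\ldots,B_{r-k-2},Z)$ with $|Z|=\Theta(n)$. Apply Corollary~\ref{cor: the decoupling lemma new}, conditioning via Remark~\ref{remark: the decoupling lemma new} where convenient, to obtain
\[
\left\lvert\E\left[e^{itX_r/\sigma_r}\right]\right\rvert^{2^{|\cE_\cP|}}\le\left\lvert\E\left[e^{it\tilde X_r/\sigma_r}\right]\right\rvert .
\]
Then split $\tilde X_r=X+Y$, where $X=\sum_{F\in\cR_{2,r-k-2}}S_{G_0,G_1}(F)=\sum_{f\in A_1\times A_2}w_f\mathbf{x}_f$ and $Y$ is the contribution of the remaining rainbow copies. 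Using $|e^{i\theta}-1-\dots|$-type Taylor bounds, in the spirit of Sah--Sawhney, I would show
\[
\left\lvert\E\left[e^{it\tilde X_r/\sigma_r}\right]\right\rvert\le \E\left\lvert\E\left[e^{itX/\sigma_r}\mid \mathcal{G}\right]\right\rvert+O\left(\E\left[(tY/\sigma_r)^{2L}\right]^{1/2}\right)+\dots,
\]
where $\mathcal{G}$ reveals everything except the $G_0$-edges in $A_1\times A_2$. This makes the $\mathbf{x}_f$ independent Bernoulli($p$) variables with frozen weights.

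\textbf{Step 1: weights.} I would reveal $(G_0\cup G_1)\setminus(A_1\times A_2)$ without colours. Using Claim~\ref{claim: lower bound on extensions distinct} and Remark~\ref{remark: lower bound on extensions}, each $f$ has concentrated counts of $K_{r-k}$-rainbow extensions into $A_i,B_j$, and each such clique extends into $Z$ in about $n^kp^{\binom r2-\binom{r-k}2}$ ways. After revealing colours, $w_f$ is a sum of $\pm$ signs with mean zero, by Claim~\ref{claim:sufficient condition for sign zero in expectation new}. A Paley--Zygmund argument (Lemma~\ref{claim: Paley-Zygmund}) and the upper bound Claim~\ref{claim:upper bound on extensions} then show that a $(\log n)^{-O(1)}$ fraction of the edges in each perfect matching of $A_1\times A_2$ have
\[
|w_f|\asymp W:=\sqrt{b^{r-k-2}p^{\binom{r-k}2-1}}\cdot n^kp^{\binom r2-\binom{r-k}2}.
\]
Claim~\ref{claim:char-bounds}, applied within one matching, yields $\exp(-\Omega(pa^2(\log n)^{-O(1)}\|tW/2\pi\sigma_r\|^2))$, which is $n^{-\omega(1)}$ provided $tW\le\sigma_r$ and $pa^2(tW/\sigma_r)^2\ge n^{\Omega(1)}$.

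\textbf{Step 2: the high moment of $Y$.} This is the main obstacle. Expand $\E[Y^{2L}]$ over tuples in $T_{\cP,2L}$, using Claim~\ref{claim:sufficient condition for sign zero in expectation new} to discard tuples with a lonely vertex in $\cP_A\cup\cP_B$. Each surviving term is $O(p^{e})$, so the sum is governed by counting tuples by vertex-distribution vector $\mathbf v$ and number of edges $e$. The key structural fact needed is a lower bound on $e(\bigcup F_i)$: every copy in $Y$ has at least three vertices in $\cP_A$ or two in some $B_j$, and shared vertices force extra edges. I would first handle the case where the union is a disjoint union of "pairs", which is the variance-like main term, then argue that extra overlaps only lose factors under (A1)--(A2)-type conditions, as in Lemma~\ref{cor:upper bound for copies on f}. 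The target is $\E[(tY/\sigma_r)^{2L}]\le n^{-\Omega(L)}$ whenever $t\le U(a,b,k)$.

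\textbf{Step 3: covering.} This gives a nonempty interval $I_k=[L_k,U_k]$ of covered frequencies for each $k$, over choices $a,b\le n^{1-o(1)}$. I would then choose $k^*$ with $U_{k^*}\ge n^{-\gamma}\sigma_r$, verify $L_{r-3}$ (or the $k=r-2$ scheme) is at most $n^{1/2+\gamma}p$, and check $L_\ell<U_{\ell+1}$ by direct exponent comparison using the assumed lower bound on $p$ relative to $m_2(K_r)$. Taking $L$ large gives a bound $n^{-\Omega(L)}\le t^{-K}$ after taking the $2^{|\cE|}$-th root, which completes the argument.
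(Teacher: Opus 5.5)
Your outline follows the paper's architecture: decoupling with an $(a,b,k,r)$-partition, $\tilde X_r=X+Y$ with $X$ summed over $\cR_{2,r-k-2}$, Paley--Zygmund weights on matchings, a $2L$-th moment for $Y$, and patching intervals. However, each of the places where the actual proof lives is misstated or missing.

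First, your separation inequality is not justified. Every rainbow copy contains an edge of $A_1\times A_2$ and must lie in $G_0\cup G_1$, where $G_1$ has no such edges. So $Y$ is a function of the very indicators $\mathbf x_f$ you want to keep free. The Taylor cross terms $\E[e^{itX/\sigma_r}(itY/\sigma_r)^\ell]$, $1\le\ell<2L$, therefore do not reduce to $\E\lvert\E[e^{itX/\sigma_r}\mid\mathcal G]\rvert$; they are the real content of your ``$\dots$''. The paper expands $P_{2L-1}(it\tilde Y/\sigma_r)$ into at most $n^{(2L-1)r}$ sign monomials. It then conditions on the event $\cD$ that a given monomial is nonzero and redoes the weight analysis under $\cD$. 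That means proving (E1)--(E2) are typical given $\cD$, with the fourth-moment input coming from Lemma~\ref{cor:upper bound for copies on f} rather than Claim~\ref{claim:upper bound on extensions}. It also means using only edges $f$ that avoid $V(F_1\cup\dots\cup F_\ell)$. The superpolynomial decay $e^{-\Theta((\log n)^{2r})}$ is what pays for the $n^{O(L)}$ monomials.

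Second, Steps 2--3 miss the mechanism that makes any window $[\LB,\UB]$ nonempty, and the covering as you state it fails. A copy in $\cR_{k+2,r-k-2}$ has exactly as many edges as a good copy; it merely draws its $k$ free vertices from $\cP_A$ instead of $Z$. So the provable moment bound is $(a^{2k+2}b^{r-k-2}p^{2\binom r2-\binom{r-k}2})^{L}$. The weights give decay only once $t^2\gtrsim\sigma_r^2/(a^2p\Lambda)$, where $a^2p\Lambda=a^2n^{2k}b^{r-k-2}p^{2\binom r2-\binom{r-k}2}$. The two thresholds differ by exactly $(n/a)^{2k}$. That gap must beat the polylogarithmic losses and the factor $n^{2\eps_k(k-1)L}$ paid when the paper maps $Z$-vertices of copies with $i+j<r$ into $\cP_A$. (That reduction to $i+j=r$ is followed by the degree-counting bounds of Lemma~\ref{claim:number of edges in F}, not an extension of Lemma~\ref{cor:upper bound for copies on f}.) Hence $|Z|/|A_i|$ must be polynomially large: $a_k=n^{1-\eps_k}$ with graded constants $\eps_0<\dots<\eps_{r-3}\ll\delta$, and the factor $n^{-\eps_k(k-0.5)}$ built into $\UB_{k,b}$. ``Extra edges from overlaps'' is not what suppresses $Y$.

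The same computation shows your Step 3 cannot work as stated. Under (P1)--(P2), for every $k\ge1$ one gets $\UB_{k,b}\le\sigma_r/(a_k\sqrt p)\le n^{-3/4+\eps_k}\sigma_r$, so no $k^*\ge1$ reaches $n^{-\gamma}\sigma_r$. The top of the range must come from the $k=0$ scheme, where $\tilde Y\equiv0$ and $b$ is lowered until $b^{r-2}p^{\binom r2-1}$ is polylogarithmic. Nor can you then chain through consecutive $k$. For $1\le k<k^*$, where (P2) fails, neither the weight concentration nor the moment bound is available. The paper instead does the following:
\begin{itemize}
\item it takes $k^*$ minimal with (P2) and gets (P2) for all $k\ge k^*$ by convexity;
\item it bridges $I_0$ directly to $I_{k^*}$ via $\LB_{0,n}\ll\UB_{k^*}$, using Claim~\ref{claim:minimality of k^*};
\item it chains $\LB_k\ll\UB_{k+1}$, which needs $r\eps_k<\eps_{k+1}$;
\item it treats $k^*\ge r-2$ separately, showing that $I_0$ alone covers $[n^{1/2+\gamma}p,n^{-\gamma}\sigma_r]$.
\end{itemize}
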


The proof of this lemma will exploit the decoupling technique. For various choices of $(a, b, k, r)$-partitions, we will apply Corollary \ref{cor: the decoupling lemma new}, and then use Lemma \ref{lemma:main lemma for decoupling} to bound the resulting expectations.
As we shall see, each such use of Lemma \ref{lemma:main lemma for decoupling} yields a different interval of frequencies for which \eqref{eq: main bound for mid frequencies} holds. Lastly, we will show that the union of these intervals covers $[n^{1/2+\gamma}p,n^{-\gamma}\sigma_r]$, thereby concluding the proof of Lemma \ref{lem: bound for mid frequencies}.

Before we continue with the setup for this section, let us mention that Lemma \ref{lemma:main lemma for decoupling} is a key and novel component of our proof of Lemma \ref{lem: bound for mid frequencies} and thereby of Theorem \ref{theorem:main theorem}. For the sake of clarity, we will only state Lemma \ref{lemma:main lemma for decoupling} and defer its proof for later sections.

Throughout this section, fix a positive real $K$ and some $p \in (0,1/2]$ satisfying
\begin{align}\label{align: p assumption}
     p \ge n^{-\frac{1}{m_2(K_r)}} \cdot (\log n)^{\frac{4r}{e(K_r)-1}}.
\end{align} 
Moreover, fix $\gamma>0$, a sufficiently small constant $\delta = \delta(\gamma) > 0$ and a sequence of constants $\epsilon_{r-3} = \epsilon_{r-3}(\delta) > \epsilon_{r-4} > \dots > \epsilon_0 = 0$ satisfying the following. For every $i \in [r-3]$ the constant $\epsilon_{i-1}$ is sufficiently smaller than $\epsilon_i$, and $\epsilon_{r-3}$ is sufficiently smaller than $\delta$. Let us stress that the quantification of how small the above constants are is given inside the proof of Lemma \ref{lemma:main lemma for decoupling} below.
For every $k \in \{0, \dots, r-3\}$, set\footnote{Formally, $a_k\coloneqq \left\lfloor n^{1-\epsilon_{k}}\right\rfloor$ but as $a_k=\omega(1)$ for all $k$, we will ignore the floor signs, and think of $a_k=n^{1-\epsilon_k}$ as an integer.} $a_k \coloneqq n^{1-\epsilon_k}$, and for a positive integer $b \le a_k$, define
\begin{align*}
    \LB_{k, b} \coloneqq \sqrt{\frac{(\log n)^{14r}}{a_k^2 p} \cdot \frac{\sigma_r^2}{b^{r-k-2} n^{2k} p^{2\binom{r}{2} - \binom{r-k}{2} - 1}}},
\end{align*}
and
\begin{align}\label{align: I_k,a,b UB}
    \UB_{k, b} \coloneqq \begin{cases}
        \frac{1}{(\log n)^{10r}} \cdot \sqrt{\frac{\sigma_r^2}{b^{r-k-2} n^{2k} p^{2\binom{r}{2} - \binom{r-k}{2} - 1}}},& \quad \text{if }k=0, \\
        \frac{n^{-\epsilon_k(k-0.5)}}{(\log n)^{10r}} \cdot \sqrt{\frac{\sigma_r^2}{a_k^{2k+2} b^{r-k-2} p^{2\binom{r}{2} - \binom{r-k}{2}}}}, &\quad \text{if }k \ge 1.
    \end{cases}
\end{align}

The following lemma is a key ingredient in the proof of Lemma~\ref{lem: bound for mid frequencies}. It establishes \eqref{eq: main bound for mid frequencies} for every $t \in \left[\LB_{k, b}, \UB_{k, b}\right]$, under suitable conditions on $b$ and $k$.

\begin{lemma}\label{lemma:decoupling for k a b}
    Suppose that $k \in \{0, \dots, r-3\}$ and $0 < b \le a_k$ are integers satisfying
    \begin{enumerate}[label=(P\arabic*)]
        \item\label{item:b condition} $b^{r-k-2} p^{\binom{r-k}{2} - 1} \ge (\log n)^{4r}$, and
        \item\label{item:k condition} $n^{k}\cdot  p^{\binom{r}{2} - \binom{r-k}{2}} \ge n^{\delta}\cdot n^{k'}\cdot  p^{\binom{r}{2} - \binom{r-k'}{2}}$ for every $0\leq k'< k$.
    \end{enumerate}
     Then, for $t \in \left[\LB_{k, b}, \UB_{k, b}\right]$ we have
     \[
        \left\lvert\E\left[e^{it X_r/\sigma_r}\right]\right\rvert \le n^{-K}.
     \]
\end{lemma}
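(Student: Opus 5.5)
The plan is to fix an $(a_k,b,k,r)$-partition $\cP=(A_1,A_2,B_1,\ldots,B_{r-k-2},Z)$ with $|Z|=\Theta(n)$, set $\cE=\cE_{\cP}$, and apply Corollary~\ref{cor: the decoupling lemma new}. This gives $|\E[e^{itX_r/\sigma_r}]|^{2^{|\cE|}}\le|\E[e^{it\tilde X_r/\sigma_r}]|$. Since $|\cE|=2(r-k-2)$ is a constant, it suffices to bound the right-hand side by $n^{-K'}$ for a suitably large $K'$.

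Split $\tilde X_r=X+Y$, where $X=\sum_{F\in\cR_{2,r-k-2}}S_{G_0,G_1}(F)$ counts rainbow copies with exactly one vertex in each of $A_1,A_2,B_1,\ldots$, and $Y$ is the remainder. As in Sah--Sawhney, use $|e^{i(x+y)}-e^{ix}|\le |y|$ together with a Taylor expansion to order $2L$. This yields
\[
|\E[e^{it\tilde X_r/\sigma_r}]|\le |\E[e^{itX/\sigma_r}]| + O\bigl(\E[(tY/\sigma_r)^{2L}]^{1/(2L)}\bigr),
\]
or a cleaner variant obtained by conditioning on $(G_0\cup G_1)\setminus(A_1\times A_2)$. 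The two terms are then handled separately.

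\emph{The linear part.} Reveal $G_0\cup G_1$ outside $A_1\times A_2$, and then which of its edges lie in $G_0$ versus $G_1$. After this, $X=\sum_{f\in A_1\times A_2}w_f\mathbf x_f$ is a weighted sum of independent Bernoulli$(p)$ variables. Next, decompose $A_1\times A_2$ into $a_k$ perfect matchings. Using Claim~\ref{claim: lower bound on extensions distinct} (via \ref{item:b condition}, with $b^{r-k-2}p^{\binom{r-k}{2}-1}\ge(\log n)^{4r}$) and Remark~\ref{remark: lower bound on extensions} (via \ref{item:k condition}, to extend into $Z$), show that with probability $1-n^{-\omega(1)}$ each matching has a $(\log n)^{-O(1)}$ fraction of edges $f$ with $|w_f|\asymp$ the standard deviation of the signed count. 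That standard deviation is about $\sqrt{b^{r-k-2}p^{\binom{r-k}{2}-1}}\cdot n^kp^{\binom r2-\binom{r-k}2}$. The signs act as fair coins given the union graph, so an anticoncentration/Paley--Zygmund argument applies.

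Lemma~\ref{cor:upper bound for copies on f} with $L_1=1$ upper-bounds $w_f^2$ by $(\log n)^{5r}\Lambda$, where $\Lambda=b^{r-k-2}n^{2k}p^{2\binom r2-\binom{r-k}2-1}$. This lets us ensure $\|w_ft/2\pi\sigma_r\|\asymp w_ft/\sigma_r$ whenever $t\le \UB_{k,b}$. Claim~\ref{claim:char-bounds} then gives
\[
|\E[e^{itX/\sigma_r}]|\le\exp\bigl(-\Omega(a_k^2p\,(\log n)^{-O(1)}\Lambda t^2/\sigma_r^2)\bigr),
\]
which is $\le n^{-K'}$ precisely when $t\ge \LB_{k,b}$; the $(\log n)^{14r}$ factor in $\LB_{k,b}$ absorbs the losses. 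The main difficulty in this part is that for $r>3$ the weights in a matching are dependent. I would handle this by the two-stage revelation already described: first establish concentration of $(k+?)$-clique extension counts on the union graph, then argue that the $G_0/G_1$ labelling makes the weights behave like independent signed sums.

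\emph{The error part (expected main obstacle).} Show $\E[(tY/\sigma_r)^{2L}]\le n^{-\Omega(L)}$ for $t\le \UB_{k,b}$. Expand $\E[Y^{2L}]$ over $2L$-tuples of rainbow copies, not all in $\cR_{2,r-k-2}$. By Claim~\ref{claim:sufficient condition for sign zero in expectation new}, only tuples in $T_{\cP,2L}$ survive, i.e.\ those in which no $A$- or $B$-vertex is used by exactly one copy. Each surviving term is $O(p^{e})$. Group tuples by the vertex-distribution vector $\mathbf v$ and the edge count $e$, and bound $|T(\mathbf v,e)|p^e$ using $|A_i|=a_k=n^{1-\epsilon_k}$ and $|B_j|=b$. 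A copy with an extra vertex in $\cP_A\cup\cP_B$ loses a factor $n^{-\epsilon_k}$ relative to a $Z$-vertex. This loss is what produces the $n^{-\epsilon_k(k-1/2)}$ and $a_k^{2k+2}$ factors in $\UB_{k,b}$, and the separation $\epsilon_{k-1}\ll\epsilon_k$ together with the $n^\delta$ gap in \ref{item:k condition} make these terms negligible. The hard step is the lower bound on $e$ in terms of $\mathbf v$ under the pairing constraint, i.e.\ proving that the shared vertices force enough edges. I would first attempt this by a greedy ordering argument as in Lemma~\ref{cor:upper bound for copies on f}, counting new edges per new vertex and using the convexity of $x\mapsto n^xp^{\binom r2-\binom{r-x}2}$. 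Combining the two parts and choosing $L$ large gives the stated bound.
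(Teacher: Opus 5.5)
Your reduction step does not give the stated bound. The inequality
\[
\bigl|\E[e^{it\tilde X_r/\sigma_r}]\bigr|\le \bigl|\E[e^{itX/\sigma_r}]\bigr| + O\Bigl(\E\bigl[(tY/\sigma_r)^{2L}\bigr]^{1/(2L)}\Bigr)
\]
is true: it is just $|e^{i(x+y)}-e^{ix}|\le |y|$ followed by H\"older. However, the error enters through a $(2L)$-th root, so taking $L$ large gains nothing. Take the moment bound you are aiming for (Lemma~\ref{lemma:main lemma for decoupling}(2)). At $t=\UB_{k,b}$ it gives $\E[(tY/\sigma_r)^{2L}]=O_L\bigl((n^{-0.8\epsilon_k}(\log n)^{-20r})^{L}\bigr)$, whose $(2L)$-th root is of order $n^{-0.4\epsilon_k}(\log n)^{-10r}$ for every $L$. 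So for $k\ge 1$ you obtain at best $|\E[e^{itX_r/\sigma_r}]|\le n^{-c\epsilon_k}$ for a small fixed $c=c(r)>0$. You do not get $n^{-K}$ for arbitrary $K$, which is what the lemma asserts and what the integral in \eqref{eq: integral bound} needs.

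A sharper moment estimate cannot rescue this. $Y$ is integer-valued and typically nonzero, and $t/\sigma_r\ge\LB_{k,b}/\sigma_r$ is a fixed negative power of $n$. Shrinking the interval also fails, since $\UB_{k,b}/\LB_{k,b}$ is only $n^{0.5\epsilon_k}(\log n)^{-17r}$.

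The missing idea is the splitting used by Sah--Sawhney and by the paper. Taylor-expand only the factor $e^{itY/\sigma_r}$, using $|e^{iy}-P_{2L-1}(iy)|\le |y|^{2L}/(2L)!$. This gives
\[
\bigl|\E[e^{it(X+Y)/\sigma_r}]\bigr|\le \Bigl|\E\Bigl[e^{itX/\sigma_r}P_{2L-1}\Bigl(\frac{itY}{\sigma_r}\Bigr)\Bigr]\Bigr| + O_L\Bigl(\E\Bigl[\Bigl(\frac{tY}{\sigma_r}\Bigr)^{2L}\Bigr]\Bigr).
\]
Now the error is the full $2L$-th moment, $O_L(n^{-0.8\epsilon_k L})$, which beats any $n^{-K}$ once $L=L(K,r,\epsilon_1)$ is large.

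The price is that the main term is no longer $\E[e^{itX/\sigma_r}]$. Expand $P_{2L-1}$ into at most $n^{(2L-1)r}$ monomials, using $t/\sigma_r\le\pi$ by Remark~\ref{remark:values-of-t}. You must then show
\[
\Bigl|\E\Bigl[e^{itX/\sigma_r}\ \Big|\ \Bigl|\prod_{j=1}^{\ell}S_{G_0,G_1}(F_j)\Bigr|=1\Bigr]\Bigr|\le n^{-D}
\]
for all $\ell\le 2L-1$, all $F_1,\dots,F_\ell\in\cR\setminus\cR_{2,r-k-2}$, and some $D>(2L-1)r+K$. Your linear-part argument is unconditional and does not give this. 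It has to be redone under an event that fixes $O_L(1)$ edges:
\begin{itemize}
\item Check that the lower and upper bounds on $|\partial_fT_2|$ and $|\partial_fT_4|$ still hold with probability $1-n^{-2D}$ under this conditioning. This is why Lemma~\ref{cor:upper bound for copies on f} is stated conditionally on $H\subseteq G_{n,p}$.
\item Apply Claim~\ref{claim:sufficient condition for sign zero in expectation new} with the extra conditioning event, to get mean-zero weights and the second- and fourth-moment identities for $w_f$.
\item Restrict the matching argument to edges $f$ disjoint from $V(F_1\cup\dots\cup F_\ell)$.
\end{itemize}
The rest of your plan is in line with the paper: the two-stage revelation, Paley--Zygmund on $w_f$, and restricting $\E[Y^{2L}]$ to $T_{\cP,2L}$.
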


\begin{remark}\label{remark}
    Under the assumptions of Lemma \ref{lemma:decoupling for k a b}, $\UB_{k,b}= n^{O(1)}$. Therefore, since $K$ was chosen to be arbitrary, the above lemma implies that for every $K'>0$ and every $t \in \left[\LB_{k, b}, \UB_{k, b}\right]$, we also have $ \left\lvert\E\left[e^{it \cdot X_r/\sigma_r}\right]\right\rvert \le t^{-K'}$. 
\end{remark}

\begin{remark}\label{remark:values-of-t}
    We note that under the assumptions of Lemma \ref{lemma:decoupling for k a b}, we always have $\UB_{k,b}\ll \sigma_r$. Indeed, for $k=0$, by \ref{item:b condition} we have
    \[
        \UB_{0,b}\le \frac{\sigma_r}{(\log n)^{12r}}\ll \sigma_r.
    \]
    For $k>0$, we have 
    \[
        \UB_{k,b}\le \frac{n^{3\epsilon_k/2}}{(\log n)^{12r}} \cdot \sqrt{\frac{\sigma_r^2}{n^{2k+2}p^{2\binom{r}{2}-2\binom{r-k}{2}+1}}}\le \frac{n^{3\epsilon_k/2}}{(\log n)^{12r}}\cdot \sqrt{\frac{\sigma_r}{n^{2+2\delta}p}}\ll \sigma_r,
    \]
    where the first inequality follows from \ref{item:b condition}, the second follows from \ref{item:k condition} with $k'=0$, and the last inequality follows provided that $\epsilon_k$ is sufficiently small and the lower bound assumption on $p$. 
\end{remark}

The first step in proving Lemma \ref{lemma:decoupling for k a b}, which we do in the next subsection, is to employ Corollary~\ref{cor: the decoupling lemma new}. This reduces the problem to estimating the characteristic function of some signed subgraph counts. To this end, we will distinguish between two types of copies: `good' and `bad'. 
One should think of the good copies as those mainly determining the value of $\left\lvert\E\left[e^{it \cdot X_r/\sigma_r}\right]\right\rvert$.
Informally, we will bound the characteristic function of the sum of all signed copies by a skewed characteristic function of the sum of good signed copies, and a high moment of the sum of bad signed counts. This, again, reduces the problem to bounding these two terms separately, and is achieved in the following lemma. 

Let us recall some notations used frequently in this section and also in the following lemma. Fix an $(a_k, b, k, r)$-partition $\cP = (A_1, A_2, B_1, \dots, B_{r-k-2}, Z)$. We write $\cR(\cP)$ to denote the set of copies of $K_r$ in $K_n$ taking at least one vertex in each of the sets $A_1, A_2, B_1, \dots, B_{r-k-2}$, see Definition \ref{def:rainbow-copies-abkr-partition}. Further, recall that for any pair of integers $i, j$, the set $\cR_{i, j}(\cP) \subseteq \cR(\cP)$ is defined to be the collection of all copies of $K_r\in \cR(\cP)$ with exactly $i$ vertices in $\cP_A$ and $j$ vertices in $\cP_B$. For simplicity, from now on we write $\cR$ and $\cR_{i, j}$ for $\cR(\cP)$ and $\cR_{i, j}(\cP)$, respectively, when the partition is understood from the context. 
Lastly, we will also use the sign function defined in Definition \ref{definition:sign} with respect to the above partition and its corresponding $\cE_\cP$ as given by Definition~\ref{def: (a,b,k)-partition new}.

\begin{lemma}\label{lemma:main lemma for decoupling}
    For every $K>0$ and large enough $n$ the following holds for every $p\in (0,1/2]$ satisfying \eqref{align: p assumption}. Suppose that $k \in \{0, \dots, r-3\}$ and $0 < b \le a_k$ are integers satisfying Properties \ref{item:b condition} and \ref{item:k condition}.    
    Fix an $(a_k, b, k, r)$-partition $\cP$. Let $G_0 \sim G_{n, p}$ and $G_1 \sim \left(\cP_A \times \cP_B\right)_p$ be two independent random graphs and set
    \begin{equation}\label{eq:tildeXY}
        \Tilde{X} \coloneqq \sum_{F \in \cR_{2, r-k-2}} S_{G_0, G_1}(F) \quad \text{and} \quad \Tilde{Y} \coloneqq \sum_{F \in \cR \setminus \cR_{2, r-k-2}} S_{G_0, G_1}(F).
    \end{equation}
    Then, the following holds.
    \begin{enumerate}
        \item\label{item:part1} For every integer $L> 0$ and every $t \in \left[\LB_{k, b}, \UB_{k, b}\right]$, we have
        \begin{equation}\label{align:bound of first item in the main lemma}
             \left|\E_{G_0, G_1}\left[e^{it\Tilde{X} / \sigma_r} \cdot P_{2L-1}\left(\frac{it \cdot \Tilde{Y}}{\sigma_r}\right)\right]\right| \le n^{-K},
        \end{equation}
        where $P_{2L-1}$ is the Taylor polynomial of $e^x$ of order $2L-1$. 
        \item\label{item:part2} For every sufficiently large integer $L = L(\epsilon_1, r)$, we have
        \[
            \E\left[\Tilde{Y}^{2L}\right] = O_L\left(\left(n^{2\epsilon_k(k-1+0.1)} a_k^{2k+2} b^{r-k-2} p^{2\binom{r}{2} - \binom{r-k}{2}}\right)^{L}\right).
        \]
    \end{enumerate}

\end{lemma}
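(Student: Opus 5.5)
We treat the two parts of Lemma~\ref{lemma:main lemma for decoupling} separately; part~\eqref{item:part2} is the main obstacle.

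\textbf{Part~\eqref{item:part1}.} The plan is to condition on the graph $\Gamma \coloneqq (G_0\cup G_1)\setminus(A_1\times A_2)$, without recording for each of its edges whether it came from $G_0$ or $G_1$, and also on the $G_0/G_1$ labels of edges outside $\cP_A\times\cP_B$. The edges of $A_1\times A_2$ lie only in $G_0$, so conditionally they are i.i.d.\ Bernoulli$(p)$.

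For $F\in\cR_{2,r-k-2}$ the two $\cP_A$-vertices form an edge $f\in A_1\times A_2$. Hence $\Tilde X=\sum_{f\in A_1\times A_2} w_f\mathbf{x}_f$, where $w_f$ is a signed count of extensions of $f$ using one vertex in each $B_j$ and $k$ vertices of $Z$. The weights $w_f$ do not depend on the $A_1\times A_2$ edges. The term $P_{2L-1}(it\Tilde Y/\sigma_r)$ does depend on them, so we expand it into monomials $\prod_{s\le m}S(F_s)$ with $m\le 2L-1$. Each monomial involves at most $(2L-1)\binom r2$ edges of $A_1\times A_2$. After also conditioning on these edges, what remains is a product over the other edges of Bernoulli characteristic functions.

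The heart of part~\eqref{item:part1} is showing that, with probability $1-n^{-\omega(K)}$ over the conditioning, each perfect matching of $A_1\times A_2$ contains $\Omega(a_k/(\log n)^{O(1)})$ edges with $|w_f|t/\sigma_r\in[(\log n)^{-O(1)}\sqrt{\LB\text{-scale}},\,1]$.
\begin{itemize}
\item The upper bound on $|w_f|$ comes from Lemma~\ref{cor:upper bound for copies on f} with $L_1=1$ (a second-moment/Markov argument for $w_f^2$), together with $t\le \UB_{k,b}$.
\item The lower bound is anti-concentration. We use Claim~\ref{claim: lower bound on extensions distinct} to guarantee many $(r-k)$-clique extensions of $f$ into the $B_j$'s, and Remark~\ref{remark: lower bound on extensions} for extensions into $Z$. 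Given $\Gamma$, the $G_0/G_1$ labels of $A_i\times B_j$ edges are independent fair-type coins, so $w_f$ is a sum of many conditionally independent signed terms. Paley--Zygmund (Lemma~\ref{claim: Paley-Zygmund}) on $w_f^2$ then gives $|w_f|\gtrsim\sqrt{\text{variance}}$ with constant probability.
\item Concentration across a matching, for edges far apart, follows by Chernoff (Lemma~\ref{lemma:chernoff}) after revealing labels.
\end{itemize}

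Claim~\ref{claim:char-bounds} then yields a factor $\exp(-\Omega(p\,a_k^2 (t\,\mathrm{var}^{1/2}/\sigma_r)^2/\text{polylog}))$, which is at most $n^{-2K}$ precisely when $t\ge\LB_{k,b}$. The polynomial factor $|P_{2L-1}|\le n^{O(L)}$ is absorbed, and the bad event costs $n^{-\omega(1)}$.

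\textbf{Part~\eqref{item:part2}.} Expand
\[
\E[\Tilde Y^{2L}]=\sum_{(F_1,\dots,F_{2L})}\E\Bigl[\prod_s S(F_s)\Bigr].
\]
By Claim~\ref{claim:sufficient condition for sign zero in expectation new}, only tuples in which every vertex of $\cP_A\cup\cP_B$ of some $F_s$ lies in another $F_t$ survive. Each surviving term is $O(p^{e(\bigcup F_s)})$ by Claim~\ref{claim: sign zero new}. We group tuples by the intersection vector $\mathbf{v}$ (the numbers of distinct vertices in $\cP_A$, $\cP_B$, $Z$, and the types $(i,j)$ with $(i,j)\ne(2,r-k-2)$) and bound $|T(\mathbf v,e)|\,p^{e}\le a_k^{|\cP_A\text{-vertices}|}b^{|\cP_B\text{-vertices}|}n^{|Z\text{-vertices}|}p^{e}$.

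The key combinatorial step is a lower bound on $e$ given the vertex counts. We would build the union one clique at a time and charge new vertices against new edges, as in the proof of Lemma~\ref{cor:upper bound for copies on f}. Pairing forces each $\cP_A\cup\cP_B$ vertex to be counted at most $L$ times. For $Z$-vertices we use \ref{item:k condition}, which makes $n^{k}p^{\binom r2-\binom{r-k}2}$ dominant, with a slack of $n^{\delta}$ absorbing the $a_k=n^{1-\epsilon_k}$ versus $n$ losses. Then \ref{item:b condition} shows that extra $B$-vertices only increase the bound monotonically in the correct direction.

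Per pair of cliques, the target per-factor bound is $a_k^{2k+2}b^{r-k-2}p^{2\binom r2-\binom{r-k}2}$, up to the factor $n^{2\epsilon_k(k-0.9)}$. Configurations with extra $\cP_A$-vertices replace $Z$-vertices, costing $a_k/n=n^{-\epsilon_k}$ each but gaining edges, and the hierarchy $\epsilon_{k-1}\ll\epsilon_k$ handles these comparisons. We expect this optimisation over $\mathbf v$ (a convexity argument in the numbers of shared vertices, checking extremes) to be the main difficulty. Finally, summing over $O_L(1)$ choices of $\mathbf v$ and $e$ completes the proof.
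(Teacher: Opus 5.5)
Your plan for part~\eqref{item:part1} is essentially the paper's argument. You condition on $(G_0\cup G_1)\setminus(A_1\times A_2)$ and write $\tilde X=\sum_f w_f\mathbf{1}_{f\in G_0}$. You then show that a $(\log n)^{-O(r)}$-fraction of each perfect matching of $A_1\times A_2$ has $c_1\Lambda^{1/2}\le|w_f|\le(\log n)^{10r}\Lambda^{1/2}$, and finish with Claim~\ref{claim:char-bounds}. Conditioning on the $A_1\times A_2$-edges of each monomial, instead of on the event $|\prod_j S_{G_0,G_1}(F_j)|=1$, is a harmless and arguably cleaner variant. One correction: $w_f$ is not a sum of conditionally independent terms. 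All extensions of $f$ through the same $\cP_B$-tuple carry the same sign, so $w_f$ is a multilinear chaos of degree $2(r-k-2)$ in the labels of the $\cP_A\times\cP_B$-edges at the endpoints of $f$. Paley--Zygmund therefore needs a fourth-moment bound. The paper gets it from $\E[w_f^4\mid\cdot]\le|\partial_fT_4|$, i.e.\ Lemma~\ref{cor:upper bound for copies on f} with $L_1=2$ as well as $L_1=1$; this gives success probability $(\log n)^{-11r}$ rather than a constant, which still suffices.

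Part~\eqref{item:part2} has a genuine gap. You name a lower bound on $e(\bigcup_sF_s)$ in terms of the vertex counts as the key step, but you do not prove it, and the route you indicate would not give it.
\begin{itemize}
\item Your survival criterion is stronger than what Claim~\ref{claim:sufficient condition for sign zero in expectation new} gives. The claim only forces each $F_s$ to meet $\bigcup_{t\neq s}V(F_t)$ inside each of $A_1,A_2,B_1,\dots,B_{r-k-2}$. So for the types $(i,j)\neq(2,r-k-2)$ making up $\tilde Y$, a clique may have unshared vertices in $\cP_A\cup\cP_B$. One only gets $n_A\le(2i-2)L$ and $n_B\le(2j-(r-k-2))L$, and the excess over $iL$, $jL$ must be paid for by extra edges.
\item This constraint is invisible to one-clique-at-a-time exposure. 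A clique's obligatory shared vertices may all lie in cliques exposed later, so at exposure time it is only known to meet its predecessors in one vertex, or in none. Sequential charging thus cannot exclude a second clique meeting the first in a single $\cP_A$-vertex. For $r=5$, $k=1$ and type $(3,2)$ this contributes $a_k^3b^2p^{10}\cdot a_k^2b^2p^{10}$ per pair, exceeding the target $a_k^4b^2p^{14}$ by $a_kb^2p^6=(a_kp)(b^2p^5)\gg1$ by \ref{item:b condition}.
\item In Lemma~\ref{cor:upper bound for copies on f} this problem does not arise: all $\cP_B$-vertices are shared there and are counted globally, and only $Z$-vertices are exposed sequentially.
\end{itemize}

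The missing ingredient is the paper's global degree count. Let $\cD$ be the set of vertices of $F=\bigcup_sF_s$ lying in exactly one clique. Every $v\notin\cD$ has $d_F(v)\ge r-1+|N_F(v)\cap\cD|/2$. Every clique has at least $r-k$ vertices outside $\cD$, so $2e(F)\ge(r-1)v(F)+\tfrac{r-k}{2}|\cD|$. Double counting gives $|\cD\cap\cP_A|\ge2(n_A-iL)$ and $|\cD\cap\cP_B|\ge2(n_B-jL)$ (Claim~\ref{claim:number of edges in F with D}). Combined with the fact that $F$ has at least $\max\{jn_A,in_B\}$ edges between $\cP_A$ and $\cP_B$, this yields the two bounds of Lemma~\ref{claim:number of edges in F}. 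The case $i+j<r$ is reduced to $i+j=r$ by relabelling $Z$-vertices as $\cP_A$-vertices, at cost $(n/a_k)^{2(r-i-j)L}\le n^{2\epsilon_k(k-1)L}$; this is the source of the $k-1$ in the exponent. The resulting sums over $(n_A,n_B)$ are maximised at extreme values and checked case by case using $b\le a_k$, \ref{item:b condition}, and \ref{item:k condition} with $k'=0$ (so $a_k^kp^{\binom r2-\binom{r-k}2}\ge1$ once $k\epsilon_k<\delta$). The leftover $n^{C_r}$ is absorbed into $n^{0.2\epsilon_kL}$ for $L$ large in terms of $\epsilon_1$. The hierarchy $\epsilon_{k-1}\ll\epsilon_k$ plays no role in this part.
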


The proof of this lemma is deferred to Sections \ref{section:decoupling} and \ref{section:high moments} and the rest of this section is organised as follows.
In Subsection \ref{section:decoupling k a b}, we derive Lemma \ref{lemma:decoupling for k a b} from Lemma \ref{lemma:main lemma for decoupling}. 
In Subsection \ref{section:cover all range of t}, we show how to patch all intervals from Lemma \ref{lemma:decoupling for k a b} and thus showing that every $t\in [n^{1/2+\gamma}p,n^{-\gamma}\sigma_r]$ is covered, concluding the proof of Lemma \ref{lem: bound for mid frequencies}.
\subsection{Proof of Lemma \ref{lemma:decoupling for k a b}}\label{section:decoupling k a b}

Let $\cP = (A_1, A_2, B_1, \dots, B_{r-k-2}, Z)$ be an $(a_k, b, k, r)$-partition, let $G_0 \sim G_{n, p}$, and let $G_1 \sim \left(\cP_A \times \cP_B\right)_p$. By Corollary \ref{cor: the decoupling lemma new}, for every $t > 0$, 
\begin{align}\label{align:after decoupling}
    \left\lvert\E\left[e^{itX_r/\sigma_r}\right]\right\rvert^{2^{2(r-k-2)}} \le \left\lvert\E_{G_0, G_1}\left[e^{it(\tilde{X} + \tilde{Y})/\sigma_r}\right]\right\rvert,
\end{align}
where $\Tilde{X}$ and $\Tilde{Y}$ are as in \eqref{eq:tildeXY}.

Without loss of generality, assume that $K> 0$ is sufficiently large as a function of $r$. By \eqref{align:after decoupling}, in order to prove Lemma \ref{lemma:decoupling for k a b}, it suffices to show that the following holds for every $t \in [\LB_{b, k}, \UB_{b, k}]$:
\begin{align}\label{align:bound on characteristic}
    \left\lvert\E_{G_0, G_1}\left[e^{it(\tilde{X} + \tilde{Y})/\sigma_r}\right]\right\rvert \le n^{-2^{2r}  K}.
\end{align}

To this end, let $ L= L(K, r, \epsilon_1)$ be some large constant and note that by standard use of Taylor's theorem (see e.g.\ \cite[Lemma 3.3.19]{Dur2010}), for every $t > 0$, we have the following:
\begin{align}\label{align:splitting to main and error}
    \left|\E_{G_0, G_1}\left[e^{it(\tilde{X} + \tilde{Y})/\sigma_r}\right]\right| \le \left|\E_{G_0, G_1}\left[e^{it\Tilde{X}/\sigma_r} \cdot P_{2L-1}\left(\frac{it \Tilde{Y}}{\sigma_r}\right)\right]\right| + O_L\left(\E_{G_0, G_1}\left[\left(\frac{t  \Tilde{Y}}{\sigma_r}\right)^{2L}\right]\right),
\end{align}
where $P_{2L-1}$ is the Taylor polynomial of $e^x$ of order $2L-1$. By the first item in Lemma \ref{lemma:main lemma for decoupling}, for every $t \in [\LB_{k, b}, \UB_{k, b}]$ we may bound the first summand in \eqref{align:splitting to main and error} as follows 
\begin{equation}\label{align:first bound after decoupling}
    \left|\E_{G_0, G_1}\left[e^{it\Tilde{X} / \sigma_r} \cdot P_{2L-1}\left(\frac{it \cdot \Tilde{Y}}{\sigma_r}\right)\right]\right| \le n^{-2\cdot 2^{2r}K}.
\end{equation}
For the second summand in \eqref{align:splitting to main and error}, note that by applying the second item of Lemma \ref{lemma:main lemma for decoupling} we have
\begin{align*}
    \E\left[\Tilde{Y}^{2L}\right] &= O_L\left(\left(n^{2\epsilon_k(k-1+0.1)} a_k^{2k+2} b^{r-k-2} p^{2\binom{r}{2} - \binom{r-k}{2}}\right)^{L}\right).
\end{align*}
This in turn implies that for every $t \le \UB_{k, b}$, 
\begin{equation}\label{align:second bound after decoupling}
\begin{aligned}\E\left[\left({t\Tilde{Y}}/{\sigma_r}\right)^{2L}\right] &= O_L\left(\left(\frac{n^{-2\epsilon_k(k-1+0.5)}}{(\log n)^{20r}} \cdot n^{2\epsilon_k(k-1+0.1)}\right)^{L}\right) \\
   &= O_L\left(\left(\frac{n^{-0.8\epsilon_k}}{(\log n)^{20r} }\right)^L\right) 
    \le n^{-2 \cdot 2^{2r} K},
    \end{aligned}
\end{equation}
where the last inequality holds provided that $L = L(r, \epsilon_1,K)$ is sufficiently large.

The proof is concluded by combining \eqref{align:splitting to main and error}, \eqref{align:first bound after decoupling},  and \eqref{align:second bound after decoupling}, which for every $t \in \left[\LB_{k, b}, \UB_{k, b}\right]$ yield the desired inequality \eqref{align:bound on characteristic}.

\subsection{Proof of Lemma \ref{lem: bound for mid frequencies}}\label{section:cover all range of t}
    Our goal in this subsection is to derive Lemma \ref{lem: bound for mid frequencies} from Lemma~\ref{lemma:decoupling for k a b}. We call $t\in [n^{1/2+\gamma}p,n^{-\gamma}\sigma_r]$ \emph{covered} if there are choices of $k$ and $b$ satisfying Properties \ref{item:b condition} and \ref{item:k condition} such that $t\in[ \LB_{k,b}, \UB_{k,b}]$. With this notation, proving Lemma \ref{lem: bound for mid frequencies} is equivalent to showing that every $t\in [n^{1/2+\gamma} p,n^{-\gamma}\sigma_r]$ is covered.

    Let us start with $k=0$ and determine the values of $t$ which are covered by varying over all possible choices of $b$ in Lemma \ref{lemma:decoupling for k a b}. 
    Note that Property \ref{item:k condition} is vacuous in this case. Let us also note that Property \ref{item:b condition} holds for $b = a_0 = n$. Indeed, by \eqref{align: p assumption},
    \begin{equation}\label{eq:lower-bound-2-density}
       n^{r-2} p^{\binom{r}{2} - 1} \ge (\log n)^{4r}.
    \end{equation}
    Lemma \ref{lemma:decoupling for k a b} (see Remark \ref{remark}), applied with $k=0$ and an integer $b \le a_0 = n$ satisfying Property \ref{item:b condition}, covers the set of frequencies
    \[
        I_{0, b} \coloneqq  \left[\LB_{0, b}, \UB_{0, b}\right].
    \]
    Since $n^2 p \geq (\log n)^{35r}$, for every such $b\leq n-1$ we have $\LB_{0,b+1}< \LB_{0,b}< \UB_{0,b+1}< \UB_{0,b}$ implying that following interval of frequencies is covered\footnote{Here and in the sequel, when $b=\omega(1)$, despite $b$ being an integer, we will ignore rounding errors for simplicity of presentation.}: 
    \[
        I_0 \coloneqq  \bigcup_{\substack{b \le n \\ b^{r-2} p^{\binom{r}{2} - 1} \ge (\log n)^{4r}}} I_{0, b} = \left[\sqrt{\frac{(\log n)^{14r}}{n^2 p} \cdot \frac{\sigma_r^2}{n^{r-2} p^{\binom{r}{2} - 1}}} , \frac{\sigma_r}{(\log n)^{12r}}\right].
    \]  
    
    Note that the values of $k$ for which Lemma \ref{lemma:decoupling for k a b} is applicable depend on $p$. The following definition dictates the values of $k>0$ which we will use to cover every $t\in [n^{1/2+\gamma}p,n^{-\gamma}\sigma_r]$. Let $k^*$ be the smallest positive integer in $[r-2]$ for which the assertion of property \ref{item:k condition} is satisfied (with $k^*=k$), that is
   \begin{equation*}
        n^{k^*} p^{\binom{r}{2} - \binom{r-k^*}{2}} \ge n^{\delta} n^{k'} p^{\binom{r}{2} - \binom{r-k'}{2}} \quad \text{for every}\quad 0 \le k' < k^*.
    \end{equation*}
    If no such integer satisfies the above, set $k^* \coloneqq r-1$.    
    For technical reasons, we will distinguish between the case of $k^* \ge r-2$ and $k^*<r-2$. In the former case, we will show that 
     \[
        [n^{1/2+\gamma} p,n^{-\gamma}\sigma_r] \subseteq I_0,
    \]
    concluding the proof under this assumption. The latter case, where $k^*<r-2$ will be more involved. Indeed, for every $k^*\le k\le r-3$ we will introduce the interval $I_k$ consisting of all $t$ covered by Lemma \ref{lemma:decoupling for k a b} (see Remark \ref{remark}) applied with $k$ and varying over all valid values of $b$. We will then aim to show that 
    \begin{equation}\label{eq:Ik}
         [n^{1/2+\gamma} p,n^{-\gamma}\sigma_r] \subseteq I_0\cup \bigcup_{k=k^*}^{r-3}I_k.
    \end{equation}
    
    We start with the following technical claim bounding $n^{k} p^{\binom{r}{2} - \binom{r-k}{2}}$ for every $k< k^*\le r-1$.
    
    \begin{claim}\label{claim:minimality of k^*}
        For every integer $0 < k < k^*\leq r-1$, we have $ n^{k} p^{\binom{r}{2} - \binom{r-k}{2}} \le n^{\delta k}.$
    \end{claim}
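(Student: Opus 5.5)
Write $f(k)\coloneqq n^{k}p^{\binom{r}{2}-\binom{r-k}{2}}$ for $0\le k\le r-1$, so that $f(0)=1$. The plan is to prove the bound by induction on $k$, using only the minimality in the definition of $k^*$. The key point is that for every $0<k<k^*$, the integer $k$ does \emph{not} satisfy property \ref{item:k condition}. Hence there is some $k'<k$ with
\[
f(k) < n^{\delta} f(k').
\]

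For the base case $k=1$, the only choice is $k'=0$. This gives $f(1) < n^{\delta}f(0)=n^{\delta}$, which is exactly the claimed bound $n^{\delta\cdot 1}$.

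For the inductive step, fix $1<k<k^*$ and assume the bound $f(j)\le n^{\delta j}$ holds for every $0<j<k$; it holds trivially for $j=0$, since $f(0)=1$. Minimality of $k^*$ supplies some $k'<k$ with $f(k)<n^{\delta}f(k')$. Since $k'<k<k^*$, the inductive hypothesis applies to $k'$ (or $k'=0$), so $f(k')\le n^{\delta k'}$. Therefore
\[
f(k) < n^{\delta}\, n^{\delta k'} \le n^{\delta (k'+1)} \le n^{\delta k},
\]
where the last step uses $k'+1\le k$ and $n^{\delta}\ge 1$.

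Two points need a little care. First, the case $k^*=r-1$ (no valid integer in $[r-2]$) needs no separate treatment: every $k\in[r-2]$ then fails property \ref{item:k condition}, which is exactly what the induction uses. Second, I don't expect a real obstacle; the only thing to watch is that the negation of \ref{item:k condition} produces an existential witness $k'$, and that this witness is strictly smaller than $k$ so that the induction is well founded.
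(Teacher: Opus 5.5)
Your proof is correct and follows the paper's argument: induction on $k$, where each $0<k<k^*$ fails property \ref{item:k condition}, giving a witness $k'<k$, and then the chain $f(k)<n^{\delta}f(k')\le n^{\delta(k'+1)}\le n^{\delta k}$. Your explicit treatment of the witness $k'=0$ via $f(0)=1$ is slightly cleaner than the paper's wording, which states the induction hypothesis only for positive $k_0$ but then allows $0\le k_0<k$.
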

    \begin{proof}
        If $k^*=1$, the claim is vacuous; hence, we assume that $k^*>1$.
        The proof proceeds by induction on $k$. First, note that for $k=1$, the minimality of $k^*$ implies the required inequality.
        For the induction step, fix $k<k^*$ and suppose that the claim holds for every positive integer $k_0 < k$. 
        Then, by the minimality of $k^*$ and the induction assumption, there exists an integer $0 \le k_0 < k$, such that
        \[
            n^{k} p^{\binom{r}{2} - \binom{r-k}{2}} < n^{\delta} n^{k_0} p^{\binom{r}{2} - \binom{r-k_0}{2}}\le n^{(k_0+1) \delta}\leq n^{k\delta}.
        \]
        This is as required.
    \end{proof}

    The next claim concludes the proof of Lemma \ref{lem: bound for mid frequencies} in the case $k^* \ge r-2$.
    
    \begin{claim}
        Suppose that $k^* \ge r-2$. Then, $[n^{1/2+\gamma} p,n^{-\gamma}\sigma_r]\subseteq I_0$.
    \end{claim}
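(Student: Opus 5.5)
The plan is to compare the two endpoints of $I_0$ directly with the endpoints of $[n^{1/2+\gamma}p,\,n^{-\gamma}\sigma_r]$. The upper endpoint is immediate: $I_0$ extends up to $\sigma_r/(\log n)^{12r}$, and this is at least $n^{-\gamma}\sigma_r$ for large $n$. So everything reduces to showing
\[
\sqrt{\frac{(\log n)^{14r}\,\sigma_r^2}{n^2p\cdot n^{r-2}p^{\binom r2-1}}}\le n^{1/2+\gamma}p,
\qquad\text{i.e.}\qquad
\frac{\sigma_r^2}{n^rp^{\binom r2}}\le \frac{n^{1+2\gamma}p^2}{(\log n)^{14r}} .
\]

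For this I would use the standard variance estimate, already invoked in the proof of Lemma \ref{lem: low frequencies}. It gives
\[
\sigma_r^2=\Theta\Bigl(\sum_{j=2}^{r} n^{2r-j}p^{2\binom r2-\binom j2}\Bigr),
\]
where $j$ is the number of shared vertices of two overlapping copies. Substituting $j=r-k$, this becomes
\[
\frac{\sigma_r^2}{n^rp^{\binom r2}}=\Theta\Bigl(\max_{0\le k\le r-2} n^kp^{\binom r2-\binom{r-k}2}\Bigr).
\]
It therefore suffices to bound each term $T_k:=n^kp^{\binom r2-\binom{r-k}2}$ by $n^{1+2\gamma}p^2/(\log n)^{14r}$. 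I would treat three ranges of $k$ separately.

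\begin{itemize}
\item \emph{$k=0$.} Here $T_0=1$, so I need $np^2\ge n^{-2\gamma}(\log n)^{14r}$. By \eqref{align: p assumption}, $p\ge n^{-2/(r+1)}$, so $np^2\ge n^{(r-3)/(r+1)}\ge 1$. The factor $n^{2\gamma}$ then absorbs the polylogarithm.
\item \emph{$0<k\le r-3$.} Since $k^*\ge r-2$, every such $k$ satisfies $k<k^*$. Claim \ref{claim:minimality of k^*} then gives $T_k\le n^{\delta k}\le n^{\delta(r-3)}$. The target is at least $n^{2\gamma}/(\log n)^{14r}$ by the previous bullet. So this range works once $\delta(r-3)<2\gamma$, which holds since $\delta$ is chosen sufficiently small in terms of $\gamma$.
\item \emph{$k=r-2$.} This is the case needing care, because Claim \ref{claim:minimality of k^*} does not apply when $k^*=r-2$. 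Here I would use the identity $T_{r-2}=np^2\cdot T_{r-3}$, which holds because $\binom 32-\binom 22=2$. If $r\ge4$, Claim \ref{claim:minimality of k^*} applied with $k=r-3$ gives $T_{r-2}\le np^2\, n^{\delta(r-3)}$, which is within the target again by $\delta(r-3)<2\gamma$. If $r=3$, then $T_{1}=np^2$ directly.
\end{itemize}

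The step I expect to be the main obstacle is this last one, the top term $k=r-2$ (the term where two copies share one edge). Everywhere else the bounds are routine exponent bookkeeping, with $\gamma$ absorbing the polylogarithmic factors.
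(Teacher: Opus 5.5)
Your proof is correct and is essentially the paper's argument. The paper notes that $\sigma_r^2=O\bigl(n^{2r-2}p^{2\binom{r}{2}-1}\bigr)$ in this regime, so only your top term $T_{r-2}$ matters, and it bounds that term exactly as you do, via $T_{r-2}=np^2\,T_{r-3}$ and Claim~\ref{claim:minimality of k^*} with $k=r-3$. Your separate cases $k<r-2$ are redundant, since by log-convexity in $k$ all $T_k$ are dominated by $T_{r-2}\ge(\log n)^{4r}$, but they are harmless. Your explicit check of the upper endpoint fills in a step the paper leaves implicit.
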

    \begin{proof}
        By the definition of $I_0$ it is enough to show that $ \LB_{0,n}\ll n^{1/2+\gamma}p$, where we recall that $\delta$ is sufficiently small compared to $\gamma$.
        Indeed, since $\sigma_r^2=O\left(n^{2r-2}p^{2\binom{r}{2}-1}\right)$ and by applying Claim~\ref{claim:minimality of k^*} with $k=r-3$, we obtain: 
        \begin{align*}
            \LB_{n,0}&=\sqrt{\frac{(\log n)^{14r}}{n^2 p} \cdot \frac{\sigma_r^2}{n^{r-2} p^{\binom{r}{2} - 1}}} \\&=O\left(\sqrt{(\log n)^{14r} \cdot n^{r-3} p^{\binom{r}{2} - \binom{3}{2}} \cdot n p^2}\right) =  O\left(\sqrt{(\log n)^{14r} \cdot n^{(r-3) \delta} \cdot n p^2}\right).
        \end{align*}     
        Hence, provided that $\delta<{\gamma}/{r}$ we have $\LB_{n,0} \ll n^{1/2+\gamma}p$, as required.
    \end{proof}
  From now on, assume that $k^* < r-2$. By the definition of $k^*$, we have
    \[
        n^{k^*}p^{\binom{r}{2}-\binom{r-k^*}{2}} \geq n^{\delta}\cdot\max\left\{1, n^{k^*-1}p^{\binom{r}{2}-\binom{r-k^*+1}{2}}\right\}.
    \]
    As $x\mapsto n^xp^{\binom{r}{2}-\binom{r-x}{2}}$ is convex, the assertion of Property \ref{item:k condition} holds also for every $k^*\leq k \le r-3$. That is for every $ 0\leq k'< k$ we have
    \[
    n^{k}  p^{\binom{r}{2} - \binom{r-k}{2}}  \ge  n^{\delta}\cdot n^{k'}  p^{\binom{r}{2} - \binom{r-k'}{2}}.
    \]
    Then, by Lemma \ref{lemma:decoupling for k a b} (see Remark \ref{remark}), for every $k^*\le  k\le r-3$ and every $b \le a_k$ for which \ref{item:b condition} is satisfied, the following interval is covered,
    \[
        I_{k, b} \coloneqq \left[\LB_{k,b}, \UB_{k,b}\right].
    \]
    Observe that as $a_k=n^{1-\epsilon_k}$, the right-hand side of the interval $I_{k, b}$ is larger than the left-hand side by a multiplicative factor of 
    \[
        \frac{n^{-\epsilon_k(k-0.5)}}{(\log n)^{17r}} \cdot \left(\frac{n}{a_k}\right)^k = \frac{n^{0.5 \epsilon_k}}{(\log n)^{17r}} \gg 1,
    \]
    and thus $I_{k, b} \neq \emptyset$.  
    Set 
    \[
        \LB_k \coloneqq \sqrt{\frac{(\log n)^{14r}}{a_k^2 p} \cdot \frac{\sigma_r^2}{a_k^{r-k-2} n^{2k} p^{2\binom{r}{2} - \binom{r-k}{2} - 1}}},
    \]
    and
    \[
        \UB_k\coloneqq  \frac{n^{-\epsilon_k(k-0.5)}}{(\log n)^{12r}} \cdot \sqrt{\frac{\sigma_r^2}{a_k^{2k+2} p^{2\binom{r}{2} - 2\binom{r-k}{2} + 1}}}.
    \]
    The interval $I_k$ mentioned above (see \eqref{eq:Ik}) is finally defined as follows, 
    \begin{align*}
        I_{k} & \coloneqq \bigcup_{\substack{b \le a_k \\ b^{r-k-2} p^{\binom{r-k}{2} - 1} \ge (\log n)^{4r}}} I_{k, b} = \left[\LB_k, \UB_k\right].
    \end{align*}
    We note that $I_k$ is indeed an interval. This follows since $\LB_{k,b+1} < \LB_{k,b} < \UB_{k,b+1} < \UB_{k,b}$ for every $b$ as above. This also implies the equality above.
    We further emphasise that $I_k$ is covered, and therefore, it suffices to show that
    \[
        [n^{1/2+\gamma} p,n^{-\gamma}\sigma_r] \subseteq I_0\cup \bigcup_{k=k^*}^{r-3}I_k.
    \]
    
    As the value of $k$ increases, the interval $I_k$ covers smaller frequencies. In particular, to reach frequencies of the order of $n^{1/2+\gamma}p$, we will have to consider $I_{r-3}$.
    To prove the above, we will show that $I_0\cup \bigcup_{k=k^*}^{r-3}I_k$ is an interval with the left endpoint smaller than $n^{1/2+\gamma}p$ and the right endpoint larger than $n^{-\gamma}\sigma _r$ .
    This will be done as follows: first, we show that $\LB_{0,n}\le \UB_{k^*}$. Then, we show that for every $k^*\le k< r-3$ we have $\LB_{k}\le \UB_{k+1}$. 
    As hinted above, we lastly show that $\LB_{r-3}\leq n^{1/2+\gamma} p $, concluding the proof.
    
    \begin{claim}
        $\LB_{0,n}\ll \UB_{k^*}$.
    \end{claim}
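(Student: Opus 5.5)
The plan is a direct computation. Both endpoints are explicit, so I will compute the ratio $\UB_{k^*}^2/\LB_{0,n}^2$ and show that it is at least $n^{\Omega(\epsilon_{k^*})}$, or at least a large power of $\log n$. Write $C\coloneqq\binom{r}{2}$ and $Q_k\coloneqq n^{k}p^{C-\binom{r-k}{2}}$. Then $f(k)\coloneqq \log Q_k$ is a convex quadratic in $k$ with $f(0)=0$. Its increments are $d_k\coloneqq f(k)-f(k-1)=\log n-(r-k)\log(1/p)$, and these increase by exactly $\log(1/p)\ge \log 2$ at each step.

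\textbf{Step 1 (reduce to a statement about $f$).} From the definitions, $\LB_{0,n}^2=(\log n)^{14r}\sigma_r^2/(n^rp^{C})$. Using $a_{k^*}=n^{1-\epsilon_{k^*}}$,
\[
\UB_{k^*}^2=\frac{n^{-2\epsilon_{k^*}(k^*-0.5)}}{(\log n)^{24r}}\cdot\frac{\sigma_r^2}{n^{(1-\epsilon_{k^*})(2k^*+2)}p^{2C-2\binom{r-k^*}{2}+1}} .
\]
The factor $\sigma_r^2$ cancels in the ratio. The $\epsilon$-exponents combine to $n^{\epsilon_{k^*}(2k^*+2)-2\epsilon_{k^*}(k^*-0.5)}=n^{3\epsilon_{k^*}}$. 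The remaining powers of $n$ and $p$ regroup exactly as $Q_{r-2}/Q_{k^*}^2$. So
\[
\frac{\UB_{k^*}^2}{\LB_{0,n}^2}=\frac{n^{3\epsilon_{k^*}}}{(\log n)^{38r}}\cdot \exp\big(f(r-2)-2f(k^*)\big),
\]
and it suffices to show $f(r-2)-2f(k^*)\ge -\epsilon_{k^*}\log n$, or alternatively $f(r-2)-2f(k^*)\ge 39r\log\log n$.

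\textbf{Step 2 (telescope).} Write
\[
f(r-2)-2f(k^*)=\sum_{j=k^*+1}^{r-2}d_j-d_{k^*}-f(k^*-1).
\]
I will use three facts. First, the definition of $k^*$ with $k'=k^*-1$ gives $d_{k^*}\ge\delta\log n>0$. Since the $d_j$ increase, all $d_j$ with $j\ge k^*$ are positive. Second, since $k^*<r-2$, the sum contains the term $d_{k^*+1}=d_{k^*}+\log(1/p)$. This cancels $-d_{k^*}$ and leaves
\[
\log(1/p)+\sum_{j=k^*+2}^{r-2}d_j-f(k^*-1),
\]
where every term of the remaining sum is at least $\delta\log n$. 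Third, $f(k^*-1)$ must be bounded above. If $k^*=1$ then $f(0)=0$. If $k^*\ge 2$, Claim~\ref{claim:minimality of k^*} gives $f(k^*-1)\le\delta(k^*-1)\log n$.

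\textbf{Step 3 (cases).} If $k^*\le r-4$, the sum contains at least one term $d_{k^*+2}\ge\delta\log n+2\log(1/p)$. I would try to show that $\sum_{j\ge k^*+2}d_j$ dominates $f(k^*-1)$ by comparing increments on both sides of $k^*$. The left side is $\sum_{j<k^*}d_j$, and each of those increments is smaller than $d_{k^*}$ by a multiple of $\log(1/p)$. This suggests $f(k^*-1)\le (k^*-1)d_{k^*}$, which must be weighed against the $r-2-k^*$ terms on the right. If $k^*=1$, then $f(0)=0$ and we are left with $\log(1/p)+\sum_{j\ge 3}d_j\ge 0$. The ratio is then at least $n^{3\epsilon_1}(\log n)^{-38r}\gg 1$, which settles this case.

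\textbf{Main obstacle.} The hard case is $2\le k^*$ combined with a nearly dense $p$, where $\log(1/p)$ is only constant. The Claim~\ref{claim:minimality of k^*} bound $\delta(k^*-1)\log n$ is then much larger than the available slack $3\epsilon_{k^*}\log n$, because $\epsilon_{k^*}\ll\delta$. My first attempt would be a sharper bound on $f(k^*-1)$. By convexity with $f(0)=0$, and by minimality of $k^*$, I expect either $f(k^*-1)\le 0$ or $d_{k^*-1}<\delta\log n$. Since $d_{k^*}=d_{k^*-1}+\log(1/p)$, a constant $\log(1/p)$ together with $d_{k^*}\ge \delta\log n$ forces $p\le n^{-\Omega(\delta)}$ whenever $k^*\ge 2$. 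That makes $\log(1/p)=\Omega(\delta\log n)$, and this gain should absorb $f(k^*-1)$. Making this last implication precise is the step I expect to require the most care.
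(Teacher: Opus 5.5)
Your Step 1 computation and the telescoped identity $f(r-2)-2f(k^*)=\log(1/p)+\sum_{j=k^*+2}^{r-2}d_j-f(k^*-1)$ are correct. The remaining sum is nonnegative, and the case $k^*=1$ is fully handled. The gap is the case $k^*\ge 2$, which you leave open, and the route you sketch for it does not work as stated.

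Minimality of $k^*$ at $k^*-1$ only gives $f(k^*-1)<\delta\log n+\max_{k'<k^*-1}f(k')$. So your dichotomy ``$f(k^*-1)\le 0$ or $d_{k^*-1}<\delta\log n$'' can fail when $f$ first dips below zero. For example, $r=7$ and $\log(1/p)=\frac{1-\delta/6}{5}\log n$ give $k^*=4$, $f(3)=\frac{\delta}{2}\log n>0$ and $d_3>\delta\log n$.

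More importantly, a gain of $\log(1/p)=\Omega(\delta\log n)$ cannot absorb $f(k^*-1)$. Claim~\ref{claim:minimality of k^*} only bounds $f(k^*-1)$ by $\delta(k^*-1)\log n$. You need a lower bound on $\log(1/p)$ whose constant does not shrink with $\delta$. Your ``comparing increments'' idea for $k^*\le r-4$ has the same problem: it only rewrites the target inequality and still needs extra information about $p$.

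The missing input is to use minimality at $k=1$ rather than near $k^*$. If $k^*\ge 2$, then $k=1$ violates the defining inequality, i.e.\ $Q_1=np^{r-1}<n^{\delta}$, so $\log(1/p)>\frac{1-\delta}{r-1}\log n$. Then
\[
f(r-2)-2f(k^*)\ge\log(1/p)-f(k^*-1)\ge\Bigl(\frac{1-\delta}{r-1}-\delta(r-4)\Bigr)\log n>0
\]
for $\delta<1/r^2$. Hence the ratio $\UB_{k^*}^2/\LB_{0,n}^2$ is at least $n^{3\epsilon_{k^*}}(\log n)^{-38r}\gg 1$. Combined with your telescoping, this treats all $k^*\ge 2$ at once.

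The paper argues differently. It factors $Q_{k^*}^2p=Q_{k^*+1}Q_{k^*-1}p^2$, uses $Q_{k^*-1}\le n^{k^*\delta}$, and then splits into two cases:
\begin{itemize}
\item For $k^*\le r-4$ it gains the factor $np\gg n^{k^*\delta}$. In your notation this means keeping the term $d_{r-2}$, since $\log(1/p)+d_{r-2}=\log(np)$.
\item For $k^*=r-3$ it uses $Q_{r-4}<n^{r\delta}$ to get $p\le n^{-\Omega_r(1)}$. This is the same kind of smallness of $p$ you were after, obtained from minimality at a small index.
\end{itemize}
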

    \begin{proof}
        Recalling the definitions of $\LB_{0,n},\UB_{k^*}$ and that $a_{k^*}=n^{1-\epsilon_{k^*}}$, the claim is equivalent to showing that 
        \[
            (\log n)^{14r} \cdot n^{2k^*} p^{2\binom{r}{2} - 2\binom{r-k^*}{2} + 1} \ll \frac{n^{3 \epsilon_{k^*}}}{(\log n)^{24r}} \cdot n^{r-2} p^{\binom{r}{2}}.
        \]
        Note that if $r = 4$, then $k^* = 1$ (as $0<k^* < r-2$ in this case). Thus, in this particular case, the above inequality is equivalent to
            \[
                (\log n)^{14 \cdot 4} \cdot n^{2} p^{2\binom{4}{2} - 2\binom{3}{2} + 1} \ll \frac{n^{3 \epsilon_{1}}}{(\log n)^{24 \cdot 4}} \cdot n^{2} p^{\binom{4}{2}},
            \]
        which clearly holds. From now on, assume that $r > 4$.
        By Claim \ref{claim:minimality of k^*}, $n^{k^* - 1} p^{\binom{r}{2} - \binom{r-k^*+1}{2}} \le n^{k^* \delta}$, and by noting that
        \begin{align*}
            n^{2k^*} p^{2\binom{r}{2} - 2\binom{r-k^*}{2} + 1} = n^{k^* + 1} p^{\binom{r}{2} - \binom{r-k^*-1}{2} + 2} \cdot n^{k^* - 1} p^{\binom{r}{2} - \binom{r-k^*+1}{2}},
        \end{align*}
        it is enough to show that
        \[
            (\log n)^{38r} \cdot n^{k^* + 1} p^{\binom{r}{2} - \binom{r-k^*-1}{2} + 1} \ll n^{3\epsilon_{k^*} - k^* \delta} \cdot n^{r-2} p^{\binom{r}{2} - 1}.
        \]
        
        Assume first that $k^* < r-3$. As $x\mapsto n^{x}p^{\binom{r}{2}-\binom{r-x}{2}}$ is convex and as $n^{k^*}p^{\binom{r}{2}-\binom{r-k^*}{2}}\geq n^{\delta}\geq  1$, we have $n^{k^* + 1} p^{\binom{r}{2} - \binom{r-k^*-1}{2}} \le n^{r-3} p^{\binom{r}{2} - \binom{3}{2}}$. Therefore, 
        \[
             (\log n)^{38r} \cdot n^{k^* + 1} p^{\binom{r}{2} - \binom{r-k^*-1}{2} + 1} \le (\log n)^{38r} \cdot n^{r-3} p^{\binom{r}{2} - \binom{3}{2}+1}  \ll n^{3\epsilon_{k^*} - k^* \delta} \cdot n^{r-2} p^{\binom{r}{2} - 1},
        \]
        where the last inequality holds since $np \gg  (\log n)^{38r} \cdot n^{k^* \delta}$  for all sufficiently small $\delta$.
        Assume now that $k^* = r-3$. Claim \ref{claim:minimality of k^*} implies that in this case $p$ satisfies $n^{r-4} p^{\binom{r}{2} - \binom{4}{2}} < n^{r \delta}$. In particular, $p$ is fairly small, that is, by taking $\delta$ to be small enough we have $p\le n^{-\Omega_r(1)}$ (as $r>4$).
        We have $n^{k^* + 1} p^{\binom{r}{2} - \binom{r-k^*-1}{2}} = n^{r-2} p^{\binom{r}{2} - \binom{2}{2}}$, which implies 
        \[
            (\log n)^{38r} \cdot n^{k^* + 1} p^{\binom{r}{2} - \binom{r-k^*-1}{2} + 1} = (\log n)^{38r} \cdot n^{r-2} p^{\binom{r}{2} - 1} \cdot p \ll n^{3\epsilon_{k^*} - k^* \delta} \cdot n^{r-2} p^{\binom{r}{2} - 1},
        \]
        where the last inequality holds by taking $\delta$ small enough.
    \end{proof}
    \begin{claim}
        For every $k^*\le k\le r-4$, we have $\LB_{k} \ll \UB_{k+1}$.
    \end{claim}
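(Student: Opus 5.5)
The plan is to unfold the definitions of $\LB_k$ and $\UB_{k+1}$, cancel the common factor $\sigma_r^2$, and reduce the claim to a single inequality between powers of $n$ and $p$. That inequality should follow from Property~\ref{item:k condition} at level $k$, which holds for every $k^*\le k\le r-3$ by the convexity remark made before the definition of $I_k$. The small constants $\epsilon_k\ll\epsilon_{k+1}$ will supply a polynomial gain that absorbs all polylogarithmic factors.

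Write $C=\binom{r}{2}$, $m=r-k\ge 4$, $B=\binom{m}{2}$ and $B'=\binom{m-1}{2}$, so that $B-B'=m-1$. Squaring both sides and cancelling $\sigma_r^2$, the statement $\LB_k\ll\UB_{k+1}$ becomes
\[
(\log n)^{38r}\, a_{k+1}^{2k+4}\, p^{2C-2B'+1}\ll n^{-2\epsilon_{k+1}(k+0.5)}\, a_k^{m}\, n^{2k}\, p^{2C-B}.
\]
Now substitute $a_k=n^{1-\epsilon_k}$ and $a_{k+1}=n^{1-\epsilon_{k+1}}$ and collect the $\epsilon$-exponents. On the right, $n^{-\epsilon_k m}$ is negligible compared with the $\epsilon_{k+1}$ terms because $\epsilon_k$ is much smaller than $\epsilon_{k+1}$. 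The $\epsilon_{k+1}$ terms combine as $-(2k+4)\epsilon_{k+1}$ on the left against $-2(k+0.5)\epsilon_{k+1}$ on the right. The net effect is a factor $n^{3\epsilon_{k+1}-m\epsilon_k}=n^{\Omega(\epsilon_{k+1})}$ in our favour, which swallows $(\log n)^{38r}$. It therefore suffices to show
\[
n^{2k+4}p^{B-2B'+1}\le n^{r+k}, \quad\text{i.e.}\quad n^{4-m}\,p^{\frac{(m-1)(4-m)}{2}+1}\le 1 .
\]
The left-hand side here equals $p\cdot\bigl(n p^{(m-1)/2}\bigr)^{-(m-4)}$.

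The key input is Property~\ref{item:k condition} at level $k$ with $k'=k-1$. Since $\binom{r-k+1}{2}-\binom{r-k}{2}=m$, it reads $np^{m}\ge n^{\delta}$. If $k=0$ there is no $k'=k-1$, but $k\ge k^*\ge 1$, so this case does not arise. As $p\le 1$, we get $np^{(m-1)/2}\ge np^m\ge n^\delta\ge 1$. Because $m\ge 4$, this gives $\bigl(np^{(m-1)/2}\bigr)^{-(m-4)}\le 1$, so the left-hand side above is at most $p\le 1/2$. That is the required inequality.

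The main obstacle is bookkeeping rather than ideas. One must check that the $\epsilon$-exponents really leave a positive power $3\epsilon_{k+1}-m\epsilon_k>0$; this uses that $\epsilon_k$ is sufficiently smaller than $\epsilon_{k+1}$, with $\epsilon_0=0$. One must also get the exponent $B-2B'+1=\frac{(m-1)(4-m)}{2}+1$ right. The case $m=4$ (that is, $k=r-4$) is the tight one: there the $n$-powers cancel exactly, and the inequality survives only through the factor $p\le 1/2$ together with the $n^{\Omega(\epsilon_{k+1})}$ gain.
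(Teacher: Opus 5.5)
Your proposal is correct and follows essentially the same route as the paper: square, cancel $\sigma_r^2$, substitute $a_k=n^{1-\epsilon_k}$ and $a_{k+1}=n^{1-\epsilon_{k+1}}$ to extract the gain $n^{3\epsilon_{k+1}-(r-k)\epsilon_k}$, and finish with Property~\ref{item:k condition}. The only difference is cosmetic. You keep the exact $p$-exponent and close with the single increment $np^{r-k}\ge n^{\delta}$, which is Property~\ref{item:k condition} at level $k$ against $k'=k-1$. The paper instead first weakens the exponent using $r-k\ge 3$ and then uses the monotonicity $n^{r-3}p^{\binom{r}{2}-\binom{3}{2}}\ge n^{k+1}p^{\binom{r}{2}-\binom{r-k-1}{2}}$.
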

    \begin{proof}
        The claim is equivalent to showing that
        \[
            (\log n)^{14r} \cdot a_{k+1}^{2k + 4} p^{\binom{r}{2} -2\binom{r-k-1}{2} + 1} \ll \frac{n^{-\epsilon_{k+1}(2k+1)}}{(\log n)^{24r}} \cdot a_{k}^{r-k} n^{2k} p^{\binom{r}{2} - \binom{r-k}{2}}.
        \]
        Substituting $a_{k}=n^{1-\epsilon_{k}}$ and $a_{k+1}=n^{1-\epsilon_{k+1}}$, the above is equivalent to showing that 
        \begin{equation}\label{align:some condition}
           (\log n)^{38r} \cdot n^{k + 1} p^{\binom{r}{2} -2\binom{r-k-1}{2} + 1} \ll n^{w} \cdot n^{r - 3} p^{\binom{r}{2} - \binom{r-k}{2}},
        \end{equation}
        where 
        \begin{equation*}
             w \coloneqq -\epsilon_{k+1}(2k+1) + \epsilon_{k+1}(2k + 4) - \epsilon_{k}(r-k) = 3\epsilon_{k+1} - \epsilon_{k}(r-k).
        \end{equation*}
        Note that $w>0$ provided that $r\epsilon_k<\epsilon_{k+1}$, which we may assume.

        Observing that $\binom{r-k-1}{2} = \binom{r-k}{2}-(r-k-1)$, we have
        \begin{align*}
           p^{\binom{r}{2} - 2\binom{r-k-1}{2} + 1} &= p^{\binom{r}{2} - \binom{r-k-1}{2}} \cdot p^{- \binom{r-k}{2} + (r-k-1) + 1} \le  p^{\binom{r}{2} - \binom{r-k-1}{2}} \cdot p^{- \binom{r-k}{2} + 3},
        \end{align*}
        where the last inequality follows from the assumption that $r-k \ge 3$. Hence, to conclude the proof of \eqref{align:some condition}, it suffices to show that
        \[
            (\log n)^{38r}  \cdot n^{k+1} p^{\binom{r}{2} - \binom{r-k-1}{2}} \ll n^{w} \cdot n^{r-3} p^{\binom{r}{2} - \binom{3}{2}}.
        \]
	    Indeed, as Property \ref{item:k condition} holds for all values of $k$ exceeding $k^*$, and as $r-3\ge k+1 > k^*$ we have, $n^{r-3} p^{\binom{r}{2} - \binom{3}{2}} \ge n^{k+1} p^{\binom{r}{2} - \binom{r-k-1}{2}}$. Further, as $w > 0$, we obtain the required inequality:
        \[
            (\log n)^{38r} \cdot n^{k+1} p^{\binom{r}{2} - \binom{r-k-1}{2}} \le (\log n)^{38r} \cdot n^{r-3} p^{\binom{r}{2} - \binom{3}{2}} \ll n^{w} \cdot n^{r-3} p^{\binom{r}{2} - \binom{3}{2}}.\qedhere
        \]
    \end{proof}

    Let us finally show that $\LB_{r-3} \ll n^{1/2 + \gamma} p$, and complete the proof of Lemma \ref{lem: bound for mid frequencies}. Indeed, recalling that $\sigma_r^2=O\left(n^{2r-2}p^{2\binom{r}{2}-1}\right)$, we have
    \begin{align*}
        \LB_{r-3}=\sqrt{\frac{(\log n)^{14r}}{a_{r-3}^2 p} \cdot \frac{\sigma_r^2}{a_{r-3} n^{2r-6} p^{2\binom{r}{2} - \binom{3}{2} - 1}}} &= O\left(\sqrt{(\log n)^{14r} \cdot n^{3\epsilon_{r-3}} \cdot n p^2} \right) \ll n^{1/2 + \gamma} p,
    \end{align*}
    where the last inequality is provided that $3\epsilon_{r-3} < {\gamma}$.

\section{Proof of Lemma \ref{lemma:main lemma for decoupling} part \eqref{item:part1}}\label{section:decoupling}

In this section, we show that for every $K>0$, sufficiently large $n$, and $p\leq 1/2$ satisfying \eqref{align: p assumption}, the first assertion of Lemma \ref{lemma:main lemma for decoupling} holds.
We begin with a high-level overview of the argument.

\subsection{Proof overview}
The first step we take is the following simple yet crucial observation.
The polynomial $P_{2L-1}$ is of degree $2L-1$, and has coefficients bounded by a function of $L$. Thus, we can represent $P_{2L-1}(it \Tilde{Y} / \sigma_r)$ as a sum of monomials in $\{S_{G_0, G_1}(F_i):\text{$F_i$ is a copy of $K_r$}\}$ with coefficients bounded by some function of $L$. By linearity of expectation, as there are at most $n^{(2L-1)r}$ monomials as above, and as $t/\sigma_r \le \pi$, it suffices to show the following for a  large enough constant $D=D(K, L, r)$:
\[
    \left|\E_{G_0, G_1}\left[e^{it\Tilde{X} / \sigma_r} \cdot \mathcal{M}
    \right]\right| \leq n^{-D},
\]
where $\mathcal{M}$ is any monomial of degree at most $2L-1$ with variables taken from $\{S_{G_0, G_1}(F_i):\text{$F_i$ is a copy of $K_r$}\}$.

Crucially, every such $\mathcal{M}$ is a random variable taking values in $\{0,\pm 1\}$ that depends on at most $O_L(1)$ edges. As will be explained later, this implies that  
\[
    \left|\E_{G_0, G_1}\left[e^{it\Tilde{X} / \sigma_r} \cdot \mathcal{M}
    \right]\right| \approx \left|\E_{G_0, G_1}\left[e^{it\Tilde{X} / \sigma_r} \right]\right|,
\]
reducing the problem to controlling the right-hand side above. The strategy after the reduction is to approximate the right-hand side above by the characteristic function of a weighted sum of \emph{independent} Bernoulli random variables. The independence will allow us to evaluate the characteristic function of each summand individually and derive the required bound.

To achieve this approximation, we will expose the randomness of $(G_0, G_1)$ in several steps. We will first expose the union of $G_0$ and $G_1$ on the complement of $A_1\times A_2$. Noting that $(G_0 \cup G_1) \cap (\cP_A \times \cP_B)$ distributes as $(\cP_A\times \cP_B)_q$ with $q=2p-p^2$, we may use various concentration inequalities. In particular, we will establish that every edge in $A_1\times A_2$ extends to many $\cP_{2,r-k-2}$-rainbow copies of $K_r$ in $(G_0\cup G_1) \setminus (A_1\times A_2)$.
Then, exposing the randomness of $(G_0,G_1)$ except for the edges in $A_1\times A_2$, we will show that for a sufficiently large portion of the edges $f\in A_1\times A_2$ the following holds: The sum of $S_{G_0,G_1}(F)$ running over all $F\cong K_r$ containing $f$, is equal to some large deterministic weight (depending on $b,k,n$ and $r$) times the indicator $x_f=\mathbf{1}_{f\in G_0}$. This is an approximation by a weighted sum of independent Bernoulli random variables as required. Specifically, these random variables are the indicators $x_f$ of the above edges with their corresponding weights.

\subsection{Proof of Lemma \ref{lemma:main lemma for decoupling} part \eqref{item:part1}}\label{section:handling P_2L}
Fix some large constant $K=K(r)>0$, a large integer $n$, and  $p\in(0,1/2]$ that satisfies \eqref{align: p assumption}. Assume that $k \in \{0, \dots, r-3\}$ and $0 < b \le a_k$ are integers that satisfy Properties \ref{item:b condition} and \ref{item:k condition}. In addition, fix an $(a_k, b, k, r)$-partition $\cP = (A_1, A_2, B_1, \dots, B_{r-k-2}, Z)$ and let $G_0 \sim G_{n, p}$ and $G_1 \sim \left(\cP_A \times \cP_B\right)_p$ be two independent random graphs. Lastly, let $L > 0$ and $t \in [\LB_{k, b}, \UB_{k, b}]$.

We begin by showing how to get rid of $P_{2L-1}(it\Tilde{Y}/\sigma_r)$ in \eqref{align:bound of first item in the main lemma}. Recall that $\Tilde{Y}$ is the sum of $S_{G_0, G_1}(F)$ running over all $F \in \cR'\coloneqq \cR\setminus \cR_{2,r-k-2}$. As $P_{2L-1}$ is the Taylor polynomial of degree~$2L-1$ of $e^x$, we have
\[    
    P_{2L-1}(it \tilde{Y} / \sigma_r) = \sum_{\ell=0}^{2L-1} \frac{1}{\ell!} \left(it \tilde{Y} / \sigma_r\right)^\ell = \sum_{\ell=0}^{2L-1} \frac{1}{\ell!} (it/\sigma_r)^\ell \sum_{F_1, \dots, F_{\ell} \in \cR'} \prod_{j=1}^{\ell} S_{G_0, G_1}(F_j).
\]
By linearity of expectation, Remark \ref{remark:values-of-t} showing that $t \le \UB_{k,b}\le \pi \sigma_r$, and the triangle inequality,
\[
    \left\lvert \E\left[e^{it\Tilde{X} / \sigma_r} \cdot P_{2L-1}\left(it \Tilde{Y} / \sigma_r\right)\right] \right\rvert \le \pi^{2L-1} \sum_{\ell=0}^{2L-1} \sum_{F_1, \dots, F_{\ell} \in \cR'} \left\lvert\E\left[e^{it\Tilde{X} / \sigma_r} \prod_{j=1}^{\ell} S_{G_0, G_1}(F_j)\right]\right\rvert.
\]
Since there are at most $n^{(2L-1)r}$ sequences $F_1,\ldots,F_{2L-1}\in \cR'$, and the sign function of a graph always belongs to $\{0,\pm1\}$, we have
\begin{align*}
    \bigg\lvert\E\big[e^{it\Tilde{X} / \sigma_r}\cdot P_{2L-1}(it \Tilde{Y} / \sigma_r)\big]\bigg \rvert \le  3L(\pi n)^{(2L-1)r}  \max_{\substack{F_1, \dots, F_{\ell} \in \cR'\\ 0\le \ell\le 2L-1}} \left\lvert\E\left[e^{it\Tilde{X} / \sigma_r} \ \bigg \lvert\  \left|\prod_{j=1}^{\ell} S_{G_0, G_1}(F_j)\right| = 1\right]\right\rvert.
\end{align*}

Therefore, in order to prove \eqref{align:bound of first item in the main lemma}, it suffices to show that for some $D > (2L-1)r+K$, every~$\ell\le 2L-1$, and every $F_1, \dots, F_{\ell} \in \cR'$, we have
\begin{align}\label{align:bound of conditional characteristic function}
    \left\lvert\E\left[e^{it\Tilde{X} / \sigma_r}\  \bigg \lvert \ \left|\prod_{j=1}^{\ell} S_{G_0, G_1}(F_j)\right| = 1\right]\right\rvert \le n^{-D}.
\end{align}
In the rest of the section, we prove \eqref{align:bound of conditional characteristic function}. For this, let us fix $\ell \in \{0,1,\ldots,2L-1\}$, a constant $D>(2L-1)r+K$, some $t \in [\LB_{k, b}, \UB_{k, b}]$, and $F_1, \dots, F_{\ell} \in \cR'$. Moreover, for simplicity of presentation, denote by $\cD$ the event that $\left\lvert \prod_{j=1}^{\ell} S_{G_0, G_1}(F_j)\right\rvert = 1$.

Throughout the proof, Definition \ref{def: count of good rainbow copies} plays a key role. For an edge $f\in K_n$, an integer $s\ge 1$ and a graph $G$ we defined $\partial_f T_{\cP,s}^{2,r-k-2}(G)$ as the collection of $s$-tuples of $r$-cliques $(H_1,\ldots ,H_s)$ with the following properties: First, $H_i\in \cR_{2,r-k-2}$ and $f\in H_i \subseteq G$ for every $i \in [s]$. Second, every $v\in \cP_A \cup \cP_B$ either belongs to no $V(H_i)$ or belongs to $V(H_i)\cap V(H_j)$ for some $i\neq j$. For simplicity of presentation in this section, we will omit some of the dependencies and write $\partial_f T_{s}(G)$ to denote $\partial_f T_{\cP,s}^{2,r-k-2}(G)$.

We note that for every $f\in A_1\times A_2$ we have 
\[
    \E\left[\left\lvert\partial_f T_2(G_{n, p})\right\rvert\mid f\in E(G_{n,p})\right] = \Theta\left(b^{r-k-2} n^{k}p^{\binom{r}{2}-1} \cdot \sum_{i=0}^{k} n^{i}p^{\binom{r}{2}-\binom{r-i}{2}}\right),
\]
where the first term in the right-hand side accounts for the probability that $H_1\subseteq G_{n,p}\cup f$, and the sum accounts for the probability that $H_2\subseteq G_{n,p}\cup H_1$. The assumption \ref{item:k condition} then implies that $\E\left[\left\lvert\partial_f T_2(G_{n, p})\right\rvert\mid f\in E(G_{n,p})\right] = \Theta\left(\Lambda \right)$ where 
\[
    \Lambda\coloneqq  b^{r-k-2} n^{2k} p^{2\binom{r}{2} - \binom{r-k}{2} - 1}.
\]
We further introduce the following notation. For a constant $c_0> 0$ let $\cE(c_0)$ be the collection of graphs $G$ on $[n]$ for which the following properties hold: 
\begin{enumerate}[label=(E\arabic*)]
    \item\label{item: typical event lower bound} For every edge $f \in A_1 \times A_2$ we have $\left\lvert \partial_f T_{2}(G) \right\rvert \ge c_0  \Lambda$.
    \item\label{item: typical event upper bound} For every edge $f \in A_1 \times A_2$ and every $j \in \{1,2\}$ we have $\left\lvert \partial_f T_{2j}(G) \right\rvert \le \left((\log n)^{5r}  \Lambda\right)^j.$
\end{enumerate}
In the following claim, we show that by carefully choosing $c_0$, the graph $G\coloneqq G_0\cup G_1\cup (A_1\times A_2)$ typically belongs to $\cE(c_0)$.

\begin{claim}\label{claim: E_1 and E_2 are typical}
    For any sufficiently small constant $c_0>0$, we have
    \[
        \Pr\left(G\in \cE(c_0)\mid \cD\right) \ge 1 - n^{-2D}.
    \]
\end{claim}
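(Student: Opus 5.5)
The plan is to prove the two properties \ref{item: typical event lower bound} and \ref{item: typical event upper bound} separately. The union of their failure probabilities should be at most $n^{-2D}$, after a union bound over the at most $n^2$ edges $f\in A_1\times A_2$ and over $j\in\{1,2\}$. Before doing so I will describe the conditional law of $G\coloneqq G_0\cup G_1\cup(A_1\times A_2)$ given $\cD$.

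Let $H\coloneqq F_1\cup\dots\cup F_\ell$, a graph on at most $(2L-1)r$ vertices. By Definition~\ref{definition:sign}, a nonzero sign forces $F_j\subseteq G_0\cup G_1$, so on $\cD$ we have $H\subseteq G$. Moreover, $\cD$ is determined by the restriction of $(G_0,G_1)$ to the edges of $H$. Hence, conditionally on $\cD$ (and on any admissible configuration on $E(H)$), the edges of $G$ outside $H\cup(A_1\times A_2)$ are independent. Each such edge is present with probability $p$, or $q=2p-p^2\in[p,2p]$ if it lies in $\cP_A\times\cP_B$. Therefore we can couple $G_{n,p}\cup H\cup(A_1\times A_2)\subseteq G\subseteq G_{n,2p}\cup H\cup(A_1\times A_2)$. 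Monotone increasing events such as \ref{item: typical event lower bound} can then be handled in the lower model, and monotone decreasing ones such as \ref{item: typical event upper bound} in the upper model.

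\emph{Upper bound \ref{item: typical event upper bound}.} Every clique counted in $\partial_f T_{2j}$ lies in $\cR_{2,r-k-2}$, so its only edge inside $A_1\times A_2$ is $f$ itself. Thus adding all of $A_1\times A_2$ is the same as conditioning on $f$ being present. I will apply Lemma~\ref{cor:upper bound for copies on f} with $p$ replaced by $2p$, with the small graph $H\cup\{f\}$ (at most $L_2=O_L(1)$ vertices), with $L_1=j$, and with $C=2D+3$. The hypothesis \ref{item: 1 high moments of alpha_f} follows from \ref{item:b condition}, because raising $p$ only helps. The hypothesis \ref{item: 2 high moments of alpha_f} follows from \ref{item:k condition}, since the $n^{\delta}$ slack absorbs the $2^{O(r^2)}$ factors coming from $p\mapsto 2p$. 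Replacing $p$ by $2p$ changes $\Lambda$ only by a constant factor. I would absorb this constant by running the lemma's proof with $D_0$, since $D_0=o((\log n)^{5r})$, rather than by citing the statement verbatim.

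\emph{Lower bound \ref{item: typical event lower bound}.} I will use the structure of $2$-tuples in $\partial_f T_2$: every $\cP_B$-vertex must lie in both cliques, so $H_1,H_2$ share $f$ and the same $r-k-2$ vertices $S\setminus f$, one in each $B_i$. Therefore
\[
|\partial_f T_2(G)|=\sum_{S}\bigl(X'_S\bigr)^2,
\]
where $S$ ranges over the $(r-k)$-cliques of $G$ containing $f$ with one vertex in each $B_i$, and $X'_S$ is the number of $K_r$-extensions of $S$ into $Z$. Working in $G_{n,p}$ (conditioned on $f$), I will apply Claim~\ref{claim: lower bound on extensions distinct} to the $2$-set $f$ with parts $B_1,\dots,B_{r-k-2}$. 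Its hypothesis $b^{r-k-2}p^{\binom{r-k}{2}-1}\ge(\log n)^{(1+\epsilon)(r-k-2)}$ follows from \ref{item:b condition}. This gives at least $\frac12 b^{r-k-2}p^{\binom{r-k}{2}-1}$ such sets $S$. Next, Remark~\ref{remark: lower bound on extensions}, applied with $|Z|=\Theta(n)$ and with $r-k$ prescribed vertices, gives $X'_S\ge c\, n^{k}p^{\binom r2-\binom{r-k}2}$ for all $S$ simultaneously. Its hypothesis holds because \ref{item:k condition} with $k'=0$ gives $n^kp^{\binom r2-\binom{r-k}2}\ge n^{\delta}$; the case $k=0$ is trivial. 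Multiplying these bounds gives $|\partial_f T_2|\ge c_0\Lambda$, with failure probability $n^{-\omega(1)}+n^{-C}$.

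The main obstacle I expect is the conditioning on $\cD$. The cited extension results are stated unconditionally in $G_{n,p}$, so I must check that passing through the coupling is legitimate. For the lower bound, adding $H$ and the extra probability $q-p$ only increases the counts, so the unconditional bound transfers. For the upper bound, the conditional version of Lemma~\ref{cor:upper bound for copies on f} is exactly what is needed to cover $H\subseteq G$. A secondary point is that the constants in $\Lambda$ and in $c_0$ must be uniform in the choice of $F_1,\dots,F_\ell$.
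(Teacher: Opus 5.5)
Your proposal is correct and follows essentially the same route as the paper: \ref{item: typical event lower bound} via Claim~\ref{claim: lower bound on extensions distinct} combined with Remark~\ref{remark: lower bound on extensions}, and \ref{item: typical event upper bound} via Lemma~\ref{cor:upper bound for copies on f} at density $2p$, using the coupling $G_{n,p}\subseteq G_0\cup G_1\subseteq G_{n,2p}$. The differences are minor. For \ref{item: typical event lower bound} you handle the conditioning on $\cD$ by monotonicity (on $\cD$ the edges of $F_1\cup\dots\cup F_\ell$ are forced present and all other edges stay independent), whereas the paper restricts to the vertex set $W'$ avoiding $V(F)$; you also explicitly absorb the constant-factor change in $\Lambda$ under $p\mapsto 2p$, which the paper passes over silently.
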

\begin{proof}
    To prove the claim, it is clearly sufficient to show that conditioned on $\cD$, the graph $G$ satisfies each of \ref{item: typical event lower bound} and \ref{item: typical event upper bound} with probability at least $1-n^{-3D}$. We begin with several observations that we utilise in the proof of each of these properties. 
    
    First, note that every edge $f\in K_n$ is present in $G_0 \cup G_1$ with some probability $p_f\in [p, 2p]$, independently of the other edges. In particular, we may assume that $G_{n,p}\subseteq G_0\cup G_1 \subseteq G_{n,2p}$. Letting $F \coloneqq \bigcup_{i=1}^{\ell} F_i$, it is clear that $v(F) \le (2L-1) r$.

    Since we assume Property \ref{item:b condition}, we may apply Claim \ref{claim: lower bound on extensions distinct} (with $r$ replaced by $r-k$ and $k$ replaced by $2$) to the subgraph of $G_ {n,p}$ induced by the vertex set $W\coloneqq \cP_A\cup \left(\cP_B\setminus V(F)\right)$. This yields that provided that $c_0$ is small enough, with probability $1-n^{-4D}$ every $f\in (A_1\times A_2)$ admits at least $c_0^{1/3}\cdot  b^{r-k-2} p^{\binom{r-k}{2} - 1}$ extensions to a copy of $K_{r-k}$ with a single vertex from each $B_1, \dots, B_{r-k-2}$ in the induced subgraph of $G_{n,p}$ on $W$.
    Whenever $k > 0$, as we assume \ref{item:k condition}, we may apply Remark~\ref{remark: lower bound on extensions} to the subgraph of $G_{n,p}$ induced by $W'\coloneqq W\cup (\cP_Z\setminus V(F))$. This yields that provided that $c_0$ is small enough, with probability at least $1-n^{-4D}$, every $S\subseteq  W$ of size $r-k$, has at least $c_0^{1/3}\cdot  n^{k} p^{\binom{r}{2} - \binom{r-k}{2}}$ extensions to a copy of $K_r$ with all vertices in $Z$. By combining these estimates, and noting that
    \[
        b^{r-k-2} p^{\binom{r-k}{2} - 1} \cdot \left(n^{k} p^{\binom{r}{2} - \binom{r-k}{2}}\right)^2 = \Lambda,
    \]
    with probability at least $1-n^{-3D}$, the following holds for every $f\in A_1\times A_2$: The subgraph of $G_{n,p}$ induced by $W\cup (\cP_Z\setminus V(F))$ contains at least $c_0\Lambda$ pairs $(H_1,H_2)$ such that $H_1,H_2\in \cR_{2,r-k-2}$ are $K_r$-extensions of $f$ and $V(H_1)\cap \cP_B = V(H_2)\cap \cP_B$.
    Therefore, the assumption that $G_0\cup G_1\supseteq G_{n,p}$ yields that
     \begin{align*} 
        \Pr\left(G\text{ satisfies \ref{item: typical event lower bound}} \mid \cD\right) &\ge \Pr\left(G\cap K_n[W']\text{ satisfies \ref{item: typical event lower bound}} \mid \cD\right) \\
        &
        \ge \Pr\left(G_{n, p}\cup (A_1\times A_2)\text{ satisfies \ref{item: typical event lower bound}}
        \right) \ge 1-n^{3D},
    \end{align*}
    where $K_n[W']$ is the complete graph on the vertex set $W'$. 

    We now switch to $\Pr\left(G\text{ satisfies \ref{item: typical event upper bound}} \mid \cD\right)$. First, observe that for every edge $f\in F\cap (A_1\times A_2)$ and every $j\in \{1,2\}$, since the event $\left\{\left\lvert \partial_f T_{2j}(G) \right\rvert \ge \left((\log n)^{5r} \Lambda\right)^j \right\}$ depends only on $(G_0\cup G_1) \setminus (A_1 \times A_2)$, and since $\cD$ is independent of $(G_0\cup G_1)\setminus F$ and satisfies $\cD \subseteq \{F\subseteq G_0\cup G_1\}$, we have
    \begin{equation}\label{eq: D to FsubsetG}
         \Pr\left(\left\lvert \partial_f T_{2j}(G) \right\rvert \ge \left((\log n)^{5r}  \Lambda\right)^j \mid \cD\right)=\Pr\left(\left\lvert \partial_f T_{2j}(G) \right\rvert \ge \left((\log n)^{5r} \Lambda\right)^j \mid F\subseteq G\right).
    \end{equation}
    Recall that, under Properties \ref{item:b condition} and \ref{item:k condition}, we may apply Lemma \ref{cor:upper bound for copies on f}, implying that for every $f \in F \cap (A_1 \times A_2)$ and every $j \in \{1,2\}$, the following holds: 
    \begin{equation}\label{eq: E2 for one edge}
         \begin{aligned}
             \Pr\Bigl(\left\lvert \partial_f T_{2j}(G) \right\rvert \ge \left((\log n)^{5r}  \Lambda\right)^j &\mid F\subseteq G\Bigl) \\ & \le \Pr\left(\left\lvert \partial_f T_{2j}(G_{n, 2p}) \right\rvert \ge \left((\log n)^{5r} \Lambda\right)^j \mid F \subseteq G_{n, 2p}\right) \le n^{-4D}.
         \end{aligned}
    \end{equation}
    By letting $D$ be sufficiently large, combining \eqref{eq: D to FsubsetG}, \eqref{eq: E2 for one edge}, and by the union bound over the edges $f\in A_1\times A_2$ and the two choices of $j$ we have 
    \begin{equation*}
        \Pr\left(G\text{ satisfies \ref{item: typical event upper bound}} \mid \cD\right) 
        \ge \Pr\left(G_{n, 2p}\cup (A_1\times A_2)\text{ satisfies \ref{item: typical event upper bound}}
        \mid F \subseteq G_{n,2p}\right) \ge 1-n^{-3D}.\qedhere
    \end{equation*}
\end{proof}

Fix $c_{0}>0$ to be a small constant for which Claim \ref{claim: E_1 and E_2 are typical} holds. Then, by noting that $|e^{it\Tilde{X}/\sigma_r}|=1$ always, we have 
\begin{equation*}
      \left\lvert \E\left[e^{it\Tilde{X} / \sigma_r} \mid \cD\right] \right\rvert \le n^{-2D} + \left\lvert \E\left[e^{it\Tilde{X} / \sigma_r} \mid \cD \wedge \left\{G \in  \cE(c_{0})\right\}\right] \right\rvert.
\end{equation*}
Therefore, to show \eqref{align:bound of conditional characteristic function}, it suffices to give an upper bound for the conditional expectation on the right-hand side above. More specifically, from this point onwards, we fix $\Gamma \in \cE(c_0)$ and aim to show that
\begin{equation}\label{align:conditional characteristic function bound}
    \left\lvert \E\left[e^{it\Tilde{X} / \sigma_r} \mid \cD \wedge \left\{G = \Gamma\right\}\right] \right\rvert\leq n^{-2D}.
\end{equation}
For brevity, we denote by $E$ the event that $\cD$ holds as well as $G = \Gamma$. To prove \eqref{align:conditional characteristic function bound}, we introduce the following notation. For every edge $f \in A_1 \times A_2$, define the \emph{weight} of $f$ as 
\[
    w_f =w_f(G_0,G_1)\coloneqq  \sum S_{G_0\cup f, G_1}(H),
\]
where the sum ranges over all $r$-cliques $H$ such that $f\in H\in \cR_{2, r-k-2}$.
Crucially, since the graph $ G_1[\mathcal{P}_A]$ is empty, we have
\begin{equation}\label{eq:*}
     \Tilde{X} = \sum_{f \in A_1 \times A_2} w_f \cdot \mathbf{1}_{f \in G_0}.
\end{equation}
To continue our analysis, we will need the following concentration inequality for $|w_f|$. For notational convenience we write $F\coloneqq \bigcup ^{\ell}_{i=1}F_i$.

\begin{claim}\label{claim:good f probability}
    There exists a constant $c_{1}> 0$ such that, for every $f \in A_1 \times A_2$ with no endpoint in $V(F)$, we have 
        \[
            \Pr\left(c_{1} \Lambda^{1/2} \le \left|w_f\right| \le (\log n)^{10r} \Lambda^{1/2} \mid E\right) \ge \frac{1}{(\log n)^{11r}}.
        \]
\end{claim}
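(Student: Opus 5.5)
The plan is a second-moment / fourth-moment argument for $w_f$, combined with the Paley--Zygmund inequality (Lemma~\ref{claim: Paley-Zygmund}). We first make the conditional randomness explicit. Write $f=uv$ with $u\in A_1$, $v\in A_2$, and condition on $E$. Then $\Gamma=G_0\cup G_1\cup(A_1\times A_2)$ is fixed, and $G_1\subseteq \cP_A\times\cP_B$, so the only remaining randomness on $\cP_A\times\cP_B$ is the following. For each edge $e\in\Gamma\cap(\cP_A\times\cP_B)$ we must decide whether it lies in $G_0$ only, in $G_1$ only, or in both. These choices are independent across edges, with probabilities proportional to $p(1-p),\,p(1-p),\,p^2$.

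Encode this choice by a random variable $\xi_e\in\{1,-1,0\}$. It is symmetric, and $\Pr(\xi_e\neq0)=2(1-p)/(2-p)\ge 2/3$. Every $H\in\cR_{2,r-k-2}$ with $f\in H$ has exactly one vertex in each $B_j$, so $H\cap(A_i\times B_j)$ is a single edge. Unwinding Definition~\ref{definition:sign} (as in the proof of Claim~\ref{claim: sign zero new}) then gives
\[
S_{G_0\cup f,G_1}(H)=\mathbf{1}[H\subseteq\Gamma]\prod_{e\in H\cap(\cP_A\times\cP_B)}\xi_e .
\]
All these edges are incident to $u$ or $v$. Since $f$ has no endpoint in $V(F)$, none of them lies in $F$. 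Hence the family $(\xi_e)$ relevant to $w_f$ is independent of $\cD$, and we may drop $\cD$ from the conditioning.

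Grouping cliques by their $\cP_B$-vertex set $\mathbf b=(b_1,\dots,b_{r-k-2})$, we write
\[
w_f=\sum_{\mathbf b}N_{\mathbf b}\prod_{j}\xi_{ub_j}\xi_{vb_j},
\]
where $N_{\mathbf b}$ is the (deterministic, given $\Gamma$) number of completions inside $Z$. Expanding $\E[w_f^2\mid E]$, a product has nonzero expectation only when every $\xi$ appears an even number of times. Here this forces the two cliques to share the same $\mathbf b$. Therefore
\[
\E[w_f^2\mid E]=\sum_{\mathbf b}N_{\mathbf b}^2\,\Pr(\xi\neq0)^{2(r-k-2)}=\Theta\big(|\partial_fT_2(\Gamma)|\big),
\]
and by \ref{item: typical event lower bound} and \ref{item: typical event upper bound} (with $j=1$) this lies in $[c\,c_0\Lambda,\,(\log n)^{5r}\Lambda]$. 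Similarly, $\E[w_f^4\mid E]$ is at most the number of $4$-tuples of such cliques in which every $\cP_B$-vertex lies in at least two cliques. This is exactly $|\partial_fT_4(\Gamma)|\le(\log n)^{10r}\Lambda^2$, by \ref{item: typical event upper bound} with $j=2$.

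Now apply Paley--Zygmund to $w_f^2$ with $\theta=1/2$:
\[
\Pr\big(w_f^2>\tfrac12 c\,c_0\Lambda\mid E\big)\ge\frac14\cdot\frac{(c\,c_0\Lambda)^2}{(\log n)^{10r}\Lambda^2}\ge\frac{2}{(\log n)^{11r}}
\]
for large $n$. For the upper tail, Markov's inequality on $w_f^2$ gives
\[
\Pr\big(w_f^2>(\log n)^{20r}\Lambda\mid E\big)\le(\log n)^{-15r}.
\]
Subtracting these two bounds yields the claim with $c_1=(c\,c_0/2)^{1/2}$.

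The main point needing care is the first step: justifying that, conditional on $E$, the signs of the relevant cliques factor as products of independent symmetric $\xi_e$ that are unaffected by $\cD$. This is where the hypothesis that $f$ avoids $V(F)$ is used, together with the observation (implicit in Claim~\ref{claim:sufficient condition for sign zero in expectation new}) that only tuples in $\partial_fT_{2j}$ contribute to the moments.
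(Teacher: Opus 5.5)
Your proof is correct and follows essentially the same route as the paper. Both arguments compare $\E[w_f^2\mid E]$ and $\E[w_f^4\mid E]$ with $|\partial_fT_2(\Gamma)|$ and $|\partial_fT_4(\Gamma)|$, feed in \ref{item: typical event lower bound}--\ref{item: typical event upper bound}, and then combine Markov's inequality for the upper tail with Paley--Zygmund for the lower tail. The only difference is presentational: you write each sign as a product of independent symmetric $\xi_e\in\{0,\pm1\}$ over the edges $ub_j,vb_j$, which makes the vanishing of odd terms and the exact second-moment constant $\Pr(\xi_e\neq0)^{2(r-k-2)}$ explicit, whereas the paper gets the same vanishing by invoking Claim~\ref{claim:sufficient condition for sign zero in expectation new}.
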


\begin{proof}
    Fix $f \in A_1 \times A_2$ with no endpoint in $V(F)$.
    For every copy $H \in \cR_{2,r-k-2}$ containing $f$ the event $\cD$ is independent of the edges of $H$ incident to $f$ in $G_0$ or $G_1$, and further the event~$\cE(c_0)$ is determined by $G_0\cup G_1$. Therefore, by Claim \ref{claim:sufficient condition for sign zero in expectation new} we have $\E\left[S_{G_0\cup f, G_1}(H) \mid E\right] = 0$, which by linearity of expectation implies that $\E\left[w_f \mid E\right] = 0$.    
    Moreover, Claim \ref{claim:sufficient condition for sign zero in expectation new} also implies the following for every integer $s$: Every $H_1, \ldots, H_s \in \cR_{2, r-k-2}$ containing $f$ and satisfying
    \[
        \E\left[S_{G_0\cup f, G_1}(H_1) \cdots S_{G_0\cup f, G_1}(H_s) \mid E\right] \neq 0,
    \]
    must also satisfy $H_1\cup\ldots \cup H_s \subseteq G$ and moreover every vertex $v\in \cP_A\cup \cP_B$  either does not belong to $\bigcup_{i=1}^{s} V(H_i)$ or belongs to $V(H_{s_1})\cap V(H_{s_2})$, for some $s_1\neq s_2$.
    Furthermore, if $s=2$, then such a pair satisfies $S_{G_0\cup f, G_1}(H_1)= S_{G_0\cup f, G_1}(H_2)\in \{0,\pm 1\}$, and $S_{G_0\cup f,G_1}(H_1)\neq 0$ if and only if $H_1$ and $H_2$ agree on $\cP_A$ and $\cP_B$. In this case we also have the following due to Claim \ref{claim: sign zero new}:
    \[
        \Pr\left(S_{G_0\cup f, G_1}(H_1)\cdot S_{G_0\cup f, G_1}(H_2) = 1 \mid E\right) = \left(1-\frac{p^2}{1-(1-p)^2}\right)^{2(r-k)}= \Theta(1).
    \]
    Thus, the weight of $f$ satisfies
    \begin{align*}
		\Omega\left(\E\left[ \lvert \partial_f T_{2}(G) \rvert \mid E\right] \right)&= \E\left[\left(w_f\right)^2 \mid E\right]\\&=\E\left[\sum_{H_1,H_2} S_{G_0\cup f, G_1}(H_1)\cdot S_{G_0\cup f, G_1}(H_2) \mid E\right]
        \le \E\left[ \lvert \partial_f T_{2}(G) \rvert \mid E\right],
	\end{align*}
	where the sum ranges over ordered pairs of graphs $ H_1,H_2\in \cR_{2, r-k-2}$ containing $f$. Similarly, $\E\left[\left(w_f\right)^4 \mid E\right] \leq \E\left[ \lvert \partial_f T_{4}(G) \rvert \mid E \right].$ 

    Recalling that $E$ implies $\cE(c_0)$, we get that
    \[
        c_{0} \Lambda \le \E\left[\left(w_f\right)^2 \mid E\right] \le (\log n )^{5r}  \Lambda \quad \text{and}\quad   \E\left[(w_f)^4 \mid E\right] \le (\log n)^{10r} \Lambda^2.
    \]
Thus, by Markov's inequality we have,
    \begin{align*}
        \Pr\left(\left|w_f \right| \ge (\log n)^{10r} \Lambda^{1/2} \mid E\right) &= \Pr\left(\left(w_f \right)^2 \ge (\log n)^{20r} \Lambda \mid E\right) 
        \le \frac{\E\left[\left(w_f\right)^2 \mid E\right]}{ (\log n)^{20r} \Lambda} \le \frac{1}{(\log n)^{15r}},
    \end{align*}
    and provided that $c_1$ is small enough, by the Paley-Zygmund inequality \ref{claim: Paley-Zygmund} we have 
    \begin{align*}
        \Pr\left(\left|w_f \right| \ge c_{1} \Lambda^{1/2} \mid E\right) \geq \Pr\left(\left(w_f \right)^2 \ge \frac{1}{2}\E\left[\left(w_f \right)^2 \mid E\right] \mid E\right) \ge \frac{\left(\E\left[\left(w_f \right)^2 \mid E\right]\right)^2}{4\E\left[\left(w_f \right)^4 \mid E\right]} 
        \ge  \frac{c_0^2}{4(\log n)^{10r}}.
    \end{align*}
    By combining the above, the proof follows:
    \begin{align*}
        \Pr\left(c_{1} \Lambda^{1/2} \le \left|w_f\right| < (\log n)^{10r} \Lambda^{1/2} \mid E\right) 
            &= \Pr\left(\left|w_f\right| \ge c_{1} \Lambda^{1/2} \mid E\right) - \Pr\left( \left|w_f\right| \ge (\log n)^{10r} \Lambda^{1/2} \mid E\right) \\
            &\ge \frac{c_0^4}{4(\log n)^{10r} } - \frac{1}{(\log n)^{15r}} \ge \frac{1}{(\log n)^{11r}}. \qedhere
    \end{align*}  
\end{proof}

Fix $c_{1}$ to be the constant guaranteed by Claim \ref{claim:good f probability}, and let $\cT$ be the collection of edges $f\in A_1 \times A_2$ with a `typical' weight, that is satisfying: 
\begin{align}\label{align:alpha_f good values}
    c_{1} \Lambda^{1/2} \le \left\lvert w_f \right\rvert < (\log n)^{10r} \Lambda^{1/2}.
\end{align}

In the following claim, we show that typically, a sufficiently large proportion of the edges in $A_1\times A_2$ belong to $\cT$.

\begin{claim}\label{claim:number of good f}
    There exists $\xi > 0$ such that
    \[
        \Pr\left(\left \lvert\cT \right\lvert \ge \frac{1}{(\log n)^{12r}} \cdot a_k^2 \mid E\right) \ge 1 - e^{-n^{\xi}}.
    \] 
\end{claim}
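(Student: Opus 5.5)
Having Claim~\ref{claim:good f probability} as a per-edge probability bound, I would amplify it via the matching decomposition sketched in the overview, together with a concentration argument that respects the dependencies among the weights.

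First I discard the $O(L r a_k)$ edges of $A_1\times A_2$ touching $V(F)$; this is negligible compared to $a_k^2$. Next I decompose the remaining edges of the complete bipartite graph $A_1\times A_2$ into $a_k$ perfect matchings $M_1,\dots,M_{a_k}$, each of size $\approx a_k$. It suffices to show that, conditionally on $E$, for every $i$,
\[
\Pr\Bigl(|\cT\cap M_i| \ge \tfrac{1}{2}(\log n)^{-11r}|M_i| \mid E\Bigr)\ge 1-e^{-n^{2\xi}},
\]
and then take a union bound over the $a_k\le n$ matchings.

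Under $E$ the graph $G=G_0\cup G_1\cup(A_1\times A_2)$ is fixed to be $\Gamma$, so the remaining randomness is the assignment of each edge of $\Gamma\cap(\cP_A\times\cP_B)$ to $G_0$ only, $G_1$ only, or both; edges outside $\cP_A\times \cP_B$ lie in $G_0$ deterministically. These assignments are independent across edges, modulo the conditioning on $\cD$, which only affects $O_L(1)$ edges inside $F$. Each weight $w_f$ with $f=\{x,y\}$ depends only on the assignments of edges between $\{x,y\}$ and $\cP_B$. For two disjoint edges $f,g$ of a matching, these edge sets are disjoint. Hence, conditionally on $E$, the weights $(w_f)_{f\in M_i}$ are mutually independent: the $K_{r}$-extensions into $Z$ are already frozen by $\Gamma$, and the only randomness in a sign comes from the $\cP_A\times\cP_B$ edges at the endpoints of $f$. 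Claim~\ref{claim:good f probability} therefore gives independent indicator events $\{f\in\cT\}$, each with probability at least $(\log n)^{-11r}$. The Chernoff bound (Lemma~\ref{lemma:chernoff}), applied to a binomial variable stochastically dominated by $|\cT\cap M_i|$, then yields failure probability $\exp(-\Omega(a_k(\log n)^{-11r}))\le e^{-n^{1/2}}$ because $a_k=n^{1-\epsilon_k}$. Summing over $i$ gives $|\cT|\ge \frac{1}{2}(\log n)^{-11r}(a_k^2-O(a_k))\ge (\log n)^{-12r}a_k^2$ with probability at least $1-e^{-n^{\xi}}$.

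The main obstacle I expect is verifying this conditional independence rigorously. The paper's overview warns that for $r>3$ the weights are dependent even for disjoint edges; that dependence is resolved precisely by conditioning on the union graph $G=\Gamma$, so I would carefully check that (i) once $\Gamma$ is fixed, the sign of every $H\ni f$ is a function only of the $G_0/G_1$ labels of the edges of $H$ in $\{x,y\}\times\cP_B$, and (ii) the conditioning on $\cD$ does not couple these labels across different $f$ not touching $V(F)$. If (ii) fails in some corner case, the fallback is to note that $\cD$ depends on $O_L(1)$ edges and, conditionally on their labels, the remaining labels are independent, so the argument goes through after further conditioning on those finitely many labels.
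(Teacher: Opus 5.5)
Your proposal is correct and matches the paper's proof: decompose $A_1\times A_2$ into $a_k$ perfect matchings, discard the at most $v(F)$ edges per matching that touch $V(F)$, use conditional independence of the weights within a matching together with Claim~\ref{claim:good f probability} to bound $|\cT\cap M_i|$ below by a binomial, and finish with Chernoff and a union bound. Your justification of the conditional independence is more explicit than the paper's one-line assertion (given $\Gamma$ the $G_0/G_1$ labels form a product measure, and $\cD$ only involves labels of edges of $F$); one small slip is that, under $E$, the $G_0$-status of edges in $A_1\times A_2$ is still random rather than deterministic, which is harmless here since $w_f$ does not depend on it.
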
 
\begin{proof}
    Fix a partition $M_1, \dots, M_{a_k}$ of $A_1\times A_2$ into edge-disjoint perfect matchings.  Note that if $f_1, f_2 \in A_1\times A_2$ are vertex disjoint and share no vertices with $V(F)$, the random variables $w_{f_1}$ and $w_{f_2}$ are subject to $E$.
Furthermore, note that for every $i \in [a_k]$,  the matching $M_i$ contains at most $v(F)\leq\log n$ edges sharing vertices with $V(F)$. Thus, by recalling that $a_k = n^{1-\epsilon_k}$, we find that for every $i$, the matching $M_i$ contains at least $0.99a_k$ edges disjoint from $V(F)$.

Now by Claim \ref{claim:good f probability}, for every $i\in [a_k]$ the random variable $|\cT\cap M_i|$ is stochastically dominated (from below) by $\textup{Bin}(0.99a_k, q)$ where $q = \frac{1}{(\log n)^{11r}}$. 
In particular, for every $i\in [a_k]$ by the Chernoff bound \ref{lemma:chernoff},
\begin{equation}\label{eq: stoch by bin}
	\Pr\left(\left\lvert \cT \cap M_i\right\lvert < 0.98 a_kq \mid E\right) \le \Pr(\textup{Bin}(0.99a_k, q) <0.98 a_kq) \le e^{-\Theta(a_kq)}.
\end{equation}
The union bound yields that
\begin{equation}\label{eq: stoch union bound}
\Pr\left(\left\lvert \cT \right\lvert < \frac{1}{(\log n)^{12r}}\cdot a_k^{2} \mid E\right)\leq \Pr\left(|\cT| < 0.98 a_k^2q \mid E\right) \le \sum_{i=1}^{a_k}\Pr\left(|\cT\cap M_i| < 0.98 a_k q \mid E\right).
\end{equation}
Lastly, by taking $\xi < 1 - \epsilon_k$ the required inequality is implied by  \eqref{eq: stoch by bin} and \eqref{eq: stoch union bound}:
\[
	\Pr\left(\left\lvert \cT \right\lvert < \frac{1}{(\log n)^{12r}}\cdot a_k^{2} \mid E\right)\le a_k \cdot e^{-\Theta(a_kq)} \le e^{-n^\xi}. \qedhere
\]
\end{proof}

To prove \eqref{align:conditional characteristic function bound}, we will need one more claim. For this, recall our assumption that $t\le  \UB_{k, b}$.

\begin{claim}\label{claim: closest to integer}
	For every $f\in \cT$ we have $\frac{t | w_f | }{\sigma_r} \le 1$.
\end{claim}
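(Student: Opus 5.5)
The plan is a direct computation. By the definition of $\cT$ in \eqref{align:alpha_f good values}, every $f\in\cT$ satisfies $|w_f|<(\log n)^{10r}\Lambda^{1/2}$, and by assumption $t\le \UB_{k,b}$. So it suffices to show
\[
    \UB_{k,b}\cdot (\log n)^{10r}\Lambda^{1/2}\le \sigma_r,
\]
where $\Lambda= b^{r-k-2} n^{2k} p^{2\binom{r}{2} - \binom{r-k}{2} - 1}$. I split according to the two cases in the definition \eqref{align: I_k,a,b UB} of $\UB_{k,b}$.

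For $k=0$, the denominator under the square root in $\UB_{0,b}$ is exactly $b^{r-2}p^{2\binom{r}{2}-\binom{r}{2}-1}=\Lambda$. Hence $\UB_{0,b}=(\log n)^{-10r}\sigma_r\Lambda^{-1/2}$, and the product above equals $\sigma_r$ exactly. This gives $t|w_f|/\sigma_r\le 1$.

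For $k\ge1$, squaring and cancelling the powers of $\log n$ reduces the required inequality to
\[
    n^{-2\epsilon_k(k-0.5)}\,\Lambda\le a_k^{2k+2} b^{r-k-2} p^{2\binom{r}{2} - \binom{r-k}{2}}.
\]
The factors $b^{r-k-2}$ and $p^{2\binom{r}{2}-\binom{r-k}{2}}$ cancel on both sides. Substituting $a_k=n^{1-\epsilon_k}$, the inequality becomes
\[
    n^{2k-2\epsilon_k k+\epsilon_k}p^{-1}\le n^{(1-\epsilon_k)(2k+2)}, \qquad\text{that is,}\qquad n^{3\epsilon_k}\le n^2p .
\]
This follows from \eqref{align: p assumption}. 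Indeed, $m_2(K_r)=\frac{r+1}{2}$ gives $p\ge n^{-2/(r+1)}\ge n^{-1/2}$ for $r\ge3$, so $n^2p\ge n^{3/2}\ge n^{3\epsilon_k}$, since $\epsilon_k$ is small.

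There is no genuine obstacle here. The only care needed is bookkeeping the exponents of $n$, $p$ and $a_k$ in the case $k\ge1$, and checking that the slack $n^{-\epsilon_k(k-0.5)}$ built into $\UB_{k,b}$ exactly absorbs the discrepancy between $a_k^{2k+2}p$ and $n^{2k}$.
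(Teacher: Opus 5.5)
Your proposal is correct and follows essentially the same route as the paper. You reduce the claim to $\UB_{k,b}^2\le \sigma_r^2/((\log n)^{20r}\Lambda)$, which holds with equality for $k=0$ and, for $k\ge 1$, reduces to $n^{3\epsilon_k}\le n^2p$; this is guaranteed by \eqref{align: p assumption} for small $\epsilon_k$. Combining this with $|w_f|<(\log n)^{10r}\Lambda^{1/2}$ for $f\in\cT$ and $t\le \UB_{k,b}$ gives the claim exactly as in the paper.
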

\begin{proof}
    First, note that $\UB_{k, b}^2 \le \frac{1}{(\log n)^{20r}} \cdot \frac{\sigma_r^2}{\Lambda}$. Indeed, if $k = 0$, we in fact have equality by definition. For $k \ge 1$, the inequality follows once we establish that
    \[
        {n^{-\epsilon_k(k-0.5)}} \cdot \sqrt{\frac{\sigma_r^2}{a_k^{2k+2} b^{r-k-2} p^{2\binom{r}{2} - \binom{r-k}{2}}}} \ll \sqrt{\frac{\sigma_r^2}{b^{r-k-2} n^{2k} p^{2\binom{r}{2} - \binom{r-k}{2} - 1}}},
    \]
	which is equivalent to $n^{3 \epsilon_k} \ll  n^2 p$. This inequality holds provided that $\epsilon_k$ is sufficiently small and by \eqref{align: p assumption} --- our assumption on $p$.

 Then, for any $f\in \cT$ we obtain the required inequality as follows: 
\[
	t\leq \UB_{k,b} \leq  \frac{\sigma_r}{(\log n)^{10r} \cdot \Lambda^{1/2}} \leq   \frac{\sigma_r}{| w_f |}.\qedhere
\]
\end{proof}

Finally, we are ready to complete the proof of \eqref{align:conditional characteristic function bound}.
Note that by Claim \ref{claim:number of good f} and by \eqref{eq:*}, upon setting the event $E^*\coloneqq E\cap \bigl\{|\cT|\geq \frac{1}{(\log n)^{12r}}\cdot a_k^2\bigr\}$, we have
\begin{align*}
    \left\lvert \E\left[e^{it\Tilde{X}/\sigma_r}  \mid E \right] \right\rvert \le e^{-n^{\xi}} + \left\lvert \E\left[e^{it\Tilde{X}/\sigma_r} \mid E^*\right] \right\rvert\leq  e^{-n^{\xi}}+ \left\lvert \E\left[\prod_{f \in A_1\times A_2}e^{it\left( w_f \cdot \mathbf{1}_{f \in G_0}\right)/\sigma_r} \mid E^*\right] \right\rvert .
\end{align*}
Further, note that conditioned on (the random variable) $\Gamma_{0,1}\coloneqq (G_0\setminus (A_1\times A_2),G_1\setminus (A_1\times A_2))$ the weights $w_f$ are fixed for every $f\in A_1\times A_2$, and thereby the set $\cT$ is also fixed. Moreover, conditioned on $\Gamma_{0,1}$, the random variables $\{\mathbf{1}_{f\in G_0}:f\in A_1\times A_2\}$ are mutually independent, and we have
\begin{align*}
    \left\lvert \E\left[\prod_{f \in A_1\times A_2}e^{it\left( w_f \cdot \mathbf{1}_{f \in G_0}\right)/\sigma_r} \mid E^*\right] \right\rvert &= \left\lvert \E\left[\E\left[\prod_{f \in A_1\times A_2}e^{it\left( w_f \cdot \mathbf{1}_{f \in G_0}\right)/\sigma_r} \mid \Gamma_{0,1}\right] \mid E^*\right] \right\rvert \\
    &\le \E\left[\prod_{f \in \cT} \left\lvert\E\left[e^{it\left( w_f \cdot \mathbf{1}_{f \in G_0}\right)/\sigma_r} \mid \Gamma_{0,1}\right] \right\rvert \mid E^*\right].
\end{align*}
Therefore, to prove \eqref{align:conditional characteristic function bound} it suffices to show that, for every value $\Gamma$ of $\Gamma_{0, 1}$ satisfying $E^*$ the following holds
\begin{align*}
    \prod_{f \in \cT} \left\lvert\E\left[e^{it\left( w_f \cdot \mathbf{1}_{f \in G_0}\right)/\sigma_r} \mid \Gamma_{0,1} = \Gamma\right] \right\rvert  \le n^{-3D}.
\end{align*}

By Claim \ref{claim:char-bounds}, we have
\begin{align*}
    \prod_{f \in \cT} \left\lvert\E\left[e^{it\left( w_f \cdot \mathbf{1}_{f \in G_0}\right)/\sigma_r} \mid \Gamma_{0,1} = \Gamma\right] \right\rvert
    \le \prod_{f \in \cT} \left(1-8p(1-p)\left\|\frac{t\cdot w_f}{2\pi \sigma_r}\right\|^2\right),
\end{align*}
where we recall that $\left\|x\right\|$ is the distance between $x$ and $\mathbb{Z}$. This, Claim \ref{claim: closest to integer} (asserting that for every $f\in \cT$, we have $\left\|\frac{t\cdot w_f}{2\pi \sigma_r}\right\| =\frac{t\cdot |w_f|}{2\pi \sigma_r}$), and the lower bound in \eqref{align:alpha_f good values} imply that 
\begin{align*}
    \prod_{f\in \cT} \exp\left({-8p(1-p)\left(\frac{t\cdot w_f }{2 \pi \sigma_r}\right)^2}\right)  
    \le \exp \left({-0.1p(1-p)\cdot \frac{c^2_{1} \Lambda t^2\cdot a_k^2}{\sigma_r^2\cdot (\log n)^{12r}}}\right).
\end{align*}
By the assumption that $t \ge \LB_{k,b}\ge  \sqrt{\frac{(\log n)^{14r}}{a_k^2 p} \cdot \frac{\sigma_r^2}{\Lambda}}$, and by the assumption that $1-p\geq 1/2$, we arrive at the following inequality
\[
\prod_{f \in \cT} \left\lvert\E\left[e^{it\left( w_f \cdot \mathbf{1}_{f \in G_0}\right)/\sigma_r} \mid \Gamma_{0,1} = \Gamma\right] \right\rvert \le e^{-\Theta\left((\log n)^{2r}\right)} \le n^{-3D}.
\]
This is as required, and therefore, the proof of the first assertion of Lemma \ref{lemma:main lemma for decoupling} is completed.

\section{Proof of the second item of Lemma \ref{lemma:main lemma for decoupling}}\label{section:high moments}
In this section, our goal is to prove the second assertion of Lemma \ref{lemma:main lemma for decoupling}. Fix $p\in(0,1/2]$ satisfying \eqref{align: p assumption}, and assume that $k \in \{0, \dots, r-3\}$ and $0 < b \le a_k$ are integers that satisfy Properties \ref{item:b condition} and \ref{item:k condition}. Moreover, fix an $(a_k, b, k, r)$-partition $\cP$ and let $G_0 \sim G_{n, p}$ and $G_1 \sim \left(\cP_A \times \cP_B\right)_p$ be two independent random graphs. We will show that for any sufficiently large integer $L = L(\epsilon_1, r) > 0$ we have
\begin{align}\label{align:bound of second item in the main lemma}
    \E\left[\Tilde{Y}^{2L}\right] = O_L\left(\left(n^{2\epsilon_k(k-1+0.1)} a_k^{2k+2} b^{r-k-2} p^{2\binom{r}{2} - \binom{r-k}{2}}\right)^{L}\right).
\end{align}
Whenever $k=0$, we have $\tilde{Y} = 0$ deterministically since $\mathcal{R}=\mathcal{R}_{2,r-2}$ by definition; hence, from here onwards we assume that $k > 0$.

Recalling the definition of $\Tilde{Y}$ and setting $\cR'\coloneqq \cR\setminus \cR_{2,r-k-2}$, it is clear that
\begin{equation*}
    \Tilde{Y}^{2L} = \sum_{F_1,\ldots ,F_{2L}\in \cR'} \prod_{i=1}^{2L}S_{G_0,G_1}(F_i).
\end{equation*}
Note that $F_1,\ldots ,F_{2L}$ might not all have the same number of vertices in each of the parts of $\cP$. This is a technically issue which we take care of right away. For integers $i, j$ set
\[
    \Tilde{Y}_{i, j} \coloneqq \sum_{F \in \cR_{i, j}} S_{G_0, G_1}(F),
\]
and observe that
\[
    \Tilde{Y} = \sum_{(i,j)\in I} \Tilde{Y}_{i, j} \quad \text{where} \quad I\coloneqq \left\{(i,j) \in [r]^2:\ \substack{i\ge 2,\ j\ge r - k - 2,\ \\ \text{and}\ r-k < i + j \le r}\right\}.
\]
Since for non-negative reals $x_1, \dots, x_\ell$ we have $(x_1 + \dots + x_\ell)^{2L} \le \ell^{2L} \cdot \max\{x_i^{2L}:i\in [\ell]\}$, we obtain the following bound
\begin{align}\label{align:Y high moment}
    \E\left[\Tilde{Y}^{2L}\right] \le |I|^{2L} \cdot \max_{(i,j)\in I} \E\left[\Tilde{Y}_{i, j}^{2L}\right] \le r^{4L} \max_{(i,j)\in I} \E\left[\Tilde{Y}_{i, j}^{2L}\right].
\end{align}
Therefore, to achieve \eqref{align:bound of second item in the main lemma}, it suffices to show that for any $(i,j)\in I$ we have 
\begin{equation}\label{eq: high moments for one type}
    \E\left[\Tilde{Y}^{2L}_{i,j}\right] = O_L\left(\left(n^{2\epsilon_k(k-1+0.1)} a_k^{2k+2} b^{r-k-2} p^{2\binom{r}{2} - \binom{r-k}{2}}\right)^{L}\right).
\end{equation}

Towards proving the above, fix a pair $(i,j)\in I$. As in the previous section, Definition \ref{def: count of good rainbow copies} will play a crucial role.
When $G$ or $\cP$ is clear from the context, we will omit it from the above notation.
The importance of Definition \ref{def: count of good rainbow copies} can be seen from Claim \ref{claim:sufficient condition for sign zero in expectation new}, which implies that for any $(F_1,\ldots ,F_{2L})\in \cR_{i,j}^{{2L}}\setminus T_{2L}^{i, j}(K_n)$ we have 
\[
    \E\left[S_{G_0, G_1}(F_1) \cdot \ldots \cdot S_{G_0, G_1}(F_{2L})\right] = 0.
\]
Crucially, as the sign function takes values in $\{0,\pm1\}$, by the above and by the fact that for $F\not \subseteq G_0\cup G_1$ the sign function $S_{G_0,G_1}(F)$ vanishes, we have 
\begin{align*} 
    \E\left[\Tilde{Y}_{i, j}^{2L}\right] &=\sum_{F_1, \dots, F_{2L} \in \cR_{i, j}} \E\left[S_{G_0, G_1}(F_1) \cdot \ldots \cdot S_{G_0, G_1}(F_{2L})\right] \le \E\left[\left\lvert T_{2L}^{i, j}(G_0 \cup G_1) \right\rvert \right].
\end{align*}
Further, as in the previous section, we may couple $G_0,G_1,$ and $G_{n,2p}$ such that $G_0\cup G_1\subseteq G_{n,2p}$, since every edge $e\in K_n$ belongs to $G_0\cup G_1$ with probability at most $2p$, independently of the other edges.  This yields that
\[
    \E\left[\Tilde{Y}_{i, j}^{2L}\right] \le \E\left[\left\lvert T_{2L}^{i, j}(G_{n, 2p}) \right\rvert \right] = O_L\left(\E\left[\left\lvert T_{2L}^{i, j}(G_{n, p}) \right\rvert \right]\right),
\]
where the second inequality holds as $e\left(\bigcup_{\ell=1}^{2L} F_\ell\right) = O_L(1)$ for any $(F_1,\ldots ,F_{2L})\in T^{i,j}_{2L}(K_n)$. 
Now it is clear that to prove \eqref{eq: high moments for one type}, it is enough to prove the following, which is the main challenge in this section:
\begin{equation}\label{align:bound of second item in the main lemma for i and j}
     \begin{aligned}\E\left[\left\lvert T_{2L}^{i, j}(G_{n, p}) \right\rvert \right]&= \sum_{(F_1, \dots, F_{2L}) \in T_{2L}^{i, j}(K_n)} \Pr\left(\bigcup_{\ell=1}^{2L} F_\ell \subseteq G_{n, p}\right)\\
     &= O_L\left(\left(n^{2\epsilon_k(k-1+0.1)} a_k^{2k+2} b^{r-k-2} p^{2\binom{r}{2} - \binom{r-k}{2}}\right)^{L}\right).
     \end{aligned}
\end{equation}
The rest of this section is divided into two subsections. In the first one, we analyse the structure of $F= \bigcup_{\ell =1}^{2L}F_\ell$ where $(F_1,\ldots ,F_{2L})$ is an arbitrary element of $T_{2L}^{i,j}(K_n)$.
In particular, we will be concerned with the number of vertices of such $F$ in $\cP_A, \cP_B$, and $\cP_Z$ as well as the number of edges of $F$.
This is then used in the second subsection to complete the proof of \eqref{align:bound of second item in the main lemma for i and j}.

\subsection{The structure of $F$}\label{subsec:structure-of-maximum}
Fix $(i,j)\in I$ for the remainder of this subsection and for every sequence $\mathbf{F}=(F_1,\ldots ,F_{2L})\in T_{2L}^{i,j}(K_n)$ let
\[
    n_A(\mathbf{F}) \coloneqq \left|\bigcup_{\ell=1}^{2L} V(F_\ell)  \cap \cP_A\right|, \quad  n_B(\mathbf{F}) \coloneqq \left|\bigcup_{\ell=1}^{2L} V(F_\ell) \cap \cP_B\right|,\quad \text{and}\quad n_Z(\mathbf{F}) \coloneqq \left|\bigcup_{\ell=1}^{2L} V(F_\ell) \cap \cP_Z\right|.
\]
We will frequently omit the dependency on $\mathbf{F}$ as it will be clear from the context. Let us start with some na\"ive bounds on $n_A,n_B$, and $n_Z$.

\begin{observation}\label{claim:number of vertices in F}
    For every $(F_1,\ldots ,F_{2L})\in T_{2L}^{i,j}(K_n)$ the following three inequalities hold:
    \begin{enumerate}
        \item\label{item: na} $i \le n_A  \le (2i - 2)L$.
        \item\label{item: nb} $j \le n_B \le (2j - (r-k-2))L$.
        \item\label{item: nz} $r - i - j \le n_Z  \le 2(r-i-j)L$.
    \end{enumerate}
\end{observation}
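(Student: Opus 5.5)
The three lower bounds are immediate, and each upper bound comes from a simple double count.

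\emph{Lower bounds.} Each $F_1\in\cR_{i,j}$ has exactly $i$ vertices in $\cP_A$, exactly $j$ in $\cP_B$, and $r-i-j$ in $\cP_Z$. Hence the union of all the $F_\ell$ has at least that many vertices in each part.

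\emph{Upper bound on $n_Z$.} Every $F_\ell$ contributes $r-i-j$ vertices to $\cP_Z$. Summing over the $2L$ cliques gives $n_Z\le 2(r-i-j)L$.

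\emph{Upper bound on $n_A$.} We use the third condition of Definition~\ref{def: count of good rainbow copies}: every vertex of $\cP_A\cup\cP_B$ that appears in some $F_\ell$ appears in at least two of them. Split the vertices of $\bigcup_\ell V(F_\ell)\cap\cP_A$ into those lying in $A_1$ and those lying in $A_2$. Since each $F_\ell$ is rainbow, it has at least one vertex in $A_1$ and at least one in $A_2$. As $|V(F_\ell)\cap\cP_A|=i$, this gives $|V(F_\ell)\cap A_1|\le i-1$, and similarly for $A_2$.

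Double count incidences between the vertices of $V(F)\cap A_1$ (where $F=\bigcup_\ell F_\ell$) and the cliques. Each such vertex has at least two incidences, while the total number of incidences is at most $2L(i-1)$. Hence $|V(F)\cap A_1|\le (i-1)L$, and likewise for $A_2$. Adding the two bounds gives $n_A\le(2i-2)L$.

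\emph{Upper bound on $n_B$.} The argument is the same with the $m\coloneqq r-k-2$ parts $B_1,\ldots,B_m$. For each $s$, let $x_{\ell,s}\coloneqq|V(F_\ell)\cap B_s|$. Rainbowness gives $x_{\ell,s}\ge1$ for every $\ell$ and $s$, and $\sum_s x_{\ell,s}=j$.

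Double counting within each $B_s$ gives $|V(F)\cap B_s|\le \frac12\sum_\ell x_{\ell,s}$. Summing over $s$,
\[
n_B\le \tfrac12\sum_\ell j = jL.
\]
This is weaker than the claimed $(2j-m)L$ whenever $j>m$, so a sharper estimate is needed. The point is that each clique's \emph{first} vertex in $B_s$ can be charged against the forced presence of a vertex in every part, and only the surplus $x_{\ell,s}-1$ is unconstrained.

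Concretely, I would bound $|V(F)\cap B_s|\le \sum_\ell (x_{\ell,s}-1)+L$. The surplus vertices number at most $\sum_\ell(x_{\ell,s}-1)$ in total. Every remaining vertex of $V(F)\cap B_s$ is contained in at least two cliques, and there are at most $2L$ cliques, so there are at most $L$ such vertices. Summing over $s$,
\[
n_B\le (2Lj-2Lm)+mL=(2j-m)L,
\]
as required.

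The same refinement applied to $A_1,A_2$ gives $n_A\le(2i-4)L+2L=(2i-2)L$, consistent with the bound above.

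I expect no step to be a genuine obstacle. The only care needed is to use rainbowness, rather than the crude bound, in the $n_B$ count.
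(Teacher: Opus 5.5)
Under the reading of Definition~\ref{def: count of good rainbow copies} that you adopt (every used vertex of $\cP_A\cup\cP_B$ lies in at least two of the $F_\ell$), your argument is correct. However, the remark that motivates your refinement is backwards. Rainbowness gives $j\ge m=r-k-2$, so $jL\le(2j-m)L$, and your crude count $n_B\le jL$ already finishes the proof. Likewise, the incidences in $\cP_A$ total exactly $2Li$, which gives $n_A\le iL\le(2i-2)L$ with no need to split into $A_1$ and $A_2$. In the refinement itself, the step ``every remaining vertex is contained in at least two cliques, and there are at most $2L$ cliques, so there are at most $L$ such vertices'' does not follow as written. What makes it true is that a remaining vertex is the designated vertex of \emph{every} clique containing it, so it uses up at least two of the $2L$ designations.

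The paper's proof relies on strictly less. It uses only that in each part ($A_1$, $A_2$, and each $B_s$) every $F_\ell$ shares \emph{at least one} vertex with another copy. Writing $k_\ell\ge 2$ for the number of vertices of $F_\ell$ in $\cP_A$ that lie in some other $F_m$, it counts private vertices once and shared ones at most half. This gives $n_A\le\sum_\ell(i-k_\ell)+\sum_\ell k_\ell/2\le(2i-2)L$, and the same count with $k_\ell\ge r-k-2$ gives the $n_B$ bound.

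This per-part condition is exactly what Claim~\ref{claim:sufficient condition for sign zero in expectation new} guarantees for tuples with non-vanishing expectation. So it is the version of $T_{2L}^{i,j}$ that Section~\ref{section:high moments} actually needs. It also explains why the bounds are $(2i-2)L$ and $(2j-(r-k-2))L$ rather than $iL$ and $jL$, and why Lemma~\ref{claim:number of edges in F} carries the terms $\max\{0,n_A-iL\}$.

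Under that weaker hypothesis, your first $n_A$ argument and your crude $n_B$ bound break. For example, take $i=3$ and suppose all $2L$ cliques share one vertex of $A_1$ and one of $A_2$, and each has a private second vertex in $A_1$. Then $|V(F)\cap A_1|=2L+1>(i-1)L$. Your per-part charging argument does survive, provided each clique's designated vertex in a part is chosen to be one it shares with another clique. With that choice it coincides with the paper's count.
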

\begin{proof}
    We first note that the lower bound in each of the cases trivially holds as $F_1\in \cR_{i,j}$. Moreover, the upper bound in item \eqref{item: nz} is also trivial. Thus, we focus on the upper bounds in \eqref{item: na} and \eqref{item: nb}. We will prove only the upper bound in \eqref{item: na}, as the proof of the upper bound in \eqref{item: nb} is verbatim.
    
   For every $\ell \in [2L]$, set
    \[
        k_\ell \coloneq \left\lvert \left(\cP_A \cap V(F_\ell)\right) \cap \left(\bigcup_{m \neq \ell} V\left(F_{m}\right)\right) \right \rvert\ge 2,
    \]
    where the lower bound follows from the fact that in each of $A_1$ and $A_2$ the graph $F_{\ell}$ shares a vertex with another copy.
    Then, for every $\ell \in [2L]$, we have
    \[
        \left|\bigl(\cP_A \cap V(F_\ell)\bigr)\setminus \bigcup_{m\neq \ell} V(F_{m})\right| = i-k_\ell \quad\text{and}\quad \left|\bigl(\cP_{A}\cap V(F_\ell)\bigr)\cap \bigcup_{m\neq \ell}V(F_{m})\right|=k_{\ell}.
    \]
    This implies the required inequality as follows
    \begin{equation*}
        n_A=\left|\cP_A \cap \bigcup_{\ell=1}^{2L} V(F_\ell) \right| \le \sum_{\ell=1}^{2L} (i - k_\ell) + \sum_{\ell=1}^{2L} \frac{k_\ell}{2} = \sum_{\ell=1}^{2L} \left(i - \frac{k_\ell}{2}\right) \le (2i-2) L.\qedhere
    \end{equation*}
\end{proof}

For integers $\alpha,\beta$ and $\gamma$ let $\varphi(\alpha,\beta,\gamma)\coloneqq \varphi_{i,j}(\alpha,\beta,\gamma)$ denote the following quantity:
\[
    \min\left\{e(F):F=\bigcup_{\ell=1}^{2L} F_\ell\text{ where }(F_1, \dots, F_{2L}) \in T_{2L}^{i,j}(K_n) \text{ and } n_A = \alpha, n_B =\beta,\text{ and }n_Z=\gamma\right\},
\]
where we use the convention $\min \emptyset \coloneqq \infty$. When $\gamma=0$, we will use the shorthand notation $\varphi(\alpha,\beta)=\varphi(\alpha,\beta,0)$. In the rest of this subsection, we only consider the case where $\gamma=0$. Note that by Observation \ref{claim:number of vertices in F} if $i+j<r$ and $\gamma=0$, then $\varphi \equiv \infty$, and therefore, we further restrict the discussion to the cases where $i+j=r$.
The following lemma, which is the main technical lemma of this subsection, provides a fairly tight lower bound for $\varphi(\alpha,\beta)$.

\begin{lemma}\label{claim:number of edges in F}
    Suppose that $i \ge 2$ and $j \ge r - k - 2$ are integers satisfying $i + j = r$. Then, for every integers $n_A$ and $n_B$ the following inequalities hold:
    \begin{align}\label{eq:main-lemma-1}
        \varphi_{i,j}(n_A, n_B) \ge \frac{(r-1)(n_A + n_B) + (r-k)\left(\max\{0, n_A - i L\} + \max\{0, n_B - j L\}\right)}{2}
    \end{align}
    and 
    \begin{equation}\label{eq:main-lemma-2}
    \begin{aligned}
        \varphi_{i,j}(n_A, n_B) &\ge \max\{j\cdot n_A , i \cdot n_B \} + \frac{(i - 1)n_A + 2\max\{0, n_A- i L\}}{2} \\&+ \frac{(j - 1)n_B + (r-k-2)\max\{0, n_B- j L\}}{2}.
    \end{aligned}
    \end{equation}
\end{lemma}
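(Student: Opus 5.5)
Both inequalities will come from a degree-counting argument on $F=\bigcup_{\ell}F_\ell$, where $V(F)\subseteq \cP_A\cup\cP_B$ because $\gamma=0$ and $i+j=r$. Every $F_\ell$ is an $r$-clique with exactly $i$ vertices in $\cP_A$ and $j$ in $\cP_B$. Hence every vertex of $F$ has degree at least $r-1$ in $F$. Summing degrees, $2e(F)\ge (r-1)(n_A+n_B)$, which is the main term of \eqref{eq:main-lemma-1}. The excess terms must come from vertices lying in several cliques that do not share their neighbourhoods.

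For \eqref{eq:main-lemma-1}, I will count vertex incidences. Each $v\in V(F)\cap\cP_A$ belongs to $m_v\ge 2$ of the cliques, by condition (3) of Definition~\ref{def: count of good rainbow copies}, and $\sum_v m_v=2Li$. Thus at most $2Li-2n_A$ incidences are ``extra'', and so at most $2Li-2n_A$ vertices have $m_v\ge 3$. I then run a case analysis on the structure of the cliques through each vertex $v$. Take the cliques through $v$ and let $N_v$ be the union of their vertex sets minus $v$. If all cliques through $v$ had the same vertex set, each $F_\ell$ through $v$ would coincide as a clique. The key step is the claim that whenever $n_A> iL$, many vertices must see a second, different clique. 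The quantity $n_A-iL$ lower bounds the number of vertices of $\cP_A$ whose cliques are not all identical. Each such vertex then has degree at least $(r-1)+(r-k)/\ldots$ in $F$. More carefully, the plan is to pair the cliques in a perfect pairing of ``twin'' cliques: at most $L$ distinct vertex-sets can cover each vertex at least twice if they are twins. Each vertex beyond $iL$ in $\cP_A$ forces an additional clique whose $\cP_B$-part differs. This yields at least $r-k$ extra neighbours, namely the $r-k-2$ parts $B_j$ plus the two $A$ sides, which the current clique cannot duplicate. I expect this step to require the most care: pinning down precisely why each excess vertex contributes $r-k$ extra degree. My first attempt will be induction on $L$, removing a pair of identical cliques; if no identical pair exists, I will use the extra incidences directly.

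For \eqref{eq:main-lemma-2}, I split the edges into three classes: $E(\cP_A,\cP_B)$, $E(F[\cP_A])$ and $E(F[\cP_B])$. Each $v\in\cP_A\cap V(F)$ lies in a clique meeting $\cP_B$ in $j$ vertices, so $e(\cP_A,\cP_B)\ge j n_A$. Symmetrically, $e(\cP_A,\cP_B)\ge i n_B$, which gives the $\max$ term. Inside $\cP_A$, each vertex has degree at least $i-1$. The degree-sum gives $(i-1)n_A/2$, and excess vertices beyond $iL$ contribute extra degree $2$ each by the same twin-pairing argument as above. An excess vertex lies in two distinct $\cP_A$-traces, which forces at least one additional neighbour in $\cP_A$ per $A$-side. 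Analogously, inside $\cP_B$ each excess vertex forces $r-k-2$ extra neighbours, one per part $B_{j'}$ it must meet. Adding the three classes gives \eqref{eq:main-lemma-2}.
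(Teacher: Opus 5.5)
There is a genuine gap. The excess terms involving $\max\{0,n_A-iL\}$ and $\max\{0,n_B-jL\}$ are the only non-trivial content of the lemma, and your proposal never proves them. The twin-pairing step is left open, and the mechanism it aims at is not the right one.

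Taken literally, condition (3) of Definition~\ref{def: count of good rainbow copies} does give $m_v\ge 2$. But then your own incidence count yields $2n_A\le\sum_v m_v=2iL$, so $n_A\le iL$, and likewise $n_B\le jL$. All maxima then vanish, and both inequalities already follow from your base counts (minimum degree $r-1$, and the three-class split with $e(\cP_A,\cP_B)\ge\max\{jn_A,in_B\}$). You derived this implicitly (``at most $2Li-2n_A$ vertices'') but then treated $n_A>iL$ as the crux, which under your premise is an empty case.

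The paper's proof, however, uses only a weaker property, which is all that Claim~\ref{claim:sufficient condition for sign zero in expectation new} supplies for the moment bound. Each $F_\ell$ shares with $\bigcup_{m\neq\ell}V(F_m)$ at least one vertex in each of $A_1,A_2,B_1,\dots,B_{r-k-2}$. Hence it shares at least $2$ vertices of $\cP_A$ and at least $r-k-2$ of $\cP_B$; this is also why Observation~\ref{claim:number of vertices in F} only gives $n_A\le(2i-2)L$. Under that hypothesis a clique may contain vertices lying in no other clique, $n_A>iL$ genuinely occurs, and $m_v\ge 2$ fails. Your argument then has nothing to run on.

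The missing idea, which is the paper's Claim~\ref{claim:number of edges in F with D}, is to work with the set $\cD$ of vertices lying in exactly one clique. Such a vertex has degree exactly $r-1$. So the surplus is not carried by the ``excess'' vertices themselves but by the shared vertices adjacent to them, and only on average. A per-vertex statement like ``each excess vertex gets $r-k$ extra neighbours'' is the wrong target. The argument runs in four steps.
\begin{enumerate}
\item For $v\notin\cD$, let $w_v=|N_F(v)\cap\cD|$. Each such neighbour lies in a unique clique, which must contain $v$, and $v$ lies in at least two cliques. So some clique through $v$ contains at most $w_v/2$ of them, giving $d_F(v)\ge r-1+w_v/2$.
\item Each $u\in\cD$ lies in one clique, which has at least $2+(r-k-2)=r-k$ shared vertices. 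Hence $\sum_{v\notin\cD}w_v\ge(r-k)|\cD|$.
\item Summing degrees gives $2e(F)\ge(r-1)(n_A+n_B)+\tfrac{r-k}{2}|\cD|$.
\item Since $\sum_{v\in V(F)\cap\cP_A}m_v=2iL$ and $m_v\ge2$ for $v\notin\cD$, we get $|\cD\cap\cP_A|\ge2(n_A-iL)$. Similarly $|\cD\cap\cP_B|\ge2(n_B-jL)$. Combining these with step 3 gives \eqref{eq:main-lemma-1}.
\end{enumerate}
For \eqref{eq:main-lemma-2}, your bipartite count is correct. The two inner terms follow from the same argument applied to the traces $F_\ell[\cP_A]$, which are cliques on $i$ vertices with at least $2$ shared vertices. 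Likewise apply it to $F_\ell[\cP_B]$, which are cliques on $j$ vertices with at least $r-k-2$ shared vertices.
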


To prove Lemma \ref{claim:number of edges in F}, we will require the following technical claim.
\begin{claim}\label{claim:number of edges in F with D}
    Suppose that $\ell_1,\ell_2,s,s_1,s_2$ are non-negative integers such that $s=s_1+s_2\geq 3$. Let $F_1, \dots, F_{2L}$ be cliques, each of size $s$, set $F\coloneqq\bigcup_{s=i}^{2L}F_i$ and fix a partition $\{W_1,W_2\}$ of $V(F)$. 
    Moreover, assume that for every $i \in [2]$ and $j \in [2L]$ we have 
    \[
        \left|W_i\cap V(F_j)\right| = s_i\quad \text{and}\quad\left|W_i\cap V(F_j)  \cap \left(\bigcup_{\ell \neq j} V(F_\ell)\right)\right| \ge \ell_i.
    \]
    Then,
    \[
        e\left(F\right) \ge \frac{(s-1)\left|V(F)\right| + \left(\ell_1 + \ell_2\right) \left(\sum_{i=1}^{2} \max\left\{0, \left|W_i\right| - s_i \cdot L\right\}\right)}{2}.
    \]
\end{claim}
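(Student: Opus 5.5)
The plan is a degree-counting (handshake) argument: write $e(F)=\frac12\sum_{v\in V(F)}d_F(v)$ and bound each degree from below. Every vertex $v\in V(F)$ lies in at least one clique $F_j$, so $d_F(v)\ge s-1$; this gives the term $(s-1)|V(F)|/2$. The extra term should come from vertices lying in several cliques: such a vertex sees the union of the neighbourhoods of all cliques containing it, and we need to show these unions are large.

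For the surplus, fix $i\in[2]$ and classify the vertices of $W_i$ by their multiplicity $m(v)=|\{j: v\in V(F_j)\}|$. Counting incidences gives $\sum_{v\in W_i} m(v)=2L s_i$. Hence the number of vertices of $W_i$ with $m(v)=1$ is at least $2|W_i|-2Ls_i$, i.e.\ at least $2\max\{0,|W_i|-s_iL\}$. We then want each vertex $v$ with $m(v)=1$ (say $v\in F_j$ only) to have $\ell_1+\ell_2$ additional neighbours, giving an extra $\frac12(\ell_1+\ell_2)$ per such vertex. Since there are at least $2\max\{0,|W_i|-s_iL\}$ such vertices in $W_i$, summing over $i$ produces exactly the claimed surplus.

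The extra neighbours for a multiplicity-one vertex $v\in F_j$ are not available inside $F_j$ alone, which already contributes only $s-1$. So the charging must be done carefully, and this is the main obstacle. A more promising route is to charge the surplus to the shared vertices. By hypothesis, $F_j$ contains at least $\ell_1+\ell_2$ vertices shared with other cliques. Each such shared vertex $u$ lies in some $F_{j'}$ with $j'\neq j$, and its degree is at least $|V(F_j)\cup V(F_{j'})|-1$. Thus a shared vertex $u$ gets extra degree from each clique $F_j$ containing $u$, counted through the private vertices of $F_j$: every private vertex $v$ of $F_j$ is adjacent to $u$, and $v$ is not in $F_{j'}$.

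So the plan is to form the bipartite incidence between private vertices and shared vertices within the same clique. Each private vertex $v\in F_j$ is adjacent to at least $\ell_1+\ell_2$ shared vertices of $F_j$. Each such edge $uv$ is an edge at $u$ that is not counted in a fixed reference clique $F_{j'}\ni u$ with $j'\ne j$. Assign to each shared vertex $u$ the reference clique $F_{j'}$ and lower-bound its degree by $(s-1)$ plus the number of private vertices adjacent to it outside $F_{j'}$. Summing this surplus over shared vertices gives at least $\sum_{v \text{ private}}(\ell_1+\ell_2)$, minus the overcounting coming from private vertices that lie in $u$'s reference clique itself.

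I would handle that overcounting by double counting: choose the reference clique of each shared $u$ so that it avoids a constant fraction of the incidences, or else average over the choices. This yields a surplus of $\frac12(\ell_1+\ell_2)$ times the number of private vertices. That number is at least $\sum_i 2\max\{0,|W_i|-s_iL\}$, and halving it in the handshake formula gives the stated bound. I expect this bookkeeping, namely making sure no edge is double counted between the $s-1$ baseline and the surplus, to be the delicate step. If direct charging fails, I would fall back to splitting $e(F)=\frac12\sum_v d(v)$ into private and shared vertices and bounding shared degrees via $|N(u)|\ge |V(F_j)\cup V(F_{j'})|-1$.
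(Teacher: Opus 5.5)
Your charging scheme is the paper's proof. The paper picks, for each shared vertex $u$, the clique containing $u$ that has the fewest private neighbours of $u$, which gives $d_F(u)\ge s-1+w_u/2$ (with $w_u$ the number of private neighbours of $u$); it then combines $\sum_u w_u\ge(\ell_1+\ell_2)$ times the number of private vertices with your count of at least $2\max\{0,|W_i|-s_iL\}$ private vertices in each $W_i$. The one point to state explicitly is why the loss is at most one half, not just a \emph{constant fraction}. Each private neighbour $v$ of $u$ lies in exactly one clique, and that clique must contain $u$ because the edge $uv$ comes from some $F_j$. So the private neighbours of $u$ are partitioned among the at least two cliques containing $u$, and the smallest part has size at most $w_u/2$.
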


\begin{proof}
    Denote by $\cD$ the set of vertices in $V(F)$ that belong to a single clique among $F_1, \dots, F_{2L}$. Clearly,
        \begin{align}\label{align:F sum of degrees}
            2 e(F) = \sum_{v \in \cD} d_F(v) + \sum_{v \in V(F) \setminus \cD} d_F(v) = (s-1) |\cD| + \sum_{v \in V(F) \setminus \cD} d_F(v).
        \end{align}
        For every vertex $v \not \in \cD$, let $w_v$ denote the size of the $F$-neighbourhood of $v$ in $\cD$, that is $w_v \coloneqq \left|N_F(v) \cap \cD\right|$. We claim that every vertex $v \not\in \cD$ satisfies
        \begin{align}\label{align:degree of v not in D}
            d_F(v) \ge s-1 + w_v/2.
        \end{align}
        Indeed, since $v \notin \cD$, it belongs to at least two cliques among $F_1, \dots, F_{2L}$, and it also satisfies
        \[
            \sum_{i=1}^{2L} \left|N_F(v) \cap V(F_i) \cap \cD\right| = w_v.
        \]
        Hence, there is some $i$ for which $v \in V(F_i)$ and $\left|N_F(v) \cap V(F_i) \cap \cD\right| \le w_v/2$.
        This implies \eqref{align:degree of v not in D} as then $v$ has $s-1$ neighbours belonging to $F_i$ and at least $w_v/2$ more neighbours in $\cD\setminus V(F_i)$.

        Substituting \eqref{align:degree of v not in D} in \eqref{align:F sum of degrees} we have
        \begin{align*}
            2 e(F) &\ge (s-1) |\cD| + \sum_{v \in V(F) \setminus \cD} \left(s-1 + w_v/2\right) 
            =  (s-1)\left|V(F)\right| + \frac{1}{2}\sum_{v \in V(F) \setminus \cD} w_v.
        \end{align*}
        By the assumption of the claim, in every clique $F_i$, there are at least $\ell_1 + \ell_2$ vertices that are not in $\cD$. Therefore, each vertex in $\cD$ has at least $\ell_1 + \ell_2$ neighbours in $V(F) \setminus \cD$, implying that
         \begin{align*}
            2 e(F) &\ge (s-1)\left|V(F)\right| + \frac{1}{2}(\ell_1+\ell_2)|\cD|.
        \end{align*}
        It is then enough to show that for every $i\in [2]$ we have $\left|\cD \cap W_i\right|\ge 2\max\{0, |W_i| - s_i \cdot L\}.$

       To this end, fix $i \in [2]$. If $\max\{0, |W_i| - s_i \cdot L\} = 0$, the inequality is trivial; therefore, we may assume the opposite, that is, $|W_i| > s_i \cdot L$. For every vertex $v \in V(F)$, denote by $c_v$ the number of cliques containing $v$ among $F_1, \dots, F_{2L}$. Since every such clique contains exactly $s_i$ vertices in $W_i$, we have 
        \[
            2s_i \cdot L = \sum_{v \in W_i} c_v = |\cD \cap W_i| + \sum_{v \in W_i \setminus \cD} c_v \ge |\cD \cap W_i| + 2(|W_i| - |\cD \cap W_i|),
        \]
        where the last inequality holds as $c_v \ge 2$ for every $v \in W_i \setminus \cD$. Hence the desired inequality is obtained
        \[
            |\cD \cap W_i| \ge 2(|W_i| - s_i \cdot L).\qedhere
        \]
\end{proof}

\begin{proof}[Proof of Lemma \ref{claim:number of edges in F}]
    Fix $i,j,n_A,n_B$ to be some non-negative integers such that $i+j=r$ and such that $i \ge 2$ and $j \ge r-k-2$. If $\varphi_{i,j}(n_A,n_B)=\infty$, the claim is vacuous. Hence, by Observation \ref{claim:number of vertices in F} we may further assume that $i \le n_A \le (2i - 2)L$ and $j \le n_B \le (2j - (r-k-2))L$.
    
    Let us also fix some graph $F=\bigcup_{\ell=1}^{2L} F_\ell$ such that $(F_1, \dots, F_{2L}) \in T_{2L}^{i, j}(K_n)$ satisfies $\left|V(F) \cap \cP_A\right| = n_A$ as well as $\left|V(F) \cap \cP_B\right| = n_B$. By definition, for every $\ell \in [2L]$ we have 
    \[
     \left|\cP_A\cap V(F_\ell)\right| = i\quad \text{and}\quad\left|\cP_A\cap V(F_\ell)  \cap \left(\bigcup_{m \neq \ell} V(F_m)\right)\right| \ge 2,
    \]
    and 
    \[
     \left|\cP_B\cap V(F_\ell)\right| = j\quad \text{and}\quad\left|\cP_B\cap V(F_\ell)  \cap \left(\bigcup_{m \neq \ell} V(F_m)\right)\right| \ge r-k-2.
    \]
    Thus, \eqref{eq:main-lemma-1} follows by applying Claim \ref{claim:number of edges in F with D} with $s=r$, the partition $\{W_1 = \cP_A, W_2 = \cP_B\}$, and the parameters $\ell_1 = 2, \ell_2=r-k-2, s_1=i, s_2 = j,T=L$.

    Continuing to the proof of \eqref{eq:main-lemma-2}, for every $\ell \in [2L]$, let $F_\ell(A)$ and $F_\ell(B)$ denote the subgraphs of $F_{\ell}$ induced by $V(F_\ell)\cap \cP_A$ and $V(F_{\ell})\cap \cP_B$, respectively. Note that $F$ can be partitioned into three graphs: $F(A)\coloneqq \bigcup_{\ell=1}^{2L} F_\ell(A)$, $F(B)\coloneqq \bigcup_{\ell=1}^{2L} F_\ell(B)$, and $F(A,B)$ which consists of all edges having a single endpoint in each $\cP_A$ and $\cP_B$. By applying Claim \ref{claim:number of edges in F with D} with $s=i$, the cliques $F_1(A), \dots, F_{2L}(A)$, the partition $\{W_1 = \cP_A, W_2 = \emptyset\}$, and the parameters $s_1 = i, s_2 = 0, \ell_1 = 2, \ell_2 = 0, T=L$, we have
    \[
        e\left(F(A)\right) \ge \frac{(i - 1) n_A + 2 \max\{0, n_A - i \cdot L\}}{2}.
    \]
    Similarly, we have
    \[
        e\left(F(B)\right) \ge \frac{(j - 1)n_B  + (r-k-2) \max\{0, n_B - j \cdot L\}}{2}.
    \]

    Note that every $v\in V(F)\cap \cP_A$ belongs to some $V(F_{\ell})$, and that $|V(F_{\ell})\cap \cP_{A}|=i$, implying that $d_{F(A,B)}(v)\geq r-i=j$. Combining this with the bipartitness of $F(A,B)$ and the assumptions of the lemma, we have 
    \[
        e(F(A,B)) = \sum_{v\in V(F)\cap \cP_A} d_{F(A,B)}(v) \geq j\cdot n_A. 
    \] 
    By symmetry we also have $e(F(A,B))\geq i\cdot n_B$, implying that $e(F(A,B))\geq \max\{j\cdot n_A, i \cdot n_B \}$. By combining the bounds for $e(F(A)),e(F(B))$, and $e(F(A,B))$, the proof is complete.
\end{proof}

\subsection{Completing the argument}
Our goal in this subsection is to show \eqref{align:bound of second item in the main lemma for i and j} for every $(i,j)\in I$. Our first step is to show that without loss of generality, it suffices to prove \eqref{align:bound of second item in the main lemma for i and j} only for $(i,j)\in I$ such that $i+j=r$. Fix $(i,j)\in I$ and recall that $|\cP_Z|/|\cP_A|=\Theta(n^{\epsilon_k})$ since $a_k = n^{1-\eps_k}$ and $b \le a_k$.
One can surjectively map each sequence $(F_1,\ldots, F_{2L})\in T_{\cP, 2L}^{i, j}(K_{n})$ to a sequence $(F_1',\ldots, F_{2L}')\in T_{\cP, 2L}^{r-j, j}(K_{n})$ by replacing every vertex in $\cP_Z$ with a vertex in $\cP_A$ such that the following holds:
\begin{itemize}
    \item $e\left(\bigcup_{\ell=1}^{2L}F_{\ell}\right)=e\left(\bigcup_{\ell=1}^{2L}F'_{\ell}\right)$, and 
    \item The preimage of each sequence $(F_1',\ldots, F_{2L}')\in T_{\cP, 2L}^{r-j, j}(K_{n})$ is of order $O(n^{2\epsilon_k(r-i-j)L})$.
\end{itemize}
This implies that 
\begin{align*}
    \E\left[\left\lvert T_{\cP, 2L}^{i, j}(G_{n, p}) \right\rvert \right] &=O\left( n^{2\epsilon_k(r-i-j)L} \E\left[\left\lvert T_{\cP, 2L}^{r-j, j}(G_{n, p}) \right\rvert\right]\right) = O\left( n^{2\epsilon_k(k-1)L} \E\left[\left\lvert T_{\cP, 2L}^{r-j, j}(G_{n, p}) \right\rvert\right]\right),
\end{align*}
where the second inequality follows as $i + j \ge r-k+1$. We conclude that to prove \eqref{align:bound of second item in the main lemma for i and j} it is sufficient to restrict to the case where $i + j = r$, and prove that
\begin{align}\label{align:upper bound of N(i, j)}
    \E\left[\left\lvert T_{\cP, 2L}^{i, j}(G_{n, p}) \right\rvert\right] \le n^{C_r} \left(a_k^{2k+2} b^{r-k-2} (2p)^{2\binom{r}{2} - \binom{r-k}{2}}\right)^{L},
\end{align}    
where $C_r>0$ is some constant that might depend on $r$.
Indeed, this implies \eqref{align:bound of second item in the main lemma for i and j} as long as $L = L(\epsilon_1, r)$ is large enough so that $C_r <  0.2 L \epsilon_1  \le 0.2 L \epsilon_k $.

From now on, assume that $(i,j)\in I$ and that $i+j=r$. By Observation \ref{claim:number of vertices in F} we have
\begin{align*}
    \E\left[\left\lvert T_{\cP, 2L}^{i, j}(G_{n, p}) \right\rvert\right] \le \sum_{n_A= i}^{(2i-2)L} \sum_{n_B=j}^{(2j-(r-k-2))L}  a_k^{n_A}  b^{n_B} p^{\varphi_{i, j}(n_A, n_B)}.
\end{align*}
This, combined with Lemma \ref{claim:number of edges in F} (applying \eqref{eq:main-lemma-1} in the first sum, and \eqref{eq:main-lemma-2} in the rest), yields
\begin{align*}
        \E\left[\left\lvert T_{\cP, 2L}^{i, j}(G_{n,p}) \right\rvert\right] &\le \sum_{n_A \ge i L, n_B \ge j L} a_k^{n_A} b^{n_B} p^{\frac{(n_A + n_B)(r-1) + (r-k)\left((n_A - i L) + (n_B - j L)\right)}{2}} \\
        &+ \sum_{n_A \ge i L, n_B \le j L} a_k^{n_A} b^{n_B} p^{n_A \cdot j + \frac{n_A(i - 1) + 2(n_A- i L)}{2} + \frac{n_B(j - 1)}{2}} \\
        &+ \sum_{ n_A \le i L, n_B \ge j L} a_k^{n_A} b^{n_B} p^{n_B \cdot i + \frac{n_A(i - 1)}{2} + \frac{n_B(j - 1) + (r-k-2)(n_B- j L)}{2}} \\
        &+ \sum_{n_A \le i L, n_B \le j L} a_k^{n_A} b^{n_B} p^{\max\{n_A\cdot j,n_B \cdot i\} + \frac{n_A(i - 1)}{2} + \frac{n_B(j - 1)}{2}},
\end{align*}
where in each sum we also require that $i \le n_A \le (2i - 2)L$ and $j \le n_B \le (2j - (r-k-2))L$. 
The following technical claim bound the respective sums above, thereby finishing the proof of \eqref{align:upper bound of N(i, j)} and of the second item of Lemma \ref{lemma:main lemma for decoupling}.

\begin{restatable}{claim}{ClaimAppendix}\label{claim:bound on expectations of T_2L}There exists $C_r>0$ depending on $r$ only such that the following holds for every $i \le n_A \le (2i - 2)L$ and $j \le n_B \le (2j - (r-k-2))L$.
\begin{enumerate}[label=(\roman*)]
    \item\label{n_A and n_B large} If $i L \le n_A \le (2i - 2)L$ and $j L \le n_B \le (2j - (r-k-2))L$. Then,
    \[
        a_k^{n_A} b^{n_B} p^{\frac{(n_A + n_B)(r-1) + (r-k)\left((n_A - i L) + (n_B - j L)\right)}{2}} \le n^{C_r} \left(a_k^{2k+2} b^{r-k-2} p^{2\binom{r}{2} - \binom{r-k}{2}}\right)^{L}.
    \]
    \item\label{n_A large n_B small} If $i L \le n_A \le (2i-2)L$ and $j \le n_B \le j L$. Then,
    \[
        a_k^{n_A} b^{n_B} p^{n_A \cdot j + \frac{n_A(i - 1) + 2(n_A- i L)}{2} + \frac{n_B(j - 1)}{2}} \le n^{C_r}  \left(a_k^{2k+2} b^{r-k-2} p^{2\binom{r}{2} - \binom{r-k}{2}}\right)^{L}.
    \]
    \item\label{n_A small and n_B large} If $i \le n_A \le i L$ and $j L \le n_B \le (2j - (r-k-2))L$. Then,
    \[
        a_k^{n_A} b^{n_B} p^{n_B \cdot i + \frac{n_A(i - 1)}{2} + \frac{n_B(j - 1) + (r-k-2)(n_B- j L)}{2}} \le n^{C_r}\left(a_k^{2k+2} b^{r-k-2} p^{2\binom{r}{2} - \binom{r-k}{2}}\right)^{L}.
    \]
    \item\label{n_A and n_B small} If $i \le n_A \le i L$ and $j \le n_B \le j L$. Then,
    \[
        a_k^{n_A} b^{n_B} p^{\max\{n_A\cdot j,n_B \cdot i\} + \frac{n_A(i - 1)}{2} + \frac{n_B(j - 1)}{2}} \le n^{C_r}\left(a_k^{2k+2} b^{r-k-2} p^{2\binom{r}{2} - \binom{r-k}{2}}\right)^{L}.
    \]
\end{enumerate}
\end{restatable}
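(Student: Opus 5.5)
In each of the four cases, the left-hand side is $a_k^{n_A}b^{n_B}p^{E(n_A,n_B)}$ with $E$ piecewise linear in $(n_A,n_B)$. So its logarithm, $n_A\log a_k+n_B\log b-E(n_A,n_B)\log(1/p)$, is piecewise linear, and in fact concave, since each $\max$ appears in the exponent of $p$ with a positive coefficient. I would therefore bound each expression by its value at the vertices of the corresponding rectangle of admissible $(n_A,n_B)$: a linear function on a box is maximised at a corner. The $\max\{n_Aj,n_Bi\}$ in case (iv) splits the box along the line $n_Aj=n_Bi$, so there the vertices of two polygons have to be checked. The factor $n^{C_r}$ absorbs the boundedly many boundary effects, for instance the lower endpoints $n_A=i$ and $n_B=j$, which are $O(1)$ rather than proportional to $L$. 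The proof thus reduces to checking finitely many corners against the target $\left(a_k^{2k+2}b^{r-k-2}p^{2\binom{r}{2}-\binom{r-k}{2}}\right)^L$.

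The tools for these corner checks are: $i+j=r$, $i\ge 2$, $j\ge r-k-2$ (so $i\le k+2$); property \ref{item:b condition}, which says $b\,p^{(r-k+1)/2}\ge 1$, so increasing $n_B$ along directions where each extra $B$-vertex costs at most $\frac{r-k+1}{2}$ powers of $p$ is favourable; $b\le a_k$; and $a_k\le n$ together with \ref{item:k condition} and the lower bound \eqref{align: p assumption} on $p$. The natural extremal configuration is $n_A=(2i-2)L$ and $n_B=(2j-(r-k-2))L$. There the copies pair up: each pair shares two $A$-vertices and $r-k-2$ $B$-vertices, so their union has $r+k+2$ vertices and $2\binom{r}{2}-\binom{r-k}{2}$ edges. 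With $i=k+2$ this gives exactly the target $a_k^{2k+2}b^{r-k-2}p^{2\binom{r}{2}-\binom{r-k}{2}}$ per pair. For smaller $i$, some of the $k+2$ vertices are in $\cP_B$ rather than $\cP_A$, and I would use $b\le a_k$ to compare. So the first step is to verify case (i) at this top corner by direct substitution, and then show that moving from the top corner toward $(iL,jL)$ along each axis changes the exponent in the right direction, using \ref{item:b condition} for the $B$-direction and $a_kp^{(r-1+r-k)/2}\le 1$-type estimates for the $A$-direction.

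For cases (ii) through (iv), I would use that the exponent there is the one from \eqref{eq:main-lemma-2}, which charges each $A$-vertex $j+\frac{i-1}{2}$ powers of $p$. That is at least $\frac{r-1}{2}\ge\frac{r-k+1}{2}$, hence at least as much as the $B$-vertices. At the common corner $(iL,jL)$, case (iv) gives $a_k^{iL}b^{jL}p^{(ij+\frac{i(i-1)}{2}+\frac{j(j-1)}{2})L}=\left(a_k^ib^jp^{\binom{r}{2}}\right)^L$. This should be compared with the square root of the target, i.e. one clique against half of a sharing pair. That comparison follows from \ref{item:k condition} together with $a_k=n^{1-\epsilon_k}$, since the pair count divided by the square of the single count is essentially $n^{-k}p^{-(\binom{r}{2}-\binom{r-k}{2})}$ times poly factors, and this is at most 1 up to $n^{O(\epsilon_k)}$, which is absorbed. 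The remaining corners with small $n_A$ or $n_B$ reduce to the same comparison plus monotonicity.

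The main obstacle is case (iv) along the diagonal $n_Aj=n_Bi$ and the small-$n_A$ corners, in the regime where $i<k+2$. There the target involves $a_k^{2k+2}$, while the configurations only contribute powers of $a_k$ and $b$ with fewer $A$-vertices. I would first try to convert $b$-powers to $a_k$-powers via $b\le a_k$, and, where that fails, invoke \eqref{align: p assumption}, i.e. $n^{r-2}p^{\binom r2-1}\ge(\log n)^{4r}$, to show that $a_kp^{\text{(per-vertex cost)}}$ is bounded below suitably so that adding vertices only helps. If some corner resists, I would allow the slack $n^{2\epsilon_k(k-1+0.1)L}$ from Lemma \ref{lemma:main lemma for decoupling} rather than insisting on $n^{C_r}$.
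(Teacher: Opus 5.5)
Your reduction is the paper's. For (i)--(iii) the exponent is linear in $(n_A,n_B)$ on the box, and in (iv) it is linear on each of the two triangles cut out by $n_Aj=n_Bi$, so only corners matter. These corners are then checked using $i+j=r$, $2\le i\le k+2$, $b\le a_k$, (P1), (P2) and the lower bound on $p$. However, beyond the top corner of (i) you do not carry out the checks, and the shortcuts you propose for discarding corners fail as stated.

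\begin{itemize}
\item In (i) an extra $B$-vertex costs $\frac{2r-k-1}{2}$ powers of $p$, and in (iii) it costs $i+\frac{j-1}{2}+\frac{r-k-2}{2}$. Both exceed the $\frac{r-k+1}{2}$ that (P1) pays for, and indeed $bp^{(2r-k-1)/2}<1$ does occur (polynomially small $p$, $b$ near the least value allowed by (P1)). So the corners with $n_B=jL$ must be checked on their own. For example, in (i) one needs $a_k^{2i-2}b^{r-i}p^{\binom{r}{2}+\frac{(i-2)(2r-k-1)}{2}}\le a_k^{2k+2}b^{r-k-2}p^{2\binom{r}{2}-\binom{r-k}{2}}$. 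This is an equality at $i=k+2$, and at $i=2$ it needs both $b\le a_k$ and (P2).
\item In the $A$-direction of (i), the relevant fact is $a_kp^{(2r-k-1)/2}\ge 1$, not $\le 1$. It follows from (P2) with $k'=0$, since $\binom{r}{2}-\binom{r-k}{2}=\frac{k(2r-k-1)}{2}$.
\item The corners of (ii) with $n_B=j$ are not ``the same comparison''. They require $a_k^{i}p^{\binom{r}{2}-\binom{r-i}{2}}$ and $a_k^{2i-2}p^{2\binom{r}{2}-\binom{r-i}{2}-\binom{r-i+2}{2}}$ to be at most the target base. The paper first reduces to $i\in\{2,k+2\}$ by log-convexity in $i$, then uses $b^{r-k-2}p^{\binom{r-k-2}{2}}\ge 1$ and $a_k^{k}b^{r-k-2}p^{\binom{r-2}{2}}\ge 1$, both consequences of (P1) and (P2).
\end{itemize}

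A further missing ingredient concerns your claim that $n^{C_r}$ absorbs the corners with $n_A=i$ or $n_B=j$. At $(n_A,n_B)=(i,j)$ in (iv), the left side is $a_k^ib^jp^{\binom{r}{2}}$, not an $L$-th power. Absorbing it therefore requires the target base $a_k^{2k+2}b^{r-k-2}p^{2\binom{r}{2}-\binom{r-k}{2}}$ to be at least $1$; otherwise the bound fails for large $L$. The paper proves this via $\bigl(a_k^kp^{\binom{r}{2}-\binom{r-k}{2}}\bigr)^2\bigl(b^{r-k-2}p^{\binom{r-k}{2}-1}\bigr)\bigl(a_k^2p\bigr)\ge 1$, and your plan needs it too.

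At $(iL,jL)$, the right comparison is one clique against one sharing pair: $a_k^ib^jp^{\binom{r}{2}}\le a_k^{2k+2}b^{r-k-2}p^{2\binom{r}{2}-\binom{r-k}{2}}$, which at $i=k+2$ is $a_k^kp^{\binom{r}{2}-\binom{r-k}{2}}\ge 1$. Comparing with the square root of the target would instead demand $a_k^2b^{r-k-2}p^{\binom{r-k}{2}}\le 1$, which is false by (P1) and $a_k^2p\ge 1$. Also, a per-factor loss of $n^{O(\epsilon_k)}$ cannot be absorbed, since it becomes $n^{O(\epsilon_k L)}$. What actually works is the $n^{\delta}$ in (P2) together with $k\epsilon_k<\delta$.

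Finally, the fallback slack $n^{2\epsilon_k(k-1+0.1)L}$ is not at your disposal. All but $n^{0.2\epsilon_k L}$ of it is consumed by the reduction from $i+j<r$ to $i+j=r$, and that remainder is what absorbs $n^{C_r}$.
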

For brevity, the straightforward computational proof is deferred to Appendix \ref{app:Appendix}.

\section{Proof of Lemma \ref{lemma:characteristic function integrable} for high frequencies --- $n^{-\gamma} \sigma_r \le t \le \pi \sigma_r$}\label{sec:high-freq}
In this section, we prove Lemma \ref{lemma:characteristic function integrable} for high frequencies. That is, we show that, for a sufficiently small $\gamma>0$, any $K > 0$, any $p\in [0,1/2)$ satisfying \eqref{align: p assumption}, and every $n^{-\gamma} \sigma_r\le t \le \pi \sigma_r$, for large enough $n$, we have
\[
    \left \lvert \E\left[e^{itX_r/\sigma_r}\right] \right\rvert \le n^{-K},
\]
which concludes the proof of Lemma \ref{lemma:characteristic function integrable} by Remark \ref{remark}.
The structure of the proof in this section is similar to that in Section~\ref{section:main lemma for medium frequencies}.
We will define an $(a, b, 0, r)$-partition $\cP = (A_1, A_2, B_1, \dots, B_{r-2}, Z)$ and apply Corollary \ref{cor: the decoupling lemma new} with this partition and the collection of edge sets $\cE = \{A_i \times B_j \colon i \in [2],\ j \in [r-2]\}$. As in Section \ref{section:main lemma for medium frequencies}, we aim to replace $X_r$ with a weighted sum of Bernoulli random variables. Similarly to Section \ref{section:main lemma for medium frequencies}, the Bernoulli random variables will correspond to the indicators of $f \in A_1 \times A_2$ belonging to $G_0$, and the weights will correspond to a signed sum of $\cP$-rainbow copies lying on $f$.

The main difficulty in this frequency range is that we cannot allow large edge weights. More specifically, for an $f\in A_{1}\times A_{2}$ to use Claim \ref{claim:char-bounds} with $\omega_f \cdot \mathbf{1}_{f\in G_0}$, we need to require that each weight satisfies $\lvert \omega_f \cdot t / \sigma_r \rvert \le \frac{1}{2}$. To ensure this, we will choose $b$ to be such that the expected number of $\cP$-rainbow copies in $G_0 \cup G_1$ lying on an edge of $A_1 \times A_2$ is of order $O(1)$. This will allow us to show that with high enough probability, sufficiently many edges in $A_1 \times A_2$ have a $\pm 1$ weight.

Let $\lambda > 1$ be a constant and let $b > 0$ be the minimal integer such that 
\[
    b^{r-2} p^{\binom{r}{2} - 1} \ge  \lambda.
\]
As $p<1 \le \lambda$ we have $b>1$, and therefore, the minimality of $b$ implies that
\begin{align}\label{align:bound on 2 density with b}
     b^{r-2} p^{\binom{r}{2} - 1} = \left(\frac{b}{b-1}\right)^{r-2}\cdot (b-1)^{r-2} p^{\binom{r}{2} - 1} \leq 2^{r-2}\lambda.
\end{align}
We will also use the fact that $b \ll n$, which follows from \eqref{align: p assumption}.

The definition of our partition $\cP$ will exploit a typical property of $G_{n,p}$.
Due to technical reasons, this property differs slightly depending on whether or not $p\geq n^{-O(1)}$.
To define this property, partition $[n]$ into sets $A_1, A_2$, and $B$, each of size $n/3$, and fix a small constant $\xi=\xi(r)>0$ to be determined later. Define the event $\cT=\cT_p$ as follows.
\begin{enumerate}
    \item If $p \ge n^{-\xi}$, then $\cT_p$ is the event that there exists a copy of $K_{r-2}$ in $G_{n, p}[B]$.
    \item If $p < n^{-\xi}$, then $\cT_p$ is the event that for prescribed pairwise disjoint sets $B_1, \dots, B_{r-2} \subseteq B$ with $|B_i|=b$ for each $i$ the following holds. 
    Let $\cH$ be an auxiliary $(r-2)$-uniform hypergraph with vertex set $\bigcup_{i=1}^{r-2}B_i$, and $(r-2)$-edges corresponding to copies of $K_{r-2}$ in $G_{n, p}$ with a single vertex in each of $B_1, \dots, B_{r-2}$. Formally, $\{v_1, \dots, v_{r-2}\}$ is an edge if and only if $v_i \in B_i$ for every $i \in [r-2]$ and if $G_{n, p}[\{v_1, \dots, v_{r-2}\}] \cong K_{r-2}$. Then, we define $\cT_p$ to be the event that $\cH$ satisfies the following conditions:
    \begin{itemize}
        \item $\frac{1}{2r^r} b^{r-2} p^{\binom{r-2}{2}} \le e(\cH) \le r^2 b^{r-2} p^{\binom{r-2}{2}}$.
        \item For every $j \in [r-3]$ denoting by $\Delta_j(\cH)$ the $j$-th maximum degree\footnote{As usual, the $j$-th maximum degree of $\cH$ is defined as the maximum, taken over all subsets $J\subseteq V(\cH)$ with $|J|=j$, of the number of edges containing $J$.} of $\cH$, we have, $\Delta_j(\cH) \le \max\left\{r^2 b^{r-2-j} p^{\binom{r-2}{2} - \binom{j}{2}}, (\log n)^{r+1}\right\}$.
    \end{itemize}
\end{enumerate}

The typicality of $\cT$ in $G_{n,p}$ is formulated in the following claim. 

\begin{claim}\label{claim:T-typical}
    $\Pr(\cT) \ge 1 - n^{-2K}.$
\end{claim}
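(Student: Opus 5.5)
We treat the two regimes in the definition of $\cT_p$ separately; in both we aim for failure probability $n^{-\omega(1)}$, which is at most $n^{-2K}$.

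\emph{Case $p\ge n^{-\xi}$.} Here $|B|=n/3$, and the expected number of copies of $K_{r-2}$ in $G_{n,p}[B]$ is $\Theta(n^{r-2}p^{\binom{r-2}{2}})\ge n^{r-2-\xi\binom{r-2}{2}}$. This is a positive power of $n$ once $\xi<1/r$, say. Every subgraph $J$ of $K_{r-2}$ with at least one edge also has $n^{v(J)}p^{e(J)}\ge n^{v(J)(1-\xi (v(J)-1)/2)}\ge n^{\Omega(1)}$. Hence Janson's inequality (Theorem~\ref{theorem: Janson}) gives $\mu^2/\Delta\ge \min_J n^{v(J)}p^{e(J)}=n^{\Omega(1)}$. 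Therefore the probability that $G_{n,p}[B]$ contains no $K_{r-2}$ is $\exp(-n^{\Omega(1)})$. Alternatively, one can invoke Theorem~\ref{claim: lower bound on extensions} with $k=0$ on the vertex set $B$, as in Remark~\ref{remark: lower bound on extensions}.

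\emph{Case $p<n^{-\xi}$.} First we record the size of $b$. By the choice of $b$ we have $b^{r-2}p^{\binom r2-1}=\Theta(\lambda)$, so $b=\Theta(p^{-(r+1)/2})$.

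\emph{Lower bound on $e(\cH)$.} We apply Claim~\ref{claim: lower bound on extensions distinct} with $r$ replaced by $r-2$ and $k=0$ (so $S=\emptyset$). This requires
\[
b^{r-2}p^{\binom{r-2}{2}}=\Theta\!\left(\lambda\, p^{\binom{r-2}{2}-\binom r2+1}\right)=\Theta\!\left(\lambda\, p^{-(2r-4)}\right)\ge n^{\xi(2r-4)},
\]
which far exceeds the needed $(\log n)^{(1+\epsilon)(r-2)}$. The claim then gives $e(\cH)\ge \frac12 b^{r-2}p^{\binom{r-2}{2}}\ge\frac{1}{2r^r}b^{r-2}p^{\binom{r-2}{2}}$ with probability $1-n^{-\omega(1)}$. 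If $r-2\le 2$ the statement degenerates, and a Chernoff bound (Lemma~\ref{lemma:chernoff}) handles it directly, since for $r=4$ the hyperedges are just the edges between $B_1$ and $B_2$.

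\emph{Upper bounds on $e(\cH)$ and $\Delta_j(\cH)$.} We follow the moment argument of Claim~\ref{claim:upper bound on extensions}, adapted to the $(r-2)$-partite structure with parts $B_1,\dots,B_{r-2}$ of size $b$. Fix $J\subseteq\bigcup B_i$ with $|J|=j$, taking at most one vertex from each part (otherwise its degree is zero). Set $X_J$ to be the number of partite extensions of $J$ to a $K_{r-2}$, and let $M=\max\{b^{r-2-j}p^{\binom{r-2}{2}-\binom j2},(C\log n)^{r-2}\}$. As in that proof, conditioning on $F_1,\dots,F_{L-1}$ gives
\[
\mathbb E[X_J^L]\le \mathbb E[X_J^{L-1}]\sum_{s\ge j}(rL)^{s-j}\, b^{r-2-s}\,p^{\binom{r-2}{2}-\binom s2}.
\]
By convexity in $s$, the sum is at most $rM$ for $L\le C\log n$. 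Markov's inequality with $L=C\log n$ then yields $X_J\le r^2M$ with probability $1-n^{-C}$. We choose $C$ large, and the union bound over the at most $n^{r}$ sets $J$ finishes the argument. The case $j=0$ gives the upper bound on $e(\cH)$. For $j\ge1$, note that $(C\log n)^{r-2}\le(\log n)^{r+1}$ for large $n$, which gives the required bound on $\Delta_j(\cH)$.

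\emph{Main obstacle.} The only real care needed is matching the exact constants and the $(\log n)^{r+1}$ threshold in the degree bound, and checking that the convexity step of Claim~\ref{claim:upper bound on extensions} transfers verbatim to the partite setting. This should follow because the $s$-th term there is log-convex in $s$.
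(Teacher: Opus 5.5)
Your proposal is correct and follows essentially the same route as the paper. For $p\ge n^{-\xi}$ you use Spencer's extension theorem (or Janson) to get a $K_{r-2}$ in $G_{n,p}[B]$; for $p<n^{-\xi}$ you adapt the moment argument of Claim~\ref{claim:upper bound on extensions} to bound $e(\cH)$ and $\Delta_j(\cH)$ from above, and you add a lower-tail bound for $e(\cH)$. The one difference is that you take that lower bound from the partite Claim~\ref{claim: lower bound on extensions distinct} rather than from Theorem~\ref{claim: lower bound on extensions}, which is, if anything, the better fit since $\cH$ only counts copies with exactly one vertex in each $B_i$.
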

\begin{proof}
    First, suppose that $p \ge n^{-\xi}$. Applying Theorem \ref{claim: lower bound on extensions} with $r-2$ instead of $r$, $k=0$, and $C=2K$, with probability at least $1-n^{-2K}$ the graph $G_{n, p}[B]$ contains a copy of $K_{r-2}$, as required.

    Next, let $K'>0$ be some large constant, and suppose that $p < n^{-\xi}$. By Claim \ref{claim:upper bound on extensions}, with probability at least $1-b^{-K'}$, for every $S \subseteq B$, the number of extensions of $S$ to a copy of $K_{r-2}$ in $G_{n, p}[B]$ intersecting each $B_i$ once, is bounded from above by
    \[
        \max\left\{(r-2)^2 b^{r-2-|S|} p^{{\binom{r-2}{2}} - \binom{|S|}{2}}, \left(\log n\right)^{r}\right\}.
    \]
    Noting that $b^{r-2} p^{\binom{r-2}{2}} = n^{\Omega(1)}$, we may take $S=\emptyset$ above, and obtain that the required upper bound on $e(\cH)$ in $\cT$ holds with probability at least $1-b^{-K'}$. Similarly, the upper bound on $\Delta_j(\cH)$ in $\cT$ holds with probability at least $1-b^{-K'}$. 
    
    Lastly, by Theorem \ref{claim: lower bound on extensions}, with probability at least $1-b^{-K'}$ we have $e(\cH) \ge \frac{1}{2r^{r}} b^{r-2} p^{\binom{r-2}{2}}$. Since $b = n^{\Omega(1)}$, the claim follows by the union bound upon taking $K'$ sufficiently large.        
\end{proof}

By Claim \ref{claim:T-typical}, we conclude that for every real $t$,
\begin{equation}\label{align:move to typical high frequencies}
        \begin{aligned}
            \left|\E\left[e^{it X_r / \sigma_r}\right]\right| &= \left|\Pr(\cT^{c}) \cdot \E\left[e^{it X_r / \sigma_r}\right] \mid \cT^{c}  + \Pr(\cT)\cdot  \E\left[e^{it X_r / \sigma_r} \mid \cT\right]\right| \\
            &\le n^{-2K} + \left|\E\left[e^{it X_r / \sigma_r} \mid \cT\right]\right|.
        \end{aligned}
    \end{equation}
Therefore, it is enough to show that for every fixed $\Gamma \subseteq K_n[B]$ that satisfies the event $\cT$, we have
\[
    \left|\E\left[e^{it X_r / \sigma_r} \mid G_{n, p}[B] = \Gamma\right]\right| \leq n^{-2K}.
\]

To this end, throughout this section, fix pairwise disjoint sets $B_1, \dots, B_{r-2} \subseteq B$ and fix a graph $\Gamma$ on vertex set $B$, such that the following holds. If $p \ge n^{-\xi}$, then $|B_i| = 1$ for every $i \in [r-2]$, $\Gamma\left[\bigcup_{i=1}^{r-2} B_i\right]$ is an $(r-2)$-clique, and if $p < n^{-\xi}$, then $|B_i|=b$ for every $i\in [r-2]$, and the $(r-2)$-uniform hypergraph $\cH$ representing the copies of $(r-2)$-cliques with vertices in each of $B_1,\ldots, B_{r-2}$ is as described by the event $\cT$. Finally, define $\cP$ to be the partition $(A_1, A_2, B_1, \dots, B_{r-2}, Z)$ of $[n]$, where $Z = [n] \setminus \left(\bigcup_{i=1}^{2} A_i \cup \bigcup_{j=1}^{r-2} B_j\right)$.

Recall that $\cR = \cR(\cP)$ is the set of copies of $K_r$ in $K_n$ taking at least (in this case exactly) one vertex in each of the sets $A_1, A_2, B_1, \dots, B_{r-2}$. Let $G_0 \sim G_{n, p}$ and $G_1 \sim (\cP_A \times \cP_B)_p$ be two independent random graphs.
By Remark \ref{remark: the decoupling lemma new} for every $t > 0$, 
\begin{align}\label{align:after decoupling high frequencies}
    \left|\E\left[e^{it X_r / \sigma_r}  \mid G_{n,p}[B] = \Gamma\right]\right|^{2^{2(r-2)}} \le \left|\E\left[e^{it \Tilde{X} / \sigma_r} \mid G_0[B] = \Gamma\right]\right|,
\end{align}
where $\Tilde{X} = \sum_{F \in \cR} S_{G_0, G_1}(F)$. 
For every edge $f \in A_1 \times A_2$, let $\cR_f$ denote the collection of graphs $F\in \cR$ with $f\in F$, and define the weight of $f$ as 
\[
    w_f \coloneqq \sum_{\substack{F \in \cR_f}} S_{G_0 \cup f, G_1}(F).
\]
We note that as in \eqref{eq:*}, we have 
\begin{align}\label{align:def of X tilde high freq}
    \Tilde{X} = \sum_{f \in A_1 \times A_2} w_f \cdot \mathbf{1}_{f \in G_0}.
\end{align}

In the following two claims, we prove that typically there are significantly many edges $f \in A_1 \times A_2$ with $|w_f| = 1$. We then follow the same approach as in the previous sections to estimate the characteristic function.

\begin{claim}\label{claim:alpha_f high frequency}
    There exists some constant $\beta = \beta(\lambda) > 0$ such that, for every $f \in A_1 \times A_2$, we have
    \[
        \Pr\bigl(\left|w_f\right| = 1 \mid G_0[B] = \Gamma\bigr) \ge \begin{cases}
            p^{2(r-2)}, \quad &p \ge n^{-\xi}, \\
            \beta, \quad &p < n^{-\xi}.
        \end{cases}
    \]    
\end{claim}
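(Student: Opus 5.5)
We split according to the two regimes in the definition of $\cT$. In both cases, the basic object is the set of rainbow copies $F\in\cR_f$. Each such copy consists of the edge $f=\{u_1,u_2\}$ together with an $(r-2)$-clique $Q$ of $\Gamma$ meeting every $B_j$ once, and the $2(r-2)$ edges joining $u_1,u_2$ to $V(Q)$ must lie in $G_0\cup G_1$. Since $G_0[B]=\Gamma$ is fixed, the remaining randomness is only in the edges of $\cP_A\times\cP_B$ incident to $u_1,u_2$. For each such edge $e$, independently, the outcome is: $e\in G_0\setminus G_1$ with probability $p(1-p)$, $e\in G_1\setminus G_0$ with probability $p(1-p)$, $e$ in both with probability $p^2$, and $e$ in neither otherwise.

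By Claim~\ref{claim: sign zero new}, $S_{G_0\cup f,G_1}(F)=\pm1$ exactly when every one of these $2(r-2)$ edges lies in exactly one of $G_0,G_1$. In particular, if exactly one $F\in\cR_f$ has all its $2(r-2)$ connecting edges in $G_0\cup G_1$, and for that copy each edge is in exactly one of the two graphs, then $|w_f|=1$. The plan in both regimes is to lower bound the probability of this event.

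\emph{Dense case, $p\ge n^{-\xi}$.} Here $|B_j|=1$, so $\cR_f$ consists of a single copy $F$, namely $f$ together with the fixed clique $\Gamma[\bigcup_j B_j]$. Hence $|w_f|=1$ if and only if each of the $2(r-2)$ edges between $\{u_1,u_2\}$ and $\bigcup_j B_j$ lies in exactly one of $G_0,G_1$. These edges are independent, and each satisfies this with probability $2p(1-p)\ge p$ since $p\le 1/2$. This gives probability at least $p^{2(r-2)}$ directly.

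\emph{Sparse case, $p<n^{-\xi}$.} Write $Q_1,\dots,Q_m$ for the edges of $\cH$, where $m=e(\cH)=\Theta(b^{r-2}p^{\binom{r-2}{2}})$ by $\cT$. Let $I_s$ be the indicator that all $2(r-2)$ edges from $u_1,u_2$ to $Q_s$ lie in $G_0\cup G_1$; this happens with probability $q^{2(r-2)}$, where $q=2p-p^2$. Set $N=\sum_s I_s$. Then $\E N=m q^{2(r-2)}=\Theta(b^{r-2}p^{\binom{r}{2}-1})=\Theta_\lambda(1)$, using \eqref{align:bound on 2 density with b} and $\binom{r-2}{2}+2(r-2)=\binom r2-1$.

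We will show $\Pr(N=1)\ge\beta'(\lambda)>0$ by the second moment / inclusion--exclusion bound $\Pr(N=1)\ge \E N-2\E\binom N2$, or, if that fails, a Janson-type bound. For two copies $Q_s,Q_{s'}$ sharing $j$ vertices, the union of their connecting edges has $4(r-2)-2j$ edges. Therefore
\[
\E\binom N2\le \tfrac12(\E N)^2+\sum_{j=1}^{r-3}m\,\Delta_j(\cH)\binom{r-2}{j}q^{4(r-2)-2j}.
\]
Using the codegree bound $\Delta_j\le\max\{r^2b^{r-2-j}p^{\binom{r-2}{2}-\binom j2},(\log n)^{r+1}\}$, each overlap term is $O(\E N)\cdot b^{r-2-j}p^{\binom{r}{2}-1-\binom{j}{2}-2j}$ (or a polylog times $p^{2(r-2)-2j}$). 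Both are $o(1)$: the first because $b^{-j}p^{-\binom{j+2}{2}+1}\to 0$, as $b^{r-2}p^{\binom r2-1}=\Theta(1)$ and $j<r-2$ force $bp^{(j+3)/2}\to\infty$ by convexity; the second because $p<n^{-\xi}$ kills polylogs.

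One issue is that $\E N-2\E\binom N2$ may be negative if $\E N$ is large. To avoid this, we choose $\lambda$ small enough that $\E N\le1/2$, or else fix a sub-collection of $\cH$ of size $\Theta(1/q^{2(r-2)})$ so that the expected count is a small constant. Restricting attention to copies outside this sub-collection requires also that no other copy is present, and the probability of that is at least a constant by Harris's inequality (Theorem~\ref{theorem: Harris}), since absence events are decreasing. I expect this to be the main obstacle: obtaining a uniform positive constant while controlling both the existence of exactly one present copy and the absence of all others.

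Finally, conditioned on $N=1$, the present copy has all its edges in exactly one of $G_0,G_1$ with probability $(2(1-p)/(2-p))^{2(r-2)}\ge 3^{-2(r-2)}$. Multiplying gives $\beta(\lambda)>0$, which completes the plan for the claim.
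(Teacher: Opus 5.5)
The dense case is exactly the paper's argument. Your sparse-case reduction is also valid: you count copies whose connecting edges lie in $G_0\cup G_1$, and then pay $\bigl(2(1-p)/(2-p)\bigr)^{2(r-2)}$ for the unique one to lie in $G_0\triangle G_1$. This works because $\{N=1\}$ is measurable with respect to the union.

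The gap is the step $\Pr(N=1)\ge\beta'(\lambda)$. The bound $\Pr(N=1)\ge \E N-2\E\binom{N}{2}\approx \E N-(\E N)^2$ only helps when $\E N<1$. Here $\E N=\Theta(b^{r-2}p^{\binom r2-1})$ is merely $\Theta_\lambda(1)$ for the $\lambda>1$ fixed in this section; the bounds from $\cT$ allow it to be as large as about $r^2 2^{3(r-2)}\lambda$. The claim is about that given $\lambda$, so you cannot shrink $\lambda$ to force $\E N\le 1/2$.

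Your fallback does not close the gap either. After passing to a sub-collection $\cH'$ with small expected count $N'$, you still need a lower bound on $\Pr(\{N'=1\}\cap\{N''=0\})$. But $\{N'=1\}$ is neither increasing nor decreasing, so Harris's inequality gives nothing for this intersection. The two events also share edges, so they are not independent. You flagged this as the main obstacle, and it is not resolved.

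The missing idea, which is what the paper does, is to sum over which copy is present and apply Harris conditionally:
\[
\Pr(N=1)=\sum_{s}\Pr(I_s=1)\cdot \Pr\Bigl(\bigcap_{s'\neq s}\{I_{s'}=0\}\ \Big|\ I_s=1\Bigr).
\]
Conditioning on $I_s=1$ only fixes the $2(r-2)$ connecting edges of $Q_s$. The remaining edge indicators still form a product measure, and each $\{I_{s'}=0\}$ is decreasing. Harris then gives the lower bound $\prod_{s'\neq s}\bigl(1-q^{2(r-2)-2j(s,s')}\bigr)\ge \exp\bigl(-2\sum_{s'}q^{2(r-2)-2j(s,s')}\bigr)$, where $j(s,s')=|Q_s\cap Q_{s'}|$.

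Your own overlap estimates bound this sum by $\E N+o(1)=O_\lambda(1)$. Copies disjoint from $Q_s$ contribute at most $m q^{2(r-2)}=\E N$, and overlapping ones contribute $o(1)$ by the codegree bounds in $\cT$. Hence $\Pr(N=1)\ge \E N\cdot e^{-O_\lambda(1)}=\Omega_\lambda(1)$ for every fixed $\lambda$, with no need for $\E N$ to be small.

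The paper runs the same argument directly with the events $F\setminus f\subseteq G_0\triangle G_1$, each edge present with probability $2p(1-p)$. That makes your final conditioning step unnecessary.
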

\begin{proof}
    Fix $f \in A_1 \times A_2$. To prove the claim, we bound the (conditional) probability that there exists a unique $F \in \cR$ lying on $f$ such that $F \setminus f \subseteq G_0 \triangle G_1$. This is sufficient as in such a case, we have $\left|w_f\right| = 1$. Indeed, by Claim~\ref{claim: sign zero new}, in this case we would have $S_{G_0, G_1}(F) \in \{\pm 1\}$ and $S_{G_0, G_1}(F') = 0$ for every $F \neq F' \in \cR$. 
    For every $F \in \cR$, denote by $S_F$ the event that $F$ extends $f$ to a $K_r$ in $G_0 \triangle G_1$. 
    With this notation, our goal is to bound from below the probability of $\cL$, the event that $S_F$ holds for exactly one $F \in \cR$.

    If $p \ge n^{-\xi}$, then there is a single vertex in each of the sets $B_1, \dots, B_{r-2}$ and $\Gamma\left[\cP_B\right]$ is an $(r-2)$-clique. The probability that an edge between an endpoint of $f$ and a vertex in $\cP_B$ belongs to $G_0 \triangle G_1$ is $2p(1 - p)\geq p$, and there are $2(r-2)$ such edges. Thus, in this case,
    \[
        \Pr\bigl(\left|w_f\right| = 1 \mid G_0[B]=\Gamma\bigr) \ge \Pr\bigl(\cL  \mid G_0[B]=\Gamma\bigr)= \left(2p(1-p)\right)^{2(r-2)} \ge p^{2(r-2)},
    \]    
    
    From now assume that $p < n^{-\xi}$. Writing $\Pr_\Gamma$ for the probability measure $\Pr\bigl(\ \cdot \mid G_0[B]=\Gamma\bigr)$, we have
    \begin{align*}
        \Pr_\Gamma\bigl(\cL \bigr) = \sum_{F \in \cR_f} \Pr_{\Gamma}\left(S_F\right) \cdot \Pr_{\Gamma}\left(\cap_{{F' \in \cR_f\setminus \{F\}}}  S_{F'}^c \mid S_F\right).
    \end{align*}
    Moreover, observe that
    \[
        \sum_{F \in \cR_f} \Pr_{\Gamma}\left(S_F\right) \ge e(\cH) \cdot p^{2(r-2)} = \Theta\left(b^{r-2} p^{\binom{r-2}{2} + 2(r-2)}\right) = \Theta\left(b^{r-2} p^{\binom{r}{2} -1}\right) = \Omega_\lambda(1),
    \]
    where $\cH$ is the hypergraph in the definition of $\cT$ with respect to $G_0$, the second equality holds by the assumption on $e(\cH)$, and the last equality holds by our choice of $b$. Thus, to conclude the proof, it suffices to show that
    \[
        \min _{F\in \cR_f}\Pr_{\Gamma}\left(\cap_{F' \in \cR_f\setminus\{F\}}  S_{F'}^c \mid S_F\right) = \Omega_{\lambda}(1).
    \]

    For every $F \in \cR$, the distribution of $G_0\triangle G_1$ conditioned on the event $\{G_0[B]=\Gamma\} \wedge S_F$ is a product measure, and in this probability space, $S_{F'}$ is a monotone event for every $F'\in \cR$. Thus, setting $\Delta_0(\cH) \coloneqq e(\cH)$, we may apply Theorem \ref{theorem: Harris} and get 
    \begin{align*}
        \Pr_{\Gamma}\left(\cap_{F' \in \cR_f\setminus \{F\}}  S_{F'}^c \mid S_F\right) &\ge \prod_{\substack{\ F' \in \cR_f \\ F' \neq F}} \Pr_{\Gamma}\left(S_{F'}^c \mid S_F\right) 
        \ge \prod_{i=0}^{r-3} \prod_{\substack{F' \in \cR_f \\ |V(F') \cap V(F)| = i}} \left(1-\left(2p(1-p)\right)^{2(r-2-i)}\right) \\
        &\ge \prod_{i=0}^{r-3} \left(e^{-2^{2r}p^{2(r-2-i)}}\right)^{\Delta_i(\cH)} \\
        &\ge e^{-\sum_{i=0}^{r-3} 2^{2r} p^{2(r-2-i)} \cdot \max\left\{r^2 b^{r-2-i} p^{\binom{r-2}{2} - \binom{i}{2}}, (\log n)^{r+1}\right\}},
    \end{align*}
    where the third inequality holds as $p=o(1)$, and the last inequality holds due to the assumed properties of $\cH$.     
    Since $p < n^{-\xi}$, for every $0 \le i \le r-3$ it is clear that $p^{2(r-2-i)} (\log n)^{r+1} = o(1)$.

    Let us now bound from above $g(x) \coloneqq  p^{2(r-2-x)} b^{r-2-x} p^{\binom{r-2}{2} - \binom{x}{2}}$ in the interval $[0,r-2]$. The convexity of $g(x)$ implies that its maximum in $[0,r-2]$ is attained at either $x = 0$ or $x = r-2$. For $x= 0$, by \eqref{align:bound on 2 density with b} we have
    \[
        g(0) = b^{r-2} p ^{2(r-2) + \binom{r-2}{2}} = b^{r-2} p^{\binom{r}{2} - 1} \le 2^{r-2}\lambda,
    \]
    and for $x = r-2$, we have $g(r-2) = 1$. 
    We conclude that 
    \[
        \sum_{i=0}^{r-3} 2 p^{2(r-2-i)} \cdot \max\left\{r^2 b^{r-2-i} p^{\binom{r-2}{2} - \binom{i}{2}}, (\log n)^{r+1}\right\} = O_\lambda (1),
    \]
    concluding the proof of the claim.
\end{proof}

Fix $\beta$ as guaranteed by Claim \ref{claim:alpha_f high frequency}. Further, denote by $S \subseteq A_1 \times A_2$ the (random) set of edges $f \in A_1 \times A_2$ with $\left|w_f\right| = 1$.
\begin{claim}\label{claim:many good f high frequencies}
    Conditioned on $\{G_0[B]=\Gamma\}$, with probability at least $1 - e^{-\sqrt{n}}$, we have
    \[
        |S| \ge \begin{cases}
            0.01 \cdot n^2 p^{2(r-2)}, \quad &p \ge n^{-\xi}, \\
            0.01 
            \cdot \beta n^2, \quad &p < n^{-\xi}.
        \end{cases}
    \]
\end{claim}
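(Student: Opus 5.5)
We follow the matching decomposition used in the proof of Claim~\ref{claim:number of good f}. Partition $A_1\times A_2$ (recall $|A_1|=|A_2|=n/3$) into $n/3$ edge-disjoint perfect matchings $M_1,\dots,M_{n/3}$. For two vertex-disjoint edges $f,g\in A_1\times A_2$, the weights $w_f$ and $w_g$ depend on the edge sets of $G_0$, $G_1$ between the endpoints of $f$ and $\cP_B$, and between the endpoints of $g$ and $\cP_B$, respectively. They also depend on $G_0[\cP_B]$. Every $F\in\cR_f$ has exactly one vertex in each of $A_1,A_2,B_1,\dots,B_{r-2}$, so $V(F)\subseteq V(f)\cup\cP_B$. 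Hence, conditioned on $\{G_0[B]=\Gamma\}$, the weight $w_f$ is a function only of the edges of $G_0,G_1$ in $V(f)\times\cP_B$. For distinct edges of one matching these edge sets are disjoint. It follows that, conditioned on $\{G_0[B]=\Gamma\}$, the random variables $\{\mathbf 1[|w_f|=1]: f\in M_i\}$ are mutually independent. This is the key simplification compared with Section~\ref{section:decoupling}, since here there are no $Z$-vertices and no $A_1\times A_2$ edges involved in the weights.

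Let $q$ denote the lower bound from Claim~\ref{claim:alpha_f high frequency}, namely $q=p^{2(r-2)}$ or $q=\beta$. Then $|S\cap M_i|$ stochastically dominates $\mathrm{Bin}(n/3,q)$. By the Chernoff bound (Lemma~\ref{lemma:chernoff}), we have $\Pr(|S\cap M_i|<0.5\cdot\tfrac n3 q\mid G_0[B]=\Gamma)\le 2e^{-nq/36}$. Taking a union bound over the $n/3$ matchings, with probability at least $1-ne^{-nq/36}$ we get $|S|\ge \tfrac n3\cdot\tfrac{nq}{6}\ge 0.01n^2q$. This is the claimed bound in both cases.

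It remains to check that $ne^{-nq/36}\le e^{-\sqrt n}$, which amounts to verifying $nq\gg\sqrt n$. When $p<n^{-\xi}$, we have $q=\beta$, a constant, so this is trivial. When $p\ge n^{-\xi}$, we have $q=p^{2(r-2)}\ge n^{-2(r-2)\xi}$. Choosing $\xi=\xi(r)<1/(8r)$ ensures $nq\ge n^{3/4}$, which suffices. This is also where the freedom in choosing $\xi$ is spent. The only point needing care is the independence claim in the first step. In particular, one must confirm that the conditioning event $\{G_0[B]=\Gamma\}$ involves only edges inside $B$, and so it does not couple different edges of a matching.
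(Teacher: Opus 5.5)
Your proof is correct and follows the paper's argument essentially verbatim. You decompose $A_1\times A_2$ into $n/3$ perfect matchings, apply Claim~\ref{claim:alpha_f high frequency} with the Chernoff bound on each matching, take a union bound, and choose $\xi$ small enough that $np^{2(r-2)}\gg\sqrt{n}$. Your justification that the indicators $\mathbf{1}[|w_f|=1]$, $f\in M_i$, are \emph{mutually} independent given $G_0[B]=\Gamma$ is actually slightly more careful than the paper. It rests on the fact that $w_f$ depends only on edges in $V(f)\times\cP_B$, whereas the paper only asserts pairwise independence before invoking Chernoff.
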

\begin{proof}
    Let $M_1, \dots, M_{n/3}$ be a partition of $A_1\times A_2$ into edge-disjoint perfect matchings. For every $i \in [n/3]$, denote by $S_i$ the set of edges $f \in M_i$ with $\left|w_f\right| = 1$. We claim that, for every $i \in [n/3]$, with conditional probability at least $1 - e^{-n^{0.6}}$ we have
    \[
        |S_i| \ge \begin{cases}
            \frac{1}{2} p^{2(r-2)}\cdot   \frac{n}{3}, \quad &p \ge n^{-\xi}, \\
            \frac{1}{2} \beta  \cdot \frac{n}{3}, \quad & p < n^{-\xi}.
        \end{cases}
    \]

    Fix $i \in [n/3]$. As $M_i$ is a matching of size $n/3$, the events $\{\left|w_f\right| = 1\}$ where $f\in M_i$ are pairwise (conditionally) independent. Therefore, if $p \ge n^{-\xi}$, then by Claim \ref{claim:alpha_f high frequency} and Claim \ref{lemma:chernoff}, we have 
    \begin{align*}
        \Pr\left(|S_i| <  \frac{1}{2} p^{2(r-2)}  \cdot \frac{n}{3} \mid G_0[B] = \Gamma\right) &\le \Pr\left(\text{Bin}\left(\frac{n}{3}, p^{2(r-2)}\right) < \frac{1}{2} p^{2(r-2)}  \cdot \frac{n}{3}\right) \\&\le e^{-\Theta\left(\frac{n}{3} \cdot p^{2(r-2)}\right)} \le e^{-n^{0.6}},
    \end{align*}
    where in the last inequality we use the assumption that $\xi$ is small as a function of $r$. Similarly, if $p < n^{-\xi}$, then Claim \ref{claim:alpha_f high frequency} and Claim \ref{lemma:chernoff} imply that
    \begin{align*}
        \Pr\left(|S_i| <  \frac{1}{2}\beta  \cdot \frac{n}{3} \mid G_0[B] = \Gamma\right) \le \Pr\left(\text{Bin}\left(\frac{n}{3}, \beta\right) < \frac{1}{2}\beta  \cdot \frac{n}{3} \right) \le e^{-\Theta(n)} \le e^{-n^{0.6}}.
    \end{align*}
    The claim now follows by a simple union bound over the choices of $i \in [n/3]$.
\end{proof}

Finally, we bound the characteristic function of $X_r$. For this, denote by $\cS$ the event that the assertion of Claim \ref{claim:many good f high frequencies} holds, that is 
\[
        |S| \ge \begin{cases}
            0.01 \cdot n^2 p^{2(r-2)}, \quad &p \ge n^{-\xi}, \\
            0.01 \cdot \beta n^2, \quad &p < n^{-\xi}.
        \end{cases}
\]
Denoting by $\cS^*$ the event $\cS\cap \{G_0[B]=\Gamma\}$, applying Claim \ref{claim:many good f high frequencies}, and recalling \eqref{align:def of X tilde high freq}, we have 
\begin{align*}
    \left|\E\left[e^{it \Tilde{X} / \sigma_r}\mid G_0[B]=\Gamma\right]\right| &\le e^{-\sqrt{n}}+\left|\E\left[ \prod _{f\in A_1\times A_2}e^{it\left( w_f \cdot \mathbf{1}_{f \in G_0}\right)/\sigma_r} \mid \cS^*\right]  \right|.
\end{align*}
Conditioned on (the random variable) $(\Gamma_{0},\Gamma_1)\coloneqq (G_0\setminus (A_1\times A_2),G_1\setminus (A_1\times A_2))$ the weights $w_f$ are fixed for every $f\in A_1\times A_2$, and thereby the set $S$ is also fixed. In addition, conditioned on $(\Gamma_{0},\Gamma_{1})$, the random variables $\{\mathbf{1}_{f\in G_0}:f\in A_1\times A_2\}$ are mutually independent. We have
\begin{align*}
    \left\lvert \E\left[\prod_{f\in  A_1\times A_2}e^{it\left(w_f \cdot \mathbf{1}_{f \in G_0}\right)/\sigma_r}  \mid  \cS^*\right] \right\rvert &= \left\lvert \E\left[\E\left[\prod_{f\in  A_1\times A_2}e^{it\left(w_f \cdot \mathbf{1}_{f \in G_0}\right)/\sigma_r} \mid \Gamma_0, \Gamma_1 \right]  \mid  \cS^* \right] \right\rvert \\
    &\le  \E\left[\left\lvert\E\left[\prod_{f\in  A_1\times A_2}e^{it\left(w_f \cdot \mathbf{1}_{f \in G_0}\right)/\sigma_r} \mid \Gamma_0, \Gamma_1 \right] \right\rvert  \mid  \cS^* \right] \\
    &\le \E\left[\left\lvert \prod_{f\in  S} \E\left[e^{it\left(w_f \cdot \mathbf{1}_{f \in G_0}\right)/\sigma_r} \mid \Gamma_0, \Gamma_1 \right] \right\rvert  \mid  \cS^* \right].
\end{align*}
Therefore, it suffices to show that the above is bounded by $n^{-3K}$.

Assume that $(\Gamma_0,\Gamma_1)$ is fixed and that it satisfies $\cS^*$. Let $\cD$ be the event that for both $i\in[2]$ we have $G_i\setminus (A_1\times A_2) = \Gamma_i$. Claim \ref{claim:char-bounds} yields
\begin{align}\label{eq:bound-conditioned-on-good-edges}
    \left\lvert \prod_{f\in S}\E\left[e^{it\left(w_f \cdot \mathbf{1}_{f \in G_0}\right)/\sigma_r} \ \big\lvert \  \cD\right] \right\rvert 
    &\le \prod_{f\in S} \left(1-8p(1-p) \cdot \left\| \frac{t w_f}{2 \pi \sigma_r}\right\|^2\right)\le e^{-8p(1-p)\cdot \frac{t^2}{(2\pi \sigma_r)^2} \cdot |S|},
\end{align}
where in the last inequality we used the definition of $S$, which implies that for every $f \in S$, we have $\left|w_f\right| = 1$ and thus $\left|\frac{t  w_f}{2\pi \sigma_r}\right| \leq \frac{1}{2}$ since $t \le \pi \sigma_r$.

The proof is now concluded as follows. First, since $(\Gamma_0,\Gamma_1)$ satisfies $\cS^*$, we may assume the assertion of Claim \ref{claim:many good f high frequencies}. Then, if $p \ge n^{-\xi}$, we have
\begin{align}\label{eq:bound-conditioned-on-good-edges-plugged1}
    8p(1-p)\cdot \frac{t^2}{(2\pi \sigma_r)^2} \cdot |S| &\ge 0.04\cdot  n^2 p^{2(r-2)+1} \cdot t^2 / (2\pi \sigma_r)^2= \Omega\left(n^{2-2\gamma} p^{2(r-2) + 1}\right) \gg \sqrt{n},
\end{align}
where the second inequality holds as $t \ge n^{-\gamma} \cdot \sigma_r$ and the last inequality holds provided $\xi$ and $\gamma$ are sufficiently small.
On the other hand, if $p < n^{-\xi}$, then,
\begin{align}\label{eq:bound-conditioned-on-good-edges-plugged2}
    8p(1-p)\cdot \frac{t^2}{(2\pi \sigma_r)^2} \cdot |S| &\ge  0.01\cdot \beta n^2 p\cdot t^2 / (\pi \sigma_r)^2 \ge  \Omega_{\lambda}(n^{2-2\gamma} p) \gg \sqrt{n},
\end{align}
where the second inequality follows as $t \ge n^{-\gamma} \cdot \sigma_r$ and the last inequality follows provided that $\gamma$ is sufficiently small.
Plugging \eqref{eq:bound-conditioned-on-good-edges-plugged1} and \eqref{eq:bound-conditioned-on-good-edges-plugged2} in \eqref{eq:bound-conditioned-on-good-edges} concludes the proof of Lemma \ref{lemma:characteristic function integrable} for high frequencies.

\section{Proof of Lemma \ref{lemma:characteristic function integrable - low freq and dense p}}\label{section:low frequencies}
In this section, we prove Lemma \ref{lemma:characteristic function integrable - low freq and dense p}. Fix positive constants $c < \frac{3}{2r}$ and a sufficiently small $\epsilon$. We show that for every constant $K > 0$ and for every $n^{-c} \le p \le \frac{1}{2}$, the following holds:
\begin{align}\label{align:bound of char function low frequencies}
    \left\lvert \E\left[e^{itX_r/\sigma_r}\right]\right \rvert \le n^{-K}\leq t^{-K/\epsilon} \quad \text{for every} \quad n^{\epsilon}\leq t \leq n^{1/2+\epsilon}p.
\end{align} 
We note that, as $K$ is allowed to depend on $\epsilon$, the above indeed suffices.

Similarly to the previous sections, we bound the characteristic function of $X_r$, by a characteristic function of a sum of weighted and independent Bernoulli random variables. As we are dealing with `low' frequencies, we must require that the weights are large. For this, we slightly alter our proof scheme. Indeed, we will use a partition of $[n]$ to three sets $\cP = (A_1, A_2, Z)$ with $|A_1| = |A_2|$ and $|Z|=\Theta(n)$. In this case we employ Corollary \ref{cor: the decoupling lemma new} with the partition $\cP$ and with $\cE=\{A_1 \times A_2\}$. This reduces the problem to bounding the characteristic function of some signed count of cliques in $(G_0,G_1)$ where $G_0\sim G_{n,p}$ and $G_1\sim (A_1\times A_2)_p$.

As in the proof of Lemma \ref{lemma:characteristic function integrable} for the medium frequency range (see Section \ref{section:main lemma for medium frequencies}), we split the signed sum into two --- the sum of singed cliques that use \emph{exactly} one edge from $A_1 \times A_2$, and its complement. 
We then essentially reduce the problem to bounding the characteristic function of the `first' sum and the moments of the `second' sum, this will be done as in \eqref{align:splitting to main and error}.

Let us remark that the bound of the characteristic function follows an approximation by weighted and independent Bernoulli random variables. Indeed, we assign to each $f \in A_1 \times A_2$, a weight $w_f$ in the following way: $w_f=0$ for $f\not\in G_0\triangle G_1$ and for the complementary case we let $|w_f|$ be the number of cliques lying on $f$ with additional vertices taken from $Z$.
The sign of $w_f$ is defined to be positive if  $f \in G_0$ and negative otherwise. Then, using standard concentration inequalities we control the number of edges $f$ with a `typical' edge weight. An important point here is that a typical weight is of the order of the expectation of the number of extensions of $f$ to an $r$-clique rather than the standard deviation as was exploited earlier.

The method described above requires $t$ to be bounded from below. On the other hand, the application of the method of moments in the complementary case requires an upper bound on $t$. Both bounds on $t$ depend on $|A_1|$ ($=|A_2|$). It remains to show that for every $n^{\epsilon}\leq t\leq n^{1/2+\epsilon}p$ there is a choice of $a$ such that for arbitrary fixed sets $A_1$ and $A_2$, each of size $a$, both of the above bounds are achieved.

\subsection{Proof of \eqref{align:bound of char function low frequencies}}
Let $n^{3/4-\epsilon}p^{-1/2}\leq a \leq n^{1-\frac{\epsilon}{2}} $ be an integer, which exists as $p\gg n^{-1/2}$, and let $\cP=(A_1, A_2, Z)$ be an $(a, 0, r-2, r)$-partition of $[n]$. Recall that $\cR =\cR(\cP)$ is the set of copies of $K_r$ in $K_n$ with at least one vertex in both $A_1$ and $A_2$, and that furthermore, $\cR_{2}\coloneqq \cR_{2, 0} \subseteq \cR$ is the set of copies of $K_r$ in $\cR$ with \emph{exactly} two vertices in $\cP_A =A_1 \cup A_2$. For convenience, set $\cR' \coloneqq \cR \setminus \cR_{2}$.

Letting $G_0 \sim G_{n, p}$ and $G_1 \sim \left(A_1 \times A_2\right)_p$ be two independent random graphs, Corollary \ref{cor: the decoupling lemma new} applied with the $\cE=\{A_1 \times A_2\}$ yields that
\begin{align}\label{align:after decoupling low freq}
    \left|\E\left[e^{itX_r / \sigma_r}\right] \right|^2 \le \left|\E\left[e^{it(\Tilde{X} + \Tilde{Y})/\sigma_r}\right]\right|,
\end{align}
where
\[
    \Tilde{X} = \sum_{F \in \cR_{2}} S_{G_0, G_1}(F) \quad \text{and} \quad \Tilde{Y} = \sum_{F \in \cR'} S_{G_0, G_1}(F).
\]
Thus, to prove 
\eqref{align:bound of char function low frequencies} in the required frequency range, it suffices to show the following:
\begin{align}\label{align:bound of char function low frequencies after dec}
    \left|\E\left[e^{it(\Tilde{X} + \Tilde{Y})/\sigma_r}\right]\right| \le n^{-2K} \quad \text{for every} \quad \log n \cdot \frac{n}{a} \le t \le \frac{n^{2-\epsilon}}{a^2}.
\end{align}
Indeed, by varying over all integers $n^{3/4-\epsilon}p^{-1/2}\leq a\leq n^{1-\frac{\epsilon}{2}}$ we obtain the required inequality:
\begin{align*}
    \left|\E\left[e^{it(\Tilde{X} + \Tilde{Y})/\sigma_r}\right]\right| \le n^{-2K} \quad \text{for every} \quad n^{\epsilon}\le t \leq n^{1/2+\epsilon}p.
\end{align*}

To this end, let $L = L(K, \epsilon) > 0$ be a sufficiently large integer and note that by Taylor's theorem (see e.g.\ \cite[Lemma 3.3.19]{Dur2010}), for every $t > 0$, we have the following:
\begin{align}\label{align:splitting to main and error11}
    \left|\E\left[e^{it(\tilde{X} + \tilde{Y})/\sigma_r}\right]\right| \le \left|\E\left[e^{it\Tilde{X}/\sigma_r} \cdot P_{2L-1}\left(\frac{it \Tilde{Y}}{\sigma_r}\right)\right]\right| + O_L\left(\E\left[\left(\frac{t  \Tilde{Y}}{\sigma_r}\right)^{2L}\right]\right),
\end{align}
where $P_{2L-1}$ is the Taylor polynomial of $e^x$ of order $2L-1$.
In the rest of the section, we prove that each of the above summands is bounded by $n^{-3K}$. 

We start with the first summand in \eqref{align:splitting to main and error11}. 
Similarly to the proof of the first item of Lemma~\ref{lemma:main lemma for decoupling} (see Section \ref{section:decoupling}), it suffices to prove the claim below. The reduction here is analogous to the reduction in the proof of the first item in Lemma \ref{lemma:main lemma for decoupling} to \eqref{align:bound of conditional characteristic function}. For simplicity of presentation, we omit the proof of this reduction.

\begin{claim}\label{claim:decoupling bound low freq}
   Suppose that $D > 2Lr + K$ is a constant and that $\ell \le 2L-1$ is an integer. Then, for every $\log n \cdot \frac{n}{a} \le t \le  \frac{n^{2-\epsilon}}{a^{2}}$ and every $F_1, \dots, F_{\ell} \in \cR'$, we have    
    \[
        \left|\E\left[e^{it\Tilde{X}/\sigma_r} \mid \left\lvert \prod_{j=1}^{\ell} S_{G_0, G_1}(F_j) \right\rvert = 1\right]\right| \le n^{-D}.
    \]    
\end{claim}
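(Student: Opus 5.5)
The plan mirrors the proof of the first item of Lemma~\ref{lemma:main lemma for decoupling} in Section~\ref{section:decoupling}, with weights now of expectation-size rather than fluctuation-size. Write $\cD$ for the event $\left\lvert\prod_{j=1}^{\ell}S_{G_0,G_1}(F_j)\right\rvert=1$ and $F\coloneqq\bigcup_j F_j$, so $v(F)\le 2Lr$. For $f\in A_1\times A_2$, let $N_f$ be the number of $K_r$-extensions of $f$ with all remaining $r-2$ vertices in $Z$, in the graph $G_0[Z\cup f]$ (edges of $G_0$ among $Z$ and between $Z$ and the endpoints of $f$). Since $\cE=\{A_1\times A_2\}$, a copy $F'\in\cR_2$ has exactly one edge $f$ in $A_1\times A_2$. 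By Claim~\ref{claim: sign zero new}, $S_{G_0,G_1}(F')$ equals $+1$ if $f\in G_0\setminus G_1$, equals $-1$ if $f\in G_1\setminus G_0$, and is $0$ otherwise, provided $F'\setminus f\subseteq G_0$. Hence
\[
\Tilde{X}=\sum_{f\in A_1\times A_2}N_f\bigl(\mathbf{1}_{f\in G_0}-\mathbf{1}_{f\in G_1}\bigr).
\]
Here $N_f$ depends only on edges outside $A_1\times A_2$, and the variables $\mathbf{1}_{f\in G_0}-\mathbf{1}_{f\in G_1}$ are independent across $f$ and independent of everything else.

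\textbf{Step 1 (typical weights).} Condition on $\cD$. By Claim~\ref{claim: sign zero new}, $\cD$ only fixes the edges of $F$ inside $A_1\times A_2$ (specifying which of $G_0,G_1$ contains them) and forces $F\subseteq G_0\cup G_1$. We discard the $O(n)$ edges $f$ meeting $V(F)$. For the remaining edges, apply Remark~\ref{remark: lower bound on extensions} (with $k=2$, extensions into $Z\setminus V(F)$) and Claim~\ref{claim:upper bound on extensions}, on the graph $G_0$ conditioned on $F\subseteq G_0$, which only helps the lower bound. This holds because $n^{r-2}p^{\binom r2-1}\ge n^{r-2-c(\binom r2-1)}=n^{\Omega(1)}$ when $c<\frac{3}{2r}<\frac{r-2}{\binom r2-1}$. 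With probability $1-n^{-2D}$, every such $f$ then satisfies $c_0\mu_f\le N_f\le r^2\mu_f$, where $\mu_f\coloneqq n^{r-2}p^{\binom r2-1}$. Upper tails for the edges near $F$ are harmless, since we simply drop those $f$.

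\textbf{Step 2 (Bernoulli bound).} Condition on all edges outside $A_1\times A_2$ and on the edges of $F$ that are fixed. Each remaining $f$ contributes a factor $\left\lvert\E e^{itN_f\xi_f/\sigma_r}\right\rvert$, where $\xi_f\in\{0,\pm1\}$ with $\Pr(\xi_f=\pm1)=p(1-p)$. By the same argument as Claim~\ref{claim:char-bounds}, this factor is at most $1-\Omega(p)\left\|\frac{tN_f}{2\pi\sigma_r}\right\|^2$. The variance is $\sigma_r\asymp n^{r-1}p^{\binom r2-1/2}$ in this regime, since $p\ge n^{-c}$ lies above the $2$-density. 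Hence
\[
\frac{t\mu_f}{\sigma_r}\asymp\frac{t}{np^{1/2}}\le\frac{n^{1-\epsilon}}{a^2p^{1/2}}\le n^{-1/2+\epsilon}\ll1,
\]
using $a\ge n^{3/4-\epsilon}p^{-1/2}$. So the norm equals the value itself. The product over the $\Omega(a^2)$ good edges is then at most
\[
\exp\Bigl(-\Omega\bigl(a^2p\,t^2/(np)\bigr)\Bigr)=\exp\bigl(-\Omega(a^2t^2/n)\bigr),
\]
since $\mu_f^2/\sigma_r^2\asymp 1/(n^2p)$. This is $\exp(-\Omega(n(\log n)^2))$ once $t\ge\log n\cdot n/a$, which is much smaller than $n^{-D}$.

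\textbf{Main obstacle.} The main difficulty is the bookkeeping of the conditioning on $\cD$. The edges of $F$ in $A_1\times A_2$ are fixed, which is handled by excluding those $f$. The requirement $F\subseteq G_0\cup G_1$ affects the extension counts of $f$ that touch $V(F)$ only, and these are again excluded. For every other $f$ we use that $\cD$ is independent of the relevant edges, as in \eqref{eq: D to FsubsetG}. I expect no matching decomposition to be needed, because the weights $N_f$ are fixed after the conditioning and only the concentration of every single weight (via a union bound) is required.
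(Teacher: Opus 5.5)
Your proof is correct in substance, but it certifies the good edges by a different route from the paper.

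\textbf{The paper's route.} It conditions on $G_0[\cP_Z]$ and records the $(r-2)$-cliques of $Z$ in a hypergraph $\cH$ with controlled size and codegrees. It then computes the second and fourth conditional moments of $w_f$ over the edges from the endpoints of $f$ to $Z$, and uses Markov and Paley--Zygmund to show each $f$ is good with probability at least $r^{-14r}$. Finally, a perfect-matching decomposition of $A_1\times A_2$ with Chernoff gives $\Omega(a^2)$ good edges. This reuses the template of Section~\ref{section:decoupling}, where weights are of fluctuation size and cannot be concentrated edge by edge.

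\textbf{Your route.} You observe that here $w_f=N_f$ is an unsigned extension count with mean $\mu_f=n^{r-2}p^{\binom r2-1}=n^{\Omega(1)}$. Hence Theorem~\ref{claim: lower bound on extensions} (via Remark~\ref{remark: lower bound on extensions}) and Claim~\ref{claim:upper bound on extensions} give $c_0\mu_f\le N_f\le r^2\mu_f$ for all relevant $f$ at once, by a single union bound. This removes the moment computations, the codegree conditions on $\cH$, and the matching/Chernoff step. The price is reliance on per-edge concentration, which is available only because the weights in this section are expectation-sized. The remaining steps essentially match the paper: the $\{0,\pm1\}$-valued $\xi_f$, and the check $t\mu_f/\sigma_r\le n^{-1/2+\epsilon}$ ensuring the norm is the value itself.

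\textbf{Small corrections.}
\begin{itemize}
\item Your final exponent is miscomputed. Since $\mu_f^2/\sigma_r^2\asymp 1/(n^2p)$, the product is $\exp(-\Omega(a^2pt^2/(n^2p)))=\exp(-\Omega(a^2t^2/n^2))$. For $t\ge\log n\cdot n/a$ this is $\exp(-\Omega((\log n)^2))$, not $\exp(-\Omega(n(\log n)^2))$. It is still at most $n^{-D}$, so the conclusion stands.
\item Conditioning on $\cD$ forces $F\setminus(A_1\times A_2)\subseteq G_0$, and its effect is not confined to edges meeting $V(F)$. An extension of an edge $f$ disjoint from $V(F)$ may pass through $V(F)\cap Z$ and use edges of $F$, so the upper tail of $N_f$ is affected for every $f$. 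The simplest repair: $\Pr(\cD)\ge(p(1-p))^{e(F)}\ge n^{-O_L(1)}$, by putting all edges of $F$ in $G_0$ and keeping those in $A_1\times A_2$ out of $G_1$. Then invoke the extension bounds with a large enough constant $C$, so that $\Pr(B\mid\cD)\le\Pr(B)/\Pr(\cD)\le n^{-2D}$ for the bad event $B$. This handles both tails at once.
\item The inequality $\frac{3}{2r}\le\frac{2}{r+1}=\frac{r-2}{\binom r2-1}$ holds with equality at $r=3$, not strictly. The strict assumption $c<\frac{3}{2r}$ already gives what you need.
\end{itemize}
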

\begin{proof}
    Throughout the proof we write $\Pr^*$ to denote the probability measure of $(G_0,G_1)$ conditioned on the event $\left\lvert \prod_{j=1}^{\ell} S_{G_0, G_1}(F_j) \right\rvert = 1$. Further, we write $\E^*$ for the corresponding (conditional) expectation operator.
    
    Let $\cH$ be the $(r-2)$-uniform hypergraph with vertex set $Z$ and edge set consisting of copies of $(r-2)$-cliques in $G_0$, that is $S \in \cH$ if and only if $G_0[S] \cong K_{r-2}$. Denote by $\cE^*$ the event that the following holds:
    \begin{enumerate}[label=(F\arabic*)]
        \item\label{item:sec10-typical-1} $\frac{1}{2r^r} n^{r-2} p^{\binom{r-2}{2}} \le e(\cH) \le r^2 n^{r-2} p^{\binom{r-2}{2}}$, and
        \item\label{item:sec10-typical-2} For every $j \in [r-2]$, denoting by $\Delta_j(\cH)$ the $j$-th maximum degree of $\cH$, we have $\Delta_j(\cH) \le \max\left\{r^2 n^{r-2-j} p^{\binom{r-2}{2} - \binom{j}{2}}, (\log n)^{r+1}\right\}$.
    \end{enumerate}
    We remark that $\cE^*$ is determined by $G_0[\cP_Z]$ and note that for every $j \in [r-2]$ we have $n^{r-2} p^{\binom{r-2}{2}} \gg \max\left\{n^{r-2-j} p^{\binom{r-2}{2} - \binom{j}{2}}, (\log n)^{r+1}\right\}$. Hence, by applying Theorem \ref{claim: lower bound on extensions} and Claim~\ref{claim:upper bound on extensions} for extensions with vertices outside $V(F)$, one can easily verify that $\Pr^*(\cE^*) \ge 1 - n^{-2D}$. 
    \[
        \left|\E^*\left[e^{it\Tilde{X}/\sigma_r}\right]\right| \le n^{-2D} +  \E^* \left|\E^*\left[e^{it\Tilde{X}/\sigma_r} \mid G_0[\cP_{Z}]\right]  \mid  \cE^*\right|,
    \]    
    and it suffices to bound the latter term by $n^{-2D}$.

    Fix $\Gamma \subseteq K_n[\cP_Z]$ satisfying the event $\cE^*$ and let $\cH=\cH(\Gamma)$ be the hypergraph as defined in the event.
    To conclude the proof of the claim it is enough to prove the following:
    \[
    \left|\E^*\left[e^{it\Tilde{X}/\sigma_r} \mid G_0[\cP_Z] = \Gamma \right]\right|\leq n^{-2D}.
    \]
    As in previous sections, for every $f \in A_1 \times A_2$, set    
     \[
    w_f \coloneqq \sum_{\substack{F \in \cR_{2} \\ \text{s.t. } f \in F}} S_{G_0 \cup \{f\}, G_1 \setminus \{f\}}(F),
    \]    
    and note that by Definition \ref{definition:sign} we have 
    \begin{align*}
        \Tilde{X} =\sum_{f\in A_1\times A_2}\sum_{\substack{F \in \cR_{2} \\ \text{s.t. } f \in F}} S_{G_0 \cup f, G_1 \setminus f}(F) \cdot \bigl(\mathbf{1}_{f \in G_0}-\mathbf{1}_{f \in G_1}\bigl)=\sum_{f \in A_1 \times A_2} w_f \cdot \bigl(\mathbf{1}_{f \in G_0}-\mathbf{1}_{f \in G_1}\bigl).
    \end{align*}
    Note that, for every edge $f \in \cP_A \times \cP_Z$ that do not share vertices with $F \coloneqq \bigcup_{i=1}^{\ell} F_i$, and $i\in \{0,1\}$ the event $\{f \in G_i\}$ is independent from the events $\cE^*$ and \{$G_0[\cP_Z] = \Gamma\}$. Hence, for every such edge $f \in A_1 \times A_2$ and every $j \in \{2, 4\}$, we have
    \begin{equation}\label{eq:sec10-nd/4th-moment}
        \E^*\left[w_f^j \mid G_0[\cP_Z] = \Gamma\right] =\sum_{H_1, \dots, H_j \in \cH} p^{2v(H_1 \cup \dots\cup H_j)}.
    \end{equation}

    The following claim provides tight bounds to the quantity in \eqref{eq:sec10-nd/4th-moment}.
    \begin{claim}\label{claim:bound on Delta_d}
        For every $j \in \{2, 4\}$, we have
         \[
            \left(e(\cH) p^{2(r-2)}\right)^j \le \E^*\left[w_f^j \mid G_0[\cP_Z] = \Gamma\right] \le \left(r^{3r} \cdot e(\cH) p^{2(r-2)}\right)^j.
        \] 
    \end{claim}
    \begin{proof}
        We first show that, for every $d \in \{0, \ldots, r-3\}$, we have
        \begin{equation}\label{eq:max-degree-edges}
            \Delta_{d}(\cH) p^{2(r-2-d)} \le e(\cH) p^{2(r-2)},
        \end{equation}
        where we abuse notation and write $\Delta_0(\cH)$ to denote $e(\cH)$.
        For $d=0$, by definition we have equality, and so it suffices to prove the claim for $d > 0$.
        Fix $d>0$ and assume that $\Delta_d(\cH) \le (\log n)^{r+1}$. Then, $\Delta_{d}(\cH) p^{2(r-2-d)} \le (\log n)^{r+1} p^{2}$ and hence it suffices to show that $(\log n)^{r+1} p^{2} \ll e(\cH) p^{2(r-2)}$. Then, as we condition on \ref{item:sec10-typical-1}, this is equivalent to showing that $(\log n)^{r+1} \ll n^{r-2} p^{\binom{r-2}{2} +2r - 6}$.
        Since $p \ge n^{-c}$ and $c < \frac{2}{3r}$ is a constant,
        \[
           n^{r-2} p^{\binom{r-2}{2} +2r - 6} \gg n^{r-2-\frac{2}{3r}\left(\frac{r^2-5r+6+4r-12}{2}\right)}  = n^{\frac{2r^2-5r+6}{3r}} \gg (\log n)^{r+1}.
        \]
        
        Next, assume that $\Delta_d(\cH) > (\log n)^{r+1}$. Let $f(x) \coloneqq n^{r-2-x} p^{\binom{r-2}{2}-\binom{x}{2} + 2(r-2-x)}$, and note that by \ref{item:sec10-typical-1} and \ref{item:sec10-typical-2}, in this case, $f(0)=\Theta(e(\cH)p^{2(r-2)})$ and $f(d)=\Omega\left(\Delta_d(\cH) p^{2(r-2-x)}\right)$. Moreover, note that $f$ is convex and thus, $f(d) \le \max\{f(1),f(r-3)\}$. Thus, to conclude the proof of \eqref{eq:max-degree-edges}, it is enough to show that $f(0)\gg \max\{f(1),f(r-3)\}$.

        Indeed, $f(0)\gg f(1)$ is equivalent to $p\gg n^{-1/2}$ which holds as $p\ge n^{-c}$ with $c<\frac{3}{2r}\le \frac{1}{2}$. Further, $f(0)\gg f(r-1)=np^{r-1}$ is equivalent to $n^{r-3} p^{\binom{r-2}{2} + r - 3} \gg 1$ which we verify next. Indeed, since $p \ge n^{-c} \gg n^{-\frac{3}{2r}}$, we have
        \begin{align*}
            n^{r-3} p^{\binom{r-2}{2} + r - 3} \gg n^{r-3 - \frac{3}{2r}\left(\binom{r-2}{2} + r - 3\right)} = n^{\frac{r-3}{4}} \ge 1.
        \end{align*}
        
        Finally, for every $j \in \{2, 4\}$
    \begin{align*}
        \left(e(\cH) p^{2(r-2)}\right)^j \le\sum_{H_1, \dots, H_j \in \cH} p^{2v(H_1 \cup \dots\cup H_j)} \le e(\cH) p^{2(r-2)} \left(\sum_{d=0}^{r-3} \binom{j \cdot r}{d} \Delta_d(\cH) p^{2(r-2-d)}\right)^{j}.
    \end{align*}  
    This, combined with \eqref{eq:sec10-nd/4th-moment} and \ref{eq:max-degree-edges}, as well as the assumption that $j\leq r$, completes the proof of the claim.
    \end{proof}

    By Markov's inequality and Claim \ref{claim:bound on Delta_d},
    \[
        \Pr^*\left(|w_f| > r^{10r} e(\cH) p^{2(r-2)} \mid G_0[\cP_Z] = \Gamma\right) \le \frac{\E^*[|w_f|^2\mid G_0[\cP_Z]=\Gamma]}{\left(r^{10r} e(\cH) p^{2(r-2)}\right)^2}\leq \frac{1}{r^{14r}}.
    \]  
    In addition, by the Paley-Zygmund inequality \ref{claim: Paley-Zygmund}, we have        
    \[
        \Pr^*\left(|w_f| \geq \frac{1}{2}  e(\cH) p^{2(r-2)} \mid G_0[\cP_Z] = \Gamma\right) \ge \left(1-\frac{1}{4}\right)^2 \frac{\left(\E^*[|w_f|^2\mid G_0[\cP_Z]=\Gamma]\right)^2}{\E^*[|w_f|^4\mid G_0[\cP_Z]=\Gamma]}\geq \frac{1}{2r^{12r}}.
    \]  
    Therefore,
    \begin{align}\label{align:alpha_f good values low frequencies}
        \Pr^*\left(\frac{1}{2}e(\cH) p^{2(r-2)} \le |w_f| \le r^{10r} e(\cH) p^{2(r-2)} \mid G_0[\cP_Z] = \Gamma\right) \ge \frac{1}{2r^{12r}} - \frac{1}{r^{14r}}\geq \frac{1}{r^{14r}}.
    \end{align}

    Denote by $S\subseteq A_1\times A_2$ the set of edges $f$ satisfying $\frac{1}{2} e(\cH) p^{2(r-2)} \le |w_f| \le r^{10r} e(\cH) p^{2(r-2)}$ and also $f \cap V(F) = \emptyset$. In addition, denote by $\cS$ the event that $|S| \ge \frac{a^2}{2r^{14r}}$. Given the events $\{G_0[\cP_Z] = \Gamma\}$ and $\cE^*$, the weights corresponding to any pair of vertex-disjoint edges $f,g\in A_1\times A_2$ are independent. Indeed, subject to this condition, the random variables $w_f$ and $w_g$ depend only on the edges between the endpoints of $f$ and $\cP_Z$, and between the endpoints of $g$ and $\cP_Z$, respectively. Since $f$ and $g$ are vertex-disjoint, these sets of underlying edges are disjoint, which implies that $w_f$ and $w_g$ are conditionally independent.
    Thus, similarly to the proof of Claim \ref{claim:number of good f}, by decomposing $A_1\times A_2$ into perfect matchings, one can show that with probability at least $1 - e^{-\Theta(a)}\geq 1-n^{-3D}$, the event $\cS$ is satisfied.
    Hence,    
\begin{align*}
    \left|\E^*\left[e^{it\Tilde{X}/\sigma_r} \mid G_0[\cP_Z] = \Gamma\right]\right| \le  n^{-3D} + \left|\E^*\left[e^{it\Tilde{X}/\sigma_r} \mid \cS \wedge \{G_0[\cP_Z] = \Gamma\}\right]\right|,
\end{align*}
while setting the random variables $W_0=G_0\cap ((A_1\times A_2)^{c}\cup F)$ and $W_1=G_1\cap F$, the second term above equals
\begin{align*}
    &\left\lvert \E^*\left[\E^*\left[e^{it \sum_{f \in A_1\times A_2} w_f \cdot \left(\mathbf{1}_{f \in G_0} - \mathbf{1}_{f \in G_1}\right)} \mid W_0,W_1\right] \mid \cS \wedge  \{G_0[\cP_Z] = \Gamma\}\right]\right\rvert\\
    &\le \E^*\left|\E^*\left[e^{it \sum_{f \in S} w_f \cdot \left(\mathbf{1}_{f \in G_0} - \mathbf{1}_{f \in G_1}\right)} \mid W_0,W_1]\right| \mid \cS \wedge  \{G_0[\cP_Z] = \Gamma\} \right] \\
    &\le \E^*\left|\E^*\left[e^{it \sum_{f \in S} w_f \cdot \mathbf{1}_{f \in G_0}} \mid W_0,W_1]\right| \mid \cS \wedge  \{G_0[\cP_Z] = \Gamma\} \right].
\end{align*}

Note that for $f\in S$, the random variables $\mathbf{1}_{f\in G_0}$ are pairwise independent in the probability measure $\Pr^*$ conditioned on $\cS \wedge \{G_0[\cP_{Z}]=\Gamma\}$ and any assignment for $(W_0,W_1)$. Further, by the assumption that
\[
    t \leq n^{1/2+\epsilon}p \ll  n \sqrt{p} = \frac{\sigma_r}{n^{r-2} p^{\binom{r}{2} - 1}},
\]
we have $\left\|\frac{w_f\cdot t}{\sigma_r}\right\|=\frac{w_f\cdot t}{\sigma_r}$ for every $f \in S$. Thus, similarly to the last part of Section~\ref{sec:high-freq}, Claim \ref{claim:char-bounds} gives
\begin{align*}
    \left|\E^*\left[e^{it\Tilde{X}/\sigma_r} \mid G_0[\cP_Z] = \Gamma\right]\right| &\le \exp\left(-8p(1-p)\cdot \left(\frac{e(\cH) p^{2(r-2)} \cdot t}{4\pi\sigma_r}\right)^2\cdot \frac{a^2}{2 r^{14r}} \right).
\end{align*}
The proof is concluded by using the assumption that $t\geq \log n\cdot \frac{n}{a}$ and $e(\cH) \ge \frac{1}{2r^r} n^{r-2} p^{\binom{r-2}{2}}$, as we then obtain 
\begin{align*}
    \left|\E^*\left[e^{it\Tilde{X}/\sigma_r} \mid G_0[\cP_Z] = \Gamma\right]\right| &\le \exp\left(-\Omega\left((\log n)^2\right)\right) \leq n^{-3D}.\qedhere
\end{align*}
\end{proof}

The following claim shows that the second term in \eqref{align:splitting to main and error11} is bounded by $n^{-3K}$.

\begin{claim}\label{claim:high moments bound low freq}
    Suppose that $t \le  \frac{n^{2-\epsilon}}{a^2}$. Then, for a sufficiently large $L = L(K, \delta)$, we have
    \[
        \E\left[\left(\frac{t \Tilde{Y}}{\sigma_r}\right)^{2L}\right] \le n^{-3K}.
    \]    
\end{claim}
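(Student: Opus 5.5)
Since $t\le n^{2-\epsilon}/a^2$, it suffices to show that for every large enough $L$
\[
\E\bigl[\Tilde{Y}^{2L}\bigr]\le n^{C_r}\left(\frac{a^4\,\sigma_r^2}{n^{4}}\right)^{L}
\]
for a constant $C_r$ independent of $L$. Granting this,
\[
\E\bigl[(t\Tilde{Y}/\sigma_r)^{2L}\bigr]\le n^{C_r}\bigl(n^{2-\epsilon}a^{-2}\bigr)^{2L}\bigl(a^4n^{-4}\bigr)^{L}=n^{C_r-2\epsilon L},
\]
which is at most $n^{-3K}$ once $L$ is large in terms of $K$ and $\epsilon$.

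To bound the moment, repeat the reduction of Section~\ref{section:high moments}. Split $\Tilde{Y}=\sum_{i=3}^{r}\Tilde{Y}_{i,0}$ according to the number $i\ge 3$ of vertices in $\cP_A$, and pay a factor $r^{2L}$ as in \eqref{align:Y high moment}. By Claim~\ref{claim:sufficient condition for sign zero in expectation new} with $\cE=\{A_1\times A_2\}$, a tuple $(F_1,\dots,F_{2L})$ contributes only if every vertex of $\cP_A$ in its union lies in at least two of the $F_\ell$. Together with the coupling $G_0\cup G_1\subseteq G_{n,2p}$, this gives
\[
\E\bigl[\Tilde{Y}_{i,0}^{2L}\bigr]\le O_L\Bigl(\E\bigl|T^{i,0}_{2L}(G_{n,p})\bigr|\Bigr).
\]
Count tuples by $n_A$ and $n_Z$. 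By the argument of Observation~\ref{claim:number of vertices in F}, $n_A\le (i-1)\cdot 2L$. The number of tuples is then at most $O_L(a^{n_A}n^{n_Z})$, times $p$ raised to the number of edges of the union.

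For the edges, use Claim~\ref{claim:number of edges in F with D} with $s=r$ and the partition $W_1=\cP_A$, $W_2=\cP_Z$; each clique has at least $2$ shared vertices in $\cP_A$. For the $\cP_Z$-part, proceed greedily clique by clique as in the proof of Lemma~\ref{cor:upper bound for copies on f}. A clique adding $v$ new $\cP_Z$-vertices contributes a factor
\[
n^{v}p^{\binom{r}{2}-\binom{r-v}{2}}\le \max\Bigl\{1,\;n^{r-i}p^{\binom{r}{2}-\binom{i}{2}}\Bigr\}
\]
by convexity. The $\cP_A$-part, with the constraint that every $\cP_A$-vertex is shared, should then contribute at most about $(a^{i}p^{\binom{i}{2}})^{L}$ up to $n^{O(1)}$, the extremal configuration being $L$ pairs of identical copies on $\cP_A$. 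This gives
\[
\E\bigl|T^{i,0}_{2L}\bigr|\le n^{C_r}\Bigl(a^{i}\,n^{2(r-i)}p^{2\binom{r}{2}-\binom{i}{2}}\Bigr)^{L}.
\]

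Finally compare with $a^4\sigma_r^2/n^4$, using $\sigma_r^2=\Theta\bigl(n^{2r-2}p^{2\binom{r}{2}-1}\bigr)$ in this regime. The required inequality becomes
\[
a^{i-4}\,n^{2-2i}\,p^{1-\binom{i}{2}}\le 1 .
\]
For $i=3$ this reads $a^{-1}n^{-4}p^{-2}\le1$, which is trivial. For larger $i$ write it as $(a/n)^{i-4}\,n^{-i-2}\,p^{1-\binom{i}{2}}$. Since $p\ge n^{-c}$ with $c<3/(2r)$, we have $p^{1-\binom i2}\le n^{c\binom i2}$, and $c\binom{i}{2}<i+2$ for $i\le r$, so the inequality holds.

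The main obstacle is making the edge lower bound rigorous when $\cP_A$-vertices are shared and the $\cP_Z$-parts overlap in complicated ways, that is, an analogue of Lemma~\ref{claim:number of edges in F} in the case $j=0$. I would attempt it by the degree-counting argument of Claim~\ref{claim:number of edges in F with D} restricted to $\cP_A$, and handle the $\cP_Z$ extension factors greedily. The slack $n^{\epsilon}$ per moment step absorbs the polynomial losses.
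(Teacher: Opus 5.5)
\textbf{Verdict.} Your target $\E[\Tilde{Y}^{2L}]\lesssim (a^4\sigma_r^2/n^4)^L$ is the paper's, but the reduction before your counting is false, and your decomposition also undercounts edges in part of the regime. Both are genuine gaps.

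\textbf{The vanishing criterion is wrong.} With $\cE=\{\{1,2\}\}$, Claim~\ref{claim:sufficient condition for sign zero in expectation new} only kills a tuple in which some $F_s$ has $V(F_s)\cap A_1$ or $V(F_s)\cap A_2$ disjoint from $\bigcup_{t\neq s}V(F_t)$. So a tuple survives as soon as every $F_s$ shares at least one vertex of $A_1$ and one of $A_2$ with the other cliques. This per-clique condition is what the paper uses in this section. (The wording of Definition~\ref{def: count of good rainbow copies} matches it only when each clique has one vertex per part.) It does not require every $\cP_A$-vertex to be covered twice, and for $i\ge 3$ this matters.

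Take $u_1,u_2,u_3\in A_1$ and $w\in A_2$, and let $F_1\supseteq\{u_1,u_2,w\}$ and $F_2\supseteq\{u_1,u_3,w\}$ have disjoint $\cP_Z$-parts. Writing $x_j=\mathbf{1}[u_jw\in G_0]$ and $y_j=\mathbf{1}[u_jw\in G_1]$,
\[
\E\bigl[S_{G_0,G_1}(F_1)S_{G_0,G_1}(F_2)\bigr]=p^{2\binom{r}{2}-4}\,\E\bigl[(x_1x_2-y_1y_2)(x_1x_3-y_1y_3)\bigr]=2(1-p)\,p^{2\binom{r}{2}-1}\neq 0,
\]
although $u_2$ and $u_3$ each lie in only one clique.

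There are $\Theta(a^{4L}n^{(2r-6)L})$ tuples built from $L$ vertex-disjoint such pairs, each with positive expectation. Together they give order $(a^4n^{2r-6}p^{2\binom{r}{2}-1})^L$. That is exactly the target, and it exceeds your claimed bound for $i=3$ by $(ap^2)^L$, where $ap^2\ge n^{3/4-\epsilon}p^{3/2}$ is polynomially large here. So the configurations you discard are the dominant ones.

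Your final verification hides this through an arithmetic slip. The ratio to $a^4\sigma_r^2/n^4$ is $a^{i-4}n^{6-2i}p^{1-\binom{i}{2}}$, since the $n^{-4}$ was dropped. For $i=3$ it equals $(ap^2)^{-1}$, which is not trivially at most $1$.

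\textbf{Fixing the condition is not enough for your decomposition.} Observation~\ref{claim:number of vertices in F} and Claim~\ref{claim:number of edges in F with D} with $\ell_1=2$ still hold under the correct condition. The $\cP_A$-part then reaches $(a^{2i-2}p^{2\binom{i}{2}-1})^L$ at $n_A=(2i-2)L$.

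The problem is that the greedy $\cP_Z$-count only sees edges at newly added $\cP_Z$-vertices. When a later clique reuses $\cP_Z$-vertices but brings new private $\cP_A$-vertices, the edges between them are charged to neither part. This is harmless when $n^{r-i}p^{\binom{r}{2}-\binom{i}{2}}\ge 1$, but not in general.

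For example, take $r=4$, $i=3$ and $p=n^{-c}$ with $1/3<c<3/8$. The pair $\{u_1,u_2,w,z\}$, $\{u_1,u_3,w,z\}$ is charged $a^4np^8$ instead of its true weight $a^4np^9$. That charge exceeds the per-pair target $a^4n^2p^{11}$ by $(np^3)^{-1}=n^{3c-1}$, which the $n^{-2\epsilon}$ slack cannot absorb.

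\textbf{How the paper avoids both issues.} It maps $T^{i,0}_{2L}$ onto $T^{r,0}_{2L}$ by relabelling each $\cP_Z$-vertex into $\cP_A$, at cost $(n/a)^{2(r-i)L}$ in total. It then applies Claim~\ref{claim:number of edges in F with D} to the whole union, so every edge is counted. Finally, $a^{r-2}p^{\binom{r}{2}-1}\ge 1$, $a\le n$ and $i\ge 3$ give
\[
(n/a)^{2(r-i)L}\,(a^{2r-2}p^{2\binom{r}{2}-1})^L\le(a^4n^{2r-6}p^{2\binom{r}{2}-1})^L .
\]
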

\begin{proof}
    We first show that 
    \begin{equation}\label{eq:high-moments-low-freq-dense}
        \E\left[\Tilde{Y}^{2L}\right] = O_L\left(\left( \frac{a^{4}}{n^{4}}\cdot n^{2r-2} p^{2\binom{r}{2} - 1}\right)^{L}\right).
    \end{equation}
    Upon setting $\cR_i\coloneqq \cR_{i,0}$, we note that 
    \[
        \Tilde{Y} = \sum_{F \in \cR'} S_{G_0, G_1}(F) = \sum_{i=3}^{r} \sum_{F \in \cR_{i}} S_{G_0, G_1}(F).
    \]
    For every $i \in \{3, 4, \dots, r\}$, set $\Tilde{Y}_i \coloneqq \sum_{F \in \cR_{i}} S_{G_0, G_1}(F)$
    and note that similarly to Section \ref{section:high moments} (see \eqref{align:Y high moment}), we have
    \[
        \E\left[\Tilde{Y}^{2L}\right] = \sum_{i=3}^{r} O_L\left(\E\left[\Tilde{Y}_i^{2L}\right]\right).
    \]    
    
    Fix $i \in \{3, 4, \dots, r\}$, and recall from Definition \ref{def: count of good rainbow copies} that $T^{i}_{2L}(G)\coloneqq T_{\cP, 2L}^{i, 0}(G)$ denotes the collection of $2L$-tuples $(F_1,\ldots,F_{2L})$ of $r$-cliques in $\cR_{i}$ that are contained in $G$, and such that for every $i'\in [2L]$ the set $V(F_{i'})$ intersects both  $A_1\cap \bigcup_{j\neq i'} V(F_{j})$ and $A_2\cap \bigcup_{j\neq i'}V(F_{j})$. Further, for every $F \in \cR_{i}$, the condition $S_{G_0, G_1}(F) \neq 0$ implies that $F \subseteq G_0 \cup G_1$. 
    Hence, by Claim \ref{claim:sufficient condition for sign zero in expectation new},
    \begin{align*}
        \E\left[\Tilde{Y}_i^{2L}\right] &= \sum_{F_1, \dots, F_{2L} \in \cR_{i}} \E\left[\prod_{j=1}^{2L} S_{G_0, G_1}(F_j) \right] \\&\le \E\left[\left\lvert T_{2L}^{i}(G_{n, 2p}) \right\rvert\right] = \sum_{(F_1, \dots, F_{2L}) \in T_{2L}^{i}(K_n)} \Pr\left( \bigcup_{j=1}^{2L} F_j \subseteq G_{n, 2p}\right).
    \end{align*}
    
    Recall that $|\cP_Z|/|\cP_A|=\Theta(n/a)$.
    Due to size consideration, there is a surjection mapping each sequence $(F_1,\ldots, F_{2L})\in T_{2L}^{i}(K_{n})$ to a sequence $(F_1',\ldots, F_{2L}')\in T_{2L}^{r}(K_{n})$ by replacing every vertex in $\cP_Z$ with a vertex in $\cP_A$ such that the following holds:
\begin{itemize}
    \item $e\left(\bigcup_{\ell=1}^{2L}F_{\ell}\right)=e\left(\bigcup_{\ell=1}^{2L}F'_{\ell}\right)$, and 
    \item the preimage of each sequence $(F_1',\ldots, F_{2L}')\in T_{2L}^{r}(K_{n})$ is of order $O\left((n/a)^{2(r-i)L}\right)$.
\end{itemize}
This together with Observation \ref{claim:number of vertices in F}, imply that 
    \begin{align*}
        \E\left[\left\lvert T_{2L}^{i}(G_{n, 2p}) \right\rvert\right] &= O\left(\left(\frac{n}{a}\right)^{2(r-i)L} \E\left[\left\lvert T_{2L}^{r}(G_{n, 2p}) \right\rvert\right]\right) \\&= O\left(\left(\frac{n}{a}\right)^{2(r-i)L} \sum_{m=r}^{(2r-2)L} (2a)^{m}(2p)^{\varphi_{r,0}(m,0)}\right),
    \end{align*}
    where $\varphi_{i,j}$ is as defined in the paragraph above Lemma \ref{claim:number of edges in F}.  
    
    For every $m\in \{r,\ldots ,(2r-2)L\}$ let $F_m$ be a minimiser of $\varphi_{r,0}(m,0)$. Claim \ref{claim:number of edges in F with D} applied with
    \[
        s =  s_1 = r, \ s_2 = 0,\  \ell_1 = 2,\ \ell_2=0,\ W_1 = V(F_m),\ \text{and} \ W_2 = \emptyset,
    \]
    yields
    \[
        \varphi_{r,0}(m,0) \ge \frac{(r-1)m + 2 \cdot \max\{0, m-rL\}}{2}.
    \]
    Therefore,
    \begin{align*}
        \E\left[\left\lvert T_{2L}^{i}(G_{n, 2p}) \right\rvert\right] &= O\left(\left(\frac{n}{a}\right)^{2(r-i)L} \sum_{m=r}^{(2r-2)L} (2a)^{m} (2p)^{\frac{(r-1)m + 2 \cdot \max\{0, m-rL\}}{2}}\right) \\
        &= O\left(\left(\frac{n}{a}\right)^{2(r-i)L} \left( a^r p^{\binom{r}{2}} + \left(a^{r} p^{\binom{r}{2}}\right)^L + \left(a^{2r-2} p^{2\binom{r}{2} - 1}\right)^L\right)\right).
    \end{align*}
    
    As $c < \frac{3}{2r}$ and as $\epsilon$ is sufficiently small we have $a^{r-2}p^{\binom{r}{2}-1}\geq 1$. Therefore, combined with the fact that $i\geq 3$ we conclude \eqref{eq:high-moments-low-freq-dense} since
    \begin{align*}
        \E\left[\left\lvert T_{2L}^{i}(G_{n, 2p}) \right\rvert\right]  = O\left(\left(\frac{n}{a}\right)^{2(r-3)L} \left(a^{2r-2} p^{2\binom{r}{2} - 1}\right)^L\right)= O\left(\left( \left(\frac{a}{n}\right)^{4} \cdot n^{2r-2} p^{2\binom{r}{2} - 1}\right)^L\right).
    \end{align*}
    
    The proof is now complete by letting $L$ be a sufficiently large constant and recalling the assumption that $t\leq \frac{n^{2-\epsilon}}{a^2}$:  
    \[
         \E\left[\left(\frac{t \Tilde{Y}}{\sigma_r}\right)^{2L}\right] =O_L\left(\left( \frac{t^{2}\cdot a^{4}\cdot n^{2r-2}p^{2\binom{r}{2}-1}}{n^{4}\cdot n^{2r-2}p^{2\binom{r}{2}-1}}\right)^{L}\right) = O_L\left(n^{-2\epsilon L}\right) \leq n^{-3K}.\qedhere
    \]    
\end{proof}

Finally we derive \eqref{align:bound of char function low frequencies after dec} completing the proof of Lemma \ref{lemma:characteristic function integrable - low freq and dense p} by combining \eqref{align:after decoupling low freq}, \eqref{align:splitting to main and error11}, Claim~\ref{claim:decoupling bound low freq}, and Claim~\ref{claim:high moments bound low freq}.

\section{Proof of Lemma \ref{lemma:characteristic function integrable - low freq and low p}}\label{sec:low p}
The goal of this section is to prove Lemma \ref{lemma:characteristic function integrable - low freq and low p}. We begin by setting some notation. Let $n^{-1/m(K_r)} \ll p \le C n^{-1/m_2(K_r)} (\log n)^{\frac{4r}{e(K_r)-1}}$ where $C>0$ is some absolute constant, and let $\tilde{n} \le n/r$ be the largest integer satisfying $\tilde{n}^{r-2} p^{\binom{r}{2}-1} \le (\log n)^{-10}$. 

As in previous sections, we will use the decoupling scheme laid out in Section \ref{subsection: decoupling new}.
To this end, partition $[n]$ into $\cP= (A_1,A_2,B_1, \dots, B_{r-2},Z)$, such that for every $(i,j)\in [2]\times [r-2]$ we have $|A_i|=|B_j|=\tilde{n}$; this is possible due to the choice of $\tilde{n}\leq n/r$. 
Define the following family of edge sets $\cE \coloneqq  \{A_i \times B_j : (i,j) \in [2]\times [r-2]\}$ to be used when applying the decoupling argument.
Lastly, for convenience, we let $B \coloneqq \bigcup_{i=1}^{r-2} B_i$.

Recall that $\cR = \cR(\cP)$ is the set of copies of $K_r$ in $K_n$ taking at least (in this case exactly) one vertex in each of the sets $A_1, A_2, B_1, \dots, B_{r-2}$. Let $G_0 \sim G_{n, p}$ and let $G_1 \sim (\cP_A \times \cP_B)_p$ be two independent random graphs.
By Corollary \ref{cor: the decoupling lemma new}, 
\begin{align}\label{align:after decoupling high frequencies low p}
    \left|\E\left[e^{it X_r / \sigma_r} \right]\right|^{2^{2(r-2)}} \le \left|\E_{G_0, G_1}\left[e^{it \Tilde{X} / \sigma_r}\right]\right|,
\end{align}
where $\Tilde{X} = \sum_{F \in \cR} S_{G_0, G_1}(F)$. For every edge $f \in A_1 \times A_2$, define its weight as
\[
    w_f \coloneqq \sum_{f \in F \in \cR} S_{G_0 \cup f, G_1}(F),
\]
and note that 
\[
    \Tilde{X} = \sum_{f \in A_1 \times A_2} w_f \cdot \mathbf{1}_{f \in G_0}.
\]

Denote by $S \subseteq A_1 \times A_2$ the (random) set of edges $f \in A_1 \times A_2$ with $\left|w_f\right| = 1$.
\begin{lemma}\label{claim:many good f high frequencies low p}
    There exists $c > 0$ such that $\Pr\left(|S| \ge c \tilde{n}^{r} p^{\binom{r}{2} - 1}\right) \ge 1 - n^{-\omega(1)}.$
\end{lemma}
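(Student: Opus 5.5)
We will find many edges $f\in A_1\times A_2$ that lie in exactly one rainbow clique, with every other edge of that clique in $G_0\triangle G_1$, and no other rainbow clique through $f$ in $G_0\cup G_1$. This parallels Claims~\ref{claim:alpha_f high frequency} and~\ref{claim:many good f high frequencies}. If $f$ lies in a unique $F\in\cR_f$ with $F\setminus f\subseteq G_0\triangle G_1$, and no other $F'\in\cR_f$ has $F'\setminus f\subseteq G_0\cup G_1$, then by Claim~\ref{claim: sign zero new} we get $|w_f|=1$. Call such $f$ \emph{isolated}.

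The expected number of rainbow cliques through a fixed $f$ in $(G_0\cup G_1)\cup f$ is $\Theta(\mu_f)$, where $\mu_f\coloneqq\tilde n^{r-2}p^{\binom r2-1}$. By the maximality of $\tilde n$ and since $p\gg n^{-1/m(K_r)}$, we have $\mu_f=\Theta((\log n)^{-10})$, so $\mu_f=o(1)$. Consequently $\Pr(f\text{ isolated})=\Theta(\mu_f)$.

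For the lower bound, we sum over $F\in\cR_f$ the probability of $S_F$ (the event that $F\setminus f\subseteq G_0\triangle G_1$) intersected with the absence of all other $F'$. For a fixed $F$, the expected number of other $F'$ present given $S_F$ is $\sum_{i}O(\tilde n^{r-2-i}p^{\binom r2-1-e_i})$, where $i$ is the overlap in $B$. Convexity, as in Section~\ref{sec:high-freq}, bounds each term by $\max\{\mu_f,\,p^{\Omega(1)}\}=o(1)$. Markov's inequality then gives a conditional probability $1-o(1)$, so $\E|S|\ge c'\tilde n^2\mu_f=c'\tilde n^rp^{\binom r2-1}$.

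The main obstacle is concentration, since we need failure probability $n^{-\omega(1)}$ and the events for different $f$ are dependent through $B$. We proceed in two steps.

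First, we expose $G_0\cup G_1$ restricted to $K_n[B]$ (only $G_0$ there), and let $\cH$ be the $(r-2)$-uniform hypergraph of transversal $(r-2)$-cliques. By Claim~\ref{claim: lower bound on extensions distinct} and Claim~\ref{claim:upper bound on extensions}, with probability $1-n^{-\omega(1)}$ we have $e(\cH)=\Theta(\tilde n^{r-2}p^{\binom{r-2}2})$ and $\Delta_j(\cH)\le\max\{r^2\tilde n^{r-2-j}p^{\binom{r-2}2-\binom j2},(\log n)^{r+1}\}$. Here $e(\cH)\ge(\log n)^{\omega(1)}$ is needed; it follows from $p\gg n^{-1/m(K_r)}$ together with $\tilde n\ge n/(\log n)^{O(1)}$, possibly with a polylog margin we should double-check. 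If it fails, we instead use $e(\cH)\ge\tilde n^{r-2}p^{\binom{r-2}2}$ via Janson directly.

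Second, conditioned on such $\cH$, the isolation of $f=uv$ depends only on the edges from $u,v$ to $B$, together with the edges $u$–$Z$ and $v$–$Z$ that appear in non-$\cR$ cliques. The latter are irrelevant, since only $\cR$ matters. Therefore the isolation events for vertex-disjoint edges $f$ are independent.

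We then decompose $A_1\times A_2$ into $\tilde n$ perfect matchings. Within each matching, the number of isolated edges dominates a binomial $\mathrm{Bin}(\tilde n,c'\mu_f)$ with mean $\tilde n\mu_f\ge\tilde n(\log n)^{-O(1)}=n^{1-o(1)}$. By Lemma~\ref{lemma:chernoff}, each matching has at least $\frac12 c'\tilde n\mu_f$ isolated edges with probability $1-e^{-n^{1-o(1)}}$. A union bound over the $\tilde n$ matchings, together with the $n^{-\omega(1)}$ failure of the first step, gives $|S|\ge c\tilde n^rp^{\binom r2-1}$.

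The one real check is the within-matching lower bound after conditioning on $\cH$. For this we redo the Harris inequality argument of Claim~\ref{claim:alpha_f high frequency} using the degree bounds $\Delta_j(\cH)$, now with $\Pr(S_F)$ summing to $\Theta(\mu_f)$ rather than $\Omega(1)$.
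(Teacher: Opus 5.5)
\textbf{There is a genuine gap in your concentration step.} Your first-moment computation is fine; the problem is the matching argument. The claim $\mu_f=\Theta((\log n)^{-10})$ does not follow from the maximality of $\tilde n$. Maximality pins down $\mu_f$ only when $\tilde n<n/r$. Throughout the range where $(n/r)^{r-2}p^{\binom{r}{2}-1}<(\log n)^{-10}$ you have $\tilde n=n/r$, and there $\mu_f$ can be polynomially small.

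For instance, take $p=\omega n^{-2/(r-1)}$ with $\omega\to\infty$ slowly. The expected number of isolated edges in one perfect matching is then
$\tilde n\mu_f=\tilde n^{r-1}p^{\binom{r}{2}-1}=\Theta\bigl(\omega^{\binom{r}{2}-1}n^{-1+2/(r-1)}\bigr)$.
This tends to $0$ for $r\ge 4$ and is only $\Theta(\omega^2)$ for $r=3$. Your Chernoff-plus-union-bound step needs $\tilde n^{r-1}p^{\binom{r}{2}-1}\gg\log n$. That fails on a polynomially wide window above $n^{-1/m(K_r)}$ when $r\ge4$ (for $r=4$, all of $n^{-2/3}\ll p\lesssim n^{-3/5}$), and for $p=O(\sqrt{\log n}/n)$ when $r=3$. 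In that window a typical matching contains no isolated edge, so there is nothing to sum. This is not a matter of tuning parameters. Tools that exploit only the independence of vertex-disjoint edges of $A_1\times A_2$ (matchings, or Janson's dependency-graph bound with fractional chromatic number about $\tilde n$) give lower-tail bounds of the form $\exp(-\Theta(\E|S|/\tilde n))$. That is useless here, even though $\E|S|=n^{\Omega(1)}$.

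What is missing is concentration over the individual edge variables in the $u$--$B$ stars and in $B$, rather than over vertex-disjoint edges $f$. The paper does this in two steps.
\begin{itemize}
\item It applies Janson's inequality to the number of $\cP$-rainbow copies of $K_r$ minus its $A_1\times A_2$ edge in $G_0\triangle G_1$. This is a monotone count of a strictly balanced graph with $p$ polynomially above its threshold, so $\mu^2/\Delta\ge n^{\Omega(1)}$. It yields $\Omega(\tilde n^{r}p^{\binom{r}{2}-1})$ such copies with probability $1-n^{-\omega(1)}$.
\item It uses the Kim--Vu inequality to show that unions of two rainbow copies sharing an edge of $A_1\times A_2$ number only $o(\tilde n^{r}p^{\binom{r}{2}-1})$. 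This holds because all relevant expected extension counts are at most $\tilde n^{-\Omega(1)}\cdot\tilde n^{r}p^{\binom{r}{2}-1}$.
\end{itemize}
The copies not involved in any such pair then give distinct edges $f$ with $|w_f|=1$. Your expectation bound captures the first-moment content of this argument, but the matching decomposition has to be replaced by an edge-level tail bound of this kind.
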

\begin{proof}
    As $K_r$ is strictly balanced, as $p\gg n^{-1/m(K_r)}$ by assumption, and as $\tilde{n}\geq n/(\log n)^{30}$ we have $\tilde{n}^{r} p^{\binom{r}{2} - 1} \ge n^{\Omega(1)}$. In addition, $K_r$ minus an edge is strictly-balanced, and thus, a standard use of Janson's inequality \ref{theorem: Janson} implies that with probability at least $1-n^{-\omega(1)}$, the number of $\cP$-rainbow copies of $K_r$ minus an edge from $A_1 \times A_2$ in $G_0 \triangle G_1$ is at least $\Omega\left(\tilde{n}^{r} p^{\binom{r}{2} - 1}\right)$.

    Let $\cF$ be the family of pairs of distinct $\cP$-rainbow copies of $K_r$ sharing an edge in $A_1\times A_2$. 
    \begin{claim}
        With probability at least $1-n^{-\omega(1)}$ the random graph $(G_0\triangle G_1) \cup (A_1\times A_2)$ contains at most $o\left(\tilde{n}^{r} p^{\binom{r}{2} - 1}\right)$ copies from $\cF$. 
    \end{claim}
    
    Before proving the claim, we note that the lemma follows trivially from it. Indeed, the claim implies that the set of $\cP$-rainbow copies of $K_r$ in $(G_0 \triangle G_1) \cup (A_1 \times A_2)$ minus the set of copies of $K_r$ that appear in $\cF$ is with probability at least $1-n^{-\omega(1)}$ of order $\Omega\left(\tilde{n}^{r} p^{\binom{r}{2} - 1}\right)$.
    In particular, this shows that with probability $1-n^{-\omega(1)}$, the graph $(G_0\triangle G_1)\cup(A_1\times A_2)$ contains at least $\Omega\left(\tilde{n}^{r}p^{\binom{r}{2}-1}\right)$ many $\cP$-rainbow copies that do not share an edge from $A_1\times A_2$ with any other $\cP$-rainbow copy, and thus $\Omega\left(\tilde{n}^{r}p^{\binom{r}{2}-1}\right)$ edges in $A_1\times A_2$ with weight $\pm 1$, as required.
    
    \begin{proof}
    To prove the claim, we consider the inhomogeneous random graph $H$ defined as follows: independently retain every edge in $A_i\times B_j$ and in $B_i\times B_j$ with probability $2p$, and every edge in $A_1 \times A_2$ with probability $1$.    
    Let $\cF^*$ be the set of graphs obtained as a union of two distinct $\cP$-rainbow copies of $K_r$ that share an edge in $A_1\times A_2$.
    The claim follows once we show that with probability at least $1-n^{-\omega(1)}$ the graph $H$ contains $o\left(\tilde{n}^{r} p^{\binom{r}{2} - 1}\right)$ many copies in $\cF^*$. For every $F \in \cF^*$, there exists a unique pair $\{F_1, F_2\}$ of $\cP$-rainbow copies of $K_r$ such that $F_1 \cup F_2 = F$. Denote by $\cF^{**}$ the collection of such pairs. 
    
    To prove the above, we will use the Kim--Vu inequality, i.e.\ Theorem \ref{theorem: Kim Vu}, which requires us to bound $E'$ in Theorem \ref{theorem: Kim Vu}. 
    In our setting
    \[
    E' = \max\left\{\E[Y_J]: J\subseteq K_n \text{ with }1\leq e(J)\leq 2\binom{r}{2}-1\right\},
    \]
    where $Y_J$ is the number of extensions of $J$ to a copy $F\in \cF^*$ in $H$. 
    To bound $E'$ we will bound each $\E[Y_J]$ individually. 
    Let $\Delta_1$ be the maximum of 
    \[
        \sum_{F_1}\Pr(F_1\subseteq H\mid J_1\subseteq H)
    \]    
    over $J_1\subseteq K_n$ with $1\leq e(J_1)\leq \binom{r}{2}$, where the sum ranges over all $\cP$-rainbow copies $F_1\supseteq J_1$.
    In addition, let $\Delta_2$ be the maximum of
    \[
       \sum_{F_2}\Pr(F_2\subseteq H\mid F_1\cup J_2\subseteq H),
    \]    
    over $J_2\subseteq K_n$ with $e(J_2) \leq \binom{r}{2}-1$ and all $\cP$-rainbow copies $F_1$, where the sum ranges over all $\cP$-rainbow copies $F_2\supseteq J_2$ satisfying $\{F_1,F_2\}\in \cF^{**}$.
    With these definitions for every $J\subseteq K_n$ with $1 \le e(J)\le 2\binom{r}{2}-1$, we may bound $\E[Y_J]$ as follows:
    \begin{align*}
        \E[Y_J] \leq \sum_{J_1,J_2}  \sum_{ F_1}\Pr(F_1\subseteq H\mid J_1\subseteq H)\sum_{F_2}\Pr(F_2\subseteq H\mid F_1\cup J_2\subseteq H)= O(\Delta_1 \Delta_2),
    \end{align*}
    where in the first sum $J_1$ and $J_2$ satisfy $J_1\cup J_2 =J$ and $e(J_1)\geq 1$, in the second sum the $\cP$-rainbow copy $F_1$ contains $J_1$, and in the third sum $F_2$ is a $\cP$-rainbow copy containing $J_2$ and satisfying $\{F_1,F_2\}\in \cF^{**}$. We conclude that $E' = O(\Delta_1\Delta_2)$.
    
    We now bound $\Delta_1$ and $\Delta_2$. Note that for every $\cP$-rainbow copy $F_1$ and every $J_1 \subseteq F_1$ with $k \le r-1$ vertices, we have $e(F_1 \setminus J_1) \ge \binom{r}{2} - 1 - \binom{k}{2}$. Therefore,
     \[
        \Delta_1 = O\left(1 + \sum_{k=2}^{r-1} \tilde{n}^{r-k} p^{\binom{r}{2} - \binom{k}{2} - 1}\right).
     \]
     By the convexity of $x\mapsto \tilde{n}^{r-x}p^{\binom{r}{2}-\binom{x}{2}-1}$, we further have
    \[
        \Delta_1= O\left(1 + \tilde{n}^{r-2} p^{\binom{r}{2} - 2} + \tilde{n} p^{r-2}\right)\leq \tilde{n}^{r} p^{\binom{r}{2} - 1} \cdot \tilde{n}^{-\xi},
    \]
    where $\xi$ is some positive real depending on $r$ only. In a similar way,
    \[
        \Delta_2 = O\left(\sum_{k=2}^{r} \tilde{n}^{r-k} p^{\binom{r}{2} - \binom{k}{2}}\right) = O\left(\tilde{n}^{r-2} p^{\binom{r}{2}-1} + 1\right)=O(1),
    \]
    where the middle equality follows by the convexity of $x\mapsto \tilde{n}^{r-x}p^{\binom{r}{2}-\binom{x}{2}}$.

    Finally, we conclude that 
    \[
        E' = O(\Delta_1 \Delta_2) = O\left(\tilde{n}^{r} p^{\binom{r}{2} - 1} \cdot \tilde{n}^{-\xi}\right).
    \]
    Recalling the notation from the Kim-Vu inequality, we further have 
    \[
        E \coloneqq \max\{\E[Y], E'\} = O\left(\tilde{n}^{r} p^{\binom{r}{2} - 1}\right),
    \]
    where $Y$ counts the number of copies of $F\in \cF^*$ in $H$.
    Thus, $(E E')^{1/2} = O\left(\tilde{n}^{r} p^{\binom{r}{2} - 1} \cdot \tilde{n}^{- \xi/2}\right)$.
    Now, by applying the Kim-Vu inequality with $k\coloneqq 2\binom{r}{2}-1$ and $\lambda = (\log n)^{2}$, we have
    \begin{align*}
        \Pr\left(|Y - \E[Y]| > a_k (E E')^{1/2} \lambda^k\right) < 18 e^{-\lambda} \tilde{n}^{k-1} = n^{-\omega(1)},
    \end{align*}
    for some constant $a_k$ depending on $k$ only. This concludes the proof of the claim.
    \end{proof}
    As was mentioned, this claim implies Lemma \ref{claim:many good f high frequencies low p}, and thus completes its proof.
\end{proof}

We are now ready to bound the characteristic function of $X_r$ and conclude the proof of Lemma \ref{lemma:characteristic function integrable - low freq and low p}. For this, denote by $\cS$ the event that the assertion of Lemma \ref{claim:many good f high frequencies low p} holds, that is $|S| \ge c \tilde{n}^{r}p^{\binom{r}{2}-1}$.
Now Lemma \ref{claim:many good f high frequencies low p} implies that 
\begin{align*}
    \left|\E_{G_0, G_1}\left[e^{it \Tilde{X} / \sigma_r}\right]\right| &\le n^{-\omega(1)}+\left|\E_{G_0, G_1}\left[ \prod _{f\in A_1\times A_2}e^{it\left( w_f \cdot \mathbf{1}_{f \in G_0}\right)/\sigma_r} \mid \cS\right]  \right|.
\end{align*}
Let $(\Gamma_{0},\Gamma_1)\coloneqq (G_0\setminus (A_1\times A_2),G_1\setminus (A_1\times A_2))$, and note that for every $f\in A_1\times A_2$ the weights $w_f$ are fixed conditionally on $(\Gamma_0,\Gamma_1)$, and so is the set $S$. 
In addition, conditioning on $(\Gamma_{0},\Gamma_{1})$, the random variables $\{\mathbf{1}_{f\in G_0}:f\in A_1\times A_2\}$ remain mutually independent and thus we have
\[
   \left|\E\left[e^{it \Tilde{X} / \sigma_r}\right]\right| \leq n^{-\omega(1)}+\E\left[\prod_{f\in S} \left\lvert \E\left[e^{it\left(w_f \cdot \mathbf{1}_{f \in G_0}\right)/\sigma_r} \ \right] \right\rvert\big\lvert \  \cS  \land (\Gamma_0,\Gamma_1) \right].
\]

By assuming that $(\Gamma_0,\Gamma_1)$ is fixed and that it satisfies $\cS$, Claim \ref{claim:char-bounds} yields  
\begin{align*}
    \left\lvert \E\left[\prod_{f\in S}e^{it\left(w_f \cdot \mathbf{1}_{f \in G_0}\right)/\sigma_r} \mid (\Gamma_0,\Gamma_1)\right] \right\rvert 
    &\le  \prod_{f\in S}\left(1-8p(1-p) \cdot \left\| \frac{t w_f}{2 \pi \sigma_r}\right\|^2\right)  \\&\le \exp\left(-8p(1-p)\cdot \frac{t^2}{(2\pi \sigma_r)^2} \cdot |S|\right),
\end{align*}
where the last inequality uses the definition of $S$ and the assumption that $t \le \pi \sigma_r$, which together imply that for every $f \in S$ we have $\left|\frac{t  w_f}{2\pi \sigma_r}\right| \leq \frac{1}{2}$. Under the event $\cS$ we have $|S| \ge c \tilde{n}^{r}p^{\binom{r}{2}-1}$, which yields the following bound
\begin{equation*}\label{eq:bound-char-func-sparse}
    \left|\E_{G_0, G_1}\left[e^{it \Tilde{X} / \sigma_r}\right]\right| \le n^{-\omega(1)} + \exp\left({-\Omega\left( \tilde{n}^{r}p^{\binom{r}{2}} \cdot \left(\frac{t}{ \sigma_r}\right)^2\right)} \right).
\end{equation*}
This combined with \eqref{align:after decoupling high frequencies low p} concludes the proof of Lemma \ref{lemma:characteristic function integrable - low freq and low p}.

\section*{Acknowledgements}
The authors would like to thank Wojciech Samotij for many helpful discussions during the early stages of the project, as well as for several suggestions that improved the readability of the paper.
This research was supported in part by the Israel Science Foundation grant 2110/22 and 1028/16,
by the ERC Consolidator Grant 101044123 (RandomHypGra), and by the ERC Starting Grant 633509.

\bibliographystyle{plain}
\bibliography{bib.bib}

@article {Ros2011,
    AUTHOR = {Ross, Nathan},
     TITLE = {Fundamentals of {S}tein's method},
   JOURNAL = {Probab. Surv.},
  FJOURNAL = {Probability Surveys},
    VOLUME = {8},
      YEAR = {2011},
     PAGES = {210--293},
      ISSN = {1549-5787},
   MRCLASS = {60F05 (05C80 60C05)},
  MRNUMBER = {2861132},
MRREVIEWER = {Anant\ P.\ Godbole},
       DOI = {10.1214/11-PS182},
       URL = {https://doi.org/10.1214/11-PS182},
}

@article {BarKarRuc1989,
    AUTHOR = {Barbour, A. D. and Karo\'nski, Micha\l{} and Ruci\'nski,
              Andrzej},
     TITLE = {A central limit theorem for decomposable random variables with
              applications to random graphs},
   JOURNAL = {J. Combin. Theory Ser. B},
  FJOURNAL = {Journal of Combinatorial Theory. Series B},
    VOLUME = {47},
      YEAR = {1989},
    NUMBER = {2},
     PAGES = {125--145},
      ISSN = {0095-8956,1096-0902},
   MRCLASS = {60F05 (05C80 60C05)},
  MRNUMBER = {1047781},
MRREVIEWER = {John\ C.\ Wierman},
       DOI = {10.1016/0095-8956(89)90014-2},
       URL = {https://doi.org/10.1016/0095-8956(89)90014-2},
}

@book {Dur2010,
    AUTHOR = {Durrett, Rick},
     TITLE = {Probability: theory and examples},
    SERIES = {Cambridge Series in Statistical and Probabilistic Mathematics},
    VOLUME = {31},
   EDITION = {Fourth},
 PUBLISHER = {Cambridge University Press, Cambridge},
      YEAR = {2010},
     PAGES = {x+428},
      ISBN = {978-0-521-76539-8},
   MRCLASS = {60-01},
  MRNUMBER = {2722836},
       DOI = {10.1017/CBO9780511779398},
       URL = {https://doi.org/10.1017/CBO9780511779398},
}

@article{AraMat2023,
  title={Local central limit theorem for triangle counts in sparse random graphs},
  author={Ara{\'u}jo, Pedro and Mattos, Let{\'\i}cia},
  journal={arXiv preprint arXiv:2307.09446},
  year={2023}
}

@article {SahSaw2022,
    AUTHOR = {Sah, Ashwin and Sawhney, Mehtaab},
     TITLE = {Local limit theorems for subgraph counts},
   JOURNAL = {J. Lond. Math. Soc. (2)},
  FJOURNAL = {Journal of the London Mathematical Society. Second Series},
    VOLUME = {105},
      YEAR = {2022},
    NUMBER = {2},
     PAGES = {950--1011},
      ISSN = {0024-6107,1469-7750},
   MRCLASS = {60F05 (05C80 60C05)},
  MRNUMBER = {4389308},
       DOI = {10.1112/jlms.12523},
       URL = {https://doi.org/10.1112/jlms.12523},
}

@article{Ber2018,
  title={A local limit theorem for cliques in G (n, p)},
  author={Berkowitz, Ross},
  journal={arXiv preprint arXiv:1811.03527},
  year={2018}
}

@article{Ber2016,
  title={A quantitative local limit theorem for triangles in random graphs},
  author={Berkowitz, Ross},
  journal={arXiv preprint arXiv:1610.01281},
  year={2016}
}

@article {GilKop2016,
    AUTHOR = {Gilmer, Justin and Kopparty, Swastik},
     TITLE = {A local central limit theorem for triangles in a random graph},
   JOURNAL = {Random Structures Algorithms},
  FJOURNAL = {Random Structures \& Algorithms},
    VOLUME = {48},
      YEAR = {2016},
    NUMBER = {4},
     PAGES = {732--750},
      ISSN = {1042-9832,1098-2418},
   MRCLASS = {05C80 (05C35 60F05)},
  MRNUMBER = {3508725},
MRREVIEWER = {Anant\ P.\ Godbole},
       DOI = {10.1002/rsa.20604},
       URL = {https://doi.org/10.1002/rsa.20604},
}

@article {ErdRen1960,
    AUTHOR = {Erd\H os, P. and R\'enyi, A.},
     TITLE = {On the evolution of random graphs},
   JOURNAL = {Magyar Tud. Akad. Mat. Kutat\'o{} Int. K\"ozl.},
  FJOURNAL = {A Magyar Tudom\'anyos Akad\'emia. Matematikai Kutat\'o{}
              Int\'ezet\'enek K\"ozlem\'enyei},
    VOLUME = {5},
      YEAR = {1960},
     PAGES = {17--61},
      ISSN = {0541-9514},
   MRCLASS = {05.40},
  MRNUMBER = {125031},
MRREVIEWER = {John\ Riordan},
}

@article {Bol1981,
    AUTHOR = {Bollob\'as, B\'ela},
     TITLE = {Threshold functions for small subgraphs},
   JOURNAL = {Math. Proc. Cambridge Philos. Soc.},
  FJOURNAL = {Mathematical Proceedings of the Cambridge Philosophical
              Society},
    VOLUME = {90},
      YEAR = {1981},
    NUMBER = {2},
     PAGES = {197--206},
      ISSN = {0305-0041,1469-8064},
   MRCLASS = {05C99 (05C35 60C05)},
  MRNUMBER = {620729},
MRREVIEWER = {Micha\l\ Karo\'nski},
       DOI = {10.1017/S0305004100058655},
       URL = {https://doi.org/10.1017/S0305004100058655},
}

@incollection {KarRuc1983,
    AUTHOR = {Karo\'nski, Micha\l{} and Ruci\'nski, Andrzej},
     TITLE = {On the number of strictly balanced subgraphs of a random
              graph},
 BOOKTITLE = {Graph theory (\L ag\'ow, 1981)},
    SERIES = {Lecture Notes in Math.},
    VOLUME = {1018},
     PAGES = {79--83},
 PUBLISHER = {Springer, Berlin},
      YEAR = {1983},
      ISBN = {3-540-12687-2},
   MRCLASS = {05C80},
  MRNUMBER = {730636},
MRREVIEWER = {Charles\ M.\ Grinstead},
       DOI = {10.1007/BFb0071616},
       URL = {https://doi.org/10.1007/BFb0071616},
}

@book {Kar1984,
    AUTHOR = {Karo\'nski, Micha\l},
     TITLE = {Balanced subgraphs of large random graphs},
    SERIES = {Seria Matematyka [Mathematics Series]},
    VOLUME = {7},
      NOTE = {With a Polish summary},
 PUBLISHER = {Uniwersytet im. Adama Mickiewicza w Poznaniu, Pozna\'n},
      YEAR = {1984},
     PAGES = {48},
   MRCLASS = {05C80 (05C05 05C38 60C05)},
  MRNUMBER = {779093},
MRREVIEWER = {Paul\ A.\ Catlin},
}

@article {Ruc1988,
    AUTHOR = {Ruci\'nski, Andrzej},
     TITLE = {When are small subgraphs of a random graph normally
              distributed?},
   JOURNAL = {Probab. Theory Related Fields},
  FJOURNAL = {Probability Theory and Related Fields},
    VOLUME = {78},
      YEAR = {1988},
    NUMBER = {1},
     PAGES = {1--10},
      ISSN = {0178-8051,1432-2064},
   MRCLASS = {60C05 (05C80 60F05)},
  MRNUMBER = {940863},
MRREVIEWER = {G.\ N.\ Bagaev},
       DOI = {10.1007/BF00718031},
       URL = {https://doi.org/10.1007/BF00718031},
}

@inproceedings {NowWie1986,
    AUTHOR = {Nowicki, Krzysztof and Wierman, John C.},
     TITLE = {Subgraph counts in random graphs using incomplete
              {$U$}-statistics methods},
 BOOKTITLE = {Proceedings of the {F}irst {J}apan {C}onference on {G}raph
              {T}heory and {A}pplications ({H}akone, 1986)},
   JOURNAL = {Discrete Math.},
  FJOURNAL = {Discrete Mathematics},
    VOLUME = {72},
      YEAR = {1988},
    NUMBER = {1-3},
     PAGES = {299--310},
      ISSN = {0012-365X,1872-681X},
   MRCLASS = {05C80},
  MRNUMBER = {975550},
MRREVIEWER = {Alan\ M.\ Frieze},
       DOI = {10.1016/0012-365X(88)90220-8},
       URL = {https://doi.org/10.1016/0012-365X(88)90220-8},
}

@article {Jan1990,
    AUTHOR = {Janson, Svante},
     TITLE = {Orthogonal decompositions and functional limit theorems for
              random graph statistics},
   JOURNAL = {Mem. Amer. Math. Soc.},
  FJOURNAL = {Memoirs of the American Mathematical Society},
    VOLUME = {111},
      YEAR = {1994},
    NUMBER = {534},
     PAGES = {vi+78},
      ISSN = {0065-9266,1947-6221},
   MRCLASS = {60F17 (05C80 60C05)},
  MRNUMBER = {1219708},
MRREVIEWER = {Andrew\ D.\ Barbour},
       DOI = {10.1090/memo/0534},
       URL = {https://doi.org/10.1090/memo/0534},
}

@article {FoxKwanSau2021a,
    AUTHOR = {Fox, Jacob and Kwan, Matthew and Sauermann, Lisa},
     TITLE = {Combinatorial anti-concentration inequalities, with
              applications},
   JOURNAL = {Math. Proc. Cambridge Philos. Soc.},
  FJOURNAL = {Mathematical Proceedings of the Cambridge Philosophical
              Society},
    VOLUME = {171},
      YEAR = {2021},
    NUMBER = {2},
     PAGES = {227--248},
      ISSN = {0305-0041,1469-8064},
   MRCLASS = {60C05 (05C80 05D40 60E15)},
  MRNUMBER = {4299587},
       DOI = {10.1017/s0305004120000183},
       URL = {https://doi.org/10.1017/s0305004120000183},
}

@article {FoxKwanSau2021b,
    AUTHOR = {Fox, Jacob and Kwan, Matthew and Sauermann, Lisa},
     TITLE = {Anti-concentration for subgraph counts in random graphs},
   JOURNAL = {Ann. Probab.},
  FJOURNAL = {The Annals of Probability},
    VOLUME = {49},
      YEAR = {2021},
    NUMBER = {3},
     PAGES = {1515--1553},
      ISSN = {0091-1798,2168-894X},
   MRCLASS = {05C80 (05C30 60C05)},
  MRNUMBER = {4255152},
MRREVIEWER = {Lyuben\ R.\ Mutafchiev},
       DOI = {10.1214/20-aop1490},
       URL = {https://doi.org/10.1214/20-aop1490},
}

@article {Har1960,
    AUTHOR = {Harris, T. E.},
     TITLE = {A lower bound for the critical probability in a certain
              percolation process},
   JOURNAL = {Proc. Cambridge Philos. Soc.},
  FJOURNAL = {Proceedings of the Cambridge Philosophical Society},
    VOLUME = {56},
      YEAR = {1960},
     PAGES = {13--20},
      ISSN = {0008-1981},
   MRCLASS = {60.00},
  MRNUMBER = {115221},
MRREVIEWER = {G.\ Newell},
}

@article{SahSawZhu2024,
  title={Local limit theorem for joint subgraph counts},
  author={Sah, Ashwin and Sawhney, Mehtaab and Zhu, Daniel G},
  journal={arXiv preprint arXiv:2412.09535},
  year={2024}
}

@article {RolRos2015,
    AUTHOR = {R\"ollin, Adrian and Ross, Nathan},
     TITLE = {Local limit theorems via {L}andau-{K}olmogorov inequalities},
   JOURNAL = {Bernoulli},
  FJOURNAL = {Bernoulli. Official Journal of the Bernoulli Society for
              Mathematical Statistics and Probability},
    VOLUME = {21},
      YEAR = {2015},
    NUMBER = {2},
     PAGES = {851--880},
      ISSN = {1350-7265,1573-9759},
   MRCLASS = {60E15 (60B10 60C05)},
  MRNUMBER = {3338649},
MRREVIEWER = {Hac\`ene\ Djellout},
       DOI = {10.3150/13-BEJ590},
       URL = {https://doi.org/10.3150/13-BEJ590},
}

@article {BerSahSaw2021,
    AUTHOR = {Berkowitz, Ross and Sah, Ashwin and Sawhney, Mehtaab},
     TITLE = {Number of arithmetic progressions in dense random subsets of
              {$\Bbb{Z}/n\Bbb{Z}$}},
   JOURNAL = {Israel J. Math.},
  FJOURNAL = {Israel Journal of Mathematics},
    VOLUME = {244},
      YEAR = {2021},
    NUMBER = {2},
     PAGES = {589--620},
      ISSN = {0021-2172,1565-8511},
   MRCLASS = {11B25 (60F05)},
  MRNUMBER = {4344037},
MRREVIEWER = {Paul\ Potgieter},
       DOI = {10.1007/s11856-021-2180-7},
       URL = {https://doi.org/10.1007/s11856-021-2180-7},
}

@article{KwaSau2023,
  title={Resolution of the quadratic Littlewood--Offord problem},
  author={Kwan, Matthew and Sauermann, Lisa},
  journal={arXiv preprint arXiv:2312.13826},
  year={2023}
}

@article {MR1774845,
    AUTHOR = {Kim, Jeong Han and Vu, Van H.},
     TITLE = {Concentration of multivariate polynomials and its
              applications},
   JOURNAL = {Combinatorica},
  FJOURNAL = {Combinatorica. An International Journal on Combinatorics and
              the Theory of Computing},
    VOLUME = {20},
      YEAR = {2000},
    NUMBER = {3},
     PAGES = {417--434},
      ISSN = {0209-9683,1439-6912},
   MRCLASS = {05C80 (60C05)},
  MRNUMBER = {1774845},
MRREVIEWER = {Tomasz\ J.\ \L uczak},
       DOI = {10.1007/s004930070014},
       URL = {https://doi.org/10.1007/s004930070014},
}

@book {JanLucRuc2000,
    AUTHOR = {Janson, Svante and \L uczak, Tomasz and Rucinski, Andrzej},
     TITLE = {Random graphs},
    SERIES = {Wiley-Interscience Series in Discrete Mathematics and
              Optimization},
 PUBLISHER = {Wiley-Interscience, New York},
      YEAR = {2000},
     PAGES = {xii+333},
      ISBN = {0-471-17541-2},
   MRCLASS = {05C80 (60C05 82B41)},
  MRNUMBER = {1782847},
MRREVIEWER = {Mark\ R.\ Jerrum},
       DOI = {10.1002/9781118032718},
       URL = {https://doi.org/10.1002/9781118032718},
}

@article {Spe1990,
    AUTHOR = {Spencer, Joel},
     TITLE = {Counting extensions},
   JOURNAL = {J. Combin. Theory Ser. A},
  FJOURNAL = {Journal of Combinatorial Theory. Series A},
    VOLUME = {55},
      YEAR = {1990},
    NUMBER = {2},
     PAGES = {247--255},
      ISSN = {0097-3165,1096-0899},
   MRCLASS = {05C80},
  MRNUMBER = {1075710},
MRREVIEWER = {Zbigniew\ Palka},
       DOI = {10.1016/0097-3165(90)90070-D},
       URL = {https://doi.org/10.1016/0097-3165(90)90070-D},
}

@article {JanOleRuc2004,
    AUTHOR = {Janson, Svante and Oleszkiewicz, Krzysztof and Ruci\'nski,
              Andrzej},
     TITLE = {Upper tails for subgraph counts in random graphs},
   JOURNAL = {Israel J. Math.},
  FJOURNAL = {Israel Journal of Mathematics},
    VOLUME = {142},
      YEAR = {2004},
     PAGES = {61--92},
      ISSN = {0021-2172,1565-8511},
   MRCLASS = {05C80},
  MRNUMBER = {2085711},
MRREVIEWER = {David\ B.\ Penman},
       DOI = {10.1007/BF02771528},
       URL = {https://doi.org/10.1007/BF02771528},
}

\appendix
\section{Proof of Claim \ref{claim:bound on expectations of T_2L}}\label{app:Appendix}

This appendix is devoted to the proof of Claim~\ref{claim:bound on expectations of T_2L}. For the convenience of the reader, we recall that $p\in(0,1/2]$ satisfies \eqref{align: p assumption}, that $k \in [r-3]$, and that $0 < b \le a_k$ are integers that satisfy Properties \ref{item:b condition} and \ref{item:k condition}. In addition, we restate the claim. 

\ClaimAppendix*

The proof of the four items above are similar to each other with the first step being the following. Upon fixing $a_k,b$ and $p$, the left-hand sides of the inequalities in the first three items are maximised at the extreme values of $n_A$ and $n_B$ (as a product of an exponential in $n_A$ and an exponential in $n_B$). 
Therefore, it suffices to prove \ref{n_A and n_B large} for $n_A\in\{iL,(2i-2)L\}$ and $n_B\in\{jL,(2j-(r-k-2))L\}$, \ref{n_A large n_B small} for $n_A\in\{iL,(2i-2)L\}$ and $n_B\in\{j,jL\}$, and \ref{n_A small and n_B large} for $n_A\in\{i,iL\}$ and $n_B\in\{jL,(2j-(r-k-2))L\}$. As we cannot straight away employ this trick to prove \ref{n_A and n_B small}, the proof of this part is slightly different.
The technical and tedious part is then to verify each inequality. 

By property \ref{item:k condition} (applied with $k'=0 < k$), we have $n^k p^{\binom{r}{2} - \binom{r-k}{2}} \ge n^{\delta}$ and therefore, $a_k^{k}p^{\binom{r}{2}-\binom{r-k}{2}}\geq n^{\delta -k\epsilon_k} \geq 1$ provided that $k\epsilon_k<\delta$.
Throughout the rest of the proofs in this appendix, we will frequently use Property \ref{item:b condition} and Property \ref{item:k condition} with $k'=0$ as described above; for the convenience of the reader, we repeat their weaker versions, here:
\begin{align}
    \label{align:a_k inequality to use in hm}
    a_k^{k} \cdot p^{\binom{r}{2} - \binom{r-k}{2}} , b^{r-k-2}\cdot  p^{\binom{r-k}{2} - 1} \ge 1.
\end{align}

\subsection{Proof of \ref{n_A and n_B large}}
    Suppose that $n_A \in \{i L, (2i-2)L\}$ and $n_B \in \{j L, (2j-(r-k-2)) L\}$ maximise 
    \[
        a_k^{n_A} b^{n_B} p^{\frac{(n_A + n_B)(r-1) + (r-k)\left((n_A - i L) + (n_B - j L)\right)}{2}} = \left(a_k p^{\frac{2r - k - 1}{2}}\right)^{n_A} \left(b p^{\frac{2r - k - 1}{2}}\right)^{n_B} p^{- \frac{(r-k)(iL + jL)}{2}}.
    \]
    Note that if $n_B = (2j - (r-k-2))L$ then as $b \le a_k$ we may assume that $n_A = (2i-2)L$.
    We will prove the desired upper bound separately for each of the three possible cases (all cases expect $n_A = iL$ and $n_B = (2j - (r-k-2))L$). For simplicity of presentation, throughout the proof, we write
    \[
        f\coloneqq\frac{(n_A + n_B)(r-1) + (r-k)\left((n_A - i L) + (n_B - j L)\right)}{2}.
    \]
    \noindent\textit{Case 1:} Assume that $n_A = (2i-2)L$ and $n_B = (2j-(r-k-2)) L$.\\
    By the assumption of this case, as well as the assumption that $i + j = r$, we have
        \begin{align*}
            f = \left(2\binom{r}{2} - \binom{r-k}{2}\right)L.
        \end{align*}
        Therefore, it remains to show that
        \[
            a_k^{(2i-2)L} b^{(2j-(r-k-2))L} p^{\left(2\binom{r}{2} - \binom{r-k}{2}\right) L} \le n^{C_r} \left(a_k^{2k+2} b^{r-k-2} p^{2\binom{r}{2} - \binom{r-k}{2}}\right)^{L},
        \]
        which is equivalent to
        \begin{equation}\label{eq:claim1-case1}
             a_k^{2i - 2} b^{2j - (r-k-2)} \le n^{C_r / L} a_k^{2k+2} b^{r-k-2}.
        \end{equation}
        To see this, recall that $b\leq a_k$, and thus among all $(i,j)\in I$ such that $i+j=r$, the pair maximising $a_k^{i}b^{j}$ is the one with the largest value of $i$. This optimal pair is $(k+2,r-k-2)$, and when plugging it into the left-hand side of \eqref{eq:claim1-case1}, the proof follows for any $C_r \ge 0$.

    \noindent\textit{Case 2:} Assume that $n_A = (2i - 2)L$ and $n_B = j L$.\\
    Similarly to the previous case, as $i + j = r$, it is straightforward to check that
        \begin{align*}
           f= \left(\binom{r}{2} + \frac{(i-2)(r-1) + (r-k)(i-2)}{2}\right)L.
        \end{align*}
        It is then enough to show that
        \[
            a_k^{2i - 2} b^{r-i} p^{\binom{r}{2} + \frac{(i-2)(r-1) + (r-k)(i-2)}{2}} \le n^{C_r/L} a_k^{2k+2} b^{r-k-2} p^{2\binom{r}{2} - \binom{r-k}{2}}.
        \]
        Recall that $2 \le i \le k+2$ and thus the left-hand side above is maximised at $i \in \{2, k+2\}$. On the one hand, if $i = k+2$,
        \[
            a_k^{2i - 2} b^{r-i} p^{\binom{r}{2} + \frac{(i-2)(r-1) + (r-k)(i-2)}{2}}  =  a_k^{2k+2} b^{r-k-2} p^{2\binom{r}{2} - \binom{r-k}{2}},
        \]
        as needed. On the other hand, if $i = 2$, since $b \le a_k$ and by \eqref{align:a_k inequality to use in hm},
        \begin{align*}
             a_k^{2i - 2} b^{r-i} p^{\binom{r}{2} + \frac{(i-2)(r-1) + (r-k)(i-2)}{2}} &= a_k^2 b^{r-2} p^{\binom{r}{2}} \le \left(a_k^k p^{\binom{r}{2} - \binom{r-k}{2}}\right) \cdot a_k^{2+k} b^{r-k-2} p^{\binom{r}{2}}\\& = a_k^{2k+2} b^{r-k-2} p^{2\binom{r}{2} - \binom{r-k}{2}},
        \end{align*}
    concluding the proof in this case.

    \noindent\textit{Case 3:} Assume that $n_A = i L$ and $n_B = j L$.\\
    In this case, we have $f=\binom{r}{2}\cdot L$. This implies that it is then enough to show the following: 
        \begin{align}\label{align:ineq4 used in hm}
            a_k^{i} b^{r-i} p^{\binom{r}{2}} \le  a_k^{2k+2} b^{r-k-2} p^{2\binom{r}{2} - \binom{r-k}{2}}.
        \end{align}
        Since $b \le a_k$ and since $2 \le i \le k+2$, the left-hand side above is maximised taking $i = k+2$. Then, by  \eqref{align:a_k inequality to use in hm}, the required inequality is obtained as follows:
        \begin{equation}\label{align:ineq4 used in hm}
             \begin{split}
             a_k^{i} b^{r-i} p^{\binom{r}{2}} &\le a_k^{k+2} b^{r-k-2} p^{\binom{r}{2}} \le \left(a_k^k p^{\binom{r}{2}-\binom{r-k}{2}}\right) \cdot a_k^{k+2} b^{r-k-2} p^{\binom{r}{2}} \\
            &=a_k^{2k+2} b^{r-k-2} p^{2\binom{r}{2} - \binom{r-k}{2}}. \qedhere
            \end{split}
        \end{equation}

\subsection{Proof of \ref{n_A large n_B small}}
    Suppose that $n_A \in \{i L, (2i-2)L\}$ and $n_B \in \{j, j L\}$ maximise 
    \[
        a_k^{n_A} b^{n_B} p^{n_A \cdot j + \frac{n_A(i - 1) + 2(n_A- i L)}{2} + \frac{n_B(j - 1)}{2}}.
    \]
    Throughout the proof, we write
    \[
        f \coloneqq n_A \cdot j + \frac{n_A(i - 1) + 2(n_A- i L)}{2} + \frac{n_B(j - 1)}{2}.
    \]
    We distinguish four cases according to the values of $n_A$ and $n_B$, proving the desired upper bound in each case.
    
    \noindent\textit{Case 1:} Suppose that $n_A = (2i - 2)L$ and $n_B = j L$.\\
    Recalling that $i+j=r$, we get
        \begin{align*}
            f = \left(2\binom{r}{2} - \binom{r-i+2}{2}\right).
        \end{align*}
    Therefore, it suffices to show that
        \[
            a_k^{(2i - 2)L} b^{(r-i) L} p^{\left(2\binom{r}{2} - \binom{r-i+2}{2}\right)L} \le n^{C_r} \left(a_k^{2k+2} b^{r-k-2} p^{2\binom{r}{2} - \binom{r-k}{2}}\right)^{L},
        \]
        which is equivalent to
        \[
            1 \le n^{C_r/L} a_k^{2k-2i+4} b^{i-k-2} p^{\binom{r-i+2}{2} - \binom{r-k}{2}}.
        \]
        Since $2 \le i \le k+2$ and since the right-hand side above is a convex function in $i$, it is minimised at $i \in \{2, k+2\}$. If $i = k+2$, the inequality is trivial as it is equivalent to $1 \le n^{C_r/L}$. If $i = 2$ we have 
        \[
             n^{C_r/L} a_k^{2k} b^{-k} p^{\binom{r}{2} - \binom{r-k}{2}} \ge n^{C_r/L} a_k^{k} p^{\binom{r}{2} - \binom{r-k}{2}} \ge 1,
        \]
        where the first inequality holds as $a_k\ge b$ and the second inequality holds due to \eqref{align:a_k inequality to use in hm}. This concludes the proof in this case.
        
    \noindent\textit{Case 2:} Assume that $n_A = i L$ and $n_B = j L$.\\
     As $i + j = r$, we have $f  = \binom{r}{2}L$, and hence, it remains to show that
        \[
            a_k^{i} b^{r-i} p^{\binom{r}{2}} \le n^{C_r/L} a_k^{2k+2} b^{r-k-2} p^{2\binom{r}{2} - \binom{r-k}{2}}.
        \]
        This was proven in Case 3 of \ref{n_A and n_B large}, see \eqref{align:ineq4 used in hm}.
    
    \noindent\textit{Case 3:} Assume that $n_A = (2i-2)L$ and $n_B = j$.\\
    In this case, again, by using $i+j=r$ we have
        \begin{align*}
            f= \left(\binom{r}{2} - \binom{r-i}{2} + \binom{r}{2} - \binom{r-i+2}{2}\right)L + \binom{j}{2}.
        \end{align*}
        Our goal is to verify that
        \[
            \left(a_k^{2i-2} p^{\binom{r}{2} - \binom{r-i}{2} + \binom{r}{2} - \binom{r-i+2}{2}}\right)^{L} b^{j}  p^{\binom{j}{2}} \le n^{C_r} \left(a_k^{2k+2} b^{r-k-2} p^{2 \binom{r}{2} - \binom{r-k}{2}}\right)^{L}.
        \]
        As $b \le n$, we have $b^{j} p^{\binom{j}{2}} \le n^{ C_r}$ provided that $C_r \ge j$, and hence, it is enough to show that
        \begin{equation}\label{eq:claim2Case3}
            a_k^{2i - 2} p^{- \binom{r-i}{2} - \binom{r-i+2}{2}} \le a_k^{2k+2} b^{r-k-2} p^{- \binom{r-k}{2}}.
        \end{equation}    
        Since $2 \le i \le k+2$ and since the left-hand side above is a convex function in $i$, it is maximised at $i \in \{2, k+2\}$.
        For $i = 2$, \eqref{eq:claim2Case3} follows by using the above and \eqref{align:a_k inequality to use in hm} in the following manner. 
        First, by combining the two inequalities in \eqref{align:a_k inequality to use in hm}, we have
        \begin{align*}
            1 &\le a_k^k p^{\binom{r}{2} - \binom{r-k}{2}} b^{r-k-2} p^{\binom{r-k}{2}-1} = a_k^k b^{r-k-2} p^{\binom{r}{2}-1} \le a_k^k b^{r-k-2} p^{\binom{r-2}{2}}.
        \end{align*}
        This, and the former inequality in \eqref{align:a_k inequality to use in hm} yield the required bound:
        \begin{equation}\label{eq:claim8.5-case3} 
        \begin{aligned}
        a_k^2 p^{-\binom{r-2}{2} - \binom{r}{2}} &\le 
        \left(a_k^k b^{r-k-2} p^{\binom{r-2}{2}}\right) \cdot \left(a_k^k p^{\binom{r}{2} - \binom{r-k}{2}}\right) \cdot a_k^2 p^{-\binom{r-2}{2} - \binom{r}{2}} \\
            &= a_k^{2k+2} b^{r-k-2} p^{-\binom{r-k}{2}}.
        \end{aligned}
        \end{equation}
        For $i = k+2$, by \eqref{align:a_k inequality to use in hm} we have, 
        \begin{align}\label{eq:bound on b^ p^}
            1 \le b^{r-k-2} p^{\binom{r-k}{2}-1} \le b^{r-k-2} p^{\binom{r-k-2}{2}}.
        \end{align}
        This then implies that
        \begin{align*}
            a_k^{2k+2} p^{-\binom{r-k-2}{2} - \binom{r-k}{2}} &\le\left(b^{r-k-2} p^{\binom{r-k-2}{2}}\right) \cdot a_k^{2k+2} p^{-\binom{r-k-2}{2} - \binom{r-k}{2}} \\&= a_k^{2k+2} b^{r-k-2} p^{- \binom{r-k}{2}}.
        \end{align*}       
        This proves \eqref{eq:claim2Case3}, and concludes the proof in this case.

    \noindent\textit{Case 4:} Assume that $n_A = i L$ and $n_B = j$.\\
    In this case, we have
        \begin{align*}
            f= \left(\binom{r}{2} - \binom{r-i}{2}\right)L + \binom{j}{2}.
        \end{align*}
    Hence, as $b \le n$ and as we may take $C_r$ to be sufficiently large, it is enough to show that
        \[
            a_k^{i} p^{\binom{r}{2} - \binom{r-i}{2}} \le a_k^{2k+2} b^{r-k-2} p^{2\binom{r}{2} - \binom{r-k}{2}}.
        \]
        Since $2 \le i \le k+2$ and since the left-hand side above is a convex function in $i$, it is maximised at $i \in \{2, k+2\}$. If $i = 2$ we have \eqref{eq:claim8.5-case3}, and if $i = k+2$, we have
        \begin{align*}
            a_k^{k+2} p^{-\binom{r-k-2}{2}} &\le \left(b^{r-k-2} p^{\binom{r-k-2}{2}}\right) \cdot \left(a_k^k p^{\binom{r}{2} - \binom{r-k}{2}}\right) \cdot a_k^{k+2} p^{-\binom{r-k-2}{2}} \\&= a_k^{2k+2} b^{r-k-2} p^{\binom{r}{2} - \binom{r-k}{2}},
        \end{align*}
        where the inequality holds by the former inequality in \eqref{align:a_k inequality to use in hm} and by \eqref{eq:bound on b^ p^}. This completes the proof of this case and thereby the proof of the claim.  

\subsection{Proof of \ref{n_A small and n_B large}}
    Suppose that $n_A  \in \{i, i L\}$ and $n_B \in \{j L, (2j - (r-k-2))L \}$ maximise 
    \[
        a_k^{n_A} b^{n_B} p^{n_B \cdot i + \frac{n_A(i - 1)}{2} + \frac{n_B(j - 1) + (r-k-2)(n_B- j L)}{2}}.
    \] 
    First, observe that if $a_k,b,p$ and $n_B$ are fixed, then the above is an increasing function of $n_A$ provided that $\eps_k$ is small enough. Indeed, we have
    \[
        a_k p^{\frac{i - 1}{2}} \ge a_k p^{\frac{r-1}{2}} = \left(a_k^{r} p^{\binom{r}{2}}\right)^{1/r} = \left(n^{r-2} p^{\binom{r}{2} - 1} \cdot n^2 p \cdot n^{-r \epsilon_k}\right)^{1/r} \ge \left(n^{2 - r \epsilon_k} p\right)^{1/r} \ge 1,
    \]
    where in the last inequality we assume that $\epsilon_k$ is sufficiently small. Therefore, we may assume that $n_A = i L$. Let us distinguish two cases according to the value of $n_B$.
    
    \noindent\textit{Case 1:} Assume that $n_B = j L$.\\
    As $i + j = r$, we have
        \begin{align*}
            n_B \cdot i + \frac{n_A(i - 1)}{2} + \frac{n_B(j - 1)}{2} + \frac{(r-k-2)(n_B-jL)}{2}
            = \left(i \cdot j + \binom{i}{2} + \binom{j}{2}\right)L 
            = \binom{r}{2}L.
        \end{align*}
    We then have
        \[
            a_k^{n_A} b^{n_B} p^{n_B \cdot i + \frac{n_A(i - 1)}{2} + \frac{n_B(j - 1)}{2} + \frac{(r-k-2)(n_B-jL)}{2}} = \left(a_k^i b^{r-i} p^{\binom{r}{2}}\right)^{L}\le \left(a_k^{2k+2} b^{r-k-2} p^{2\binom{r}{2} - \binom{r-k}{2}}\right)^{L},
        \]
    which follows from Case 3 of the proof of \ref{n_A and n_B large}, see \eqref{align:ineq4 used in hm}.

    \noindent\textit{Case 2:} Assume that $n_B = (2j - (r-k-2))L$.\\
    Similarly to the previous case, since $i+j=r$ we have
        \begin{align*}
            n_B \cdot i + \frac{n_A(i - 1)}{2} + \frac{n_B(j - 1) + (r-k-2)(n_B- j L)}{2} = \left(2 \binom{r}{2} - \binom{i + r - k - 2}{2}\right)L.
        \end{align*}
        Hence, it remains to show that       
        \[
            a_k^{i} b^{2(r-i) - (r-k-2)} p^{2 \binom{r}{2} - \binom{i + r - k - 2}{2}} \le a_k^{2k+2} b^{r-k-2} p^{2\binom{r}{2} - \binom{r-k}{2}}.
        \]
        Since $2 \le i \le k+2$ and since the left-hand side above is a convex function in $i$, it is maximised at $i \in \{2, k+2\}$. If $i = 2$, recall that $b \le a_k$ which readily implies the required inequality:
        \begin{align*}
             a_k^2 b^{r + k - 2} p^{2 \binom{r}{2} - \binom{ r - k}{2}} &\le a_k^{2k + 2} b^{r - k - 2} p^{2 \binom{r}{2} - \binom{ r - k}{2}}.
        \end{align*}                 
        If $i = k+2$, we have
        \[
            a_k^{k+2} b^{r-k-2} p^{\binom{r}{2}} \le \left(a_k^k p^{\binom{r}{2} - \binom{r-k}{2}}\right) \cdot a_k^{k+2} b^{r-k-2} p^{\binom{r}{2}} =a_k^{2k+2} b^{r-k-2} p^{2\binom{r}{2} - \binom{r-k}{2}},
        \]
        where the inequality follows from \eqref{align:a_k inequality to use in hm}, and the proof is concluded.

\subsection{Proof of \ref{n_A and n_B small}}
    Suppose that $i\le n_A \leq i L$ and $j\le n_B \le j L$ maximise 
    \begin{equation}\label{eq:claim4}
        f\coloneqq a_k^{n_A} b^{n_B} p^{\max\{n_A\cdot j,n_B \cdot i\} + \frac{n_A(i - 1)}{2} + \frac{n_B(j - 1)}{2}}.
    \end{equation}
    We first eliminate the maximum above by splitting the analysis into two cases.
    
    \noindent\textit{Case 1:} Assume that $n_A\cdot j\leq n_B \cdot i$.\\
    As in the proof of \ref{n_A small and n_B large}, upon fixing $a_k,b,i,j,p,$ and $n_B$, the function in \eqref{eq:claim4} is increasing in $n_A$. Therefore, we may assume in this case that $n_A \cdot j= n_B\cdot i$, where here, $n_A$ may be non-integer. In particular, as $j\le n_B\le jL$, we have
    \begin{equation}\label{eq:Claim4-1}
        f= \left(a_k^{i} b^{j} p^{ ij + \frac{i(i - 1)}{2} + \frac{j(j - 1)}{2}}\right)^{n_B/j}= \left(a_k^{i} b^{j} p^{ \binom{r}{2}}\right)^{n_B/j}\le \max\left\{a_k^{i} b^{j} p^{ \binom{r}{2}},\left(a_k^{i} b^{j} p^{\binom{r}{2}}\right)^{L}\right\}.
    \end{equation}

    By \eqref{align:a_k inequality to use in hm} and the fact that $a_k^2 p \ge 1$ which holds provided that $\epsilon_k$ is sufficiently small, we have,
    \[
        1 \le \left(a_k^k p^{\binom{r}{2}-\binom{r-k}{2}}\right)^2 \left(b^{r-k-2} \cdot p^{\binom{r-k}{2}-1}\right) \left(a_k^2 p\right) = a_k^{2k+2} b^{r-k-2} p^{2\binom{r}{2} - \binom{r-k}{2}}
    \]
    Then, it is clear that by taking $C_r$ large enough 
    \begin{equation}\label{eq:Claim4-2}
        a_k^{i} b^{j} p^{ ij + \frac{i(i - 1)}{2} + \frac{j(j - 1)}{2}} \leq n^{C_r}\leq n^{C_r} \left(a_k^{2k+2} b^{r-k-2} p^{2\binom{r}{2} - \binom{r-k}{2}}\right)^{L}.
    \end{equation}
    Moreover, as $b\leq a_k$ and as $i\le k+2$ we have
    \begin{equation}\label{eq:Claim4-3}
        a_k^{i} b^{j} p^{ ij + \frac{i(i - 1)}{2} + \frac{j(j - 1)}{2}} =a_k^{i} b^{j} p^{ \binom{r}{2}}\le a_k^{k+2} b^{r-k-2} p^{ \binom{r}{2}}\le a_k^{2k+2} b^{r-k-2} p^{ 2\binom{r}{2}-\binom{r-k}{2}},
    \end{equation}
    where the last inequality holds by the first inequality in \eqref{align:a_k inequality to use in hm}. By combining \eqref{eq:Claim4-1}, \eqref{eq:Claim4-2}, and \eqref{eq:Claim4-3} the proof in this case is complete.

    \noindent\textit{Case 2:} Assume that $n_B\cdot i\leq n_A \cdot j$.\\
        
    In this case, we have
    \[
        f= a_k^{n_A} b^{n_B} p^{n_A\cdot j + \frac{n_A(i - 1)}{2} + \frac{n_B(j - 1)}{2}}.
    \]
    As in the previous cases, the maximum of $f$ is attained at the boundary values of $n_A$ and $n_B$.

    Observe that if the maximum of $f$ occurs at $n_A = i$, we must have $n_B = j$. This yields $n_A \cdot j = n_B \cdot i$, which is already covered by the first case. Consequently, we may assume that $f$ is maximised when $n_A = iL$. Furthermore, if $n_B = jL$, we again obtain $n_A \cdot j = n_B \cdot i$, which is similarly covered by the first case.
    
    Suppose then that $f$ is maximized at $n_A = iL$ and $n_B = j$. Under these conditions, we have
    \[
        f = \left(a_k^{i} p^{ij + \binom{i}{2}}\right)^{L} \cdot b^{j} p^{\binom{j}{2}} = \left(a_k^{i} p^{\binom{r}{2} - \binom{r-i}{2}}\right)^{L} \cdot b^{j} p^{\binom{j}{2}},
    \]
    and this case can be completed in the same manner as Case 4 in the proof of \ref{n_A large n_B small}.

\end{document}